\newcommand\cQ{\mathcal{Q}} 
\newcommand\cR{\mathcal{R}} 
\newcommand\hg{{\rm ht}} 
\newcommand{\oalpha}{{\oline{\alpha}}}
\newtheorem{theorem}{Theorem}[section]
\newtheorem{lemma}[theorem]{Lemma}
\newtheorem{prop}[theorem]{Proposition}
\newtheorem{corollary}[theorem]{Corollary} 
\theoremstyle{definition}
\newtheorem{definition}[theorem]{Definition}
\newtheorem{example}[theorem]{Example}
\theoremstyle{remark}
\newtheorem{remark}[theorem]{Remark}
\newtheorem{prob}[theorem]{Problem} 
\numberwithin{equation}{section}
\begin{document}

\title[Unitary highest weight modules]
{Unitary highest weight modules of locally affine Lie algebras}


\author[Neeb]{Karl-Hermann Neeb} 
\address{Fachbereich Mathematik, TU Darmstadt, Schlossgartenstrasse 7, 
64289-Darmstadt, Germany} 
\curraddr{}
\email{neeb@mathematik.tu-darmstadt.de}
\thanks{}

\subjclass[2000]{Primary 17B70, Secondary 17B10}

\date{10.9.2008}

\begin{abstract} Locally affine Lie algebras are 
generalizations of affine Kac--Moody algebras with Cartan 
subalgebras of infinite rank whose root system is locally affine. 
In this note we study a class of representations of locally 
affine algebras generalizing integrable highest weight 
modules. In particular, we construct such an integrable representation 
for each integral weight not vanishing on the center and 
show that, over the complex numbers, we thus obtain 
unitary representations w.r.t.\ a unitary real form. 

We also use Yoshii's recent classification of locally affine root 
systems to derive a classification of so-called minimal 
locally affine Lie algebras and give realizations as twisted loop algebras. 
\end{abstract} 

\maketitle

\section*{Introduction}  \label{sec:0} 

It is an important feature of the integrable highest weight modules 
$L(\lambda)$ of a Kac--Moody Lie algebra $\g(A)$, that the set 
$\cP_\lambda$ of weights can be described as 
\begin{equation}
  \label{eq:1}
\cP_\lambda = \conv(\cW\lambda) \cap (\lambda + \cQ),
\end{equation}
where $\cQ = \Spann_\Z \Delta$ 
is the root lattice. This description implies in 
particular that $\conv(\cP_\lambda) = \conv(\cW\lambda)$ and that 
each weight $\lambda$ is an extreme point of this set. 
Over the field $\C$ of complex numbers, the Lie algebra $\g(A)$ 
has a natural antilinear involution defining a so-called 
{\it unitary real form} $\fk(A)$, and if $A$ is symmetrizable, each 
integrable highest weight module $L(\lambda)$ is unitary in the sense 
that it carries a positive definite $\fk(A)$-invariant hermitian form 
(\cite{Ka90}). If $A$ is of affine (or finite) type, 
then one even has natural realizations of these unitary modules 
in spaces of holomorphic sections of holomorphic line bundles 
over certain coadjoint orbits with K\"ahler structures 
(cf.\ \cite{PS86}, \cite{Ne01}, where this is discussed for loop groups). 

On the other hand, split locally finite Lie algebras $(\g,\fh)$ 
(i.e., $\g$ has a root decomposition with respect to $\fh$)  
also have a natural class of representations fitting into this scheme. 
In this context the situation becomes more tricky because 
one cannot choose a positive system, resp., a set of simple roots, 
a priori, but for each integral 
weight $\lambda$ there exists a positive system 
for which $\lambda$ is dominant. As we have seen in \cite{Ne98}, 
this also leads to integrable highest weight modules $L(\lambda)$ 
which are uniquely determined by their weight set 
$\cP_\lambda$, given by \eqref{eq:1}. In the complex case, they are unitary 
with respect to a unitary real form of the Lie algebra $\g$.  
In particular, these modules do not depend on the positive system 
with respect to which they are dominant, and their equivalence 
classes are parameterized by the set $\cP/\cW$ of Weyl group orbits 
in the set $\cP$ of integral weights (which in finite dimensions   
is usually identified with the set of dominant integral weights  
with respect to 
some positive system $\Delta^+$). 
Under suitable boundedness conditions on the weight, the 
realization of these modules in holomorphic line bundles is discussed 
in the context of Banach--Lie groups in \cite{Ne04}. 

Comparing these two classes of representations which exhibit 
a rich geometric structure, we would like to understand if there are 
larger classes of Lie algebras with similar types of representations, 
or even if there is 
a common roof for the Kac--Moody cases and the locally finite 
situation. In this note we address this question for the class of 
locally affine Lie algebras (LALAs), a subclass of the 
recently introduced locally extend affine Lie algebras 
(LEALAs) (\cite{MY06}), which in turn are infinite rank variants 
of extended affine Lie algebras (\cite{AA-P97}). 

From a geometric point of view, the choice of a positive system with respect to which 
an integral weight $\lambda$ is dominant is a redundant 
feature of the theory. It is much more natural to work 
with the stabilizer $\fp_\lambda$ of the highest weight ray in 
$L(\lambda)$. 
A key observation that makes our approach work is that for an affine 
Kac--Moody algebra, the property of an integral weight $\lambda$ 
to be an extremal weight of an integrable highest weight module 
is equivalent to $\lambda$ not vanishing on the center, a property 
that no longer refers to the choice of a positive system, so that 
it also makes sense for locally affine Lie algebras. 

In the first section we introduce the framework of split quadratic 
Lie algebras $(\g,\fh,\kappa)$, where $\kappa$ is an invariant 
symmetric bilinear form and $\fh$ is a splitting Cartan subalgebra. 
In this context one has a natural concept of an integrable root 
generalizing the notion of 
a real root of a Kac--Moody Lie algebra, resp., an anisotropic root 
of an EALA. In this framework we define LEALAs and LALAs following 
\cite{MY06} and \cite{YY08}. Since notions such as 
the Weyl group, coroots and integral 
weights make sense in general, it becomes an interesting issue 
to understand the interactions of the axiomatic framework 
and geometric, resp., representation theoretic features of these  
Lie algebras. The main new observation in Section~\ref{sec:1} 
is that 
any integral weight $\lambda$ defines a natural $\Q$-grading 
$\g = \bigoplus_{q \in \Q} \g^q(\lambda)$ and that this becomes a 
$\Z$-grading if 
\begin{itemize}
\item[\rm(2-Aff)] $\beta(\check \alpha)\alpha(\check\beta) \leq 4$ 
for $\alpha,\beta \in \Delta_i$,  
\end{itemize}
where $\Delta_i$ denotes the set of integrable roots 
(Definition~\ref{def:basic}). 
This means that the  subalgebras $\g(\alpha,\beta)$ 
generated by two integrable root-$\fsl_2$-algebras 
are either finite dimensional or affine Kac--Moody. 

In Section~\ref{sec:2} we then turn to LEALAs. Since all these Lie algebras 
satisfy (2-Aff), each integral weight $\lambda$ defines a $\Z$-grading 
of $\g$. For affine Kac--Moody Lie algebras, the $\Z$-gradings corresponding 
to integrable highest weight modules $L(\lambda,\Delta^+)$ 
have the property that all roots of the $0$-component $\g^0(\lambda)$ 
of the grading are integrable. 
We therefore focus on integral weights with this 
property. The first main result of Section~\ref{sec:2} is that 
the existence of an integral weight with this property 
implies that $\g$ is a locally affine Lie algebra (Theorem~\ref{thm:2.8}). 
This is a representation theoretic 
characterization of LALAs among LEALAs. From this point on 
we restrict our considerations to locally affine Lie algebras.  

In Section~\ref{sec:3} we turn to the structure of locally 
affine Lie algebras. After analyzing how they can be exhausted by 
affine Kac--Moody algebras in a controlled fashion, we 
show that 
isomorphisms of locally affine root systems ``extend'' to isomorphisms 
of the cores of the corresponding Lie algebras and even to all of 
$\g$, provided $\g$ is {\it minimal} 
(cf.\ Definition~\ref{def:min}).\begin{footnote}
  {After this paper was written, we learned about the work in progress 
\cite{MY08} of Morita and Yoshii, where they also show that 
the cores or locally affine Lie algebras with isomorphic root systems 
are isomorphic. However, their approach is somewhat different.} 
\end{footnote}
From that it also follows that for a minimal locally affine 
Lie algebra two Cartan subalgebras 
with isomorphic root systems are conjugate under $\Aut(\g)$ 
(cf.\ \cite{Sa08} for related results)  
and that, over $\C$, minimal affine Lie algebras have a unitary 
real form. 

Section~\ref{sec:4} is completely devoted to representation theoretic 
issues. Here we construct for each integral weight $\lambda$ of a 
locally extended affine Lie algebra $\g$ a simple module 
$L(\lambda) := L(\lambda, \fp_\lambda)$ by induction of a generalized 
parabolic subalgebra $\fp_\lambda = \sum_{q \geq 0} \g^q(\lambda)$. 
Our main representation theoretic result 
is that these modules $L(\lambda)$ 
are integrable and that, over $\C$, they are unitary with respect to 
the unitary real form. Moreover, $\cP_\lambda$ is given by 
\eqref{eq:1} and $L(\lambda) \cong L(\mu)$ if and only if 
$\mu$ is conjugate to $\lambda$ under the Weyl group $\cW$. 

In a first appendix we recall Yoshii's classification of the locally affine 
root systems of infinite rank, derive some more information that is 
needed in Section~\ref{sec:3} and describe a realization of the 
corresponding minimal locally affine Lie algebras in terms 
of twisted loop algebras. In a second appendix we show that 
Yoshii's seven types of infinite rank locally affine root systems 
lead to four isomorphy classes of 
minimal locally affine Lie algebras. 

In the following $\K$ denotes a field of characteristic zero, 
if not explicitly said otherwise.

{\bf Acknowledgement:} We thank Y.~Yoshii for illuminating discussions 
during the preparation of this manuscript 
and for making the manuscripts \cite{MY08} and \cite{YY08} available.

\section{Split quadratic Lie algebras} \label{sec:1} 

\begin{definition} \label{def:basic} (a) We call an abelian subalgebra 
$\fh$ of the Lie algebra $\g$ a {\it splitting Cartan subalgebra} if 
$\fh$ is maximal abelian and $\ad \fh$ is simultaneously
diagonalizable. Then the pair $(\g,\fh)$, resp., $\g$, 
is called a {\it split Lie algebra} and 
we have a {\it root decomposition} 
$$ \g = \fh + \sum_{\alpha \in\Delta} \g_\alpha, $$
where $\g_\alpha = \{ x \in \g \: (\forall h \in \fh) [h,x]= \alpha(h)
x\}$ and 
$$\Delta := \Delta(\g,\fh) := \{ \alpha \in \fh^* \setminus \{0\} \: \g_\alpha
\not= \{0\}\}$$ is the corresponding {\it root system}. 
Note that $\g_0 = \fh$ because $\fh$ is maximal abelian. 

(b) We call a root $\alpha \in \Delta$ {\it integrable} 
if there exist $x_{\pm \alpha} \in \g_{\pm\alpha}$ with \break 
$\alpha([x_\alpha, x_{-\alpha}]) \not=0$ such that the operators 
$\ad x_{\pm \alpha}$ on $\g$ are locally nilpotent. 
Then $\dim \g_{\pm \alpha} = 1$ and we write $\check \alpha \in 
[\g_\alpha,\g_{-\alpha}]$ for the unique element satisfying 
$\alpha(\check \alpha) = 2$,  called the corresponding 
{\it coroot}. We also write 
$$ \g(\alpha) := \K \check \alpha + \g_\alpha + \g_{-\alpha} 
\cong \fsl_2(\K) $$
for the associated three dimensional subalgebra 
(cf.\ \cite[Prop.~I.6]{Ne00b}). 
The set of integrable roots is denoted $\Delta_i$. 
For a subset $S \subeq \Delta_i$, we write $\g(S)$ for the 
subalgebra generated by the $\g(\alpha)$, $\alpha \in S$. 
The subalgebra $\g_c := \g(\Delta_i)$ is called the {\it core of $\g$}. 
In the following, we shall frequently assume that $\g$ is 
{\it coral}, i.e., $\g = \fh + \g_c$. Then $\g_c$ is a perfect ideal of $\g$, 
hence coincides with the commutator algebra $[\g,\g]$. 

(c) A linear functional $\lambda\in \fh^*$ is called an {\it integral weight} 
if $\lambda(\check \alpha) \in \Z$ for each $\alpha \in \Delta_i$. 

(d) The Weyl group $\cW$ of $\g$ is the subgroup of 
$\GL(\fh)$ generated by the reflections 
$r_\alpha(h) := h - \alpha(h) \check \alpha.$
Then each $r_\alpha$ extends to an automorphism of $\g$ preserving 
$\fh$ and acting on $\fh^*$ by the adjoint linear map 
$r_\alpha(\beta) := \beta - \beta(\check \alpha) \alpha.$
The action of the Weyl group preserves the root system $\Delta$ 
and the subset $\Delta_i$ of integrable roots 
(cf.\ \cite[Def.~I.8, Lemma I.11]{Ne00b}). 

(e) We call two integrable roots $\alpha$ and $\beta$ {\it connected} if 
there exists a chain 
$\alpha_0= \alpha, \alpha_1, \ldots, \alpha_n = \beta$ 
in $\Delta_i$ with $\alpha_{i+1}(\check \alpha_i)\not=0$ 
for $i = 0,\ldots, n-1$. Then we define 
$\dist(\alpha,\beta)$ as the minimal length $n$ of such a chain. 
Connectedness defines an equivalence relation on 
$\Delta_i$ (cf.\ Proposition~\ref{prop:introot} below) 
and on each connected component of $\Delta_i$ 
(the corresponding equivalence 
classes), $\dist$ defines a metric. 
\end{definition} 

\begin{definition} A {\it quadratic Lie algebra} is a pair $(\g,\kappa)$, 
consisting of a Lie algebra $\g$ and a non-degenerate invariant symmetric 
bilinear form $\kappa$ on $\g$. 
If $(\g,\fh,\kappa)$ is a split quadratic Lie algebra, 
then the root spaces satisfy  
$\kappa(\g_\alpha, \g_\beta) = \{0\}$ for $\alpha + \beta\not=0$, and 
in particular, $\kappa\res_{\fh \times \fh}$ is non-degenerate. 
We thus obtain 
an injective map $\flat \: \fh \to \fh^*, h \mapsto 
h^\flat, h^\flat(x) := \kappa(x,h)$. For $\alpha\in 
\fh^\flat := \flat(\fh)$ 
we put $\alpha^\sharp := \flat^{-1}(\alpha)$ and define a symmetric 
bilinear map on $\fh^\flat$ by 
$(\alpha,\beta) := \kappa(\alpha^\sharp, \beta^\sharp)$. 
Note that (1.2) in Remark~\ref{rem:1.4} below implies 
in particular that $\Delta \subeq \fh^\flat$, so that 
$(\alpha,\beta)$ is defined for $\alpha,\beta \in \Delta$. 

A split quadratic Lie algebra $(\g,\fh,\kappa)$ 
is called a {\it locally extended affine Lie algebra} (LEALA) if  the 
following two conditions are satisfied 
\begin{itemize}
\item[\rm(Irr)] $\Delta_i$ is connected.
  \begin{footnote}
    {This is equivalent to $\Delta_i$ being {\it irreducible} 
in the sense that it cannot be decomposed into two proper mutually  
orthogonal subsets. 
}  \end{footnote}

\item[\rm(Int)] All non-isotropic roots are integrable, 
i.e., $\{ \alpha \in \Delta \: (\alpha,\alpha) \not=0\} \subeq \Delta_i$. 
\end{itemize}
\end{definition}  

The following proposition combines Cor.~III.8 and Prop.~III.9 in \cite{Ne00b}. 
\begin{prop} \label{prop:introot} For $\alpha, \beta \in \Delta_i$, 
the following assertions hold: 
\begin{itemize}
\item[\rm(i)] $\alpha(\check \beta) \in \Z$, 
$\alpha(\check \beta) \beta(\check \alpha) \geq 0$, and 
$\alpha(\check \beta) =0$ implies $\beta(\check \alpha)=0$. 
\item[\rm(ii)] If $\kappa$ is an invariant non-degenerate symmetric 
bilinear form on $\g$, then $\kappa(\check\alpha, \check \alpha) \not=0$,  
and if $\beta(\check \alpha) \not=0$, then 
$$\kappa(\check\beta,\check \beta)/\kappa(\check \alpha, \check \alpha) 
= \alpha(\check \beta)/\beta(\check \alpha) \in \Q^\times_+ := \{ q \in 
\bQ \: q > 0\}. $$
\end{itemize}
\end{prop}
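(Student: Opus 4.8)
The backbone of the argument is $\fsl_2$-representation theory applied to the subalgebra $\g(\alpha)\cong\fsl_2(\K)$. The plan is first to view $\g$ as a module over $\g(\alpha)$, with $x_{\pm\alpha}$ acting by the locally nilpotent operators $\ad x_{\pm\alpha}$ and $\check\alpha$ acting diagonally. Local nilpotence makes $\g$ a locally finite $\g(\alpha)$-module, so $\ad\check\alpha$ is diagonalizable with integer eigenvalues on every finite-dimensional submodule. Since the eigenvalue of $\ad\check\alpha$ on $\g_\gamma$ equals $\gamma(\check\alpha)$, this yields $\gamma(\check\alpha)\in\Z$ for every $\gamma\in\Delta$; in particular $\beta(\check\alpha)\in\Z$, and by the symmetric argument with $\g(\beta)$ also $\alpha(\check\beta)\in\Z$, which is the first assertion of (i).

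For the remaining assertions of (i) I would exploit the $\alpha$-string through $\beta$. Using the standard fact that the weights of a finite-dimensional $\fsl_2$-module form an unbroken chain symmetric about $0$, the set $\{k\in\Z : \beta+k\alpha\in\Delta\}$ is an interval $[-p,q]$ with $p,q\geq 0$, and weight symmetry gives $\beta(\check\alpha)=p-q$; the analogous relation holds for the $\beta$-string through $\alpha$. The sign inequality $\alpha(\check\beta)\beta(\check\alpha)\geq 0$, together with the vanishing implication $\alpha(\check\beta)=0\Rightarrow\beta(\check\alpha)=0$, is the genuinely delicate point and the main obstacle. I would obtain it by passing to the rank-two subalgebra $\langle\g(\alpha),\g(\beta)\rangle$ generated by the two $\fsl_2$-triples and tracking how the reflections $r_\alpha,r_\beta$ move $\alpha$ and $\beta$: opposite signs of $\beta(\check\alpha)$ and $\alpha(\check\beta)$ would force one of the two root strings to fail to close up, contradicting the local finiteness of $\g$ as an $\fsl_2$-module. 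In the quadratic setting this sign is equivalent, through the identities of part (ii), to the positivity of the numbers $(\alpha,\alpha)$ attached to integrable roots.

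For part (ii) the computations are concrete and I would carry them out directly. Normalising so that $\check\alpha=[x_\alpha,x_{-\alpha}]$, invariance of $\kappa$ gives $\kappa(\check\alpha,h)=\alpha(h)\,\kappa(x_\alpha,x_{-\alpha})$ for all $h\in\fh$. Non-degeneracy of $\kappa$ together with the orthogonality $\kappa(\g_\alpha,\g_\gamma)=\{0\}$ for $\gamma\neq-\alpha$ forces $\kappa(x_\alpha,x_{-\alpha})\neq 0$, whence $\kappa(\check\alpha,\check\alpha)=2\kappa(x_\alpha,x_{-\alpha})\neq 0$. The same identity shows $\check\alpha=\tfrac{2}{(\alpha,\alpha)}\alpha^\sharp$ and $(\alpha,\alpha)=\tfrac{4}{\kappa(\check\alpha,\check\alpha)}$, from which $\beta(\check\alpha)=\tfrac{2(\alpha,\beta)}{(\alpha,\alpha)}$ and, symmetrically, $\alpha(\check\beta)=\tfrac{2(\alpha,\beta)}{(\beta,\beta)}$. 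Dividing gives $\tfrac{\alpha(\check\beta)}{\beta(\check\alpha)}=\tfrac{(\alpha,\alpha)}{(\beta,\beta)}=\tfrac{\kappa(\check\beta,\check\beta)}{\kappa(\check\alpha,\check\alpha)}$. Finally, when $\beta(\check\alpha)\neq 0$ part (i) guarantees that $\alpha(\check\beta)$ is a non-zero integer of the same sign, so the common value lies in $\Q^\times_+$.
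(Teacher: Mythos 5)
Your part (ii) is correct and complete: it is precisely the computation the paper itself records in Remark~\ref{rem:1.4}(a) (formulas \eqref{eq:brela}--\eqref{eq:intflat}), and the final passage to $\Q^\times_+$ correctly invokes part (i). The integrality statement in (i) is also fine, by the standard local-finiteness argument for $\g$ as a $\g(\alpha)$-module. For calibration: the paper does not prove this proposition internally at all; it quotes it from \cite{Ne00b} (Cor.~III.8 and Prop.~III.9), and the two assertions of (i) that you could not settle are exactly the hard content of that reference.

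The genuine gap is the sign assertion $\alpha(\check\beta)\beta(\check\alpha)\geq 0$ together with the implication $\alpha(\check\beta)=0\Rightarrow\beta(\check\alpha)=0$, which you flag as ``the main obstacle'' but then only gesture at. The sketch as given does not work: opposite signs do not force any root string to fail to close up. For instance $\alpha(\check\beta)=-1$, $\beta(\check\alpha)=+1$ is compatible with perfectly finite, unbroken strings $\{\beta-\alpha,\beta\}$ and $\{\alpha,\alpha+\beta\}$; and tracking reflections does not terminate in a contradiction either, because the bad configuration reproduces itself: setting $\gamma=r_\alpha(\beta)$, $\delta=r_\beta(\alpha)$, $a=\alpha(\check\beta)$, $b=\beta(\check\alpha)$, one computes $\gamma(\check\delta)=b(ab-3)$ and $\delta(\check\gamma)=a(ab-3)$, so a pair of integrable roots with $ab<0$ begets another such pair with $|ab|$ strictly larger. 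Any correct proof must be a global analysis of the rank-two subalgebra $\g(\alpha,\beta)$, which is what Section~III of \cite{Ne00b} carries out at length. Your fallback---deducing the sign ``in the quadratic setting'' from the identities of part (ii) and the positivity of $(\alpha,\alpha)$---is inadmissible twice over: part (i) is stated for split Lie algebras with no invariant form assumed, and even when a form exists, the statement $(\alpha,\alpha)\in\Q^\times_+$ (Remark~\ref{rem:1.4}(b)) is itself derived from Proposition~\ref{prop:introot}, i.e.\ from what is being proved; over a general field $\K$ of characteristic zero there is no a priori notion of positivity of $\kappa(\check\alpha,\check\alpha)$ at all.
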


\begin{remark} \label{rem:1.4} 
Let $(\g,\fh, \kappa)$ be a split quadratic Lie algebra. 

(a) For $h \in \fh$ and $x_\pm \in \g_{\pm\alpha}$ we have  
\begin{equation}
  \label{eq:brela}
\alpha(h)\kappa(x_\alpha, x_{-\alpha})
= \kappa([h,x_\alpha], x_{-\alpha})
= \kappa(h,[x_\alpha, x_{-\alpha}]), 
\end{equation}
so that the non-degeneracy of $\kappa$ on $\fh$ leads to $\alpha\in \fh^\flat$ 
and 
\begin{equation}
  \label{eq:brack-rel}
[x_\alpha, x_{-\alpha}] = \kappa(x_\alpha, x_{-\alpha}) \alpha^\sharp   
\end{equation}
(cf.\ \cite{MY08}).\begin{footnote}
{This argument shows that $(\g,\fh,\kappa)$ 
is an admissible triple in the sense of \cite{MY06}.}
  \end{footnote}
If $\alpha$ is integrable and $\check \alpha = [x_\alpha, x_{-\alpha}],$ 
then \eqref{eq:brela} 
and $\alpha(\check \alpha) =2$ imply 
$\kappa(\check \alpha, \check \alpha) = 2\kappa(x_\alpha, x_{-\alpha})$, 
which leads for $\beta \in \fh^\flat$ to 
\begin{equation}
  \label{eq:intflat}
\alpha^\sharp = \frac{2 \check \alpha}{\kappa(\check \alpha, 
\check \alpha)}, \quad 
(\alpha, \alpha) = \frac{4}{\kappa(\check \alpha, \check \alpha)} \quad 
\mbox{ and } \quad 
(\beta, \alpha) = \frac{2\beta(\check \alpha)}{\kappa(\check\alpha, 
\check \alpha)}.
\end{equation}

(b) If, in addition, $\Delta_i$ is connected, then Proposition~\ref{prop:introot}(ii) 
implies that, after multiplication with a suitable element 
of $\K^\times$, $\kappa$ satisfies 
$\kappa(\check\alpha, \check \alpha) \in \Q^\times_+$ for each 
$\alpha \in \Delta_i$, and thus 
$(\alpha,\alpha) \in \Q^\times_+.$ 
In the following, we shall always assume this normalization whenever 
$\Delta_i$ is connected. 

(c) It is easy to see that for $S \subeq \Delta_i$ we have 
\begin{equation}
  \label{eq:core-root}
\fh \cap \g(S) = \Spann \check S \quad \mbox{ and in particular\ }\quad 
\fh \cap \g_c = \Spann\check \Delta_i 
\end{equation}
(cf.\ \cite[Lemma~I.10]{Ne00b}). If $\g = \fh + \g_c$, then we further have 
\begin{equation}
  \label{eq:rootrel}
\Delta \subeq \Spann_\Z \Delta_i 
\quad \mbox{ and } \quad (\forall \beta\in \Delta)(\exists \alpha \in \Delta_i) \ \beta - \alpha \in \Delta.
\end{equation}
In fact, for each $\beta  \in \Delta$, the root space 
$\g_\beta$ is spanned by brackets of 
root vectors of integrable roots. Hence there exists an 
$\alpha \in \Delta_i$ with $\beta - \alpha \in \Delta$. 
From \eqref{eq:core-root} we derive that 
\begin{equation}
  \label{eq:orthog}
\g_c^\bot = \fh \cap (\check\Delta_i)^\bot 
= \Delta_i^\bot = \Delta^\bot = \fz(\g). 
\end{equation}
\end{remark} 

\begin{lemma} \label{lem:semdef} 
Let $(\g,\kappa)$ be a split quadratic Lie algebra with 
$(\alpha,\alpha) \in \Q^\times_+$ for each 
integrable root $\alpha$. 
For $\alpha, \beta \in \Delta_i$, the form  $(\cdot,\cdot)$ 
is positive semidefinite on $\Spann_\Q\{\alpha,\beta\}$ if and 
only if 
\begin{equation}
  \label{eq:estimate}
\alpha(\check \beta)\beta(\check \alpha) 
\in\{0,1,2,3,4\}. 
\end{equation} 
\end{lemma}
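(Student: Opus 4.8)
The plan is to reduce the statement to a two–variable Gram determinant computation. Write $a := \alpha(\check\beta)$ and $b := \beta(\check\alpha)$; by Proposition~\ref{prop:introot}(i) these are integers with $ab \geq 0$, so the asserted condition \eqref{eq:estimate} is equivalent to the single inequality $ab \leq 4$. The form $(\cdot,\cdot)$ restricted to $V := \Spann_\Q\{\alpha,\beta\}$ is symmetric, and its diagonal Gram entries $(\alpha,\alpha)$ and $(\beta,\beta)$ are by hypothesis strictly positive; from \eqref{eq:intflat} I moreover get $\kappa(\check\alpha,\check\alpha) = 4/(\alpha,\alpha) > 0$ and likewise $\kappa(\check\beta,\check\beta) > 0$. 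Thus everything will hinge on the sign of the off–diagonal entry squared against the product of the diagonal ones.

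First I would compute the Gram entries via \eqref{eq:intflat}. That relation gives $(\beta,\alpha) = 2\beta(\check\alpha)/\kappa(\check\alpha,\check\alpha)$ and, interchanging the roles of the two roots, $(\alpha,\beta) = 2\alpha(\check\beta)/\kappa(\check\beta,\check\beta)$; multiplying the two expressions and using $(\alpha,\alpha)(\beta,\beta) = 16/\bigl(\kappa(\check\alpha,\check\alpha)\kappa(\check\beta,\check\beta)\bigr)$ yields
\begin{equation*}
(\alpha,\beta)^2 = \frac{4\, ab}{\kappa(\check\alpha,\check\alpha)\,\kappa(\check\beta,\check\beta)},
\qquad
(\alpha,\alpha)(\beta,\beta) - (\alpha,\beta)^2 = \frac{4\,(4-ab)}{\kappa(\check\alpha,\check\alpha)\,\kappa(\check\beta,\check\beta)}.
\end{equation*}
Since the denominator is positive, the sign of this Gram determinant is exactly the sign of $4 - ab$.

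Then I would invoke the standard criterion for positive semidefiniteness of a symmetric bilinear form on a space of dimension at most two. With both diagonal entries positive, the form on $V$ is positive semidefinite if and only if the above determinant is $\geq 0$ (for $\dim V = 2$ this is the nonnegativity of all principal minors; for $\dim V = 1$ the pair $\{\alpha,\beta\}$ is linearly dependent, the determinant vanishes, and the restriction is automatically positive definite). By the displayed formula this is equivalent to $ab \leq 4$, and combining with $ab \in \Z$, $ab \geq 0$ from Proposition~\ref{prop:introot}(i) gives $ab \in \{0,1,2,3,4\}$, which is \eqref{eq:estimate}.

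The only point requiring care — the ``main obstacle'', such as it is — is the degenerate case in which $\alpha$ and $\beta$ are proportional, so that $\dim V = 1$. There one must confirm that the determinant formula remains consistent (it forces $ab = 4$, as one checks directly for $\beta \in \{\pm\alpha\}$) and that positive semidefiniteness is not lost. Because the identities above are purely formal consequences of \eqref{eq:intflat}, valid for any two integrable roots irrespective of linear (in)dependence, this case causes no difficulty once the semidefiniteness criterion is phrased in terms of principal minors rather than relative to a chosen basis.
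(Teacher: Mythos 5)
Your proposal is correct and follows essentially the same route as the paper: both reduce the statement to the nonnegativity of the $2\times 2$ Gram determinant, translate that via the relations \eqref{eq:intflat} of Remark~\ref{rem:1.4}(a) into $\alpha(\check\beta)\beta(\check\alpha)\leq 4$, and conclude with the integrality and sign statements of Proposition~\ref{prop:introot}(i). The only (minor) difference is organizational: the paper treats the linearly dependent case separately by citing $\beta\in\{\pm\alpha\}$, whereas you absorb it into the determinant formula, which vanishes automatically for proportional roots — a slightly cleaner handling of the degenerate case, but not a different argument.
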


\begin{proof} If $\alpha$ and $\beta$ are linearly dependent over $\Q$, 
then $\beta \in \{\pm \alpha\}$ (\cite[Prop.~I.6]{Ne00b}), which implies 
that 
$\beta(\check \alpha) \alpha(\check \beta) = 4.$

Now we assume that $\alpha$ and $\beta$ are linearly independent. 
If $\beta(\check \alpha) = \alpha(\check \beta)~=~0$, then 
$(\alpha,\beta) = 0$, so that $(\cdot,\cdot)$ is 
positive definite on $\Spann_\Q\{\alpha,\beta\}$. We may therefore assume that 
$(\alpha,\beta) \not=0$. 
Then the assertion 
is equivalent to the positive semidefiniteness of the rational Gram matrix 
$\pmat{(\alpha,\alpha) & (\alpha,\beta) \cr 
(\alpha,\beta) & (\beta,\beta)},$
which means that 
$(\alpha,\beta)^2 \leq (\alpha, \alpha) (\beta, \beta).$
In view of Remark~\ref{rem:1.4}(a), this 
is equivalent to 
$\alpha(\check \beta)\beta(\check \alpha) \leq 4.$
Since both factors on the left are integers of the same sign, 
their product is non-negative, and the assertion follows 
(Proposition~\ref{prop:introot}(i)). 
\end{proof}

\begin{lemma}\label{lem:neighbor} If $\alpha,\beta \in \Delta_i$ 
are connected, then the orbit $\cW\beta$ contains an element $\gamma$ with 
$\gamma(\check \alpha) \not=0$. 
\end{lemma}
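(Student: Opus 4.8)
The plan is to argue by contradiction. Suppose, to the contrary, that $\gamma(\check\alpha) = 0$ for every $\gamma$ in the orbit $\cW\beta$. Since the Weyl group preserves $\Delta_i$ (Definition~\ref{def:basic}(d)), the whole orbit $\cW\beta$ lies in $\Delta_i$; in particular $\beta \in \cW\beta$ and all coroots appearing below are defined. The idea is to show that the condition ``all orbit members vanish on a given coroot'' propagates along the chain witnessing the connectedness of $\alpha$ and $\beta$, until it produces a contradiction at $\beta$ itself.

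To make this precise, I would introduce the set
$$ C := \{ \eta \in \Delta_i \: \gamma(\check\eta) = 0 \text{ for all } \gamma \in \cW\beta \}. $$
By the contradiction hypothesis, $\alpha \in C$. The key step is the \emph{propagation claim}: if $\delta \in C$ and $\eta \in \Delta_i$ satisfies $\eta(\check\delta) \neq 0$, then $\eta \in C$. This follows directly from the reflection formula of Definition~\ref{def:basic}(d): for $\gamma \in \cW\beta$ we have $r_\eta(\gamma) = \gamma - \gamma(\check\eta)\eta \in \cW\beta$, and evaluating both $\gamma$ and $r_\eta(\gamma)$ on $\check\delta$ (both values vanish because $\delta \in C$) yields $\gamma(\check\eta)\eta(\check\delta) = 0$; since $\eta(\check\delta) \neq 0$ this forces $\gamma(\check\eta) = 0$ for every $\gamma$, i.e.\ $\eta \in C$.

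Now I would walk along a connecting chain $\alpha = \alpha_0, \alpha_1, \ldots, \alpha_n = \beta$, which by the definition of connectedness satisfies $\alpha_{i+1}(\check\alpha_i) \neq 0$. Starting from $\alpha_0 = \alpha \in C$, the propagation claim applied with $\delta = \alpha_i$ and $\eta = \alpha_{i+1}$ gives $\alpha_{i+1} \in C$, so by induction $\beta = \alpha_n \in C$. But $\beta \in C$ means in particular $\beta(\check\beta) = 0$ (take $\gamma = \beta \in \cW\beta$), contradicting $\beta(\check\beta) = 2$. Hence some $\gamma \in \cW\beta$ must satisfy $\gamma(\check\alpha) \neq 0$.

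The argument is purely combinatorial once the propagation claim is isolated, and notably it needs neither the estimate \eqref{eq:estimate} nor any positivity of $(\cdot,\cdot)$; it uses only the reflection formula, the integrality data, and the chain defining connectedness (in fact only the one-sided conditions $\alpha_{i+1}(\check\alpha_i)\neq 0$). I expect the only point requiring care to be the bookkeeping in the propagation claim, specifically remembering that $\cW\beta$ is a full Weyl orbit, so that $r_\eta(\gamma)$ remains in it, and that the defining condition of $C$ is then available for both $\gamma$ and $r_\eta(\gamma)$ simultaneously.
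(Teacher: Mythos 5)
Your proof is correct, but it is organized quite differently from the paper's. The paper argues by induction on $\dist(\alpha,\beta)$: it takes a \emph{minimal} chain $\alpha_0=\alpha,\dots,\alpha_n=\beta$, forms $\gamma := r_{\alpha_{n-1}}(\beta)\in\cW\beta$, and uses minimality (which forces $\beta(\check\alpha_{n-2})=0$) to compute $\gamma(\check\alpha_{n-2}) = -\beta(\check\alpha_{n-1})\alpha_{n-1}(\check\alpha_{n-2})\neq 0$, so that $\gamma$ is strictly closer to $\alpha$; iterating produces the desired orbit element. You instead run the same reflection identity in contrapositive form: assuming every element of $\cW\beta$ kills $\check\alpha$, your set $C$ and the propagation claim push this vanishing forward along an \emph{arbitrary} connecting chain until it reaches $\beta$, where $\beta(\check\beta)=2$ gives the contradiction. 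The underlying computation --- evaluating $r_\eta(\gamma) = \gamma - \gamma(\check\eta)\eta$ on a coroot and exploiting that reflections preserve the orbit --- is identical in both arguments, but your version dispenses with the minimality of the chain and with the distance function altogether, which makes it a bit cleaner; what you give up is the mildly constructive flavor of the paper's proof, which exhibits the witness $\gamma$ explicitly as a composition of reflections $r_{\alpha_{n-1}}, r_{\alpha_{n-2}},\dots$ applied to $\beta$, with control on how far it is from $\alpha$ at each stage. One small bookkeeping remark: your propagation step needs $r_\eta \in \cW$, which holds because $\eta \in \Delta_i$ (the Weyl group is generated by reflections in integrable roots), and the chain condition $\alpha_{i+1}(\check\alpha_i)\neq 0$ is exactly the one-sided hypothesis you need --- by Proposition~\ref{prop:introot}(i) it is anyway equivalent to $\alpha_i(\check\alpha_{i+1})\neq 0$, which is the form the paper uses.
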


\begin{proof} If $\dist(\alpha,\beta) \leq 1$, then $\beta(\check\alpha)\not=0$, 
and there is nothing to show. If $\dist(\alpha,\beta) > 1$, we show that 
there exists an element $\gamma \in \cW\beta$ with 
$\dist(\gamma,\alpha) < \dist(\alpha,\beta)$. Then the assertion follows 
by induction. 

Let $\alpha_0 = \alpha, \alpha_1, \ldots, \alpha_n= \beta$ be a minimal chain of integrable roots with 
$\alpha_i(\check \alpha_{i+1}) \not=0$ for $i = 0,\ldots, n-1$. 
We consider the root 
$$ \gamma := r_{\alpha_{n-1}}(\beta) = \beta - \beta(\check \alpha_{n-1}) 
\alpha_{n-1}  \in \Delta_i $$
(cf.\ Definition~\ref{def:basic}(d)). 
Since $n$ is minimal and $>1$, we have $\beta(\check \alpha_{n-2})=~0$ and 
therefore 
$\gamma(\check \alpha_{n-2}) 
= - \beta(\check \alpha_{n-1}) \alpha_{n-1}(\check\alpha_{n-2}) \not=0.$
This implies that $\dist(\alpha,\gamma) <~n$. 
\end{proof}

\begin{prop} \label{prop:quotient} 
Suppose that the split quadratic Lie algebra $(\g,\fh,\kappa)$ satisfies 
{\rm(2-Aff)}. If $\alpha$ and $\beta$ are connected with 
$(\beta,\beta)/(\alpha, \alpha) > 1$, then 
$(\beta,\beta)/(\alpha,\alpha) \in \{2,3,4\}$. 
If, in addition, $\Delta_i$ is connected, then 
$(\alpha,\alpha)$ has at most $3$ values for $\alpha \in \Delta_i$. 
\end{prop}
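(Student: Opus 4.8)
The plan is to deduce the first statement from the special case in which $\alpha$ and $\beta$ are \emph{directly} connected (i.e.\ $\beta(\check\alpha) \neq 0$), and then to read off the second statement by a counting argument on the set of possible squared root lengths. For the reduction I would invoke Lemma~\ref{lem:neighbor}: since $\alpha$ and $\beta$ are connected, there is a $\gamma \in \cW\beta$ with $\gamma(\check\alpha) \neq 0$. The reflections generating $\cW$ act on $\fh^*$ as orthogonal reflections for $(\cdot,\cdot)$ --- this follows from Remark~\ref{rem:1.4}(a), which gives $\beta(\check\alpha) = 2(\alpha,\beta)/(\alpha,\alpha)$, so that $r_\alpha(\beta) = \beta - \frac{2(\alpha,\beta)}{(\alpha,\alpha)}\alpha$ is the usual reflection in the hyperplane $\alpha^\perp$ --- hence $(\cdot,\cdot)$ is $\cW$-invariant and $(\gamma,\gamma) = (\beta,\beta)$. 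Since $\cW$ preserves $\Delta_i$, the root $\gamma$ is again integrable, and by Proposition~\ref{prop:introot}(i) the relation $\gamma(\check\alpha) \neq 0$ forces $\alpha(\check\gamma) \neq 0$. Thus $\alpha$ and $\gamma$ are directly connected and have the same length ratio $(\gamma,\gamma)/(\alpha,\alpha) = (\beta,\beta)/(\alpha,\alpha)$.

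It then suffices to treat directly connected roots, where I would use the identity
$$\frac{(\beta,\beta)}{(\alpha,\alpha)} = \frac{\kappa(\check\alpha,\check\alpha)}{\kappa(\check\beta,\check\beta)} = \frac{\beta(\check\alpha)}{\alpha(\check\beta)},$$
obtained by combining the formula $(\alpha,\alpha) = 4/\kappa(\check\alpha,\check\alpha)$ of Remark~\ref{rem:1.4}(a) with Proposition~\ref{prop:introot}(ii). By Proposition~\ref{prop:introot}(i) the integers $\beta(\check\alpha)$ and $\alpha(\check\beta)$ are nonzero of the same sign, and {\rm(2-Aff)} bounds their product by $4$. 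Writing $a = |\alpha(\check\beta)|$ and $b = |\beta(\check\alpha)|$, the hypothesis $(\beta,\beta)/(\alpha,\alpha) = b/a > 1$ gives $b > a \geq 1$ together with $ab \leq 4$; an immediate inspection forces $a = 1$ and $b \in \{2,3,4\}$, which is the claim.

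For the second statement, assume $\Delta_i$ connected, so that any two integrable roots are connected, and set $L := \{(\alpha,\alpha) \: \alpha \in \Delta_i\} \subeq \Q^\times_+$. By the first statement (applied in both directions, together with the trivial case of equal lengths), the ratio of any two elements of $L$ lies in $S := \{1/4, 1/3, 1/2, 1, 2, 3, 4\}$. Fixing one value $u_0 \in L$ shows $L \subeq \{u_0/4, u_0/3, u_0/2, u_0, 2u_0, 3u_0, 4u_0\}$, so $L$ is finite and has a minimum $v$. Every $w \in L$ satisfies $w/v \in S \cap [1,\infty) = \{1,2,3,4\}$, whence $L \subeq \{v, 2v, 3v, 4v\}$. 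I would then rule out the two forbidden coincidences: if both $2v$ and $3v$ lay in $L$, their ratio $3/2$ would violate $S$, and if both $3v$ and $4v$ lay in $L$, their ratio $4/3$ would violate $S$. Hence the presence of $3v$ already forces $2v, 4v \notin L$, so $L \subeq \{v,3v\}$ and $|L| \leq 2$; and if $3v \notin L$ then $L \subeq \{v, 2v, 4v\}$, so $|L| \leq 3$ in every case.

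The only genuinely non-routine point is the reduction in the first paragraph: everything downstream is elementary integer arithmetic, but it is essential that a general connected pair can be replaced, without changing the length ratio, by a directly connected one. This is exactly what Lemma~\ref{lem:neighbor} together with the $\cW$-invariance of $(\cdot,\cdot)$ supplies, and I would take particular care to justify that invariance (equivalently, that $r_\alpha$ is the orthogonal reflection), since the entire argument rests on the equality $(\gamma,\gamma) = (\beta,\beta)$.
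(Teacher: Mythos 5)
Your proof is correct and follows essentially the same route as the paper's: Lemma~\ref{lem:neighbor} plus the $\cW$-invariance of $(\cdot,\cdot)$ (checked via Remark~\ref{rem:1.4}(a)) reduces to a directly connected pair, the ratio identity from Proposition~\ref{prop:introot}(ii) and Remark~\ref{rem:1.4}(a) combined with (2-Aff) yields $\{2,3,4\}$, and the same exclusion of the ratios $3/2$ and $4/3$ gives the bound of three square lengths. Your write-up is merely a bit more explicit than the paper's (finiteness of the set of lengths, integrability of $\gamma$, and the nonvanishing of $\alpha(\check\gamma)$), but the argument is identical in substance.
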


\begin{proof} In view of Remark~\ref{rem:1.4}(b), we may w.l.o.g.\ assume that 
  $(\alpha,\alpha) \in \Q^\times_+$, so that the 
connectedness of $\beta$ with $\alpha$ implies 
$(\beta,\beta) \in \Q^\times_+$. 
Assume that $(\beta,\beta) > (\alpha,\alpha)$. 
In view of Lemma~\ref{lem:neighbor}, 
there exists $\gamma \in \cW\beta$ with $\gamma(\check \alpha) \not=0$. 
Since the form $(\cdot,\cdot)$ on $\Spann_\Z \Delta$ is invariant 
under the action of the Weyl group (which is easily checked on the 
generators with Remark~\ref{rem:1.4}(a)), we have 
$(\gamma,\gamma) = (\beta,\beta) > (\alpha,\alpha).$
By combining Proposition~\ref{prop:introot}(ii) with Remark~\ref{rem:1.4}, 
we find that 
$$ \frac{(\beta,\beta)}{(\alpha,\alpha)} 
= \frac{(\gamma, \gamma)}{(\alpha,\alpha)} 
= \frac{\kappa(\check\alpha, \check \alpha)}{\kappa(\check\gamma,\check\gamma)}
= \frac{\gamma(\check \alpha)}{\alpha(\check \gamma)}. $$
Therefore $|\alpha(\check\gamma)| < |\gamma(\check \alpha)|$, so that the 
integrality of these numbers and (2-Aff) lead to 
$|\alpha(\check\gamma)| =1$. This proves that 
$\frac{(\beta,\beta)}{(\alpha,\alpha)} 
= |\gamma(\check \alpha)| \in \{2,3,4\}.$

Now we assume, in addition, that $\Delta_i$ is connected. 
We may w.l.o.g.\ assume that the minimal value of 
$(\alpha, \alpha)$ for $\alpha \in \Delta_i$ is $1$. 
Then the argument above  implies that 
$(\beta,\beta) \in \{1,2,3,4\}$ for $\beta \in \Delta_i$. 
If there exists a $\beta$ with $(\beta,\beta) = 3$, then 
$(\gamma,\gamma) \in \{2,4\}$ is ruled out for $\gamma \in \Delta_i$ by 
the preceding argument. Therefore the only possibility 
for the set of square lengths is $\{1,3\}$ or a subset of $\{1,2,4\}$. 
\end{proof}

The preceding proposition has an interesting consequence. 

\begin{theorem} \label{thm:q-grad} 
Let $(\g,\fh,\kappa)$ be a split quadratic Lie algebra 
with $\kappa(\check\alpha,\check\alpha)\in\Q$ for 
$\alpha\in \Delta_i$ and $\Delta \subeq \Spann_\Z \Delta_i.$
Then each integral weight $\lambda \in \fh^*$ defines a 
$\Q$-grading of $\g$ by 
$$ \g^q := \g^q(\lambda) 
:= \bigoplus_{\lambda(\alpha^\sharp)= q} \g_\alpha \quad \mbox{ 
for } \quad q \in \Q. $$
If, in addition, {\rm(2-Aff)} holds and $\Delta_i$ is connected, 
then the support of this 
grading lies in a cyclic subgroup of $\Q$, so that we actually 
obtain a $\Z$-grading. 
\end{theorem}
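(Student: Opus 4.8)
The plan is to treat the two assertions separately, in each case reducing to integrable roots by exploiting the $\K$-linearity of the sharp map $\sharp \: \fh^\flat \to \fh$ together with the hypothesis $\Delta \subeq \Spann_\Z \Delta_i$. By Remark~\ref{rem:1.4}(a) every root lies in $\fh^\flat$, so $\alpha^\sharp$ and hence $\lambda(\alpha^\sharp)$ are defined for all $\alpha \in \Delta$, and $\g_0 = \fh$ is naturally placed in degree $0$.

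For the $\Q$-grading I would first record, from \eqref{eq:intflat}, that for an integrable root
\begin{equation*}
\alpha^\sharp = \frac{(\alpha,\alpha)}{2}\,\check\alpha,
\qquad\text{so that}\qquad
\lambda(\alpha^\sharp) = \frac{(\alpha,\alpha)}{2}\,\lambda(\check\alpha) \in \Q,
\end{equation*}
using $\lambda(\check\alpha)\in\Z$ and $(\alpha,\alpha) = 4/\kappa(\check\alpha,\check\alpha)\in\Q$. For an arbitrary root $\beta = \sum_i n_i\alpha_i$ with $n_i\in\Z$, $\alpha_i\in\Delta_i$, linearity of $\sharp$ gives $\lambda(\beta^\sharp) = \sum_i n_i\lambda(\alpha_i^\sharp)\in\Q$, so each $\g^q$ is nonzero only for $q\in\Q$. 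The compatibility $[\g^p,\g^q]\subeq\g^{p+q}$ is then immediate: on root vectors $[\g_\alpha,\g_\beta]\subeq\g_{\alpha+\beta}$, and the additivity $\lambda((\alpha+\beta)^\sharp) = \lambda(\alpha^\sharp)+\lambda(\beta^\sharp)$ follows once more from linearity of $\sharp$ (note $(-\alpha)^\sharp = -\alpha^\sharp$, consistent with $[\g_\alpha,\g_{-\alpha}]\subeq\fh = \g^0$).

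For the second assertion the same linear-algebra reduction shows it suffices to place $\{\lambda(\alpha^\sharp)\:\alpha\in\Delta_i\}$ inside a cyclic subgroup of $\Q$, since $\lambda(\beta^\sharp) = \sum_i n_i\lambda(\alpha_i^\sharp)$ then lies in the same subgroup. Here the decisive input is Proposition~\ref{prop:quotient}: under {\rm(2-Aff)} and connectedness of $\Delta_i$ the set of square lengths $\{(\alpha,\alpha)\:\alpha\in\Delta_i\}$ is \emph{finite} (at most three values), and by hypothesis each is rational. Consequently the subgroup $C\subeq\Q$ generated by the finite set $\{\tfrac12(\alpha,\alpha)\:\alpha\in\Delta_i\}$ is finitely generated, and every finitely generated subgroup of $(\Q,+)$ is cyclic. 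Since $\lambda(\alpha^\sharp) = \tfrac12(\alpha,\alpha)\lambda(\check\alpha)$ with $\lambda(\check\alpha)\in\Z$, each $\lambda(\alpha^\sharp)$ lies in $C$, and by the reduction the entire support of the grading lies in the cyclic group $C$, which is the assertion.

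The $\Q$-grading and the bracket additivity are routine bookkeeping with the linearity of $\sharp$. The genuine content sits in the second assertion, and I expect the only real obstacle to be the \emph{finiteness} of the set of square lengths: it is precisely {\rm(2-Aff)} together with connectedness, via Proposition~\ref{prop:quotient}, that bounds this set---without such a bound one could have infinitely many rational square lengths whose halves generate a dense (non-cyclic) subgroup of $\Q$. Finiteness is exactly what licenses the structure theorem for finitely generated subgroups of $\Q$, and once it is in hand the $\Z$-grading follows formally.
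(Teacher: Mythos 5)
Your proof is correct and follows essentially the same route as the paper's: both rest on the formula $\lambda(\alpha^\sharp)=\tfrac12(\alpha,\alpha)\lambda(\check\alpha)$ together with additivity of $\sharp$ and the hypothesis $\Delta \subeq \Spann_\Z\Delta_i$ for the $\Q$-grading, and on Proposition~\ref{prop:quotient} (finitely many square lengths) plus the fact that finitely generated subgroups of $\Q$ are cyclic for the $\Z$-grading. You merely spell out the bookkeeping (bracket compatibility, reduction from general roots to integrable ones) that the paper leaves implicit.
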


\begin{proof} Clearly, the map $\fh^\flat \to \K, \alpha \mapsto 
\alpha^\sharp(h)$, 
is additive, and for $\alpha \in \Delta_i$ we have 
$\lambda(\alpha^\sharp) = \frac{2 \lambda(\check \alpha)}
{\kappa(\check \alpha,\check \alpha)} \in \Q.$
Now the first assertion follows from $\Delta\subeq \Spann_\Z(\Delta_i)$. 

Let us now assume that (2-Aff) is satisfied and $\Delta_i$ is connected. 
We have to show that the set of rational numbers of the form 
$$ \lambda(\alpha^\sharp) 
= \frac{2\lambda(\check\alpha)}{\kappa(\check \alpha, \check \alpha)} 
= \frac{1}{2}(\alpha,\alpha)\lambda(\check\alpha) 
\in \frac{1}{2} (\alpha, \alpha) \Z $$ 
is contained in a cyclic subgroup of $(\Q,+)$. 
Since each finitely generated 
subgroup of $\Q$ is cyclic, this follows from Proposition~\ref{prop:quotient}. 
\end{proof}

In the following, we 
are mainly interested in those integral weights for which all 
roots $\alpha$ with $\lambda(\alpha^\sharp) = 0$ are integrable. 
The existence of such weights for LEALAs will be investigated in 
Section~\ref{sec:2} below.

The following proposition completely describes the meaning of 
(2-Aff) for Kac--Moody algebras. 
\begin{footnote}
  {We thank Pierre-Emmanuel Caprace for suggesting a different proof 
of this result, based on the geometry of the corresponding Coxeter group.}
\end{footnote}

\begin{prop} A Kac--Moody algebra $\g(A)$ 
satisfies {\rm(2-Aff)} if and only if it either is finite-dimensional 
or affine. 
\end{prop}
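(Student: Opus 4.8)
The plan is to identify the integrable roots $\Delta_i$ of $\g(A)$ with its real roots $\Delta^{\rm re}$, for which $\dim\g_{\pm\alpha}=1$, $\check\alpha$ is the usual coroot, and by Proposition~\ref{prop:introot}(i) the integers $\alpha(\check\beta)$ and $\beta(\check\alpha)$ have the same sign. Replacing $\beta$ by $-\beta\in\Delta_i$ if necessary, I may assume $\alpha(\check\beta),\beta(\check\alpha)\le 0$, so that $\bigl(\begin{smallmatrix}2 & \alpha(\check\beta)\\ \beta(\check\alpha) & 2\end{smallmatrix}\bigr)$ is itself a rank-$\le 2$ generalized Cartan matrix, of indefinite type exactly when its determinant $4-\alpha(\check\beta)\beta(\check\alpha)$ is negative. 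Thus (2-Aff) is equivalent to the statement that no two integrable roots generate an indefinite rank-$2$ subsystem, i.e.\ that every $\g(\alpha,\beta)$ is finite or affine. Throughout the converse I may assume $A$ indecomposable, since a non-finite, non-affine indecomposable block already violates (2-Aff) on its own roots.

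For the implication ``finite or affine $\Rightarrow$ (2-Aff)'', I use that both types are symmetrizable and carry an invariant symmetric form $(\cdot,\cdot)$ that is positive definite (finite type) or positive semidefinite of corank one (affine type) on $\Spann_\Q\Delta$, normalized as in Remark~\ref{rem:1.4}(b) so that $(\alpha,\alpha)\in\Q^\times_+$ on $\Delta_i$. Since each plane $\Spann_\Q\{\alpha,\beta\}$ inherits a positive semidefinite form, Lemma~\ref{lem:semdef} yields $\alpha(\check\beta)\beta(\check\alpha)\in\{0,1,2,3,4\}$ for all integrable $\alpha,\beta$, which is (2-Aff).

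For the converse I argue contrapositively, assuming (2-Aff). Suppose first that $A$ is symmetrizable. By Lemma~\ref{lem:semdef} the form is positive semidefinite on every plane spanned by two real roots, so $(\beta,\alpha_j)^2\le(\beta,\beta)(\alpha_j,\alpha_j)$ for each real root $\beta$ and each simple root $\alpha_j$. As $(\beta,\beta)=(\alpha_i,\alpha_i)$ ranges over the finitely many values attained on the $\cW$-orbit of the simple roots, all pairings $(\beta,\alpha_j)$ are bounded; passing to the non-degenerate quotient $\overline V$ of $\Spann_\Q\Delta$ by the radical of the form, the images of the real roots therefore form a \emph{finite} $\cW$-stable set $F$ spanning $\overline V$. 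Hence the image $\overline\cW$ of the Weyl group in the isometry group of $(\overline V,(\cdot,\cdot))$ embeds into the permutation group of $F$ and is finite. Since $A$ is indecomposable the reflection representation on $\overline V$ is irreducible, and a finite reflection group carries, up to scale, a unique invariant form, necessarily positive definite; so $(\cdot,\cdot)$ is definite on $\overline V$ and hence positive semidefinite on $\Spann_\Q\Delta$. By Kac's trichotomy (\cite{Ka90}) this forces $A$ to be of finite or affine type.

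It remains to treat a non-symmetrizable $A$, which is automatically of indefinite type; here there is no form, so I must exhibit an explicit violating pair, and this is where I expect the main technical work. Because the symmetrizability defect is already visible on a three-node cycle, I would pass to an indecomposable rank-three subdiagram on $\{\alpha_1,\alpha_2,\alpha_3\}$ with $a_{12}a_{23}a_{31}\ne a_{21}a_{32}a_{13}$, set $\beta:=r_1(\alpha_3)=\alpha_3-a_{13}\alpha_1$ with $\check\beta=\check\alpha_3-a_{31}\check\alpha_1$, and compute
$$\beta(\check\alpha_2)=a_{23}-a_{13}a_{21},\qquad \alpha_2(\check\beta)=a_{32}-a_{31}a_{12}.$$
The cocycle inequality makes this product exceed $4$ (for instance it equals $9$ when $a_{23}=a_{31}=-2$ and the remaining off-diagonal entries equal $-1$), so (2-Aff) fails. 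The obstacle is to check that such a single reflection always produces a product $\ge 5$ uniformly over all minimal non-symmetrizable $3$-cycles; I would organize this by first using (2-Aff) on the simple roots to normalize the three products $a_{ij}a_{ji}\in\{1,2,3,4\}$, then inspecting the finitely many resulting patterns, noting that a product $\le 4$ for \emph{every} such reflected pair would force the symmetrizability relation $a_{12}a_{23}a_{31}=a_{21}a_{32}a_{13}$ and contradict the choice of the cycle.
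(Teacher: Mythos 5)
Your forward implication is the same as the paper's (positive semidefiniteness plus Lemma~\ref{lem:semdef}), and your symmetrizable converse is a genuinely different and essentially sound argument: bounded integral pairings force the real roots to have finite image in $\oline V = \Spann_\Q\Delta/\mathrm{rad}$, hence a finite Weyl image, hence (via irreducibility of $\oline V$ for indecomposable $A$ and a Schur-type argument) a positive definite invariant form on $\oline V$, and then the trichotomy applies. However, the proof as a whole has a genuine gap: the non-symmetrizable case, which you yourself flag as ``the main technical work,'' is not actually proved, and the reduction you propose for it is false. A non-symmetrizable generalized Cartan matrix need not contain any non-symmetrizable $3$-node principal subdiagram: symmetrizability is a condition on \emph{all} cycles of the diagram, and the shortest violating cycle can be chordless of length $\geq 4$. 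For instance, take $A$ of size $4$ whose diagram is the $4$-cycle $1\,\hbox{--}\,2\,\hbox{--}\,3\,\hbox{--}\,4\,\hbox{--}\,1$ with $a_{13}=a_{31}=a_{24}=a_{42}=0$, all remaining off-diagonal entries equal to $-1$ except $a_{41}=-2$. Then $a_{12}a_{23}a_{34}a_{41}=2\neq 1=a_{21}a_{32}a_{43}a_{14}$, so $A$ is not symmetrizable, yet every $3$-node principal subdiagram is a path, hence symmetrizable; there is no rank-three cycle on which to run your reflection computation. (By the proposition this $A$, being of indefinite type, must violate (2-Aff) somewhere, but exhibiting the violating pair of real roots requires a different argument.) Moreover, even for honest $3$-cycles your key claim --- that the single reflected pair $\bigl(r_1(\alpha_3),\alpha_2\bigr)$ always yields a product $\geq 5$ --- is only asserted, not verified, so that case is incomplete as well.

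The paper avoids this difficulty entirely by never splitting on symmetrizability and never using the bilinear form in the converse: assuming $\g(A)$ infinite-dimensional and (2-Aff), it takes a non-integrable root $\delta=\sum_i n_i\alpha_i$ of \emph{minimal height}, writes $\delta=\alpha+\beta$ with $\alpha$ simple and $\beta$ integrable, and invokes \cite[Prop.~III.14]{Ne00b} to see that $\g(\alpha,\beta)$ is the commutator algebra of a rank-$2$ affine Kac--Moody algebra, so that $\alpha+n\delta\in\Delta_i$ for all $n\in\N$. Since (2-Aff) bounds $|\gamma(\check\eta)|$ by $4$ for $\gamma,\eta\in\Delta_i$, this forces $\delta(\check\eta)=0$ for every integrable $\eta$; a connectedness argument then shows all $n_i>0$, so $A\mathbf{n}=0$ with $\mathbf{n}>0$, and Vinberg's classification gives affine type. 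If you want to salvage your write-up, the cleanest repair is to replace your entire converse (both cases) by an argument of this kind; alternatively, keep your symmetrizable argument --- which is an attractive variant --- but you then still owe a complete, form-free proof for non-symmetrizable $A$, and that is precisely the hard part.
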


\begin{proof} If $\g(A)$ is affine or finite-dimensional, then 
the canonical bilinear form on $\Spann \Delta$ is positive semidefinite, 
so that (2-Aff) follows from Lemma~\ref{lem:semdef}. 

Suppose, conversely, that $\g(A)$ is infinite dimensional and 
satisfies (2-Aff). 
Let $\Pi = \{ \alpha_1, \ldots, \alpha_r\} 
\subeq \Delta_i$ be a system of simple roots 
and define the height of a root by 
$\hg(\sum_i n_i \alpha_i) := \sum_i n_i$. 
Let $\delta = \sum_{i = 1}^r n_i \alpha_i 
\in \Delta^+$ be a non-integrable root of minimal height 
(cf.\ \cite{Ka90}). 
Then there exists a simple root $\alpha \in \Pi$ with 
$\beta := \delta - \alpha \in \Delta$, and since the height of 
$\delta$ is minimal, $\beta$ is integrable.  
Next we use \cite[Prop.~III.14]{Ne00b} to see that the 
subalgebra $\g(\alpha,\beta)$ generated by 
$\g(\alpha)$ and $\g(\beta)$ is the commutator algebra of an 
affine Kac--Moody algebra of rank $2$. In particular, 
for each $n \in \N$, $\alpha + n\delta\in \Delta_i$ 
(\cite[Thm.~III.5]{Ne00b}). 
Since (2-Aff) implies in particular that $|\gamma(\check\eta)| \leq 4$ 
for $\gamma,\eta \in \Delta_i$, it follows that 
$\delta(\check \eta) = 0$ for each integrable root $\eta$. 

If some coefficient $n_{i}$ of $\delta$ vanishes, then 
the connectedness of $\Pi$ implies the existence of some 
$j$ with $n_j > 0$ and $\alpha_j(\check \alpha_{i}) < 0$. 
Since $\alpha_i(\check \alpha_{k}) \leq 0$ holds for each $i\not=k$, 
we thus arrive at the contradiction $\delta(\check \alpha_{i}) < 0$.  
Therefore the vector ${\bf n} = (n_1,\ldots, n_r)^\top \in \N^r$ 
has no zero entry and 
satisfies $A {\bf n}= 0$, so that Vinberg's classification of generalized 
Cartan matrices (\cite[Prop.~3.6.5]{MP95}) implies that $A$ is of affine 
type. 
\end{proof}

We record the following proposition because we shall use it later 
on to obtain a classification of minimal locally affine 
Lie algebras. 

\begin{definition} A subset $\Pi \subeq \Delta$ is called a {\it simple system} 
if $\alpha-\beta \not\in \Delta$ for $\alpha,\beta \in \Pi$.   
\end{definition}

\begin{prop}\label{prop:1.10} Let $(\g,\fh,\kappa)$ be a split 
quadratic Lie algebra 
and $\Pi = \{ \alpha_1, \ldots, \alpha_r\} \subeq \Delta_i$ 
be a linearly independent simple system. Then the following 
assertions hold: 
\begin{itemize} 
\item[\rm(i)] $\check \Pi$ is linearly independent in $\fh$. 
\item[\rm(ii)] The matrix 
$A_\Pi := (\alpha_i(\check \alpha_j))_{i,j \in \Pi}$ is a 
symmetrizable generalized Cartan matrix. 
\item[\rm(iii)] Let $k := \rk(A_\Pi)$ and $n := 2r-k$. 
Put $h_i := \check \alpha_i$ 
for $i =1,\ldots, r$ and choose elements 
$h_{r+1},\ldots, h_n$ such that $h_1,\ldots, h_n$ are linearly independent 
and their span $\fh_\Pi$ separates the points in $\Spann \Pi$. 
Then 
$$ \fh_\Pi + \g(\Pi) \cong \g(A_\Pi), $$
the Kac--Moody Lie algebra associated to $A_\Pi$. 
\item[\rm(iv)] If, in addition, $A_\Pi$ is of affine type, then 
$n = r + 1$ and we may choose any $h_n \in \fh$ on  which 
the isotropic roots of $\g(A_\Pi)$ do not vanish. 
\end{itemize}
\end{prop}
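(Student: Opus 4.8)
The plan is to realize $\fh_\Pi+\g(\Pi)$ as the Kac--Moody algebra $\g(A_\Pi)$ by checking that $(\fh_\Pi,\{\alpha_i|_{\fh_\Pi}\}_{i},\check \Pi)$ is a realization of $A_\Pi$ and that suitably normalized root vectors $x_{\pm\alpha_i}\in\g_{\pm\alpha_i}$ play the role of the Chevalley generators; with this setup the four assertions reduce to (i) linear independence of the coroots, (ii) the Chevalley--Serre relations, (iii) the absence of ideals meeting the Cartan subalgebra trivially, and (iv) a dimension count. For (i), equation \eqref{eq:intflat} gives $\check\alpha_i=\tfrac12\kappa(\check\alpha_i,\check\alpha_i)\,\alpha_i^\sharp$ with $\kappa(\check\alpha_i,\check\alpha_i)\neq 0$ (Proposition~\ref{prop:introot}(ii)); since $\sharp$ is injective on $\fh^\flat\supseteq\Pi$ and $\Pi$ is linearly independent, the $\alpha_i^\sharp$, and hence their nonzero scalar multiples $\check\alpha_i$, are linearly independent.

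For (ii) the diagonal entries are $\alpha_i(\check\alpha_i)=2$, while Proposition~\ref{prop:introot}(i) supplies $\alpha_i(\check\alpha_j)\in\Z$ together with the sign-consistency $\alpha_i(\check\alpha_j)=0\Leftrightarrow\alpha_j(\check\alpha_i)=0$. The off-diagonal sign $\alpha_i(\check\alpha_j)\leq 0$ for $i\neq j$ I read off from the representation theory of $\g(\alpha_j)\cong\fsl_2(\K)$: the $\alpha_j$-string through $\alpha_i$ is $\{\alpha_i+n\alpha_j\:-p\leq n\leq q\}$ with $p-q=\alpha_i(\check\alpha_j)$, and the simple-system hypothesis $\alpha_i-\alpha_j\notin\Delta$ forces $p=0$, whence $\alpha_i(\check\alpha_j)=-q\leq 0$. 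Symmetrizability then follows by comparing the two expressions for $(\alpha_i,\alpha_j)$ coming from \eqref{eq:intflat}: symmetry of $(\cdot,\cdot)$ yields $d_i\,\alpha_i(\check\alpha_j)=d_j\,\alpha_j(\check\alpha_i)$ with $d_i:=\kappa(\check\alpha_i,\check\alpha_i)$, and by Proposition~\ref{prop:introot}(ii) the $d_i$ share a common sign on each connected component of $\Pi$ while the off-diagonal blocks between distinct components vanish; hence the positive diagonal matrix $\mathrm{diag}(|d_i|)$ symmetrizes $A_\Pi$.

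For (iii) I first fix $x_{\pm\alpha_i}\in\g_{\pm\alpha_i}$ with $[x_{\alpha_i},x_{-\alpha_i}]=\check\alpha_i$ (Definition~\ref{def:basic}(b)) and verify the Chevalley relations: $\fh_\Pi$ is abelian; $[h,x_{\pm\alpha_i}]=\pm\alpha_i(h)x_{\pm\alpha_i}$ for $h\in\fh_\Pi$; and for $i\neq j$ one has $[x_{\alpha_i},x_{-\alpha_j}]\in\g_{\alpha_i-\alpha_j}=\{0\}$ since $\alpha_i-\alpha_j\notin\Delta$. The Serre relations $(\ad x_{\pm\alpha_i})^{1-\alpha_j(\check\alpha_i)}x_{\pm\alpha_j}=0$ follow from the same $\fsl_2$-string analysis applied now to the $\alpha_i$-string through $\alpha_j$. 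By construction $\fh_\Pi$ has dimension $n=2r-k$ and separates $\Spann\Pi$, so the numbers $\alpha_j(\check\alpha_i)$ reproduce the entries of $A_\Pi$ and these data form a realization; as $A_\Pi$ is symmetrizable, the Gabber--Kac theorem (\cite{Ka90}) identifies $\g(A_\Pi)$ with the algebra presented by them, yielding a homomorphism $\varphi\colon\g(A_\Pi)\to\fh_\Pi+\g(\Pi)$ that is the identity on $\fh_\Pi$ and sends the Chevalley generators to $x_{\pm\alpha_i}$. It is surjective, since $\fh_\Pi$ and the $x_{\pm\alpha_i}$ generate $\fh_\Pi+\g(\Pi)$ by $\fh\cap\g(\Pi)=\Spann\check\Delta_i|_\Pi=\Spann\check \Pi$ from \eqref{eq:core-root}. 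The main obstacle is injectivity: $\ker\varphi$ is an ideal of $\g(A_\Pi)$ with $\ker\varphi\cap\fh_\Pi=\{0\}$ (as $\varphi|_{\fh_\Pi}=\mathrm{id}$), and the defining feature of the Kac--Moody algebra $\g(A_\Pi)$ is precisely that it admits no nonzero ideal intersecting its Cartan subalgebra trivially (\cite{Ka90}); hence $\ker\varphi=\{0\}$ and $\varphi$ is an isomorphism.

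Finally, for (iv), an affine $A_\Pi$ has corank one, whence $k=r-1$ and $n=2r-k=r+1$, so $\fh_\Pi=\Spann\check \Pi+\K h_n$. The restrictions $\{\alpha_i|_{\Spann\check \Pi}\}$ satisfy a unique relation up to scale, whose coefficient vector is the mark vector of the generator $\delta=\sum_i m_i\alpha_i$ of the isotropic roots; indeed $\delta$ is characterized by $\delta(\check\alpha_j)=0$ for all $j$, so $\delta|_{\Spann\check \Pi}=0$. Consequently $\{\alpha_i|_{\fh_\Pi}\}$ is linearly independent, equivalently $\fh_\Pi$ separates $\Spann\Pi$, exactly when $\delta(h_n)\neq 0$. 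Since every isotropic root of $\g(A_\Pi)$ is an integer multiple of $\delta$, this is precisely the stated condition, and any such $h_n$ completes the realization.
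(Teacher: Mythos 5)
Your proposal is correct and follows essentially the same route as the paper's proof: (i) via $\check\alpha_i$ being a nonzero multiple of $\alpha_i^\sharp$ and injectivity of $\sharp$, (ii) via the symmetry of the form $(\cdot,\cdot)$ together with Proposition~\ref{prop:introot}, (iii) via a realization of $A_\Pi$, the Gabber--Kac presentation, and a homomorphism $\varphi$ whose kernel is shown to be trivial, and (iv) via the same corank-one count and the characterization $\K\delta = \check\Pi^\bot \cap \Spann\Pi$. The only deviations are in the supporting details: where the paper cites \cite{Ne00b} (Prop.~II.11 for the existence of $\varphi$, Lemma~VII.5 for centrality of its kernel), you verify the Chevalley--Serre relations directly by the $\fsl_2$-string argument (also making explicit why the off-diagonal entries are $\leq 0$, a point the paper leaves implicit in its appeal to Proposition~\ref{prop:introot}(i)) and you kill the kernel by the maximal-ideal property of $\g(A_\Pi)$ rather than by centrality --- both substitutions are valid.
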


\begin{proof} (i) In view of Remark~\ref{rem:1.4}(a), 
$\check \alpha_i$ is a linear multiple of $\alpha_i^\sharp$. 
Hence (i) follows from the injectivity of the linear map 
$\sharp$ on $\Spann \Delta$. 

(ii) That $A_\Pi$ is a generalized Cartan matrix follows from 
Proposition~\ref{prop:introot}(i) and the symmetrizability follows 
from the symmetry of the matrix with entries $(\alpha_i,\alpha_j)$ 
and Remark~\ref{rem:1.4}(a). 

(iii) The choice of the $h_i$ implies that $(\fh_\Pi, \Pi, \check \Pi)$ 
is a realization of the generalized Cartan matrix $A_\Pi$. 
Let $\g(A_\Pi)$ be the corresponding Kac--Moody algebra. 
Since $A_\Pi$ is symmetrizable, the 
Gabber--Kac Theorem \cite[Thm.~2]{GK81} 
implies that it is defined by the generators $e_1, \ldots, e_r; f_1, 
\ldots, f_r; 
h_1, \ldots, h_n$ and the Serre relations. 

For $\alpha\in \Pi$ pick 
$x_{\pm\alpha} \in \g_{\pm\alpha}$ with 
$[x_{\alpha}, x_{-\alpha}] = \check \alpha$. 
Then \cite[Prop.~II.11]{Ne00b} implies the existence 
of a unique homomorphism $\phi \: \g(A_\Pi) \to \g$ which is the identity 
on the $h_i$ and maps $e_i$ to $x_{\alpha_i}$. Then 
$\phi(\g(A_\Pi)) = \fh_\Pi + \g(\Pi)$ and it remains to see that 
$\phi$ is injective. In view of \cite[Lemma~VII.5]{Ne00b}, 
its kernel is central in $\g(A_\Pi)$, but since $\phi\res_{\fh_\Pi}$ 
is injective, it is injective. 

(iv) If $A_\Pi$ is of affine type, then $\rk A_\Pi = r-1$ implies 
$n = r +1$. If $\delta$ an isotropic root of $\g(A_\Pi)$, then 
$\K \delta = \check \Pi^\bot \cap \Spann(\Pi)$, so that 
we obtain with any element $h_n \in \fh$ with $\delta(h_n)\not=0$ 
an $n$-dimensional space $\fh_\Pi$ separating the points of $\Spann\Pi$. 
\end{proof}

\begin{lemma} \label{lem:diag-auto} Each group homomorphism 
$$ \chi \: \cQ = \Spann_\Z \Delta \to \K^\times $$
defines an automorphism $\phi_\chi \in \Aut(\g)$ by 
$\phi_\chi(x) := \chi(\alpha)x$ for $x \in \g_\alpha$. 

If, conversely, there exists a rationally linearly independent subset 
$\cB \subeq \Delta_i$ with $\g = \fh + \g(\cB)$, 
then each automorphism $\phi \in \Aut(\g)$ fixing $\fh$ pointwise 
is of the form $\phi_\chi$ as above. 
\end{lemma}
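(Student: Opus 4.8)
The statement has two halves, and I would treat them separately. For the first half, I need to verify that a group homomorphism $\chi \: \cQ \to \K^\times$ genuinely defines a Lie algebra automorphism $\phi_\chi$ via $\phi_\chi(x) = \chi(\alpha)x$ on $x \in \g_\alpha$. The plan is to check that $\phi_\chi$ respects brackets. On homogeneous elements $x \in \g_\alpha$, $y \in \g_\beta$ we have $[x,y] \in \g_{\alpha+\beta}$, so $\phi_\chi([x,y]) = \chi(\alpha+\beta)[x,y]$, while $[\phi_\chi(x),\phi_\chi(y)] = \chi(\alpha)\chi(\beta)[x,y]$; these agree precisely because $\chi$ is a homomorphism, $\chi(\alpha+\beta) = \chi(\alpha)\chi(\beta)$. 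Since $\fh = \g_0$ and $\chi(0) = 1$, the map fixes $\fh$ and is the identity there. Bijectivity is immediate because $\phi_{\chi^{-1}}$ is an inverse (each $\chi(\alpha) \in \K^\times$ is invertible). This half is routine.

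For the converse, suppose $\phi \in \Aut(\g)$ fixes $\fh$ pointwise. Since $\phi$ fixes $\fh$ and preserves the root decomposition (an eigenvector of $\ad h$ for eigenvalue $\alpha(h)$ maps to another such eigenvector because $\phi(\ad h) = (\ad h)\phi$), each root space $\g_\alpha$ is $\phi$-invariant. For $\alpha \in \Delta_i$ we have $\dim \g_\alpha = 1$ by Definition \ref{def:basic}(b), so $\phi$ acts on $\g_\alpha$ by a scalar $\chi(\alpha) \in \K^\times$. The plan is then to define $\chi$ on the integrable roots this way and show it extends to a homomorphism on all of $\cQ = \Spann_\Z \Delta$ agreeing with $\phi$ everywhere.

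First I would establish multiplicativity on the integrable roots: for $\alpha,\beta \in \Delta_i$ with $\alpha+\beta \in \Delta$, pick $x_\alpha \in \g_\alpha$, $x_\beta \in \g_\beta$ with $[x_\alpha,x_\beta] \neq 0$; applying $\phi$ to $[x_\alpha,x_\beta] \in \g_{\alpha+\beta}$ gives $\chi(\alpha+\beta)[x_\alpha,x_\beta] = \chi(\alpha)\chi(\beta)[x_\alpha,x_\beta]$, forcing $\chi(\alpha+\beta) = \chi(\alpha)\chi(\beta)$ on that root space, and since $\phi$ acts by the single scalar $\chi(\alpha+\beta)$ on all of $\g_{\alpha+\beta}$ whenever $\alpha + \beta$ is integrable, this pins down the value. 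The key structural input is the hypothesis that there is a rationally linearly independent $\cB \subeq \Delta_i$ with $\g = \fh + \g(\cB)$: this lets me define $\chi$ freely on the $\Z$-basis $\cB$ by its scalars on $\g_\beta$, $\beta \in \cB$, extend to a homomorphism $\chi \: \cQ \to \K^\times$ (using that $\cB$ is $\Q$-linearly independent and $\cQ = \Spann_\Z\Delta \subeq \Spann_\Q \cB$, so the values are determined by $\Z$-linearity on $\Spann_\Z\cB$ and then checked against the lattice $\cQ$), and then argue $\phi = \phi_\chi$.

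The main obstacle I anticipate is the extension step: $\chi$ is naturally defined as a scalar on each root space, but to produce a genuine group homomorphism $\cQ \to \K^\times$ I must confirm that these scalars are consistent across all the ways a root decomposes as a $\Z$-combination of elements of $\cB$. The rational linear independence of $\cB$ is exactly what guarantees a well-defined homomorphism on $\Spann_\Z\cB$, and the containment $\cQ \subeq \Spann_\Q\cB$ plus integrality ensures $\cQ$ sits inside the domain; the remaining work is to verify that the scalar by which $\phi$ acts on an arbitrary $\g_\gamma$ coincides with $\chi(\gamma)$ computed from the $\cB$-expansion of $\gamma$. I would handle this by writing $\g_\gamma$ as spanned by iterated brackets of root vectors from $\cB$-roots — possible because $\g = \fh + \g(\cB)$ means the core is generated by the $\g(\beta)$, $\beta \in \cB$ — and then propagate the multiplicativity established above through those brackets to conclude $\phi$ acts on $\g_\gamma$ by $\chi(\gamma)$, hence $\phi = \phi_\chi$.
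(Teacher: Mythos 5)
Your proposal is correct and follows essentially the same route as the paper: read off the scalars by which $\phi$ acts on the one-dimensional root spaces $\g_\beta$, $\beta \in \cB$, use the rational linear independence of $\cB$ to define a homomorphism $\chi$ on $\cQ$, and then exploit the generation hypothesis $\g = \fh + \g(\cB)$ to conclude $\phi = \phi_\chi$. The only difference is bookkeeping: where you propagate the scalars through iterated brackets of root vectors (which also justifies your claim that every root lies in $\Spann_\Z \cB$), the paper passes to $\phi_\chi^{-1} \circ \phi$ and observes that an automorphism fixing $\fh$ and each $\g(\alpha)$, $\alpha \in \cB$, pointwise must fix the subalgebra they generate, namely all of $\g$.
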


\begin{proof} The first assertion is trivial. For the second, 
let $\phi \in \Aut(\g)$ fix $\fh$ pointwise. 
Then $\phi$ preserves all root spaces, 
so that there exists for each $\alpha \in \Delta_i$ a number 
$\lambda_\alpha \in \K^\times$ with 
$\phi(x_\alpha) = \lambda_\alpha x_\alpha$ for $x_\alpha \in \g_\alpha$. 
Since $\cB$ is linearly independent over $\Q$, it generates a free
subgroup of $\cQ$ and there exists a group homomorphism 
$\chi \: \cQ \to \K^\times$ with $\chi(\alpha) = \lambda_\alpha$ 
for each $\alpha \in \cB$. Now 
$\phi_\chi^{-1} \circ \phi \in \Aut(\g)$ fixes $\fh$ pointwise 
and likewise all subalgebras $\g(\alpha)$, $\alpha \in \cB$. 
Therefore it also fixes $\fh + \g(\cB) = \g$ pointwise, i.e., 
$\phi = \phi_\chi$. 
\end{proof}

\begin{prob} Let $\g$ be a coral split Lie algebra for which 
  $\Delta_i$ is connected. 
Does (2-Aff) imply the existence of an invariant symmetric bilinear form 
$\kappa$ on the commutator algebra $[\g,\g]$?  
If such a form exists and extends to a non-degenerate form on $\g$, 
then $(\g,\fh,\kappa)$ would be a split quadratic Lie algebra. 

A necessary condition for that is that the matrix 
$(\alpha(\check \beta))_{\alpha, \beta \in \Delta_i}$ is symmetrizable. 
In view of \cite[Prop.~2.3]{KN01}, this is the case if 
for any $3$-element set  $\{\alpha_1, \alpha_2, \alpha_3\}$ of integrable 
roots, the matrix $(\alpha_i(\check \alpha_j))_{i,j=1,\ldots,3}$ 
is symmetrizable. 
\end{prob} 

\section{Locally extended affine Lie algebras} \label{sec:2} 

Throughout this section, $(\g,\fh,\kappa)$ denotes an LEALA for which 
$\kappa$ is normalized such that $(\alpha,\alpha) \in \Q^\times_+$ for each 
integrable root. The goal of this section is to see that the existence 
of an integral weight $\lambda$ for which all roots in 
$$\Delta^\lambda := \{ \alpha \in \Delta \: \lambda(\alpha^\sharp) = 0\}$$  
are integrable implies that $\g$ is locally affine (cf.\ Section~\ref{sec:3}). 
If 
$$ \g^0(\lambda) = \fh + \sum_{\lambda(\alpha^\sharp)= 0} \g_\alpha $$
is generated by $\fh$ and its core, then this is also equivalent to the 
local finiteness of $\g^0(\lambda)$ (cf.\ Proposition~\ref{prop:loc-fin}). 
Conversely, for each locally affine algebra such weights exist, 
as we shall derive from Yoshii's description of the locally affine 
root systems. 

The following result is of central importance for the structure 
theory of LEALAs (\cite[Thm.~3.10]{MY06}): 

\begin{theorem}[Morita--Yoshii] \label{thm:2.1}
The form $(\cdot,\cdot)$ on $V := \Spann_\Q \Delta$ is positive 
semidefinite. 
\end{theorem}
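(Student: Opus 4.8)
The plan is to show that the Gram matrix of $(\cdot,\cdot)$ restricted to any finite-dimensional rational subspace of $V$ has no negative eigenvalues, which is equivalent to positive semidefiniteness on all of $V$. Since $\Delta \subeq \Spann_\Z \Delta_i$ by the LEALA axioms together with \eqref{eq:rootrel}, it suffices to prove semidefiniteness on $\Spann_\Q F$ for every finite subset $F \subeq \Delta_i$. First I would reduce to the two-dimensional case using Lemma~\ref{lem:semdef}: that lemma already gives positive semidefiniteness on $\Spann_\Q\{\alpha,\beta\}$ for any pair of integrable roots, \emph{provided} we know the estimate $\alpha(\check\beta)\beta(\check\alpha) \in \{0,1,2,3,4\}$ holds. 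So the real content is to upgrade the pairwise bound to a global positive semidefiniteness statement, which cannot follow from pairwise information alone.

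The key mechanism I would exploit is the embedding of finitely generated subconfigurations into affine Kac--Moody algebras. Given a finite $F \subeq \Delta_i$, I would try to extract a linearly independent simple system $\Pi \subeq \Delta_i$ whose $\Z$-span contains $F$, so that Proposition~\ref{prop:1.10} applies: the Cartan matrix $A_\Pi = (\alpha_i(\check\alpha_j))$ is a symmetrizable generalized Cartan matrix and $\fh_\Pi + \g(\Pi) \cong \g(A_\Pi)$. Then I would invoke the hypothesis \textbf{(2-Aff)}, which by the preceding proposition forces each such $\g(A_\Pi)$ to be finite-dimensional or affine. For finite and affine Kac--Moody algebras the canonical invariant form on $\Spann\Pi$ is positive semidefinite (a standard fact from Vinberg's classification via the generalized Cartan matrix being of finite or affine type), and because the form $(\cdot,\cdot)$ here is invariant and, by Remark~\ref{rem:1.4}(a), agrees up to normalization with the canonical form on the embedded $\g(A_\Pi)$, positive semidefiniteness transfers back to $\Spann_\Q F$. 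Letting $F$ exhaust $\Delta_i$ then gives semidefiniteness on all of $V$.

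I expect two obstacles, and the harder one is structural. The first, lesser difficulty is manufacturing the linearly independent simple system $\Pi$ from an arbitrary finite $F$: one cannot take $F$ itself, since $F$ need be neither a simple system (roots $\alpha-\beta \in \Delta$ may occur) nor linearly independent. The standard remedy is to choose a generic linear functional on $\fh^\flat$ to fix a positive system on the finite subconfiguration generated by $F$, take the resulting indecomposable simple roots, and pass to a linearly independent subset spanning the same rational subspace; one must check these simple roots remain integrable and that their coroots behave correctly. The second and genuinely central obstacle is verifying that the restriction of $(\cdot,\cdot)$ to the embedded copy of $\g(A_\Pi)$ really coincides, up to a positive scalar, with the canonical Kac--Moody form for which semidefiniteness is known. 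This is where \textbf{(2-Aff)} is indispensable: without the rank-two bound $\beta(\check\alpha)\alpha(\check\beta)\le 4$, the subalgebras $\g(\alpha,\beta)$ could be hyperbolic rather than affine, and the form would acquire a negative eigenvalue.

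A cleaner alternative, which I would pursue if the simple-system extraction proves awkward, is to work purely at the level of the Gram matrix: suppose $(\cdot,\cdot)$ is indefinite, so some $v = \sum_{i} c_i \alpha_i$ with $\alpha_i \in \Delta_i$ satisfies $(v,v) < 0$. Restricting to the finite subspace $\Spann_\Q\{\alpha_1,\dots,\alpha_m\}$ and diagonalizing, one isolates a minimal subconfiguration on which the form is indefinite; such a minimal configuration must, after applying Weyl-group moves as in Lemma~\ref{lem:neighbor} to improve connectivity, embed as an indecomposable subsystem. The symmetrizability from Proposition~\ref{prop:1.10}(ii) lets one attach a generalized Cartan matrix, and \textbf{(2-Aff)} together with Vinberg's trichotomy (finite/affine/indefinite) forces the matrix out of the indefinite case, contradicting $(v,v)<0$. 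Either route funnels through the same essential point: \textbf{(2-Aff)} excludes indefinite rank-two subsystems, and local embedding into Kac--Moody algebras globalizes this to positive semidefiniteness of $(\cdot,\cdot)$ on $V$.
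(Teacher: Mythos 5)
Your proposal has a fatal circularity. You invoke \textbf{(2-Aff)} as a ``hypothesis,'' but \textbf{(2-Aff)} is not among the defining axioms of a LEALA --- those are only (Irr) ($\Delta_i$ connected) and (Int) (all non-isotropic roots are integrable). In the paper, \textbf{(2-Aff)} for LEALAs is Corollary~\ref{cor:2.2}, which is \emph{derived from} Theorem~\ref{thm:2.1} via Lemma~\ref{lem:semdef}: positive semidefiniteness of $(\cdot,\cdot)$ implies the rank-two bound $\alpha(\check\beta)\beta(\check\alpha)\le 4$, not the other way around. So both of your routes (the Kac--Moody embedding and the ``minimal indefinite subconfiguration'' variant) assume exactly what must be proved. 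The genuinely hard content of the Morita--Yoshii theorem is to rule out rank-two hyperbolic configurations using only (Int) --- roughly, that a pair with $\alpha(\check\beta)\beta(\check\alpha)>4$ would generate a hyperbolic rank-2 Kac--Moody subalgebra whose imaginary roots of negative norm yield non-isotropic, non-integrable roots of $\g$, contradicting (Int) --- and then to globalize. Your proposal never addresses how to obtain the rank-two bound from (Irr) and (Int), so the central difficulty is untouched.

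Two further points. First, the paper does not prove this theorem at all: it is quoted from Morita--Yoshii \cite[Thm.~3.10]{MY06}, so there is no internal proof to reconstruct; any self-contained argument must start from the LEALA axioms alone. Second, even granting \textbf{(2-Aff)}, your globalization step is shakier than you indicate: your appeal to \eqref{eq:rootrel} to get $\Delta\subeq\Spann_\Z\Delta_i$ requires corality ($\g=\fh+\g_c$), which is not part of the LEALA definition, and the extraction of a finite linearly independent simple system whose span contains an arbitrary finite $F\subeq\Delta_i$ is precisely the kind of structural control that, in this paper, only becomes available (Proposition~\ref{prop:2.11}) \emph{after} the root system is known to be locally affine --- which again rests on Theorem~\ref{thm:2.1}.
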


In view of Lemma~\ref{lem:semdef}, this implies 
\begin{corollary} \label{cor:2.2} Each LEALA satisfies {\rm(2-Aff)}. 
For $\alpha, \beta \in \Delta_i$ with 
$(\beta,\beta) \geq (\alpha, \alpha)$, we have 
$(\beta,\beta)/(\alpha,\alpha) \in \{1,2,3,4\}$. 
\end{corollary}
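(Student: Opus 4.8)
The plan is to derive both assertions directly from the Morita--Yoshii positive semidefiniteness result (Theorem~\ref{thm:2.1}) together with the two-dimensional criteria already established. First I would verify (2-Aff). Fix $\alpha,\beta \in \Delta_i$. Since the form $(\cdot,\cdot)$ is positive semidefinite on all of $V = \Spann_\Q \Delta$ by Theorem~\ref{thm:2.1}, it is in particular positive semidefinite on the subspace $\Spann_\Q\{\alpha,\beta\}$. By Lemma~\ref{lem:semdef} this is equivalent to $\alpha(\check\beta)\beta(\check\alpha) \in \{0,1,2,3,4\}$, and in particular $\alpha(\check\beta)\beta(\check\alpha) \leq 4$. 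As $\alpha,\beta$ were arbitrary integrable roots, (2-Aff) holds.

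For the second assertion I would invoke the quotient bound. Since $\g$ is an LEALA, $\Delta_i$ is connected by (Irr), so any two integrable roots $\alpha,\beta$ are connected. Recall also that throughout this section $\kappa$ is normalized so that $(\alpha,\alpha) \in \Q^\times_+$ for every integrable root, which places us in the setting of Proposition~\ref{prop:quotient}; its hypothesis (2-Aff) was just checked. Now suppose $(\beta,\beta) \geq (\alpha,\alpha)$. If equality holds, the ratio is $1$. Otherwise $(\beta,\beta)/(\alpha,\alpha) > 1$, and Proposition~\ref{prop:quotient} gives $(\beta,\beta)/(\alpha,\alpha) \in \{2,3,4\}$. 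In either case the ratio lies in $\{1,2,3,4\}$, as claimed.

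Since all the substantive work is carried out in Theorem~\ref{thm:2.1}, Lemma~\ref{lem:semdef}, and Proposition~\ref{prop:quotient}, there is no genuine obstacle here; the proof is a matter of assembling these ingredients and checking that their hypotheses are met. The only points requiring a moment's care are the normalization of $\kappa$ and the connectedness of $\Delta_i$, both of which are guaranteed by the standing assumptions on an LEALA fixed at the start of the section.
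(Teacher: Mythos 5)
Your proposal is correct and matches the paper's (largely implicit) argument: the paper derives (2-Aff) exactly as you do, by combining the Morita--Yoshii Theorem~\ref{thm:2.1} with Lemma~\ref{lem:semdef}, and the ratio statement rests on Proposition~\ref{prop:quotient}, whose hypotheses (connectedness of $\Delta_i$ from (Irr) and the standing normalization of $\kappa$) you rightly verify. Nothing is missing; your write-up simply makes explicit the assembly that the paper compresses into the phrase ``In view of Lemma~\ref{lem:semdef}, this implies.''
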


\begin{prob} Let $(\g,\fh,\kappa)$ be coral split quadratic 
with $\Delta_i$ connected. Does (2-Aff) already imply that it is an 
LEALA, i.e., that all non-isotropic roots are integrable? This would be a 
nice characterization of LEALAs in terms of rank-2 subalgebras. 
\end{prob}

To proceed, we have to take a closer look at root systems. 
\begin{definition} \label{def:2.3}(cf.\ \cite{YY08})
Let $V$ be a rational vector space with a 
positive semidefinite bilinear form 
$(\cdot, \cdot)$ and $\cR \subeq V$ a subset. The triple 
$(V,\cR, (\cdot, \cdot))$ is called a 
{\it locally extended affine root system} or {\it LEARS} for short, 
if the following conditions are satisfied:
\begin{footnote}{In \cite{AA-P97} extended affine root systems are defined 
in such a way that they may also contain isotropic roots. To use \cite{YY08}, 
we follow Yoshii's approach. 
}\end{footnote}
\begin{itemize}
\item[\rm(A1)] $(\alpha,\alpha) \not=0$ for each $\alpha \in \cR$ and 
$\Spann \cR = V$. 
\item[\rm(A2)] $\la \beta, \alpha \ra := \frac{2(\alpha,\beta)}
{(\alpha,\alpha)} \in \Z$ for $\alpha, \beta \in \cR$. 
\item[\rm(A3)] $r_\alpha(\beta) := \beta - 
\la \beta, \alpha \ra \alpha \in \cR$ for $\alpha, \beta \in \cR$. 
\item[\rm(A4)] If $\cR = \cR_1 \cup \cR_2$ with $(\cR_1, \cR_2) \subeq \{0\}$, 
then either $\cR_1$ or $\cR_2$ is empty \break ($\cR$ is irreducible). 
\end{itemize}

A LEARS is said to be {\it reduced} if, in addition,  
\begin{itemize}
\item[\rm(R)] $2 \alpha \not\in \cR$ for each $\alpha \in \cR$. 
\end{itemize}

The root system $(V,\cR)$ is called {\it locally affine} (LARS), 
if, in addition, the 
following condition is satisfied: 
\begin{itemize}
\item[\rm(A5)] The subspace $V^0 := \{ v \in V \: (v,V) = \{0\}\}$ 
intersects $\Spann_\Z \cR$ in a non-trivial cyclic group. 
\end{itemize}

In view of (A1), (A5) implies that $\dim V^0 = 1$. 
If, in addition, $V$ is finite-dimensional, then $(V,\cR)$ is called {\it affine}. 
\begin{footnote}
{Although it is not obvious, this concept of an affine root system 
is consistent with Macdonald's (\cite{Mac72}). He assumes, 
instead of (A5), the properness of the action of the corresponding 
affine Weyl group. After tensoring with $\R$, condition (A5) 
is equivalent to the discreteness of the root system 
and the discreteness of the root 
system implies in his context the local finiteness of the 
associated system of affine hyperplanes, which in turn 
is equivalent to the properness of the action of the Weyl group 
(\cite[Cor.~3.5.9]{HoG04}). 
}\end{footnote}

The quotient space  $\oline V := V/V^\bot$ inherits a positive definite 
form for which the image $\oline \cR$ still satisfies (A1)-(A4) 
and is a locally finite root system in the sense 
of \cite{LN04}, which is not necessarily reduced (cf.\  \cite{MY06}). 
\end{definition}

The importance of these root systems is due to the following observation, 
which, in view of Definition~\ref{def:basic} 
and Remark~\ref{rem:1.4}, is an immediate consequence of 
the Morita--Yoshii Theorem~\ref{thm:2.1}. 

\begin{prop} If $(\g,\fh,\kappa)$ is a LEALA, then 
$(\Spann_\Q \Delta_i, \Delta_i, (\cdot,\cdot))$ 
is a reduced LEARS. 
\end{prop}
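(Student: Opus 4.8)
The plan is to verify the defining conditions (A1)--(A4), together with the reducedness condition (R), of Definition~\ref{def:2.3} one at a time for the triple $(\Spann_\Q \Delta_i, \Delta_i, (\cdot,\cdot))$, drawing on the structural facts already in hand. First I record that the ambient form is admissible: since $\Spann_\Q \Delta_i \subseteq \Spann_\Q \Delta$, the Morita--Yoshii Theorem~\ref{thm:2.1} shows that $(\cdot,\cdot)$ restricts to a positive semidefinite form on $V := \Spann_\Q \Delta_i$, which is the standing hypothesis for a LEARS. Condition~(A1) is then immediate: $\Spann \Delta_i = V$ by the definition of $V$, and $(\alpha,\alpha) \in \Q^\times_+$ for every $\alpha \in \Delta_i$ by the normalization recorded in Remark~\ref{rem:1.4}(b) (available because $\Delta_i$ is connected by (Irr)), so in particular $(\alpha,\alpha)\neq 0$.

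The technical heart is a single bookkeeping identity. Using the formulas $(\beta,\alpha) = 2\beta(\check\alpha)/\kappa(\check\alpha,\check\alpha)$ and $(\alpha,\alpha) = 4/\kappa(\check\alpha,\check\alpha)$ from \eqref{eq:intflat} in Remark~\ref{rem:1.4}(a), I compute
$$ \la \beta,\alpha\ra = \frac{2(\alpha,\beta)}{(\alpha,\alpha)} = \beta(\check\alpha) \qquad (\alpha,\beta \in \Delta_i). $$
Granting this, (A2) is exactly the integrality $\beta(\check\alpha) \in \Z$ supplied by Proposition~\ref{prop:introot}(i). For (A3), the same identity turns the abstract reflection $r_\alpha(\beta) = \beta - \la\beta,\alpha\ra\alpha$ into the geometric Weyl reflection $\beta - \beta(\check\alpha)\alpha$ of Definition~\ref{def:basic}(d); since the Weyl group preserves $\Delta_i$ (loc.\ cit.), we obtain $r_\alpha(\beta) \in \Delta_i$, which is (A3).

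For (A4) I exploit that the identity gives $(\alpha,\beta) = 0 \iff \beta(\check\alpha) = 0$, and by Proposition~\ref{prop:introot}(i) this is symmetric in $\alpha,\beta$ and is precisely the failure of $\alpha,\beta$ to be joined by a chain of length one in the sense of Definition~\ref{def:basic}(e). Hence a partition $\Delta_i = \cR_1 \cup \cR_2$ into mutually orthogonal, nonempty pieces would admit no connecting chain between $\cR_1$ and $\cR_2$, contradicting the connectedness asserted by (Irr); conversely connectedness forbids such a partition, so (A4) holds. Finally, reducedness (R) follows because if $2\alpha \in \Delta_i$ for some $\alpha \in \Delta_i$, then $2\alpha$ and $\alpha$ are linearly dependent over $\Q$, forcing $2\alpha \in \{\pm\alpha\}$ by \cite[Prop.~I.6]{Ne00b}, which is impossible; thus $2\alpha \notin \Delta_i$.

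None of the steps is genuinely hard once the pieces are assembled; the one point requiring a little care is the identification of irreducibility (A4) with the connectedness hypothesis (Irr), where I must translate between the orthogonality relation $(\alpha,\beta)=0$ and the combinatorial single-step relation $\beta(\check\alpha)\neq 0$ and check that an orthogonal partition really does block every connecting chain. Everything else reduces to the identity $\la\beta,\alpha\ra = \beta(\check\alpha)$ together with the cited structural facts, which is why the statement is essentially an immediate consequence of Theorem~\ref{thm:2.1}.
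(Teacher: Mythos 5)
Your proof is correct and follows exactly the paper's route: the paper records this proposition as an immediate consequence of the Morita--Yoshii Theorem~\ref{thm:2.1} together with Definition~\ref{def:basic} and Remark~\ref{rem:1.4}, and your verification of (A1)--(A4) and (R) simply spells out that observation, with the identity $\la\beta,\alpha\ra = \beta(\check\alpha)$ doing the translation work. All steps check out, including the chain argument identifying (Irr) with (A4) and the use of \cite[Prop.~I.6]{Ne00b} for reducedness.
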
 

In the following we put $V := \Spann_\Q \Delta_i.$
Generalizing the finite-dimensional case in \cite[Ch.~2]{ABGP97}, 
the structure of LEARS is described in detail in 
\cite{YY08}. In the following we shall only need very specific 
information, which we now recall. 
In the locally finite root system $\oline\Delta := \{ \oline\alpha \: 
\alpha \in \Delta_i\} \subeq \oline V$, 
we write 
$$\oline\Delta_{\rm red} := \{ \oline \alpha \: \oline\alpha 
\not\in 2\oline\Delta\} $$
for the corresponding reduced root system. 
Let $V' \subeq V$ a complementary subspace which is a {\it reflectable section}, 
i.e., $\Delta_{\rm red} := V' \cap \Delta$ contains an inverse image 
of each element of $\oline\Delta_{\rm red}$. We thus obtain a locally finite 
subsystem of $\Delta_{\rm red} \subeq \Delta$ spanning a hyperplane 
of $V$, on which the bilinear form is positive definite. 
Since $\Delta$ is reduced, we cannot hope for 
$V' \cap \Delta$ to map surjectively onto all of $\oline\Delta$  
if $\oline\Delta$ is not reduced. Therefore reflectable sections are 
optimal in the sense they intersect $\Delta$ in a maximal 
subset (cf.\ \cite[Lemma~4]{YY08}). 

In the following we identify $\oline V$ with $V'$ and write 
$\oline\alpha \in V'$ for the projection of $\alpha$ onto $V'$ along 
$V^0$. 
The Weyl group $\oline\cW$ of $\oline\Delta$ has at most $3$ 
orbits, determined by the square length 
(\cite[Prop.~4.4, Cor.~5.6]{LN04}).
\begin{footnote}
  {The most convenient normalization of the scalar product is that 
$(\alpha, \alpha) = 2$ for all long roots.} 
\end{footnote}
Accordingly, we write 
$\oline\Delta_{\rm sh}, \oline\Delta_{\rm lg}, \oline\Delta_{\rm ex}$
for the set of short roots (with minimal length), 
extralong roots (twice the length of a short root) and 
long roots (all others). 
There are no extralong roots if and only if the root system 
$\oline\Delta$ is reduced. 
Correspondingly, we obtain a disjoint decomposition 
$$ \Delta_i = \Delta_{\rm sh}\dot\cup \Delta_{\rm lg}\dot\cup 
\Delta_{\rm ex}. $$
We now write 
$\Delta_i = \bigcup_{\oalpha \in \oline\Delta} (\oalpha + S_\oalpha),$
where $S_\oalpha := \{ \beta \in V^0 \: \oalpha + \beta \in \Delta\}$ 
is a subset only depending on the $\oline\cW$-orbit of 
$\oline\alpha \in \oline\Delta$ (\cite{YY08}). 
For $\alpha$ short, we put $S := S_\oalpha$, 
for $\alpha$ long, we put $L := S_\oalpha$, and for 
$\alpha$ extralong, we put $E := S_\oalpha$, so that 
$$ \Delta_{\rm sh} = \oline\Delta_{\rm sh} + S, \quad 
\Delta_{\rm lg} = \oline\Delta_{\rm lg} + L \quad \mbox{ and } \quad 
\Delta_{\rm ex} = \oline\Delta_{\rm ex} + E. $$
Finally, we write $\Delta^0 := \Delta \cap V^0$ for 
the set of isotropic roots. 

\begin{lemma} \label{lem:2.5} Let $\g$ be a coral LEALA and 
$G := \la S, L, E \ra \subeq V^0$ denote the subgroup 
generated by $S$, $L$ and $E$. Then 
$$ \Delta \subeq (\oline\Delta\cup \{0\}) \oplus G \subeq 
V' \oplus V^0 $$
and there exists an $m \in \N$ with 
$$ m G + \Delta_i \subeq \Delta_i \quad \mbox{ and } \quad 
mG \subeq \Delta^0 \cup \{0\}. $$
\end{lemma}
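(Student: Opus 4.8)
The plan is to treat the two displayed inclusions separately, and then to produce $m$ from a short list of relations among $S,L,E$ and $\Delta^0$ obtained by reflecting and using unbroken $\fsl_2$-strings. For the inclusion $\Delta \subeq (\oline\Delta\cup\{0\})\oplus G$, the integrable roots are immediate: by the very description $\Delta_i = \bigcup_{\oalpha\in\oline\Delta}(\oalpha + S_\oalpha)$, and each support set $S_\oalpha \in \{S,L,E\}$ lies in $G$, so every $\alpha\in\Delta_i$ has $\oline\alpha\in\oline\Delta$ and $\alpha-\oline\alpha\in G$. The remaining roots are the isotropic ones, $\Delta^0 = \Delta\setminus\Delta_i$ (by (Int) and the normalization, a root is integrable iff it is anisotropic). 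For $\beta\in\Delta^0$ I would use that $\oline\beta = 0$, i.e.\ $\beta\in V^0$, and invoke the coral relation \eqref{eq:rootrel} to pick $\alpha\in\Delta_i$ with $\beta-\alpha\in\Delta$. Since $\oline{\beta-\alpha} = -\oline\alpha\neq 0$, the root $\beta-\alpha$ is anisotropic, hence integrable, so writing $\alpha = \oline\alpha + s$ and $\beta-\alpha = -\oline\alpha + s'$ with $s\in S_{\oline\alpha}$ and $s'\in S_{-\oline\alpha}$ (same $\oline\cW$-orbit, hence same support set), I obtain $\beta = s+s'\in G$. The companion inclusion $(\oline\Delta\cup\{0\})\oplus G\subeq V'\oplus V^0$ is then trivial, as $\oline\Delta\subeq V'$ and $G\subeq V^0$.

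For the existence of $m$, I would first reduce $mG + \Delta_i\subeq\Delta_i$ to the three translation conditions $mG + S\subeq S$, $mG + L\subeq L$, $mG + E\subeq E$, since adding an element of $V^0$ to $\oalpha + s_0$ keeps the $V'$-component fixed. The needed relations all come from reflections and root-string length estimates, with \emph{uniform} coefficients. Since $V'$ is a reflectable section one has $0\in S$ and $0\in L$. Computing $r_{\oalpha+s'}(\oalpha+s) = -\oalpha + (s-2s')$ gives $S - 2S\subeq S$, and together with $0\in S$ this yields $2\la S\ra\subeq S$, where $\la S\ra$ denotes the generated subgroup. Taking two short roots $\mu = \oalpha+s$, $\nu = -\oalpha+s'$, one has $\la\nu,\mu\ra = -2$, so the unbroken $\mu$-string forces $\nu+\mu = s+s'\in\Delta\cap V^0 = \Delta^0\cup\{0\}$, i.e.\ $S+S\subeq\Delta^0\cup\{0\}$; combined with $2\la S\ra\subeq S$ this gives the crucial $4\la S\ra\subeq\Delta^0\cup\{0\}$. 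Reflecting a short root in a long one and conversely (using a non-orthogonal short–long pair, which exists by irreducibility) yields the sandwich $kS\subeq L\subeq S$ and $L + k\la S\ra\subeq L$, where $k\in\{1,2,3\}$ is the long/short length ratio; and reflecting an extralong root $2\oalpha + e$ in a short root gives both $E + 4\la S\ra\subeq E$ and, from the length-$4$ string, $e-s\in S$, hence $E\subeq\la S\ra$.

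These relations let me finish. From $L\subeq S$ and $E\subeq\la S\ra$ I get $G = \la S,L,E\ra = \la S\ra$, so I may take $m := 4k$. Indeed $mG = 4k\la S\ra\subeq 4\la S\ra\subeq\Delta^0\cup\{0\}$; moreover $mG\subeq 2\la S\ra$ gives $mG + S\subeq S$ via $S + 2\la S\ra\subeq S$, while $mG\subeq k\la S\ra$ gives $mG + L\subeq L$ and $mG\subeq 4\la S\ra$ gives $mG + E\subeq E$ by the corresponding stabilizer relations.

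The main obstacle is assembling the structural relations with bounds independent of the (possibly infinite) rank, especially the extralong relations, and dealing with the fact that $\Delta^0\cup\{0\}$ need not be closed under addition. The latter is exactly what is circumvented by first proving $2\la S\ra\subeq S$: this upgrades $S+S\subeq\Delta^0\cup\{0\}$ to $4\la S\ra\subeq S+S\subeq\Delta^0\cup\{0\}$ without ever asserting that sums of isotropic roots are isotropic. If $\oline\Delta$ is simply laced one sets $k=1$ and the $L$- and $E$-relations are vacuous, so $m=4$ already works.
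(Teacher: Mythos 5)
Your proof is correct, and its first half is essentially the paper's own argument: both reduce the inclusion $\Delta \subeq (\oline\Delta\cup\{0\})\oplus G$ to showing $\Delta^0 \subeq G$, using \eqref{eq:rootrel} together with (Int) to write an isotropic root via two integrable roots lying over $\pm\oalpha$, whose fiber components then land in $G$. The genuine divergence is in the production of $m$. The paper does not prove any translation relations itself: it quotes $S + 2G \subeq S$, $L + kS \subeq L$ and $E + 4S \subeq E$ from \cite{YY08} and \cite{AA-P97}, asserts the existence of some $m$, and then proves $mG \subeq \Delta^0\cup\{0\}$ by a separate root-string argument (for $\delta \in mG\setminus\{0\}$ and $\alpha \in \Delta_i$, the root $\beta := \alpha+\delta$ satisfies $\beta(\check\alpha)=2>0$, so $\delta = \beta-\alpha \in \Delta$ by \cite[Prop.~I.7]{Ne00b}). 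You instead derive everything from scratch: reflections (using the orbit-independence of the fibers $S_{\oalpha}$ and $0 \in S\cap L$ from the reflectable section) give $S-2S\subeq S$, $S - L \subeq S$, $L - kS \subeq L$, $E - 4S\subeq E$, while unbroken $\fsl_2$-strings give $S+S \subeq \Delta^0\cup\{0\}$ and $E - S \subeq S$. This buys two things the paper leaves implicit: the relation $G = \la S\ra$ (via $L \subeq S$ and $E \subeq \la S\ra$), which is what converts $S$-translation relations for $L$ and $E$ into $G$-translation relations (in the paper's version the corresponding bridge is $2G \subeq S$, extracted from $S+2G\subeq S$ and $0\in S$), and an explicit constant $m = 4k$. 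Your isotropy step $mG \subeq 4\la S\ra \subeq S+S \subeq \Delta^0\cup\{0\}$ is the same string mechanism as the paper's, applied to the pair $\oalpha+s$, $-\oalpha+s'$ rather than to $\alpha$ and $\alpha+\delta$; it neatly avoids ever claiming that sums of isotropic roots are isotropic.

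One small point you should make explicit: near the end you invoke $S + 2\la S\ra \subeq S$, which is not a formal consequence of $2\la S\ra \subeq S$ alone, since $S$ need not be closed under addition. It does hold, by iterating $S - 2S \subeq S$ using $S=-S$ and $0\in S$ --- exactly the iteration you already used to get $2\la S\ra \subeq S$ --- so the fix is merely to record the iterated relation in the form $S + 2\la S\ra \subeq S$ in the first place.
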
 

\begin{proof} First we show that $\Delta^0 \subeq G$. 
In view of \eqref{eq:rootrel} in Remark~\ref{rem:1.4}(c), 
for each $\delta \in \Delta^0$, there exists an 
$\alpha \in \Delta_i$ with $\delta + \alpha \in \Delta$. 
Then $(\delta + \alpha, \delta + \alpha) = (\alpha,\alpha) > 0$ 
implies that $\delta + \alpha$ is integrable. 
Now $\alpha, \alpha + \delta \in \oalpha + S_\oalpha$ 
shows that $\delta = (\alpha +\delta) - \alpha \in G$. 
This proves the first assertion. 

For the second, we write 
$k := (\beta,\beta)/(\alpha,\alpha) \in \{2,3\}$ 
for $\beta \in \Delta_{\rm lg}$ and 
$\alpha \in \Delta_{\rm sh}$ (cf.\ Proposition~\ref{prop:quotient}). 
Then 
$$ S + 2G \subeq S, \quad L + k S \subeq L\quad \mbox{ and }\quad 
 E + 4 S \subeq E$$
(cf.\ \cite{YY08}, \cite{AA-P97}) imply the existence of an 
$m \in \N$ with 
$m G + S_\oalpha \subeq S_\oalpha$ for each $\alpha\in\Delta_i$ 
and hence that 
$m G + \Delta_i \subeq \Delta_i.$

If $\alpha \in \Delta_i$ and $\delta \in G \setminus \{0\}$ satisfy 
$\beta := \alpha + \delta \in \Delta$, then 
$$\beta(\check \alpha) = \frac{2(\beta,\alpha)}{(\alpha,\alpha)} 
= \frac{2(\alpha,\alpha)}{(\alpha,\alpha)} = 2 > 0 $$
leads to $\delta = \beta - \alpha \in \Delta$ 
(cf.\ \cite[Prop.~I.7]{Ne00b}). 
In particular, we see that $mG\subeq \Delta^0\cup \{0\}$. 
\end{proof}

\begin{lemma} \label{lem:cent} If $\g = \fh + \g_c$, then 
$\alpha \in V$ is contained in $V^0$ if and only if 
$\alpha^\sharp \in \fz(\g)$. 
\end{lemma}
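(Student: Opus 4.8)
The plan is to translate each side of the claimed equivalence into a vanishing condition for the form $(\cdot,\cdot)$ on $V = \Spann_\Q \Delta_i$ and then to match the two. First I would note that the statement is meaningful: since $\Delta_i \subeq \fh^\flat$ and $\fh^\flat = \flat(\fh)$ is a $\K$-subspace of $\fh^*$, we have $V \subeq \fh^\flat$, so $\alpha^\sharp \in \fh$ is defined for every $\alpha \in V$.

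The computational heart is the identity
$$ \gamma(\alpha^\sharp) = \kappa(\alpha^\sharp, \gamma^\sharp) = (\alpha,\gamma) \qquad (\alpha \in V,\ \gamma \in \Delta_i). $$
This is immediate from the definition of $\sharp$ (equivalently $\alpha = (\alpha^\sharp)^\flat$, i.e.\ $\kappa(h,\alpha^\sharp) = \alpha(h)$ for $h \in \fh$) applied to $h = \alpha^\sharp$ together with $\gamma = (\gamma^\sharp)^\flat$, and the definition $(\alpha,\gamma) := \kappa(\alpha^\sharp,\gamma^\sharp)$.

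Next I would invoke \eqref{eq:orthog} in Remark~\ref{rem:1.4}(c), which applies precisely because $\g = \fh + \g_c$: it identifies the center as $\fz(\g) = \Delta_i^\bot = \{ h \in \fh \: \gamma(h) = 0 \text{ for all } \gamma \in \Delta_i\}$. Feeding the displayed identity into this description gives the chain
$$ \alpha^\sharp \in \fz(\g) \iff \gamma(\alpha^\sharp) = 0\ \forall\,\gamma \in \Delta_i \iff (\alpha,\gamma) = 0\ \forall\,\gamma \in \Delta_i \iff \alpha \in V^0, $$
where the final step uses that $\Delta_i$ spans $V$ over $\Q$ together with the definition of $V^0$ as the radical of $(\cdot,\cdot)$ on $V$.

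There is no serious technical obstacle here; the one conceptual point to keep straight is that centrality is a priori a condition against all of $\Delta$, whereas $V^0$ only records orthogonality to the integrable roots $\Delta_i$. These two are reconciled exactly by the coral hypothesis, which enters through the equality $\Delta^\bot = \Delta_i^\bot$ inside \eqref{eq:orthog} (itself a consequence of $\Delta \subeq \Spann_\Z \Delta_i$, see \eqref{eq:rootrel}). Using this identification one obtains both implications at once; without it, only the implication $\alpha^\sharp \in \fz(\g) \Rightarrow \alpha \in V^0$ would be free, and the converse would have to be argued by expanding an arbitrary root as a $\Z$-combination of integrable ones and invoking bilinearity.
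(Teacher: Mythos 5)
Your proposal is correct and follows essentially the same route as the paper's proof: the chain $\alpha \in V^0 \iff \alpha \perp \Delta_i \iff \Delta_i(\alpha^\sharp) = \{0\} \iff \alpha^\sharp \in \fz(\g)$, with the last equivalence supplied by \eqref{eq:orthog} of Remark~\ref{rem:1.4}(c), which is where the coral hypothesis $\g = \fh + \g_c$ enters. Your closing observation about which implication survives without corality is accurate but goes beyond what the paper records; the core argument is identical.
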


\begin{proof} 
In fact, $\alpha  \in V^0$ is equivalent to $\alpha \bot \Delta_i$, 
which in turn is equivalent to $\Delta_i(\alpha^\sharp) = \{0\}$, 
and hence to $\alpha^\sharp \in \fz(\g)$ (Remark~\ref{rem:1.4}(c)). 
\end{proof} 

\begin{theorem} \label{thm:2.8} 
Let $\g$ be a coral locally extended affine Lie algebra for which 
there exists an integral 
weight $\lambda \in \fh^*$ with 
$\Delta^\lambda = \{ \alpha \in \Delta \: \lambda(\alpha^\sharp) = 0\}
\subeq \Delta_i$. Then $(V,\Delta_i,(\cdot,\cdot))$ 
is a locally affine or a locally finite root system. 
\end{theorem}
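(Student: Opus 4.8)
The plan is to separate the two possible conclusions according to the radical $V^0 = \{v \in V \: (v,V) = \{0\}\}$ of the form on $V = \Spann_\Q \Delta_i$, which is positive semidefinite by Theorem~\ref{thm:2.1}. Since $\g$ is a LEALA, the proposition following Definition~\ref{def:2.3} already guarantees that $(V,\Delta_i,(\cdot,\cdot))$ is a reduced LEARS, so (A1)--(A4) hold independently of the hypothesis on $\lambda$. Thus I only need to analyze $V^0$: I will show $\dim V^0 \leq 1$, that $V^0 = \{0\}$ yields the locally finite case, and that $\dim V^0 = 1$ forces the cyclicity condition (A5), hence the locally affine case.

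The central device is the $\Q$-linear functional
$$\Lambda \: V \to \Q, \qquad \Lambda(\alpha) := \lambda(\alpha^\sharp) = \tfrac{1}{2}(\alpha,\alpha)\lambda(\check\alpha),$$
which is well defined because $V \subeq \fh^\flat$ (Remark~\ref{rem:1.4}) and additive in $\alpha$ since $\sharp$ is linear (cf.\ Theorem~\ref{thm:q-grad}). The hypothesis $\Delta^\lambda \subeq \Delta_i$ reads exactly as: every root on which $\Lambda$ vanishes is integrable. An isotropic root $\delta \in \Delta^0 = \Delta \cap V^0$ has $(\delta,\delta) = 0$, hence is not integrable (and $\delta^\sharp \in \fz(\g)$ by Lemma~\ref{lem:cent}), so it cannot lie in $\Delta^\lambda$; therefore $\Lambda(\delta) \neq 0$ for every isotropic root $\delta$.

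The key step, which I expect to be the main obstacle, is to upgrade this non-vanishing into a dimension bound. Here I would invoke Lemma~\ref{lem:2.5}: writing $G = \la S,L,E\ra \subeq V^0$, one checks $\Spann_\Q G = V^0$ (project $\Delta_i = \bigcup_\oalpha(\oalpha + S_\oalpha)$ onto $V^0$ along $V'$), and there is an $m \in \N$ with $mG \subeq \Delta^0 \cup \{0\}$. Now $mG$ is a \emph{subgroup} of $V^0$ all of whose nonzero elements are isotropic roots, so by the previous paragraph $\Lambda$ is nowhere zero on $mG \setminus \{0\}$; being a homomorphism, $\Lambda\res_{mG}$ is therefore \emph{injective}. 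An injective homomorphism $mG \hookrightarrow \Q$ forces $mG$, hence $V^0 = \Spann_\Q(mG)$, to have $\Q$-rank at most $1$, i.e.\ $\dim V^0 \leq 1$. The conceptual point is precisely that $mG \setminus \{0\}$ is closed under the group operation and consists of isotropic roots, which is what converts ``$\Lambda$ misses the roots'' into injectivity of a homomorphism.

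It remains to settle the dichotomy. If $V^0 = \{0\}$ the form is positive definite and $(V,\Delta_i)$ is a reduced locally finite root system. If $\dim V^0 = 1$, then $\Lambda\res_{V^0}$ is a nonzero functional on a line, hence injective, so $\Lambda$ embeds $V^0 \cap \Spann_\Z \Delta_i$ into $\Q$. Its image lies in $\Lambda(\Spann_\Z \Delta_i)$, and since $(\alpha,\alpha)$ attains only finitely many values on $\Delta_i$ (Corollary~\ref{cor:2.2}) while $\lambda(\check\alpha) \in \Z$, the numbers $\Lambda(\alpha) = \tfrac{1}{2}(\alpha,\alpha)\lambda(\check\alpha)$ generate a finitely generated, hence cyclic, subgroup of $\Q$. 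Consequently $V^0 \cap \Spann_\Z \Delta_i$ is cyclic, and it is nontrivial because it contains $mG \neq \{0\}$ (recall $\Delta \subeq \Spann_\Z \Delta_i$ as $\g$ is coral). This verifies (A5), so $(V,\Delta_i,(\cdot,\cdot))$ is locally affine.
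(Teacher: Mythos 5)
Your proposal is correct, and its skeleton is the same as the paper's: both arguments rest on Lemma~\ref{lem:2.5} (so that $G$ spans $V^0$ and $mG \setminus \{0\} \subeq \Delta^0$), both convert the hypothesis $\Delta^\lambda \subeq \Delta_i$ into injectivity of $\Lambda = \lambda \circ \sharp$ on the subgroup $mG$, and both extract cyclicity from the commensurability of the square lengths (Corollary~\ref{cor:2.2}) together with integrality of $\lambda$. Where you genuinely diverge is in how the cyclic group is produced. The paper uses the coroot identity $\check\beta = \check\alpha + \frac{2}{(\alpha,\alpha)}\delta^\sharp$ for $\beta = \alpha + \delta$ with $\delta \in mG$ --- which requires the additional assertion $mG + \Delta_i \subeq \Delta_i$ of Lemma~\ref{lem:2.5} so that $\beta$ is integrable and has a coroot --- to get $\lambda(\delta^\sharp) \in \frac{(\alpha,\alpha)}{2}\Z$, concluding that all of $G$ is cyclic; the dichotomy is then read off from the rank of $(\Spann_\Z \Delta)\cap V^0 \subeq G$. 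You instead use only the linearity of $\Lambda$ and its values $\Lambda(\alpha) = \frac{1}{2}(\alpha,\alpha)\lambda(\check\alpha)$ on $\Delta_i$, which place $\Lambda(\Spann_\Z \Delta_i)$, and hence the $\Lambda$-image of $V^0 \cap \Spann_\Z\Delta_i$, inside a cyclic subgroup of $\Q$; combined with injectivity of $\Lambda$ on the line $V^0$, this verifies (A5) directly. Your route is slightly more economical (no coroot computation, and no need for $mG + \Delta_i \subeq \Delta_i$), while the paper's yields the marginally stronger byproduct that the full group $G = \la S,L,E\ra$ is cyclic; for the theorem as stated, both deliver exactly what is needed, and your organization of the dichotomy by $\dim V^0$ rather than by $\rk\big((\Spann_\Z\Delta)\cap V^0\big)$ is an equivalent repackaging.
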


\begin{proof} Since $V = \Spann_\Q \Delta_i$, Lemma~\ref{lem:2.5} 
implies that $G$ spans $V^0$. Therefore it suffices to show that 
$G$ is cyclic. In fact, Lemma~\ref{lem:2.5} yields 
$\Spann_\Z \Delta \subeq V' \oplus G$, so that the group 
$(\Spann_\Z \Delta) \cap V^0 \subeq G$ is also cyclic if $G$ has
this property. 

For $\delta \in mG$, $\alpha \in \Delta_i$ and $\beta := \alpha + \delta$ 
we have 
$\beta^\sharp = \alpha^\sharp + \delta^\sharp,$
and since $\delta^\sharp$ is central (Lemma~\ref{lem:cent}), 
$\beta(\beta^\sharp) = \beta(\alpha^\sharp) = (\beta, \alpha) 
=(\alpha,\alpha),$
which leads to 
$$ \check\beta = \frac{2}{(\alpha,\alpha)}(\alpha^\sharp + \delta^\sharp)
= \check \alpha + \frac{2}{(\alpha,\alpha)}\delta^\sharp. $$

In view of Corollary~\ref{cor:2.2}, the relation 
$\frac{2}{(\alpha,\alpha)} \lambda(\delta^\sharp) \in \Z$ 
for each integrable root $\alpha$ 
implies that 
$\lambda(G^\sharp)$ is contained in a cyclic subgroup of $\Q$. 
The condition $\Delta^\lambda \subeq \Delta_i$ is equivalent to 
$\ker \lambda \cap (\Delta^0)^\sharp = \eset,$
so that Lemma~\ref{lem:2.5} 
implies that $\lambda\res_{mG^\sharp} \: mG^\sharp \to \Q$ is 
injective with cyclic image. Therefore $G$ is a cyclic group. 

If $r := \rk \big((\Spann_\Z \Delta) \cap V^0\big)$, 
then the preceding argument implies that the 
root system $\Delta$ is either 
locally finite (for $r = 0$) or locally affine (for $r = 1$). 
\end{proof}

For the classification of locally finite and locally affine 
root systems, we refer to the appendix (Theorem~\ref{thm:yclass}). 

\begin{definition} In the following we call an integral weight 
$\lambda \in \fh^*$ of {\it transversal type} if 
$\Delta^\lambda \subeq \Delta_i$, because this is equivalent to 
$\ker \lambda \cap (V^0)^\sharp = \eset,$
i.e., $\ker \lambda$ is a hyperplane of $\fh$ transversal 
to $(V^0)^\sharp$. 
\end{definition}

The following proposition sheds some extra light on the condition 
$\Delta^\lambda \subeq \Delta_i$ in terms of the structure of the 
split quadratic Lie algebra $(\g^0(\lambda),\fh,\kappa)$. 

\begin{prop} \label{prop:loc-fin} For a coral 
quadratic split Lie algebra $(\g,\fh,\kappa)$, the following are equivalent: 
\begin{itemize}
\item[\rm(i)] $\g$ is locally finite.  
\item[\rm(ii)] $\Delta = \Delta_i$. 
\end{itemize}
\end{prop}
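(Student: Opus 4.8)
The plan is to deduce both implications from the behaviour of the rank-one and rank-two subalgebras attached to roots, reducing everything to the dichotomy \emph{finite type versus Kac--Moody} supplied by Proposition~\ref{prop:1.10}.

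For (i)$\Rightarrow$(ii) I would first record that, under local finiteness, $\ad x_{\pm\alpha}$ is locally nilpotent for \emph{every} root $\alpha$: if $y\in\g_\beta$, then the iterates $(\ad x_\alpha)^n y$ lie in the pairwise distinct root spaces $\g_{\beta+n\alpha}$ and all belong to the finite-dimensional subalgebra generated by $x_\alpha$ and $y$, so only finitely many are nonzero; the general case follows since every element has finite support. Next, whenever $(\alpha,\alpha)\neq0$ I would pick $x_{\pm\alpha}$ with $\kappa(x_\alpha,x_{-\alpha})\neq0$, using non-degeneracy of $\kappa$ on $\g_\alpha\times\g_{-\alpha}$; then \eqref{eq:brack-rel} gives $\alpha([x_\alpha,x_{-\alpha}])=\kappa(x_\alpha,x_{-\alpha})(\alpha,\alpha)\neq0$, so together with the local nilpotency just proved $\alpha$ is integrable. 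Thus under (i) any non-integrable root is necessarily isotropic, and it remains to rule isotropic roots out.

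To exclude isotropic roots I would argue by contradiction and exhibit a finitely generated subalgebra that is infinite-dimensional. The model situation is an isotropic $\alpha$ orthogonal to an integrable root $\gamma$ with $\alpha-\gamma\in\Delta$ (such a $\gamma$ exists by coralness and \eqref{eq:rootrel}): here the $\gamma$-string forces $\alpha+\gamma\in\Delta$ as well, and $\mu:=\alpha+\gamma$, $\nu:=\alpha-\gamma$ are linearly independent integrable roots with $(\mu,\mu)=(\nu,\nu)=(\gamma,\gamma)$ and $(\mu,\nu)=-(\gamma,\gamma)$, hence $\mu(\check\nu)\nu(\check\mu)=4$. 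Then $r_\mu r_\nu$ has infinite order, so the roots $(r_\mu r_\nu)^n\mu$ are infinitely many distinct integrable roots in $\Spann_\Q\{\mu,\nu\}$, each with a one-dimensional root space lying in $\g(\mu,\nu)$; thus $\g(\mu,\nu)$ is infinite-dimensional although generated by $x_{\pm\mu},x_{\pm\nu}$, contradicting (i). I expect the genuine obstacle to be precisely the reduction to this model: for an arbitrary isotropic $\alpha$ one must first normalize it, e.g.\ establish that local finiteness forces $(\cdot,\cdot)$ to be positive semidefinite on $\Spann_\Q\Delta_i$ (so that isotropic roots land in $V^0=\Delta_i^\perp$ and every $\gamma$ as above is automatically orthogonal to $\alpha$), which is the step requiring the most care.

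For the converse (ii)$\Rightarrow$(i), assume $\Delta=\Delta_i$. Then every root space is one-dimensional by Definition~\ref{def:basic}(b), so local finiteness amounts to showing that each finite $F\subeq\Delta$ generates a finite-dimensional $\g(F)$, i.e.\ that $R:=\Delta\cap\Spann_\Q F$ is finite. I would choose a finite linearly independent simple system $\Pi$ of the reflection-closed set $R$ in the finite-dimensional space $\Spann_\Q F$; then Proposition~\ref{prop:1.10} identifies $\fh_\Pi+\g(\Pi)$ with the Kac--Moody algebra $\g(A_\Pi)$ of a symmetrizable generalized Cartan matrix, and $\g(F)\subeq\fh_\Pi+\g(\Pi)$ because the reflections $r_\gamma$, $\gamma\in\Pi$, act by automorphisms preserving $\g(\Pi)$ and carry $\Pi$ onto a generating set of $R$. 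Since $\Delta=\Delta_i$, every root of $\g(A_\Pi)$ is integrable, so $\g(A_\Pi)$ has no imaginary roots and $A_\Pi$ must be of finite type; hence $\g(A_\Pi)$, and therefore $R$, is finite. Here the only delicate point is the standard root-system input --- existence of a finite linearly independent simple system with $R=\cW_\Pi\Pi$ --- which I would take from the theory of (Kac--Moody) root systems.
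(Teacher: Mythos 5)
Your proposal is not a complete proof: each implication hinges on a step that you yourself flag and then leave open, and in both cases that step is essentially the theorem the paper quotes. For (i)$\Rightarrow$(ii), your sound parts are the local nilpotency of $\ad x_{\alpha}$ for every root vector under local finiteness, the conclusion via \eqref{eq:brack-rel} that every non-isotropic root is integrable, and the affine $A_1^{(1)}$-type contradiction in your model situation. But the reduction to that model --- the claim that local finiteness forces $(\cdot,\cdot)$ to be positive semidefinite on $\Spann_\Q \Delta_i$, so that an isotropic root (which lies in $\Spann_\Q\Delta_i$ by \eqref{eq:rootrel}) is orthogonal to all of $\Delta_i$ --- is precisely the missing content, not a technicality. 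What local finiteness gives cheaply is only a pairwise bound: for linearly independent $\gamma,\nu \in \Delta_i$ one gets $\gamma(\check\nu)\nu(\check\gamma) \leq 3$ (a product $\geq 4$ makes $\g(\gamma,\nu)$ the commutator algebra of an affine or indefinite rank-two Kac--Moody algebra, which is infinite dimensional), hence by Lemma~\ref{lem:semdef} positive definiteness on each two-dimensional span; but pairwise definiteness does not imply semidefiniteness on the full span (indefinite generalized Cartan matrices with all products $\leq 3$ exist, e.g.\ the complete graph on four vertices), so passing from rank two to global again requires a simple-system argument of the same difficulty as the statement being proved. Concretely, your model also misses a case: if the $\gamma \in \Delta_i$ with $\alpha - \gamma \in \Delta$ furnished by \eqref{eq:rootrel} satisfies $\alpha(\check\gamma) = 1$, then $\alpha - \gamma$ is again isotropic, $\alpha+\gamma$ need not be a root, and this pair yields no contradiction. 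The paper's proof of this direction is one line because it invokes Stumme's Levi decomposition of locally finite split Lie algebras \cite[Thm.~III.16]{St99}, which gives $\g_c = \Spann\check\Delta_i + \sum_{\alpha \in \Delta_i}\g_\alpha$; corality $\g = \fh + \g_c$ then forces $\Delta = \Delta_i$.

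For (ii)$\Rightarrow$(i) the step you defer to ``the theory of (Kac--Moody) root systems'' is not available from it. The set $R = \Delta \cap \Spann_\Q F$ is not known to be the root system of a Kac--Moody algebra --- that is in effect what you are trying to establish --- and even for genuine Kac--Moody root systems one never has $R = \cW_\Pi\Pi$ (only real roots are Weyl-conjugate into $\Pi$), nor is the existence of a finite linearly independent simple system controlling all of an a priori infinite, reflection-closed set $R$ an elementary fact. This existence statement is the substance of \cite[Thm.~VI.3]{Ne00b}, which is exactly what the paper cites for this implication. (The remainder of your argument for this direction --- Proposition~\ref{prop:1.10}, exclusion of imaginary roots because imaginary root vectors are never $\ad$-locally nilpotent, finite type forcing $R$ finite --- would be fine once $\Pi$ is in hand.) So the architecture and the peripheral steps are correct, but in both directions the core is a gap, and filling either one amounts to reproving the cited structure theorems.
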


\begin{proof} From \cite[Thm.~VI.3]{Ne00b} we know that (ii) implies (i). 
If $\g$ is locally finite, then the Levi decomposition of 
locally finite split Lie algebras (\cite[Thm.~III.16]{St99}) 
shows that $\g_c = (\Spann\check \Delta_i) 
+ \sum_{\alpha \in \Delta_i} \g_\alpha$, 
so that all roots of $\g = \fh + \g_c$ are integrable.  
\end{proof}

\begin{remark} (a) The four dimensional split oscillator algebra $\osc$ is 
a $\K$-Lie algebra with basis 
$h,c,p,q$, where $c$ is central, 
$[p,q] = c$ and $[h,p] = p$, $[h,q] = -q$. 
Then $\fh = \K c + \K h$ is a splitting Cartan subalgebra, 
and for $\alpha(c) = 0$, $\alpha(h) = 1$ we have 
$\Delta = \{\pm\alpha\}$ and $\Delta_i = \eset$. 
In particular, $\osc$ is not coral, so that the corality is 
necessary for the implication (i) $\Rarrow$ (ii) 
in Proposition~\ref{prop:loc-fin}. 

(b) If $\g$ is a LEALA and $\oline\lambda$ an integral weight of 
$\oline\Delta$, then $\alpha^\sharp \mapsto \oline\lambda(\oline\alpha^\sharp)$ 
defines a linear functional on $V^\sharp \subeq \fh$ which we extend 
to a linear functional $\lambda$ on all of $\fh$. For each integrable 
root $\alpha$ we then have 
$$ \lambda(\check \alpha) 
= \frac{2}{(\alpha,\alpha)} \lambda(\alpha^\sharp)
= \frac{2}{(\oline\alpha,\oline\alpha)} \oline\lambda(\oline\alpha^\sharp)
= \oline\lambda(\check{\oline\alpha}) \in \Z. $$
Therefore $\lambda$ is integral, but $\lambda$ vanishes on the 
center, so that $\Delta^\lambda$ contains non-integrable roots. 
\end{remark}

\section{Locally affine Lie algebras} \label{sec:3} 

In the preceding section we have seen that the existence 
of an integral weight $\lambda$ for which all roots in 
$\g^0(\lambda)$ are integrable implies for a LEALA that its root 
system is locally affine or locally finite. 
This leads to a natural concept of a 
locally affine Lie algebra, and this section is dedicated to a 
discussion of the structure of these Lie algebras. 

The first main results in this section describes how locally affine 
Lie algebras can be described as direct limits of affine Kac--Moody 
Lie algebras. Based on this information, we then show that 
isomorphisms of locally affine root systems ``extend'' to isomorphisms 
of the corresponding minimal locally affine Lie algebras. 
From that it also follows that for a minimal locally affine 
Lie algebra, any two Cartan subalgebras 
with isomorphic root systems are conjugate under an automorphism. 

\begin{definition} \label{def:locafflie} 
We call a LEALA $(\g,\fh,\kappa)$ satisfying 
$\g = \fh + \g_c$ a {\it coral locally affine Lie algebra} 
if $\Delta_i$ is a locally affine root 
system (in its rational span) and $\Delta \not=\Delta_i$.  
\end{definition}

The following lemma helps to translate between the rational vector space 
generated by the integrable roots and its $\K$-span. 

\begin{lemma} \label{lem:3.2} If $\g$ is a coral locally affine Lie algebra 
and $V := \Spann_\Q \Delta_i$, 
then the canonical map $V \otimes_\Q \K \to \fh^*, (v, \lambda) \mapsto 
\lambda v$ is a linear isomorphism onto $\Spann_\K \Delta$. 
\end{lemma}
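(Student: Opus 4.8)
The plan is to split the statement into two parts: identifying the image of the canonical map $\iota\colon V\otimes_\Q\K\to\fh^*$ with $\Spann_\K\Delta$, and proving that $\iota$ is injective. The image part is immediate: the image of $\iota$ is $\Spann_\K V = \Spann_\K\Delta_i$, and since $\g$ is coral, \eqref{eq:rootrel} gives $\Delta\subeq\Spann_\Z\Delta_i$, while trivially $\Delta_i\subeq\Delta$; hence $\Spann_\K\Delta = \Spann_\K\Delta_i$, as desired. So the real content is injectivity, i.e.\ that every $\Q$-linearly independent subset of $V$ stays $\K$-linearly independent in $\fh^*$.

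For injectivity the natural idea is to separate roots by evaluating against coroots: if $v\in V$, then $v(\check\beta)\in\Q$ for every $\beta\in\Delta_i$, and these rational pairings ought to detect $\K$-linear relations. The main obstacle is the isotropic radical. By (A5) together with (A1) we have $\dim_\Q V^0 = 1$, say $V^0 = \Q\delta$, and every element of $V^0$ satisfies $\delta(\check\beta)=0$ for all $\beta$ (since $(\delta,\beta)=0$ and $\delta(\check\beta)=2(\delta,\beta)/(\beta,\beta)$ by \eqref{eq:intflat}). Thus the coroots alone can never see the $\delta$-direction, and a purely coroot-based argument must fail in exactly this one dimension.

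The way around this is to use the decomposition $V = V'\oplus V^0$, where $V'$ is a reflectable section carrying a positive definite form and $\Delta_{\rm red} = V'\cap\Delta$ spans $V'$. First I would record the non-degeneracy on $V'$: if $w\in V'$ satisfies $w(\check\beta)=0$ for all $\beta\in\Delta_{\rm red}$, then $(w,\beta)=0$ for all such $\beta$, hence $(w,\cdot)=0$ on $V'$, and positive definiteness forces $w=0$. Now given a $\Q$-linearly independent family $v_1,\dots,v_n\in V$ and a relation $\sum_i c_i v_i = 0$ with $c_i\in\K$, I write $v_i = w_i + t_i\delta$ with $w_i\in V'$ and $t_i\in\Q$. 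Evaluating the relation on the $\check\beta$, $\beta\in\Delta_{\rm red}$, kills $\delta$ and leaves $\sum_i c_i w_i(\check\beta)=0$. Choosing finitely many coroots $\check\beta_1,\dots,\check\beta_p$ for which the rational matrix $(w_s'(\check\beta_r))$ is invertible (possible by the non-degeneracy just noted, applied to a $\Q$-basis $w_1',\dots,w_p'$ of $\Spann_\Q\{w_i\}$) then forces the $\K$-coefficients of $\sum_i c_i w_i$ relative to the $w_s'$ to vanish, i.e.\ $\sum_i c_i w_i = 0$. Subtracting from the original relation gives $(\sum_i c_i t_i)\delta = 0$, hence $\sum_i c_i t_i = 0$ because $\delta\neq 0$.

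Finally I would assemble these into a single $\K$-linear relation among rational vectors. Writing $w_i = \sum_s a_{is}w_s'$, the equations $\sum_i c_i a_{is}=0$ and $\sum_i c_i t_i=0$ say precisely that $\sum_i c_i u_i = 0$ in $\K^{p+1}$, where $u_i\in\Q^{p+1}$ is the coordinate vector of $v_i$ in the $\Q$-basis $\{w_1',\dots,w_p',\delta\}$ of $\Spann_\Q\{v_i\}$. Since the $v_i$ are $\Q$-linearly independent, so are the $u_i$, and a $\Q$-linearly independent family in $\Q^{p+1}$ remains $\K$-linearly independent in $\K^{p+1}$; hence all $c_i=0$. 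This establishes injectivity and completes the proof. The only genuinely delicate point is the treatment of the single isotropic dimension $V^0$, which I resolve by reducing everything to linear algebra over $\Q$ and invoking the flatness of the extension $\Q\subeq\K$ in the last step.
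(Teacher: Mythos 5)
Your proof is correct, and its engine is the same as the paper's: the pairings of $V$ against integrable (co)roots are rational-valued and non-degenerate modulo the one-dimensional radical $V^0$, so a $\K$-linear relation forces rational relations, with the $V^0$-direction treated separately. The executions differ, though. The paper stays entirely inside $V$ and uses the form directly: after a rational change of basis one may assume $\alpha_0$ spans $\Spann_\Q\{\alpha_i\} \cap V^0$; non-degeneracy of $(\cdot,\cdot)$ modulo $V^0$ yields dual vectors $\alpha_1^*, \ldots, \alpha_n^* \in V$ with $(\alpha_i,\alpha_j^*) = \delta_{ij}$, and pairing the relation with each $\alpha_j^*$ kills every coefficient $t_j$, $j \geq 1$, at once, leaving $t_0\alpha_0 = 0$ and hence $t_0 = 0$. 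This short-circuits your two heavier ingredients. First, the reflectable section is not needed: by \eqref{eq:intflat}, evaluation at any coroot $\check\beta$, $\beta \in \Delta_i$, already annihilates $V^0$, and the only non-degeneracy required is that the form is positive semidefinite on $V$ with radical exactly $V^0$ (Theorem~\ref{thm:2.1} together with the definition of $V^0$); so any $\Q$-complement of $V^0$ and the full set $\Delta_i$ would serve in place of $V'$ and $\Delta_{\rm red}$. Second, the dual-basis trick replaces your invertible-matrix-plus-coordinates step and the final appeal to flatness of $\Q \subeq \K$: the WLOG normalization of the family does the same work with much less bookkeeping. One small slip in your write-up: $\{w_1',\dots,w_p',\delta\}$ is a $\Q$-basis of $\Spann_\Q\{w_i\} + \Q\delta$, which contains $\Spann_\Q\{v_i\}$ but need not equal it — harmless, since coordinates with respect to a basis of any ambient space preserve linear independence.
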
 

\begin{proof} We have to show that if 
$\alpha_0, \ldots, \alpha_n \in V$ are linearly independent over 
$\Q$, then they are also linearly independent over $\K$. We may 
assume that $\alpha_0$ is contained in the one-dimensional space $V^0$. 
Since $(\cdot,\cdot)$ is non-degenerate modulo $V^0$, there 
exist $\alpha_1^*, \ldots, \alpha_n^* \in V$ with 
$(\alpha_i, \alpha_j^*) = \delta_{ij}$ for $i,j=1,\ldots, n$. 

Suppose that $\sum_{i = 0}^n t_i \alpha_i = 0$ for $t_i \in \K$. 
Then 
$0 = \big(\sum_{i = 1}^n t_i \alpha_i, \alpha_j^*\big) = t_j$ 
for $j =1,\ldots, n,$
and hence $t_0 \alpha_0 = 0$. As $\alpha_0$ is non-zero, 
it also follows that $t_0 = 0$. This proves the lemma. 
\end{proof}

\begin{prop} \label{prop:2.11} 
For a coral locally affine Lie algebra $\g$, a generator 
$\delta$ of the group $V^0\cap \Spann_\Z \Delta_i$ and an element 
$h_0 \in \fh$ with $\delta(h_0) \not=0$, the following assertions hold: 
\begin{itemize}
\item[\rm(i)] $\dim(\fz(\g_c)) = 1$. 
\item[\rm(ii)] $\Delta_i$ is the directed union of all finite connected 
subsets $F$ with $\delta \in \Spann_\Z F$. For each such $F$, 
the following assertions hold for $V_F := \Spann_\Q F$: 
\begin{itemize}
\item[\rm(a)] If $\Delta_i^F := V_F \cap \Delta_i$, then 
$(\Delta_i^F, V_F)$ is an affine root system. 
\item[\rm(b)] $\Delta^F_i$ contains a linearly independent 
simple system $\Pi_F$, i.e., 
$\alpha - \beta \not\in \Delta$ for $\alpha,\beta \in \Pi_F$. 
\item[\rm(c)] The subalgebra $\g(\Pi_F) + \K h_0$ is isomorphic to the 
affine Kac--Moody algebra  $\g(A_{\Pi_F})$ and its root system 
is $\Delta^F = \Delta_i^F \cup (\Z \setminus \{0\})\delta.$
\end{itemize}
\item[\rm(iii)] $\delta \in \Delta$ and 
$\Delta^0 = \Z\delta \setminus \{0\}$ is the set of isotropic roots. 
\item[\rm(iv)] $\Delta_i$ contains a 
linearly independent subset 
$\cB$ with $\Delta \subeq \Spann_\Z \cB$. 
We call such a set an {\em integral base of $\Delta$}. 
\end{itemize}
\end{prop}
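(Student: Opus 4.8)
The plan is to prove the four assertions in the order (ii), (iii), (i), (iv), since the affine subalgebras produced in (ii) immediately yield the remaining facts about isotropic roots and the center. Throughout I use that, by the Morita--Yoshii Theorem~\ref{thm:2.1}, the form $(\cdot,\cdot)$ on $V$ is positive semidefinite with radical $V^0 = \Q\delta$, which is one-dimensional by (A1) and (A5). To set up the directed union in (ii), I note that $\Delta_i$ is connected and $\delta \in \Spann_\Z\Delta_i$, so I can write $\delta$ as a finite integer combination of integrable roots and enlarge that finite set by connecting chains (Definition~\ref{def:basic}(e)) to a finite connected $F$ with $\delta\in\Spann_\Z F$; any two such $F$ embed into a common one the same way, the family is directed, and every root lies in some member, giving $\Delta_i=\bigcup_F\Delta_i^F$. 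For (a) the decisive point is that the restricted form has radical exactly $V_F^0=\Q\delta$: for a positive semidefinite form a vector $v$ with $(v,v)=0$ lies in the global radical by Cauchy--Schwarz, so $V_F^0\subseteq V^0\cap V_F=\Q\delta$, while the reverse inclusion is clear as $\delta\in V_F$. Hence $(\Delta_i^F,V_F)$ satisfies (A1)--(A4) as a subsystem closed under the reflections $r_\alpha$ ($\alpha\in\Delta_i^F$), and (A5) holds because $V_F^0\cap\Spann_\Z\Delta_i^F$ is a subgroup of $\Z\delta$ containing $\delta$, hence equals $\Z\delta$; as $V_F$ is finite-dimensional this is an affine root system.

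For (b) I invoke the standard base theory of the finite root system $\oline{\Delta_i^F}$ in $\oline{V_F}:=V_F/\Q\delta$: lifting a base $\oline\alpha_1,\dots,\oline\alpha_\ell$ through the reflectable section and adjoining the affine root $\alpha_0=\delta-\theta$ produces $\Pi_F=\{\alpha_0,\dots,\alpha_\ell\}$, and the relation $\alpha_0+\sum_i c_i\alpha_i\in\Q\delta$ shows, by a one-line determinant computation in the basis $\{\alpha_1,\dots,\alpha_\ell,\delta\}$ of $V_F$, that $\Pi_F$ is linearly independent; that it is a simple system follows since for distinct simple roots $\oline\alpha\neq\oline\beta$, so $\alpha-\beta$ is neither an element of $\Delta_i^F$ nor isotropic, and thus not in $\Delta$ (using $\Delta\cap V_F\subseteq\Delta_i^F\cup\Z\delta$, which rests on (Int) and corality). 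Now (c) is exactly the situation of Proposition~\ref{prop:1.10}: $\Pi_F$ is a linearly independent simple system whose Cartan matrix $A_{\Pi_F}$ is a symmetrizable affine generalized Cartan matrix, so part (iv) of that proposition applies with $h_0$ as the extra Cartan element (here I use $\delta(h_0)\neq0$, noting $\delta$ vanishes on $\Spann\check\Pi_F$), giving $\g(\Pi_F)+\K h_0\cong\g(A_{\Pi_F})$, whose root system is that of an affine Kac--Moody algebra, namely $\Delta^F=\Delta_i^F\cup(\Z\setminus\{0\})\delta$.

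From (c) I read off (iii): every $n\delta$ with $n\neq0$ is a root of some $\g(A_{\Pi_F})$, hence of $\g$, so $(\Z\setminus\{0\})\delta\subseteq\Delta^0$; conversely corality gives $\Delta\subseteq\Spann_\Z\Delta_i$ by \eqref{eq:rootrel}, whence $\Delta^0=\Delta\cap V^0\subseteq\Spann_\Z\Delta_i\cap V^0=\Z\delta$, and therefore $\Delta^0=\Z\delta\setminus\{0\}$ and in particular $\delta\in\Delta$. For (i) I observe that $\fz(\g_c)$ consists of the $w^\sharp$ with $w\in\Spann_\K\Delta_i$ and $\alpha(w^\sharp)=(\alpha,w)=0$ for all $\alpha\in\Delta_i$ (root-space components of a central element are killed by $\Spann\check\Delta_i$), that is, $w$ lies in the radical $V^0\otimes_\Q\K=\K\delta$; since $\sharp$ is injective on $\Spann\Delta$ this gives $\fz(\g_c)=\K\delta^\sharp$, of dimension one.

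Finally (iv). I choose a base $\{\oline\gamma_i\}_{i\in I}$ of the reduced locally finite system $\oline\Delta_{\rm red}$, lift it through the reflectable section to a linearly independent set $\{\gamma_i\}\subseteq\Delta_i\cap V'$, and observe, using $V^0\cap\Spann_\Z\Delta_i=\Z\delta$ together with Lemma~\ref{lem:2.5}, that $\Delta\subseteq\Spann_\Z\{\gamma_i\}+\Z\delta$. A dimension count shows an integral base must be a $\K$-basis of $\Spann_\K\Delta$, so exactly one further root may be adjoined: the task is to find a single $\beta\in\Delta_i$ whose isotropic component generates $\Z\delta$ modulo $\Spann_\Z\{\gamma_i\}$, i.e.\ with $\beta\mp\delta\in\Spann_\Z\{\gamma_i\}$. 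I expect this last existence statement to be the main obstacle: it cannot be settled by the abstract relations $S+2G\subseteq S$, $L+kS\subseteq L$, $E+4S\subseteq E$ alone, which only force the shift sets to generate $\Z\delta$ jointly and do not by themselves produce a root with isotropic shift $\delta$; instead it requires the explicit description of $S$, $L$, $E$ from Yoshii's classification in the appendix, where in each of the finitely many types one exhibits a short or long root with shift $\delta$. Granting such a $\beta$, the set $\cB:=\{\gamma_i\}\cup\{\beta\}$ is linearly independent (adjoining $\beta$ enlarges the span only by the $\delta$-direction) and satisfies $\delta\in\Spann_\Z\cB$, so $\Delta\subseteq\Spann_\Z\{\gamma_i\}+\Z\delta=\Spann_\Z\cB$, completing the proof.
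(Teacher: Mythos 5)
Your parts (ii), (iii) and (i) follow the paper's own proof almost step for step: the same directed family of finite connected sets $F$ with $\delta \in \Spann_\Z F$, the same appeal to Proposition~\ref{prop:1.10}(iv) for (c), and the same identification $\fz(\g_c) = \K\delta^\sharp$ via Lemma~\ref{lem:cent} and the isomorphism $\sharp \: \Spann_\K\Delta \to \fh\cap\g_c$. Your verification of (A5) for $\Delta_i^F$ (the radical of the restricted form is $\Q\delta$ by Cauchy--Schwarz, and $V_F^0\cap\Spann_\Z\Delta_i^F$ is a subgroup of $\Z\delta$ containing $\delta$) is a clean substitute for the paper's discreteness remark. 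Two caveats. First, in (b) your explicit recipe $\alpha_0 = \delta-\theta$ and the verification that differences of simple roots are non-roots are correct only for the untwisted types; in the twisted types the affine simple root is $\delta-\theta_s$ and sums like $\theta_s+\alpha_i$ can be roots, so here you really must lean on the cited standard theory (as the paper does via [ABGP97]). Second, in (i) your parenthetical reason that central elements lie in $\fh$ --- ``root-space components of a central element are killed by $\Spann\check\Delta_i$'' --- is not sufficient: the isotropic root spaces $\g_{n\delta}$ are also annihilated by every coroot, since $n\delta(\check\beta) = \frac{2n(\delta,\beta)}{(\beta,\beta)} = 0$. To exclude components in $\g_{n\delta}$ one needs the non-degeneracy of $\kappa$, which pairs $\g_{n\delta}$ with $\g_{-n\delta}\subeq\g_c$; invariance gives $\kappa([x,y],h) = n\delta(h)\kappa(x,y)\not=0$ for suitable $y,h$, so no nonzero $x\in\g_{n\delta}$ is central in $\g_c$. (The paper is also silent on this containment, but it does not assert the faulty reason.)

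On (iv) you genuinely part ways with the paper, and this is where the proposal is weakest. The paper does not invoke the infinite-rank classification at all: having reduced, exactly as you do, to producing one integrable root whose isotropic component generates $\Z\delta$ modulo $\Spann_\Z\cB_1$, it picks $\alpha_1\in\cB_1$ lying in one of the affine subsystems $\Delta^F$ from (ii), notes that $\alpha_1$ is part of a simple system of the affine root system $\Delta^F$, concludes $\alpha_0 := \delta - \alpha_1 \in \Delta_i^F$, and sets $\cB := \cB_1\cup\{\alpha_0\}$, which works because $\Spann_\Z\cB = \Spann_\Z\cB_1 + \Z\delta$. So your claim that this existence statement ``cannot be settled'' without the explicit description of $S$, $L$, $E$ is overstated: the finite-rank affine root systems already constructed in (ii) supply the extra root. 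Your alternative route through Theorem~\ref{thm:yclass} is legitimate within the paper's framework (the classification is quoted there and used freely in Section~\ref{sec:3}, e.g.\ in Proposition~\ref{prop:3.4}), and your sketch is correct: in each of the seven types the short roots have shift set all of $\Z\delta$, so a root $\beta$ with $\beta - \delta \in \Spann_\Z\{\gamma_i\}$ exists. But as written you ``grant'' precisely the step that needs proving; a complete write-up must include that (short) type-by-type verification. Net effect: same skeleton throughout, but for the final step you reach for a much heavier tool --- and leave it as an assumption --- where the paper closes the argument with material it has already established.
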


\begin{proof} (i) First we recall from Remark~\ref{rem:1.4}(c) that 
$\Delta^\sharp \subeq \fh \cap \g_c = \Spann_\K \check \Delta_i,$
so that 
$\sharp \: \Spann_\K \Delta \to \fh \cap \g_c$
is a linear isomorphism. Next we observe that, for 
$\alpha \in \Spann_\K \Delta$,  the relation $\alpha \in V^0$ 
is equivalent to $\alpha^\sharp \in \fz(\g_c)$ (Lemma~\ref{lem:cent}). 
Now (i) follows from $\dim V^0 = 1$. 

(ii), (iii) If $M \subeq \Delta_i$ is a finite subset, then the connectedness 
of $\Delta_i$ implies the existence of a finite connected subset 
$\tilde M \subeq \Delta_i$ containing $M$. Since 
$\delta \in \Spann_\Z \Delta_i$, it follows that $\Delta_i$ 
is the directed union of all finite connected subsets $F$ with 
$\delta \in \Spann_\Z F$. 

Clearly, $(V_F, \Delta_i^F)$ satisfies (A1)-(A3). Since 
$V_F$ is spanned by a connected set of roots, $\Delta_i^F$ is 
irreducible, so that it is an affine root system. 
Moreover, $\Delta_i^F$ is discrete in $V_F \otimes_\Q \R$ because 
its image in $\oline V$ is finite (cf.\ \cite[Lemma~2.8]{AA-P97}) 
and the fibers of the map $\Delta \to \oline\Delta$, i.e., 
the sets $S_{\oline\alpha}$ are contained in a cyclic group. 
That each affine 
root system contains a linearly independent 
simple system follows from the discussion in (\cite[Sect~2]{ABGP97}). 

Next we use Proposition~\ref{prop:1.10}(iv) to see that 
$\g(\Pi_F) + \K h_0$ is isomorphic to the affine Kac--Moody 
algebra $\g(A_{\Pi_F})$. From \cite[\S 5.5]{Ka90} we know that 
$\Z \delta \setminus \{0\} \subeq \Delta^F \subeq \Delta$, 
and since $V^0 \cap \Spann_\Z \Delta = \Z \delta$, it follows 
that $\Delta^0 = \Z \delta \setminus \{0\}$. This completes 
the proof of (ii), and (iii) also follows. 

(iv) First we recall from 
\cite[Thm.~VI.6]{St99} that the locally finite root system 
$\oline\Delta$ has an integral base $\oline\cB$ 
(see also \cite[Cor.~6.5]{LN04}). 
Let $\cB_1 \subeq \Delta$ be a subset mapping bijectively 
onto $\oline\cB$ and observe that this implies that 
$(\cdot,\cdot)$ is positive definite on $\Spann_\Q \cB_1$. 

For $V_F$ as above, we may w.l.o.g.\ assume that 
$\Delta^F$ contains an element $\alpha_1 \in \cB_1$. 
This element is part of a simple system of the affine root system 
$\Delta^F$, so that $\alpha_0 := \delta - \alpha_1 \in \Delta^F_i$. 
Then we put 
$\cB := \cB_1 \cup \{\alpha_0\}$ (cf.\ Proposition~\ref{prop:2.11}). 
Since $\oline\cB$ is linearly 
independent and $\delta \in V^0$, the subset $\cB \subeq V$ is also 
linearly independent. 

To see that $\Delta \subeq \Spann_\Z \cB = \Spann_\Z \cB_1 + \Z \delta$, let 
$\beta \in \Delta$. Since $\oline\cB$ is an integral base 
of $\oline\Delta$, there exist $\alpha_1, \ldots, \alpha_N \in \cB_1$ such 
and $n_i \in \Z$ such that 
$$\beta- \sum_{j=1}^N n_j \alpha_j \in V^0 \cap \Spann_\Z \Delta 
= \Z \delta. $$
This proves that $\beta \in \Spann_\Z \cB$. 
\end{proof}

\begin{prop} \label{prop:3.4} If $\g$ is a coral locally affine Lie algebra, 
then for an integral weight $\lambda \in \fh^*$, the condition 
$\Delta^\lambda \subeq \Delta_i$ is equivalent to 
$\lambda\res_{\fz(\g_c)} \not=0$. Such weights exist. 
\end{prop}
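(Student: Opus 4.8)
The plan is to reduce the whole statement to the one-dimensional center $\fz(\g_c)$ together with the description of the isotropic roots. First I would set up the basic dictionary between integrability and isotropy. By the axiom (Int) every non-isotropic root is integrable, while \eqref{eq:intflat} shows that every integrable root $\alpha$ satisfies $(\alpha,\alpha)\neq 0$; hence $\Delta\setminus\Delta_i$ is exactly the set $\Delta^0$ of isotropic roots. By Proposition~\ref{prop:2.11}(i),(iii) I then have $\Delta^0 = \Z\delta\setminus\{0\}$ and $\fz(\g_c)=\K\delta^\sharp$ is one-dimensional, with $\delta^\sharp\neq 0$ since $\delta\neq 0$ and $\sharp$ is injective on $\Spann_\K\Delta$; the identification $(V^0)^\sharp=\fz(\g_c)$ is Lemma~\ref{lem:cent}.

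With this dictionary the equivalence is immediate in both directions. If $\lambda\res_{\fz(\g_c)}=0$, then $\lambda(\delta^\sharp)=0$, so $\delta\in\Delta^\lambda$; but $\delta$ is isotropic, i.e.\ $\delta\notin\Delta_i$, contradicting $\Delta^\lambda\subeq\Delta_i$. Conversely, assume $\lambda(\delta^\sharp)\neq 0$ and take $\beta\in\Delta^\lambda\setminus\Delta_i$. Then $\beta\in\Delta^0$, so $\beta=n\delta$ with $n\neq 0$, whence $\lambda(\beta^\sharp)=n\,\lambda(\delta^\sharp)\neq 0$, contradicting $\beta\in\Delta^\lambda$. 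Thus $\Delta^\lambda\subeq\Delta_i$ is equivalent to $\lambda\res_{\fz(\g_c)}\neq 0$.

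For the existence I would use the splitting $\fh\cap\g_c=\oline{V}^\sharp\oplus\fz(\g_c)$ coming from $\Spann_\K\Delta_i=(\oline{V}\oplus V^0)\otimes_\Q\K$ and Lemma~\ref{lem:3.2}. Using that the fibers $S_{\oline{\alpha}}$ lie in $G=\Z\delta$ (Lemma~\ref{lem:2.5} together with $\Delta^0\subeq G$), each integrable root decomposes as $\alpha=\oline{\alpha}+q_\alpha\delta$ with $q_\alpha\in\Z$, so that $(\alpha,\alpha)=(\oline{\alpha},\oline{\alpha})$ gives $\check\alpha=\check{\oline{\alpha}}+\tfrac{2q_\alpha}{(\oline{\alpha},\oline{\alpha})}\,\delta^\sharp$. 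I would then define $\lambda$ on $\fh\cap\g_c$ by letting it vanish on $\oline{V}^\sharp$ and take the value $D$ on $\delta^\sharp$, where $D$ is a common denominator of the finitely many rationals $2/(\oline{\alpha},\oline{\alpha})$ (finitely many by Proposition~\ref{prop:quotient} or Corollary~\ref{cor:2.2}), and extend it arbitrarily to $\fh$. Then $\lambda(\check\alpha)=\tfrac{2q_\alpha}{(\oline{\alpha},\oline{\alpha})}D\in\Z$ for every $\alpha\in\Delta_i$, so $\lambda$ is integral, while $\lambda(\delta^\sharp)=D\neq 0$, so $\lambda\res_{\fz(\g_c)}\neq 0$.

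The main obstacle is the existence part rather than the equivalence: one must guarantee integrality on \emph{all} coroots $\check\alpha$ while keeping the level $\lambda(\delta^\sharp)$ nonzero. The crucial structural input that makes this work is that the central shifts all lie in the single cyclic group $\Z\delta$ and that the square lengths $(\oline{\alpha},\oline{\alpha})$ take only finitely many values, so a single common denominator $D$ suffices to clear all of them simultaneously.
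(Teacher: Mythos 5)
Your proof is correct, and its first half coincides with the paper's: both reduce the equivalence to $\Delta\setminus\Delta_i = \Z\delta\setminus\{0\}$ (Proposition~\ref{prop:2.11}) together with the fact that $\delta^\sharp$ spans $\fz(\g_c)$ (Lemma~\ref{lem:cent}). The existence half follows the same overall strategy as the paper — make $\lambda$ vanish on a reflectable section, give it a sufficiently divisible nonzero value on $\delta^\sharp$, and use that only finitely many square lengths occur (Proposition~\ref{prop:quotient}) — but you source the root decomposition differently. The paper invokes Yoshii's classification (Theorem~\ref{thm:yclass}) to write $\Delta \subeq (\Delta_{\rm red}+\Z\delta)\cup\big(2(\Delta_{\rm red})_{\rm sh}+(2\Z+1)\delta\big)$ and computes coroots via \eqref{eq:mul-coroot}; you instead obtain $\alpha = \oalpha + q_\alpha\delta$ from the fiber decomposition $\Delta_i = \bigcup_\oalpha(\oalpha + S_\oalpha)$ and Lemma~\ref{lem:2.5}, so your argument does not need the explicit classification from the appendix (only the structure theory recalled in Section~\ref{sec:2}). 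What the paper's route buys in exchange is sharper quantitative output: it pins down the admissible levels $\lambda(\delta^\sharp)\in 2(\alpha,\alpha)\Z$ and that $\Delta^\lambda$ is then exactly the reflectable section $\Delta_{\rm red}$, information that is reused later (Remark~\ref{rem:3.7}(c) and the appendix remark on weights of transversal type). One small imprecision on your side: Lemma~\ref{lem:2.5} together with $\Delta^0\subeq G$ only yields $\Z\delta \subeq G \subeq \frac{1}{m}\Z\delta$, not $G=\Z\delta$ outright; to get equality one should add that each element of $S_\oalpha$ is a difference of roots (using a base point of the fiber in $\Delta$), hence lies in $\Spann_\Z\Delta\cap V^0 = \Z\delta$ by (A5). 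This is harmless for your argument, since any cyclic bound on $G$ suffices after enlarging the common denominator $D$ accordingly.
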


\begin{proof} Since $\Delta\setminus \Delta_i = \Z \delta \setminus \{0\}$ 
for an isotropic root $\delta$ (Proposition~\ref{prop:2.11}), 
$\Delta^\lambda \subeq \Delta_i$
is equivalent to $\lambda(\delta^\sharp) =0$. 
As $\delta^\sharp$ generates $\z(\g_c)$ 
(Lemma~\ref{lem:cent}), 
this in turn is equivalent to $\lambda\res_{\z(\g_c)} \not=0$. 

That weights of transversal type actually exist 
can be derived from the description of the affine root system 
$\Delta$ in terms of the locally finite subsystem 
$\Delta_{\rm red}$. Since the assertion is trivial for the finite-dimensional 
case, we may assume that $\dim V = \infty$. 
From Yoshii's classification in Theorem~\ref{thm:yclass} below,  
it follows that 
$$ \Delta \subeq 
(\Delta_{\rm red} + \Z \delta)\cup 
\big(2(\Delta_{\rm red})_{\rm sh} + (2\Z+1) \delta\big). $$
From this information one can easily calculate the possible coroots. 
For a root of the form $n\alpha + m \delta$, $\alpha \in \Delta_{\rm red}$, 
$n \in \{1,2\}$, the corresponding coroot is determined by 
$(n\alpha + m \delta)\,\check{} \in \Q (n\alpha^\sharp +  m\delta^\sharp)$ and 
$(n\alpha+ m \delta)((n\alpha + m \delta)\,\check{})=2$, which leads to 
\begin{equation}
  \label{eq:mul-coroot}
(n\alpha + m \delta)\,\check{} = \frac{2}{n^2(\alpha,\alpha)}(n\alpha^\sharp 
+ m \delta^\sharp), \quad n \in \{1,2\}, m \in \Z. 
\end{equation}
Therefore a linear functional $\lambda \in \fh^*$ vanishing on 
$\check\Delta_{\rm red}$ is integral if 
$$ \lambda(\delta^\sharp) \in 2(\alpha,\alpha)\Z $$
holds for each integrable root $\alpha$. In this case we have 
$\Delta^\lambda = \Delta_{\rm red}$.  
Since at most three square lengths occur (Proposition~\ref{prop:quotient}), 
this proves the existence of integral weights $\lambda$ 
of transversal type. 
\end{proof}

\begin{remark} \label{rem:3.4} (i) If $\Delta = \Delta_i$, 
then $\g$ is locally finite (\cite[Thm.~VI.3]{Ne00b}). 
If, in addition, $\g$ is perfect and $\Delta_i$ connected, 
then $\g$ carries the structure of a 
locally extended affine Lie algebra for which $(\cdot,\cdot)$ is 
positive definite on $\Spann_\Q \Delta$ (\cite[Thm.~4.2]{LN04}). In this case 
$\Delta^\lambda \subeq \Delta_i$ trivially holds for any integral 
weight~$\lambda$. 

(ii) Suppose that $\g$ is affine and that $\Pi = \{\alpha_1, \ldots, 
\alpha_r\}\subeq \Delta$ is a 
generating linearly independent simple system. 
Let $\lambda \in \fh^*$ be a dominant integral weight 
not vanishing on all coroots, i.e., 
$$\Pi_\lambda := \{ \alpha \in \Pi \: \lambda(\check \alpha) = 0 \} 
\not=\Pi.$$ 
We claim that $\lambda$ does not vanish on the center. 
Using the notation of \cite{Ka90}, we write a generator of the 
center as $K = \sum_{j = 1}^r \check a_j \check \alpha_j$, where all 
coefficients $\check a_j$ are positive. Then  
$$\lambda(K) = \sum_{\alpha_i \not\in \Pi_\lambda} \check a_{i} 
\lambda(\check\alpha_{i}) > 0. $$
\end{remark}  

\subsection*{Minimal locally affine Lie algebras}

The following notion of minimality distinguishes a class of 
locally affine Lie algebras which, as we shall see, are 
uniquely determined by their root systems. 

\begin{definition} \label{def:min} 
We call a locally affine Lie algebra $(\g,\fh,\kappa)$ 
{\it minimal} if $\g_c$ is a hyperplane in $\g$ and there exists an element 
$d \in \fh$ for which 
$\{ \alpha \in \Delta_i \: \alpha(d)=0\}$ 
is a reflectable section. Then $\delta(d) \not=0$ and we may 
normalize $d$ by $\delta(d) = 1$ (cf.\ Proposition~\ref{prop:2.11}). 
\end{definition}

To analyze how minimal locally affine Lie algebras can be reconstructed 
from their core, we need the concept of a double extension 
of a quadratic Lie algebra (cf.\ \cite{MR85}). 

\begin{definition} \label{def:doubext} Let $(\oline\g,\oline\kappa)$ 
be a  quadratic Lie algebra and $D \in \der(\oline\g,\oline\kappa)$ 
be a derivation which  is skew-symmetric with respect to $\oline\kappa$. 
Then $\omega_D(x,y) := \oline\kappa(Dx,y)$ defines a $2$-cocycle 
on $\oline\g$ and $D$ extends to a derivation $\tilde D(z,x) := (0,Dx)$ 
of the corresponding central extension 
$\K \oplus_{\omega_D} \oline\g$. The Lie algebra 
$$ \g= (\K \oplus_{\omega_D} \oline\g) \rtimes_{\tilde D} \K$$ 
with the Lie bracket 
$$ [(z,x,t), (z',x',t')] = (\omega_D(x,x'), [x,x'] + tDx' - t'Dx,0) $$
is called the corresponding {\it double extension}. 
It carries a non-degenerate invariant symmetric bilinear form 
$$ \kappa((z,x,t), (z',x',t)) = \oline\kappa(x,x') + zt'+z't, $$
so that $(\g,\kappa)$ also is a quadratic Lie algebra. 
\end{definition}

\begin{remark} \label{rem:3.7} (a) Each affine Kac--Moody algebra 
is a minimal locally affine Lie algebra. 

(b) If $\g_c$ is the core of a locally affine Lie algebra, 
then $\g_c$ is graded by the root group $\cQ = \Spann_\Z \Delta$. 
Let $\lambda \in \fh^*$ not vanish on the center and pick 
$c \in \fz(\g_c)$ with $\lambda(c) = 1$. We extend $\lambda$ 
to a linear functional, also called $\lambda$, on $\g$, vanishing on all 
root spaces. 
Then $\ker \lambda \subeq \g_c$ is a subspace mapped bijectively 
onto the centerless core $\g_{cc} := \g_c/\fz(\g_c)$. 
For $x,x' \in \ker \lambda$ and $a,a' \in \K$ we have 
$$ [x + ac, x' + ac'] = [x,x'] = ([x,x'] - \lambda([x,x'])c) + \lambda([x,x'])c, $$
so that $\g_c$ is the central extension 
$\K \oplus_\omega \g_{cc}$ 
defined by the cocycle $\omega(\oline x,\oline x') = \lambda([x,x'])$, 
where $\oline x := x + \fz(\g_c)$. 

Next we observe that $\lambda$ defines a diagonal derivation 
$\tilde D_\lambda \in \der(\g_c)$ by 
$$ \tilde D_\lambda x = \lambda(\alpha^\sharp) x \quad \mbox{ for } \quad 
x \in \g_\alpha. $$
This derivation also induces a derivation $D$ on 
the centerless core $\g_{cc} = \g_c/\fz(\g_c)$ preserving the 
induced non-degenerate symmetric bilinear form $\oline\kappa$. 

(c) Let $\hat\g= (\K \oplus_\omega \g_{cc}) \rtimes_D \K$ be the corresponding double 
extension (Definition~\ref{def:doubext}). 
With $\fh_c := \fh \cap \g_c$ we now obtain a splitting 
Cartan subalgebra $\hat\fh := \R \oplus \fh_c \oplus \R$ of $\hat\g$, 
for which the corresponding root decomposition coincides with the 
$\cQ$-grading. In particular, we obtain a realization of the locally 
affine root system $\Delta$ in $\fh^*$. 

To obtain a minimal locally affine Lie algebra $\hat\g$ with this procedure,
we have to assume, in addition, that 
$\Delta^\lambda = \{ \alpha \in \Delta \: \lambda(\check \alpha) = 0\} 
= \Delta_{\rm red}$ 
holds for a reflectable section. Since such functionals exist 
by Proposition~\ref{prop:3.4}, we derive the 
existence of a minimal realization for $\Delta$. We refer to the 
appendix for minimal realizations of the infinite rank affine root systems 
by twisted loop algebras. 

(d) Let us assume, in addition, 
that there exists an element $d\in \fh$ with 
$\lambda = d^\flat$. Then 
$\lambda(\alpha^\sharp) = \kappa(d,\alpha^\sharp) = \alpha(d)$ 
implies that $\tilde D = \ad d$ and 
$$ \omega_D(\oline x, \oline x') = \kappa([d,x],x') 
= \kappa(d,[x,x']) = \lambda([x,x']). $$
We also note that for $x,x' \in \ker\lambda \cap \g_c$ and 
$a,a', b,b,' \in \K$ we have 
$$ \kappa(a c + x + bd, a'c + x' + b'd) 
=  \kappa(x,x') + ba'+ab', $$
which shows that with $\fh_c := \fh \cap \g_c$ we obtain an isomorphism 
$$(\hat\g, \hat\fh,\hat\kappa) \cong (\g_c + \K d, \fh_c + \K d, \kappa)$$ 
of split 
quadratic Lie algebras. We thus find a minimal locally affine 
subalgebra of~$\g$. 
\end{remark}

\subsection*{The Extension Theorem} 

\begin{definition} Two locally extended affine root 
systems $(V_1, \cR_1)$ and $(V_2, \cR_2)$
are said to be isomorphic if there exists a linear isomorphism 
$\psi \: V_1 \to V_2$ with $\psi(\cR_1) = \cR_2$. 
\end{definition}

Since the quadratic form is part of the concept of a LEARS, it should 
also be taken into account for the concept of an isomorphism, but 
the following lemma shows that this is redundant (\cite[Lemma~9]{YY08}): 

\begin{lemma} If $\phi \: (V_1, \cR_1) \to (V_2, \cR_2)$ is an isomorphism 
of locally extended affine root systems, then 
\begin{equation}
  \label{eq:co-rel}
\la \phi(\alpha), \phi(\beta) \ra = \la \alpha, \beta\ra \quad \mbox{ 
for } \quad \alpha, \beta \in \cR_1, 
\end{equation}
$\phi$ preserves the quadratic form up to a factor, and 
$\phi \circ r_\alpha \circ \phi^{-1} = r_{\phi(\alpha)}$ for 
$\alpha \in \cR_1.$
\end{lemma}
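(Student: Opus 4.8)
The plan is to establish the three conclusions in sequence, using only the defining properties (A1)--(A4) of a LEARS together with the definitions of $\la\cdot,\cdot\ra$ and $r_\alpha$. First I would prove \eqref{eq:co-rel}, since the other two assertions follow from it almost formally. The root system axioms are stated purely in terms of the pairing $\la\beta,\alpha\ra = 2(\alpha,\beta)/(\alpha,\alpha)$, the reflections $r_\alpha$, and membership in $\cR$. Because an isomorphism $\phi$ is by definition only a linear bijection with $\phi(\cR_1)=\cR_2$, it is \emph{not} immediate that $\phi$ respects the pairings; the content of the lemma is that the combinatorial structure forces this.

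The key step is to show that $\phi$ conjugates reflections correctly at the level of the root sets, and then promote this to an identity of the integer pairings. I would argue as follows. For a fixed $\alpha \in \cR_1$ and any $\beta \in \cR_1$, axiom (A3) gives $r_\alpha(\beta)\in\cR_1$, hence $\phi(r_\alpha(\beta))\in\cR_2$; I want to compare this with $r_{\phi(\alpha)}(\phi(\beta))$. Applying $\phi$ to the defining relation and using linearity yields
\begin{equation}
  \label{eq:plan-expand}
\phi(r_\alpha(\beta)) = \phi(\beta) - \la \beta,\alpha\ra\, \phi(\alpha),
\end{equation}
whereas $r_{\phi(\alpha)}(\phi(\beta)) = \phi(\beta) - \la \phi(\beta),\phi(\alpha)\ra\,\phi(\alpha)$. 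Both expressions differ from $\phi(\beta)$ only by an integer multiple of $\phi(\alpha)$. The main obstacle is to conclude that the two integer coefficients $\la\beta,\alpha\ra$ and $\la\phi(\beta),\phi(\alpha)\ra$ actually coincide; this is where one must use that $\cR_2$ is a genuine root system and not merely a set. I expect to exploit the fact that the string of roots $\{\phi(\beta) + k\,\phi(\alpha) : k\in\Z\}\cap\cR_2$ is an unbroken interval, together with the involutivity $r_\alpha^2 = \id$ and (A2), so that the reflection $r_{\phi(\alpha)}$ is the \emph{unique} element of the dihedral structure on this string sending $\phi(\beta)$ back into $\cR_2$ with the right fixed hyperplane; comparing with the image of the reflection from $(V_1,\cR_1)$ then forces equality of coefficients and establishes both \eqref{eq:co-rel} and $\phi\circ r_\alpha\circ\phi^{-1} = r_{\phi(\alpha)}$ simultaneously.

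Once \eqref{eq:co-rel} is in hand, the statement that $\phi$ preserves the quadratic form up to a factor is a linear-algebra consequence. On each Weyl-group orbit the square length $(\alpha,\alpha)$ is constant, and \eqref{eq:co-rel} shows $\phi$ maps $\alpha$-strings to $\phi(\alpha)$-strings with matching pairings; writing $(\alpha,\beta) = \tfrac12\la\beta,\alpha\ra(\alpha,\alpha)$, it suffices to track how $(\alpha,\alpha)$ transforms. Using irreducibility (A4), which guarantees $\cR$ cannot be split into orthogonal pieces, one shows the ratio $(\phi(\alpha),\phi(\alpha))/(\alpha,\alpha)$ is independent of $\alpha$: for connected $\alpha,\beta$ the relation \eqref{eq:co-rel} applied in both directions pins down the ratio of square lengths, and propagating along chains of non-orthogonal roots (which exist by irreducibility) forces a single global constant $c$ with $(\phi(\alpha),\phi(\beta)) = c\,(\alpha,\beta)$ on $\cR_1$; since $\Spann\cR_1 = V_1$ by (A1), this extends by bilinearity to all of $V_1$. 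This yields the "up to a factor" assertion and completes the proof.
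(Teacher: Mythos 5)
The paper offers no proof of this lemma at all --- it is quoted from \cite[Lemma~9]{YY08} --- so your proposal can only be measured against what a correct argument must supply, and there your central step has a genuine gap. You propose to read off the equality of Cartan integers from the claim that the string $\{\phi(\beta)+k\phi(\alpha) \: k \in \Z\}\cap \cR_2$ is an unbroken interval, combined with a uniqueness property of the reflection acting on that string. Both ingredients fail for LEARS. Strings do break, precisely because axiom (A1) excludes isotropic vectors from $\cR$: in the locally affine system of type $A_1^{(1)}$, with $\cR = \{\pm\alpha + k\delta \: k \in \Z\}$ and $\delta$ isotropic, the $\alpha$-string through $\beta = \alpha+\delta$ is $\{\beta - 2\alpha,\ \beta\}$, whose midpoint $\beta-\alpha = \delta$ is not a root. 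And the uniqueness statement you invoke (``the unique element of the dihedral structure with the right fixed hyperplane'') is the classical Bourbaki lemma whose proof requires $\cR$ to be finite (a linear map permuting a finite spanning set has finite order, and a unipotent map of finite order is the identity); here $\cR_2$ is infinite, and what needs to be ruled out is exactly that the involution $\phi\circ r_\alpha\circ \phi^{-1}$, which preserves $\cR_2$, sends $\phi(\alpha)$ to $-\phi(\alpha)$ and fixes the hyperplane $\phi(\{v \: (v,\alpha)=0\})$ pointwise, could differ from $r_{\phi(\alpha)}$ --- a priori that hyperplane need not be orthogonal to $\phi(\alpha)$ for the form on $V_2$.

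The step can be repaired, because even a broken string determines the Cartan integer: it is finite and $r_\alpha$-symmetric. Put $S := \{ k \in \Z \: \beta + k\alpha \in \cR_1\}$, which contains $0$. If $k \in S$ with $|k|$ large, then $\la \beta + k\alpha,\alpha\ra = \la\beta,\alpha\ra + 2k$ is a nonzero integer; since for roots $\gamma,\eta$ the integers $\la\gamma,\eta\ra$ and $\la\eta,\gamma\ra$ vanish simultaneously, have the same sign, and satisfy $\la\gamma,\eta\ra\,\la\eta,\gamma\ra \le 4$ (Cauchy--Schwarz for the positive semidefinite form, which is part of the definition of a LEARS), this forces $|\la\beta,\alpha\ra + 2k| \le 4$, a contradiction; hence $S$ is finite. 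By (A3), $r_\alpha$ permutes the string, acting on indices by the order-reversing involution $k \mapsto -\la\beta,\alpha\ra - k$, so $\min S = -\la\beta,\alpha\ra - \max S$, i.e.\ $\la\beta,\alpha\ra = -(\max S + \min S)$. The right-hand side depends only on the linear structure and the set of roots, and since $\phi$ is a linear bijection with $\phi(\cR_1) = \cR_2$, the analogous index set for the pair $(\phi(\alpha),\phi(\beta))$ is the same set $S$; this proves \eqref{eq:co-rel} without any unbrokenness. From that point on your deductions do go through as planned: $\phi\circ r_\alpha\circ\phi^{-1}$ and $r_{\phi(\alpha)}$ agree on $\cR_2$, which spans $V_2$ by (A1); and the ratio $(\phi(\alpha),\phi(\alpha))/(\alpha,\alpha)$ is constant along chains of pairwise non-orthogonal roots, since $(\beta,\beta)/(\alpha,\alpha) = \la\beta,\alpha\ra/\la\alpha,\beta\ra$ whenever $(\alpha,\beta)\neq 0$, and any two roots are joined by such a chain by irreducibility (A4).
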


Together with Proposition~\ref{prop:2.11}, the following lemma is 
the key ingredient in our Extension Theorem. It provides the required 
local information. 

\begin{lemma}\label{lem:3.3}  
Let $(\g_1, \fh_1)$ and $(\g_2, \fh_2)$ be affine Kac--Moody 
Lie algebras and 
$\psi \: \Delta_1 \to \Delta_2$ an isomorphism 
of affine root systems. Further, let 
$\cB \subeq \Delta_{1,i}$ be an integral 
base and pick $0\not=x_\alpha \in \g_{1,\alpha}$ 
and $0\not=y_\alpha\in \g_{2, \psi(\alpha)}$ for $\alpha \in \cB$. 
Then there exists a unique isomorphism of 
Lie algebras 
$$ \phi \: (\g_1)_c \to (\g_2)_c \quad \mbox{ with } \quad 
\phi(\g_{1,\alpha}) = \g_{2,\psi(\alpha)} \quad \mbox{ for } 
\quad \alpha \in \Delta_1 $$
and 
$\phi(x_\alpha) = y_\alpha$ for $\alpha \in \cB.$
\end{lemma}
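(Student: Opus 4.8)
The plan is to build $\phi$ from its values on the three-dimensional subalgebras $\g(\alpha)$ and then verify it is a well-defined isomorphism. First I would reduce to a presentation of the cores. By the Gabber--Kac theorem, each affine Kac--Moody algebra is presented by Chevalley generators $e_i,f_i,h_i$ together with the Serre relations, and the core is the subalgebra generated by the $e_i,f_i$. Choosing an integral base $\cB$ for $\Delta_1$ (which exists by Proposition~\ref{prop:2.11}(iv)), I would use it as a set of simple-type generators: for each $\alpha\in\cB$ set $\phi(x_\alpha)=y_\alpha$, and define $\phi$ on the dual root vectors so that $\phi$ carries $\check\alpha$ to the corresponding coroot $\psi(\alpha)\check{\,}$. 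The key compatibility I need is that the Cartan integers match, i.e.\ $\alpha(\check\beta)=\psi(\alpha)(\psi(\beta)\check{\,})$ for $\alpha,\beta\in\cB$; this is exactly \eqref{eq:co-rel} in the preceding lemma, which guarantees that $\psi$ preserves the numbers $\la\cdot,\cdot\ra$ and hence the generalized Cartan matrix $A_{\cB}$.

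Next I would invoke the universal property to produce $\phi$. Since $\cB$ is linearly independent (Proposition~\ref{prop:2.11}(iv) together with Proposition~\ref{prop:1.10}(i)), it spans a realization of the symmetrizable GCM $A_{\cB}$, and by Proposition~\ref{prop:1.10}(iii) the subalgebra generated by $\fh_\cB$ and the root vectors over $\cB$ is a copy of the Kac--Moody algebra $\g(A_{\cB})$ whose core is $(\g_1)_c$. The matching of Cartan integers means the $y_\alpha$ and the coroots $\psi(\alpha)\check{\,}$ satisfy the same Serre relations inside $(\g_2)_c$. Applying \cite[Prop.~II.11]{Ne00b} (as in the proof of Proposition~\ref{prop:1.10}(iii)) I obtain a homomorphism $\g(A_{\cB})\to\g_2$ sending $e_\alpha\mapsto y_\alpha$; restricting to the core gives a Lie algebra homomorphism $\phi\:(\g_1)_c\to(\g_2)_c$ with $\phi(x_\alpha)=y_\alpha$ for $\alpha\in\cB$. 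Because $\psi$ is a bijection of root systems and maps the $\Z$-span of $\cB$ onto $\Delta_2$, the image $\phi((\g_1)_c)$ contains root vectors for every root of $\Delta_2$, so $\phi$ is surjective; an identical argument applied to $\psi^{-1}$ (with the images $y_\alpha$ and the preimages $x_\alpha$) produces an inverse homomorphism, giving bijectivity and the property $\phi(\g_{1,\alpha})=\g_{2,\psi(\alpha)}$ for all $\alpha\in\Delta_1$.

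For the statement $\phi(\g_{1,\alpha})=\g_{2,\psi(\alpha)}$ for \emph{every} $\alpha\in\Delta_1$ (not just $\alpha\in\cB$), I would argue that $\phi$ intertwines the $\cQ$-gradings: since $\cB$ generates $\cQ=\Spann_\Z\Delta_1$ and $\psi$ is additive on this lattice (being the restriction of a linear map), $\phi$ raises a root vector of weight $\gamma=\sum n_\alpha\alpha$ to weight $\psi(\gamma)=\sum n_\alpha\psi(\alpha)$, which is the grading-preservation claim. Uniqueness is immediate: any two homomorphisms agreeing on the generating set $\{x_\alpha:\alpha\in\cB\}$ of the perfect core $(\g_1)_c$ must coincide, since $(\g_1)_c$ is generated as a Lie algebra by these vectors and their brackets.

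The main obstacle I expect is the passage from the affine (finite-rank) isomorphism data to a correctly normalized pair of \emph{Chevalley-type} generators over the integral base $\cB$. The subtlety is that $\cB$ consists of integrable roots forming a linearly independent simple system, but the affine root system $\Delta_1$ also carries the isotropic direction $\Z\delta$ (Proposition~\ref{prop:2.11}(iii)), so I must check that $\psi$ carries $\delta$ to the corresponding isotropic generator of $\Delta_2$ and that the coroot relations \eqref{eq:brack-rel} and $[x_\alpha,x_{-\alpha}]=\check\alpha$ survive under $\phi$. Handling the central/imaginary direction correctly — ensuring the images $y_\alpha$ have their brackets landing on the right coroots rather than differing by central scalars — is the delicate point, but it is controlled by the equality \eqref{eq:co-rel} of Cartan integers and by the fact that on the core these coroots span $\fh\cap(\g_2)_c$ (Remark~\ref{rem:1.4}(c)).
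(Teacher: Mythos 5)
Your proof has two genuine gaps, and both come from treating the integral base $\cB$ as if it were a simple system.

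The existence step misapplies Proposition~\ref{prop:1.10}(iii) and the universal property \cite[Prop.~II.11]{Ne00b}: these require a linearly independent \emph{simple system} $\Pi$, i.e.\ $\alpha - \beta \notin \Delta$ for $\alpha, \beta \in \Pi$, and this condition is exactly what forces the relations $[\g_{\alpha}, \g_{-\beta}] \subeq \g_{\alpha - \beta} = \{0\}$ for $\alpha \not= \beta$, which are among the defining relations of the presentation. An integral base in the sense of Proposition~\ref{prop:2.11}(iv) is only a linearly independent subset of $\Delta_i$ whose $\Z$-span contains $\Delta$; it need not be a simple system. Concretely, in $A_1^{(1)}$, realized as $\hat\cL(\fsl_2(\K))$, the set $\cB = \{\alpha, \alpha + \delta\}$ is an integral base, but $(\alpha+\delta) - \alpha = \delta \in \Delta$, and with $x_\alpha = 1 \otimes e$, $x_{\alpha+\delta} = t \otimes e$, $x_{-\alpha} = 1 \otimes f$ one gets $[x_{\alpha+\delta}, x_{-\alpha}] = t \otimes h \not= 0$, so the relation ``$[e_i, f_j] = 0$ for $i \not= j$'' fails. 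The matching of Cartan integers \eqref{eq:co-rel} cannot repair this: the defect is not in the matrix $A_\cB$ but in the fact that the chosen root vectors do not satisfy the defining relations of $\g(A_\cB)$. The paper's proof is structured precisely to avoid this. It constructs an isomorphism $\gamma \: \g_1 \to \g_2$ inducing $\psi$ from a genuine generating simple system $\Pi_1$ (both algebras have the same generalized Cartan matrix by \eqref{eq:co-rel}), and it uses $\cB$ only through diagonal automorphisms: writing $\gamma(x_\alpha) = \lambda_\alpha y_\alpha$, the linear independence of $\cB$ yields a character $\chi \: \Spann_\Z \cB \to \K^\times$ with $\chi(\alpha) = \lambda_\alpha$, and $\phi := \gamma \circ \phi_\chi^{-1}$ (Lemma~\ref{lem:diag-auto}) then satisfies $\phi(x_\alpha) = y_\alpha$.

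The uniqueness step rests on a false claim: $(\g_1)_c$ is \emph{not} generated as a Lie algebra by $\{x_\alpha \: \alpha \in \cB\}$. Brackets of positive root vectors never produce negative root vectors; for a simple system these elements generate only the positive nilpotent subalgebra, and for the integral base $\{\alpha, \alpha+\delta\}$ above they even generate a two-dimensional abelian subalgebra, since $2\alpha + \delta \notin \Delta$. This is exactly why the lemma asserts uniqueness only among isomorphisms that in addition satisfy $\phi(\g_{1,\alpha}) = \g_{2,\psi(\alpha)}$ for all $\alpha \in \Delta_1$, and why the paper's argument must use that hypothesis: the composition $\Phi := \tilde\phi^{-1} \circ \phi$ preserves every root space and fixes each $x_\alpha$, hence fixes all coroots and acts by a scalar $\mu_\beta$ on each root space $\g_{1,\beta}$; it is therefore a diagonal automorphism $\phi_\nu$, and $\nu = 1$ on $\cB$ combined with $\Delta_1 \subeq \Spann_\Z \cB$ gives $\nu \equiv 1$, i.e.\ $\Phi = \id$. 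In short, an integral base supplies exactly the data needed for character automorphisms (a free abelian group whose span contains all roots), not the data for a Serre-type presentation; your construction needs the latter, while the paper's proof only ever uses the former.
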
  

\begin{proof} Let $\Pi_1 \subeq \Delta_1$ be a linearly independent 
generating simple system and $\Pi_2 := \psi(\Pi_1)$. 
In view of \eqref{eq:co-rel}, 
$$ \psi(\alpha)(\psi(\beta)\,\check{}) 
= \la \psi(\alpha), \psi(\beta)\ra 
= \la \alpha, \beta\ra 
=  \alpha(\check \beta)\quad \mbox{ for } \quad \alpha, \beta \in \Pi_1, $$
so that $\g_1$ and $\g_2$ correspond to the same generalized 
Cartan matrix, hence are isomorphic (cf.\ \cite[Ch.~1]{Ka90}). 

Let $\gamma \: \g_1 \to \g_2$ be an isomorphism with 
$\gamma(\fh_1) = \fh_2$, inducing the 
isomorphism $\phi \: \Delta_1 \to \Delta_2$. Since the 
non-isotropic root spaces $\g_{2,\psi(\alpha)}$ are $1$-dimensional, 
there exist scalars $\lambda_\alpha \in \K^\times$ with 
$$ \gamma(x_\alpha) = \lambda_\alpha y_\alpha \quad \mbox{ for } \quad 
\alpha \in \cB. $$
Since $\cB \subeq \Delta_1$ is linearly independent, 
there exists a group homomorphism  
$$ \chi \: \Spann_\Z \cB \to \K^\times \quad \mbox{ with } \quad 
\chi(\alpha) = \lambda_\alpha\quad \mbox{ for } \quad \alpha \in \cB. $$
Then 
$\phi_\chi(x) := \chi(\alpha) x$ for $x \in \g_{1,\alpha}$ 
defines an automorphism of Lie algebras (Lemma~\ref{lem:diag-auto}) 
and 
$$ \phi := \gamma \circ \phi_\chi^{-1} \: \g_1 \to \g_2 $$
maps each $x_\alpha$, $\alpha \in \cB$, to the corresponding 
element $y_\alpha \in \g_2$. This proves the existence of $\phi$. 

For the uniqueness, we assume that $\tilde\phi \: (\g_1)_c \to (\g_2)_c$ 
is another isomorphism with the same properties. Then 
$\Phi := \tilde\phi^{-1} \circ \phi \: (\g_1)_c \to (\g_1)_c$ is an 
isomorphism preserving each root space and fixing each 
$x_\alpha$, $\alpha \in \cB$. We have to show that this implies 
that $\Phi = \id_{\g_{1,c}}$. 

On each $3$-dimensional subalgebra $\g(\beta)$, $\beta \in \Delta_{1,i}$, 
$\Phi$ induces an automorphism preserving the root decomposition. 
This implies that $\Phi(\check\beta) = \check\beta$ and that 
$\Phi(x_\beta) = \mu_\beta x_\beta$ for some $\mu_\beta \in \K^\times$. 
Let $\Pi_1 = \{ \alpha_1,\ldots, \alpha_r\} 
\subeq \Delta_1$ be a generating simple system 
and $\mu_j := \mu_{{\alpha_j}}$. 
Let $\nu \: \Spann_\Z \Pi_1 \to \K^\times$ be the unique group 
homomorphism mapping $\alpha_j$ to $\mu_j$. 
Then $\phi_\nu \in \Aut(\g_1)$ is the unique automorphism 
fixing $\fh_1$ pointwise and multiplying each $x_{\alpha_j}$ with 
$\mu_j$. We conclude that $\Phi = \phi_\nu$, which implies that 
$\nu(\alpha) = 1$ for each $\alpha \in \cB$. Now 
$\Delta \subeq \Spann_\Z \cB$ finally leads to $\nu =1$, so that 
$\Phi = \phi_\nu = \id_{\g_{1,c}}$. 
\end{proof}

\begin{theorem}[Extension Theorem] \label{thm:3.3}  
Let $(\g_1, \fh_1, \kappa_1)$ and $(\g_2, \fh_2, \kappa_2)$ 
be locally affine Lie algebras.  
If 
$\psi \: (V_1, \Delta_1) \to (V_2, \Delta_2)$ is an isomorphism 
of locally affine root systems, then there exists an isomorphism of 
Lie algebras 
$$ \phi \: (\g_1)_c \to (\g_2)_c \quad \mbox{ with } \quad 
\phi(\g_{1,\alpha}) = \g_{2,\psi(\alpha)} \quad \mbox{ for } \quad 
\alpha \in \Delta_1. $$

If $\cB \subeq \Delta_{1,i}$ is an integral base 
$0\not=x_\alpha \in \g_{1,\alpha}$, $0\not=y_\alpha\in 
\g_{2, \psi(\alpha)}$ for $\alpha \in \cB$, 
then there exists a unique such $\phi$ with 
\begin{equation}
  \label{eq:unique2}
\phi(x_\alpha) = y_\alpha \quad \mbox{ for } \quad \alpha \in \cB. 
\end{equation}
\end{theorem}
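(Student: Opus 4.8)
The plan is to reduce the global statement about locally affine Lie algebras to the affine Kac--Moody situation already handled in Lemma~\ref{lem:3.3}, using the exhaustion of $\Delta_i$ by finite connected subsets provided by Proposition~\ref{prop:2.11}(ii). First I would fix a generator $\delta$ of $V_1^0 \cap \Spann_\Z \Delta_{1,i}$ and note that $\psi$ carries the isotropic line of $\Delta_1$ to that of $\Delta_2$; after rescaling we may assume $\psi(\delta) = \delta_2$, the corresponding generator on the other side. By Proposition~\ref{prop:2.11}(ii), $\Delta_{1,i}$ is the directed union of the finite connected subsets $F$ with $\delta \in \Spann_\Z F$, and for each such $F$ the subalgebra generated by a simple system $\Pi_F$ (together with a suitable $h_0$) is an affine Kac--Moody algebra with root system $\Delta_1^F$. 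The image $\psi(F)$ is then a finite connected subset of $\Delta_{2,i}$ with $\delta_2 \in \Spann_\Z \psi(F)$, so it defines a corresponding affine Kac--Moody subalgebra of $(\g_2)_c$, and $\psi$ restricts to an isomorphism of affine root systems $\Delta_1^F \to \Delta_2^{\psi(F)}$.

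Next I would apply Lemma~\ref{lem:3.3} to each such $F$. Given the prescribed root vectors $x_\alpha, y_\alpha$ for $\alpha$ in an integral base $\cB$, I would arrange matters so that $\cB$ is compatible with the exhaustion: by Proposition~\ref{prop:2.11}(iv) one may take $\cB$ to arise from a fixed integral base $\cB_1$ of $\oline\Delta_1$ together with one affine node $\alpha_0 = \delta - \alpha_1$, and then choose the exhausting family $\{F\}$ so that each $F$ contains enough elements of $\cB$ to pin down the local isomorphism. For each $F$, Lemma~\ref{lem:3.3} furnishes a unique isomorphism $\phi_F \colon (\g_1^F)_c \to (\g_2^{\psi(F)})_c$ with $\phi_F(\g_{1,\alpha}) = \g_{2,\psi(\alpha)}$ for $\alpha \in \Delta_1^F$ and $\phi_F(x_\alpha) = y_\alpha$ for $\alpha \in \cB \cap \Delta_1^F$.

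The heart of the argument is then a compatibility/gluing step: if $F \subeq F'$, the uniqueness clause in Lemma~\ref{lem:3.3}, applied to the affine subsystem $\Delta_1^F$ with the same normalization of root vectors, forces $\phi_{F'}\res_{(\g_1^F)_c} = \phi_F$. This coherence lets me define $\phi$ on the directed union $(\g_1)_c = \bigcup_F (\g_1^F)_c$ (using that the core is generated by the $\g(\alpha)$, $\alpha \in \Delta_{1,i}$, and that every integrable root lies in some $F$) by $\phi\res_{(\g_1^F)_c} := \phi_F$. The map $\phi$ is a well-defined Lie algebra homomorphism because the bracket of any two elements already lives in some common $(\g_1^F)_c$; it is bijective since each $\phi_F$ is, and it satisfies $\phi(\g_{1,\alpha}) = \g_{2,\psi(\alpha)}$ and the normalization \eqref{eq:unique2} by construction.

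The main obstacle I anticipate is exactly this gluing step: ensuring that the locally defined $\phi_F$ genuinely agree on overlaps, rather than differing by a diagonal automorphism $\phi_\chi$ (in the sense of Lemma~\ref{lem:diag-auto}). The pinning condition $\phi(x_\alpha)=y_\alpha$ on a \emph{full integral base} $\cB$ is what removes this ambiguity, so the delicate point is to verify that every finite $F$ in the exhaustion can be taken to meet $\cB$ in a set large enough that the uniqueness in Lemma~\ref{lem:3.3} applies with respect to these same $x_\alpha,y_\alpha$; the relation $\Delta_1 \subeq \Spann_\Z \cB$ is what ultimately propagates uniqueness from the finite pieces to all of $(\g_1)_c$. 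For the final uniqueness assertion of the theorem, I would argue that any two extensions agreeing with the $y_\alpha$ on $\cB$ must agree on each $(\g_1^F)_c$ by the local uniqueness, hence on the whole core.
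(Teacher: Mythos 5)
Your proposal follows the paper's proof in all essentials: reduce to the affine Kac--Moody case via Lemma~\ref{lem:3.3} on an exhaustion by affine subsystems containing $\delta$ in their $\Z$-span, and glue the local isomorphisms $\phi_F$ using the uniqueness clause of that lemma. The one point you explicitly leave open --- guaranteeing that each piece of the exhaustion meets $\cB$ in an integral base of that piece, so that the local uniqueness really pins down $\phi_F$ and forces compatibility on overlaps --- is exactly what the paper settles by a cleaner choice of the directed system: instead of arbitrary finite connected subsets of $\Delta_{1,i}$, it lets $F$ range over finite connected subsets of $\cB$ itself with $\delta \in \Spann_\Z F$, and puts $\Delta^F := \Delta_1 \cap \Spann F$. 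Since $\cB$ is linearly independent over $\Q$ and $\Delta_1 \subeq \Spann_\Z \cB$, any root in $\Spann_\Q F$ automatically lies in $\Spann_\Z F$, i.e.\ $F$ is itself an integral base of the affine system $\Delta^F$; hence Lemma~\ref{lem:3.3} applies verbatim with the prescribed vectors $x_\alpha$, $\alpha \in F$, and for $F \subeq E$ its uniqueness immediately gives $\phi_E\res_{(\g_{1,F})_c} = \phi_F$. Such $F$ are cofinal among your exhausting sets (any finite subset of $\Delta_{1,i}$ lies in some $\Spann_\Z F$ with $F \subeq \cB$ finite and connected), so with this modification your gluing step and the final uniqueness argument go through exactly as you describe.
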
 

\begin{proof} Let $\cB \subeq \Delta_{1,i}$ be an integral base 
(Proposition~\ref{prop:2.11}(iv)). 
For each $\alpha \in \cB$, we pick non-zero elements 
$x_\alpha \in \g_{1,\alpha}$ and $y_\alpha \in \g_{2,\psi(\alpha)}$.  

Let $F \subeq \cB$ be a connected finite subset with 
$\delta \in \Spann_\Z F$ and $\Delta^F := \Delta \cap \Spann F$, 
so that the subalgebra $\g_1(\Delta^F_i)  \subeq \g_1$ 
is the core of an affine 
Kac--Moody algebra and $F \subeq \Delta^F$ is an integral  
base (Proposition~\ref{prop:2.11}). 
With Lemma~\ref{lem:3.3} we now obtain a unique isomorphism 
$$ \phi_F \: (\g_{1,F})_c  \to (\g_{2,\psi(F)})_c \quad \mbox{ with } 
\quad \phi_F(x_\alpha) = y_\alpha \quad \mbox{ for } \quad 
\alpha \in F. $$ 

For any larger finite subset $E \supeq F$ with the same properties, we 
likewise obtain a unique isomorphism 
$$ \phi_E \: (\g_{1,E})_c \to (\g_{2,\psi(E)})_c \quad \mbox{ with } 
\quad \phi_E(x_\alpha) = y_\alpha \quad \mbox{ for } \quad 
\alpha \in E, $$ 
and the uniqueness of $\phi_F$ implies that 
$\phi_E\res_{\g_F} = \phi_F$. 
We conclude that the isomorphisms $\phi_F$ combine to a unique 
isomorphism 
$$\phi \: \g_{1,c} = \bigcup_F (\g_{1,F})_c  
\to \g_{2,c} = \bigcup_F (\g_{2,F})_c 
\quad \mbox{ with } \quad 
\phi(x_\alpha) = y_\alpha \quad \mbox{ for } \quad 
\alpha \in \cB.$$ 
\end{proof}

\begin{corollary}\label{cor:3.3}  
If $(\g_1, \fh_1, \kappa_1)$ and $(\g_2, \fh_2, \kappa_2)$ 
are locally affine Lie algebras with isomorphic root systems, then 
their cores are isomorphic. 
\end{corollary}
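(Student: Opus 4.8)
The plan is to read the statement directly off the Extension Theorem (Theorem~\ref{thm:3.3}), since that result already manufactures an isomorphism of cores out of an isomorphism of root systems. The only point that deserves a word of justification is that the hypothesis ``isomorphic root systems'' is exactly the datum that Theorem~\ref{thm:3.3} consumes.

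First I would unpack the hypothesis. The locally affine root system attached to $\g_j$ is the pair $(V_j, \Delta_{j,i})$ with $V_j = \Spann_\Q \Delta_{j,i}$, so an isomorphism of root systems is a linear bijection $\psi \: V_1 \to V_2$ with $\psi(\Delta_{1,i}) = \Delta_{2,i}$. By the preceding lemma (any isomorphism of locally extended affine root systems preserves the form up to a nonzero factor), $\psi$ carries the radical $V_1^0$ onto $V_2^0$; since it also respects the integral lattice $\Spann_\Z \Delta_{j,i}$, it sends a generator $\delta_1$ of $V_1^0 \cap \Spann_\Z \Delta_{1,i}$ to $\pm \delta_2$. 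By Proposition~\ref{prop:2.11}(iii) the full root system is $\Delta = \Delta_i \cup (\Z\delta \setminus \{0\})$, so $\psi$ extends canonically to a bijection $\Delta_1 \to \Delta_2$ of the full root systems. This is precisely an isomorphism of locally affine root systems in the sense required by Theorem~\ref{thm:3.3}.

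With this identification in place, the Extension Theorem furnishes an isomorphism of Lie algebras $\phi \: (\g_1)_c \to (\g_2)_c$ with $\phi(\g_{1,\alpha}) = \g_{2,\psi(\alpha)}$ for all $\alpha \in \Delta_1$, and in particular $(\g_1)_c \cong (\g_2)_c$, which is the assertion. There is therefore no genuine obstacle at the level of the corollary: all the real work --- constructing the isomorphism locally on the cores of the rank-$2$ affine Kac--Moody subalgebras $\g(\Pi_F)$ via Lemma~\ref{lem:3.3}, and patching these compatibly along the directed union of finite connected subsets $F \subeq \cB$ --- has already been discharged in the proof of Theorem~\ref{thm:3.3}. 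The sole thing to verify is the matching of the two notions of ``isomorphic root systems'', which is the content of the second paragraph above.
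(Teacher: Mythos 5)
Your proposal is correct and takes essentially the same route as the paper: the paper gives no separate proof of Corollary~\ref{cor:3.3}, treating it as an immediate consequence of the Extension Theorem~\ref{thm:3.3}, exactly as you do. Your second paragraph --- checking that an isomorphism of the locally affine root systems $(V_j,\Delta_{j,i})$ preserves the radical and the lattice, hence sends $\delta_1$ to $\pm\delta_2$ and induces a bijection of the full root systems including the isotropic roots --- makes explicit a bridging detail that the paper leaves implicit, and that verification is sound.
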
 

\begin{remark} In general there is no unique extension 
of $\phi$ to all of $\g_1$. If $h_1 \in \fh_1$, then any 
such extension mapping $\fh_1$ into $\fh_2$ would have to map 
$h$ to an element $h_2$ satisfying 
$\psi(\alpha)(h_2) = \alpha(h_1)$ for each 
$\alpha \in \Delta_1$. 
This determines $h_2$ uniquely up to a central element. On the other 
hand, every linear map $\g_1/[\g_1,\g_1] \to \z(\g_2)$ is a homomorphism 
of Lie algebras that can be added to any homomorphism 
$\phi \: \g_1 \to \g_2$. 
\end{remark}

\begin{theorem}[Uniqueness Theorem]\label{thm:3.4}  
If $(\g_1, \fh_1, \kappa_1)$ and $(\g_2, \fh_2, \kappa_2)$ 
are minimal locally affine Lie algebras with isometrically 
isomorphic root systems, then there exists an isomorphism 
$\phi \: (\g_1, \fh_1, \kappa_1) \to (\g_2, \fh_2, \kappa_2)$ 
of quadratic split Lie algebras. 
\end{theorem}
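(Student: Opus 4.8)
The plan is to reduce the Uniqueness Theorem to the Extension Theorem~\ref{thm:3.3} together with the double extension realization described in Remark~\ref{rem:3.7}, exploiting minimality to control the one-dimensional center and the degree derivation. First I would apply the Extension Theorem to the given isometric isomorphism $\psi \: (V_1,\Delta_1) \to (V_2,\Delta_2)$ to obtain an isomorphism of cores $\phi_c \: (\g_1)_c \to (\g_2)_c$ with $\phi_c(\g_{1,\alpha}) = \g_{2,\psi(\alpha)}$ for all $\alpha \in \Delta_1$; choosing the root vectors $x_\alpha, y_\alpha$ along an integral base $\cB$ normalizes $\phi_c$ uniquely. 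Since $\psi$ is assumed \emph{isometric} (preserving $(\cdot,\cdot)$ exactly, not just up to scale), I would check that $\phi_c$ is compatible with the restrictions $\kappa_1\res_{(\g_1)_c}$ and $\kappa_2\res_{(\g_2)_c}$; this uses \eqref{eq:brack-rel} and \eqref{eq:intflat} to express $\kappa$ on $\fh_c \cap (\g_i)_c = \Spann \check\Delta_{i}$ in terms of the coroots, and the fact that $\psi$ carries coroots to coroots via \eqref{eq:co-rel}.

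The second main step is to extend $\phi_c$ across the two transversal directions that distinguish a minimal locally affine Lie algebra from its core. By Definition~\ref{def:min}, $\g_i = (\g_i)_c + \K d_i$ with $\g_c$ a hyperplane and $d_i \in \fh_i$ chosen so that $\{\alpha \in \Delta_i \: \alpha(d_i)=0\}$ is a reflectable section and $\delta_i(d_i) = 1$. Using Remark~\ref{rem:3.7}(d), each $(\g_i, \fh_i, \kappa_i)$ is isomorphic to the double extension $(\K \oplus_\omega \g_{i,cc}) \rtimes_D \K$, where the degree derivation $D$ is induced by $\lambda_i = d_i^\flat$ acting diagonally by $\lambda_i(\alpha^\sharp) = \alpha(d_i)$. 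The plan is to show that $\phi_c$ intertwines the two degree derivations: because $\psi$ preserves the grading datum (it carries $\delta_1$ to $\delta_2$ up to sign and the reflectable sections correspond), the diagonal derivation $\tilde D_{\lambda_1}$ on $(\g_1)_c$ is carried by $\phi_c$ to $\tilde D_{\lambda_2}$ on $(\g_2)_c$, possibly after adjusting $d_2$ by a central element and rescaling. One then sets $\phi(d_1) := d_2 + (\text{central correction})$ and checks that the resulting map respects the double-extension bracket $[(z,x,t),(z',x',t')]$ and the bilinear form $\kappa((z,x,t),(z',x',t')) = \oline\kappa(x,x') + zt'+z't$.

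The delicate bookkeeping concerns the central element $c_i \in \fz((\g_i)_c)$ and the derivation $d_i$, the two directions beyond the centerless core $\g_{i,cc}$. Since $\dim \fz((\g_i)_c) = 1$ (Proposition~\ref{prop:2.11}(i)) and $\phi_c$ already maps $\fz((\g_1)_c)$ isomorphically to $\fz((\g_2)_c)$ preserving the bracket, $\phi_c(c_1)$ is a nonzero scalar multiple of $c_2$; the isometry of $\psi$ pins down this scalar. I would then verify that the cocycle $\omega_1(\oline x,\oline x') = \lambda_1([x,x'])$ is pulled back from $\omega_2$ under the induced map on $\g_{i,cc}$, so that the extension of $\phi_c$ to $d_1 \mapsto d_2$ is a genuine Lie algebra homomorphism, and finally that it sends $\fh_1 = \fh_{1,c} + \K d_1$ onto $\fh_2 = \fh_{2,c} + \K d_2$ and intertwines the forms.

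The hard part will be the central and derivation-level compatibility: making sure that the freedom in choosing $d_2$ (unique only up to $\fz(\g_2)$, as noted in the Remark following Corollary~\ref{cor:3.3}) can be fixed so that $\phi$ simultaneously respects the bracket, sends $\delta_1^\sharp$-data correctly, and is an isometry for $\kappa$ rather than merely a similarity. Here minimality is essential: it forces $\g_c$ to be exactly a hyperplane, so there is precisely one transversal derivation direction to match, and the reflectable section condition guarantees that $d_i$ is adapted to the grading, removing the ambiguity that obstructs extension in the general (non-minimal) case discussed in the preceding remark.
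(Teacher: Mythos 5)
Your overall strategy (Extension Theorem for the cores, isometry of the core isomorphism, intertwining of the degree derivations, reconstruction via the double extension of Remark~\ref{rem:3.7}(d)) is exactly the paper's route, but there is a genuine gap at the step where you assert that ``$\psi$ preserves the grading datum \ldots\ and the reflectable sections correspond.'' The hypothesis only provides an isometric isomorphism $\psi \: (V_1,\Delta_1) \to (V_2,\Delta_2)$; it carries the reflectable section $\Delta_{1,{\rm red}} = \{ \alpha \in \Delta_1 \: \alpha(d_1)=0\}$ to \emph{some} reflectable section of $\Delta_2$, but there is no reason this should be $\Delta_{2,{\rm red}} = \{ \alpha \in \Delta_2 \: \alpha(d_2)=0\}$. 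Your proposed repair --- adjusting $d_2$ by a central element and rescaling --- cannot work: every root vanishes on $\fz(\g_2)$, so replacing $d_2$ by $c\,d_2 + z$ with $z$ central and $c \in \K^\times$ does not change the set $\{\alpha \: \alpha(d_2) = 0\}$ at all. Nor can one always find some other $d_2' \in \fh_2$ cutting out $\psi(\Delta_{1,{\rm red}})$: in a minimal LALA one has $\fh_2 = \fh_{2,c} + \K d_2$ with $\fh_{2,c} = \Spann \check\Delta_{2,i}$, so the reflectable sections cut out by elements of $\fh_2$ correspond to functionals on $\oline V_2$ realized by \emph{finite} linear combinations of coroots, whereas (by the proof of Theorem~\ref{thm:beam-class}) a general reflectable section corresponds to an arbitrary integral functional $\gamma \: \oline V_2 \to \Q$; in infinite rank these do not coincide. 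If $\psi(\Delta_{1,{\rm red}})$ is not cut out by any element of $\fh_2$, then the core isomorphism $\phi_c$ attached to this particular $\psi$ intertwines $\ad d_1$ with no inner derivation of $\g_2$ coming from $\fh_2$, and the extension you want does not exist for that $\phi_c$.

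The missing ingredient is precisely Theorem~\ref{thm:beam-class}: any two reflectable sections of a locally affine root system are conjugate under an isometric automorphism inducing the identity on $\oline V$. The paper's proof invokes this \emph{first}, to replace the given isomorphism by one mapping $\Delta_{1,{\rm red}}$ onto $\Delta_{2,{\rm red}}$ (and replaces $\psi$ by $-\psi$ if necessary to arrange $\psi(\delta_1) = \delta_2$); only then does $\psi(\alpha)(d_2) = \alpha(d_1)$ hold for all $\alpha \in \Delta_1$, which is what makes the core isomorphism from Theorem~\ref{thm:3.3} satisfy $\phi \circ \ad d_1 = \ad d_2 \circ \phi$ and lets the double-extension argument go through. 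Note that this is legitimate because the theorem only asserts the \emph{existence} of an isomorphism $\g_1 \to \g_2$, not that the given root-system isomorphism itself ``extends.'' With that correction inserted at the start, the rest of your outline (isometry of $\phi_c$ via the coroot computation, passage to the centerless core, the map $(z,x,t) \mapsto (z,\oline\phi(x),t)$ on double extensions, and the identification $\hat\g_j \cong \g_j$ from Remark~\ref{rem:3.7}(d)) is sound and agrees with the paper; in particular your central-element bookkeeping in the third paragraph is subsumed by the double-extension formalism and needs no separate argument.
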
 

\begin{proof} Since both Lie algebras $\g_j$ are minimal locally affine, 
there exist $d_j \in \fh_j$, $j =1,2$,  such that 
$$\Delta_{j,{\rm red}} := \{ \alpha \in \Delta_j \: \alpha(d_j) = 0 \} $$
define reflectable sections and $\delta_j(d_j) = 1$ holds for the respective basic 
isotropic roots $\delta_j$. In view of Theorem~\ref{thm:beam-class} below, 
there exists an isomorphism $\psi \: \Delta_{1,i} \to \Delta_{2,i}$ 
of root systems mapping $\Delta_{1,{\rm red}}$ to $\Delta_{2,{\rm red}}$, 
and we may further assume that $\psi(\delta_1) = \delta_2$ (which can be 
achieved by replacing $\psi$ by $-\psi$ if necessary). 
Then $\psi(\alpha)(d_2) = \alpha(d_1)$ holds for 
$\alpha\in \Delta_{1,{\rm red}}$ and also for $\alpha = \delta_1$, 
hence for each $\alpha \in \Delta_1$. 

Now we apply Theorem~\ref{thm:3.3} to obtain an isomorphism 
$\phi \: \g_{1,c} \to \g_{2,c}$ with 
$\phi(\g_{1,\alpha}) = \g_{2,\psi(\alpha)}$ for 
$\alpha \in \Delta_{1,i}$. For $x \in \g_{1,\alpha}$ we have
$$ \phi([d_1, x]) = \alpha(d_1)\phi(x) 
= \psi(\alpha)(d_2) \phi(x)
= [d_2, \phi(x)],$$
which implies that $\phi \circ \ad d_1 = \ad d_2 \circ \phi$. 

Next we claim that $\phi$ is isometric. To this end, we consider 
the symmetric bilinear form $\kappa := \phi^*\kappa_2 - \kappa_1$ 
on $\g_{1,c}$. To see that $\kappa$ vanishes, we note that its 
radical is an ideal of $\g_{1,c}$. If we can show that 
it contains $\check \Delta_{i,1}$, then it also contains 
all subalgebras $\g(\alpha)$, $\alpha \in \Delta_i$, and therefore 
all of $\g_{1,c}$. It therefore remains to show that 
for $\alpha, \beta \in \Delta_{1,i}$ we have 
\begin{equation}
  \label{eq:kappa-rel1}
\kappa_2(\phi(\check \alpha), \phi(\check \beta)) 
= \kappa_1(\check \alpha, \check \beta). 
\end{equation}
From $\phi(\g_{1,\alpha}) = \g_{2,\psi(\alpha)}$ we derive that 
$\phi(\check \alpha) = \psi(\alpha)\,\check{}$, so that 
$$ \kappa_1(\check \alpha, \check \beta) 
= \frac{4(\alpha,\beta)}{(\alpha,\alpha)(\beta,\beta)}
= \frac{4(\psi(\alpha),\psi(\beta))}{(\psi(\alpha),\psi(\alpha))
(\psi(\beta),\psi(\beta))}
= \kappa_2(\phi(\check \alpha), \phi(\check \beta)) $$ 
implies \eqref{eq:kappa-rel1}. 
This proves that $\phi \: \g_{1,c} \to \g_{2,c}$ is isometric. 

In particular, we derive that the induced isomorphism 
$\oline\phi \: \g_{1,cc} \to \g_{2,cc}$ of the centerless cores 
is isometric and intertwines the derivations 
$D_j$ induced by $\ad d_j$ on $\g_{j,cc}$. Therefore $\oline\phi$ 
extends to an isomorphism 
$$ \hat\phi \: \hat\g_1 \to \hat\g_2, \quad 
(z,x,t) \mapsto (z, \oline\phi(x), t) $$
of the corresponding double extensions (Definition~\ref{def:doubext}). 
Finally, Remark~\ref{rem:3.7}(d) 
implies that $\hat\g_j \cong \g_j$, and the assertion follows. 
\end{proof}

\begin{corollary}\label{cor:3.4a}  
If $(\g, \fh, \kappa)$ is a minimal locally affine Lie algebra 
and $\fh'$ is another splitting Cartan subalgebra for which 
the corresponding root system $\Delta'$ is isomorphic to $\Delta$, 
then there exists an automorphism $\phi$ of $\g$ with 
$\phi(\fh)= \fh'$. 
\end{corollary}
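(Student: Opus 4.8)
The plan is to apply the Uniqueness Theorem (Theorem~\ref{thm:3.4}) to the two quadratic split Lie algebras $(\g,\fh,\kappa)$ and $(\g,\fh',\kappa)$, which share the same underlying Lie algebra and the same invariant form and differ only in the chosen splitting Cartan subalgebra. Once we know that both are minimal locally affine and that their root systems are isometrically isomorphic, Theorem~\ref{thm:3.4} produces an isomorphism $\phi \: (\g,\fh,\kappa) \to (\g,\fh',\kappa)$ of quadratic split Lie algebras; by definition such a $\phi$ is a Lie algebra automorphism of $\g$ satisfying $\phi(\fh) = \fh'$, which is exactly the assertion. Thus the whole problem reduces to verifying the hypotheses of Theorem~\ref{thm:3.4} for the second structure $(\g,\fh',\kappa)$.

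Several of these hypotheses come for free. Since $\kappa$ is unchanged, $(\g,\fh',\kappa)$ is split quadratic. Corality with respect to $\fh'$ is automatic: as $(\g,\fh,\kappa)$ is minimal, $\g_c = [\g,\g]$ is a hyperplane, and if we had $\fh' \subseteq \g_c$ then every nonzero $\fh'$-weight space would lie in the ideal $\g_c$ (because $[\g_c,\g] \subseteq \g_c$ forces $\ad\fh'$ to act trivially on $\g/\g_c$), while the zero weight space $\g_0' = \fh' \subseteq \g_c$, whence $\g = \g_c$, contradicting that $\g_c$ is a proper hyperplane. Therefore $\fh' \not\subseteq \g_c$ and $\g = \fh' + \g_c$, so the core and the hyperplane condition, both governed by the intrinsic ideal $[\g,\g]$, are unaffected by the choice of Cartan subalgebra. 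Concerning the isometry hypothesis, the given isomorphism $\Delta' \cong \Delta$ a priori preserves the induced forms only up to a positive scalar (the lemma recording \eqref{eq:co-rel}); but since we only use the resulting $\phi$ as a Lie algebra automorphism carrying $\fh$ onto $\fh'$, we are free to rescale $\kappa$ on the second copy so that the isomorphism becomes isometric, at no cost to the conclusion.

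The hard part, and the one where the content of the corollary sits, is to confirm that $(\g,\fh',\kappa)$ is genuinely a minimal locally affine Lie algebra. This requires (Irr) and, more delicately, (Int) with respect to $\fh'$: one must show that every non-isotropic root of $(\g,\fh')$ is integrable. This is not read off the abstract root-system isomorphism $\Delta' \cong \Delta$, since integrability is an intrinsic condition on $\g$ (local nilpotency of the corresponding root vectors) rather than a combinatorial feature of the root lattice. Once $(\g,\fh',\kappa)$ is established to be a LEALA, its root system is locally affine with $\Delta' \neq \Delta_i'$, and minimality follows by exhibiting a scaling element: the reflectable section realized in $\fh$ by the element $d$ transports, via $\Delta' \cong \Delta$, to a reflectable section of $\Delta'$, which can be realized by a suitable $d' \in \fh'$ normalized by $\delta'(d') = 1$. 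With $(\g,\fh,\kappa)$ and $(\g,\fh',\kappa)$ both minimal locally affine and their root systems isometrically isomorphic, Theorem~\ref{thm:3.4} then delivers the desired automorphism, completing the proof.
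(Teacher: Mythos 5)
Your first paragraph is precisely the paper's argument: Corollary~\ref{cor:3.4a} is stated with no separate proof, as an immediate application of the Uniqueness Theorem~\ref{thm:3.4} to the two quadratic split structures $(\g,\fh,\kappa)$ and $(\g,\fh',\kappa)$ on one and the same Lie algebra, an isomorphism of such structures being by definition an automorphism of $\g$ carrying $\fh$ onto $\fh'$. Your rescaling remark is a correct and genuinely needed refinement: Theorem~\ref{thm:3.4} asks for an \emph{isometric} isomorphism while the corollary only assumes an isomorphism, and since an isomorphism of the root systems preserves the induced forms up to a positive rational factor $c$, one may replace $\kappa$ by $c\kappa$ in the second structure; this costs nothing, because the conclusion uses $\phi$ only as a Lie algebra automorphism.

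The weak point is the one you name yourself: you never establish that $(\g,\fh',\kappa)$ is a minimal locally affine Lie algebra, you only reduce the corollary to that statement. Verifying (Int) and (Irr) with respect to $\fh'$, that $\Delta_i'$ is locally affine with $\Delta'\neq\Delta_i'$, and that a transported reflectable section is actually cut out by some element $d'\in\fh'$ (not automatic in infinite rank), is exactly what you call ``the hard part'' and then skip, so as written your text is a reduction, not a proof. A smaller flaw: your corality argument conflates the core of $(\g,\fh')$ with $[\g,\g]$; what you actually prove is $\g=\fh'+[\g,\g]$, whereas corality of $(\g,\fh')$ requires every $\fh'$-root space to lie in the subalgebra generated by the \emph{integrable} $\fh'$-root spaces, which again cannot be checked before (Int) is settled. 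In fairness, the paper closes none of these points either: it implicitly reads the hypothesis as saying that the structure attached to $\fh'$ is again minimal locally affine with isomorphic root system, and under that reading your argument, like the paper's, is complete; under a literal reading of the hypothesis, your proposal and the paper's corollary carry the same unproven verification.
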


\subsection*{Unitary real forms} 

In this subsection, we consider complex Lie algebras and suitable real 
forms which are compatible with all the relevant structure. 
The main point is the existence of unitary real forms of minimal 
locally affine Lie algebras. 

\begin{definition} \mlabel{def:unitaryrealform} 
An {\it involution} of a complex quadratic split 
Lie algebra $(\g,\fh,\kappa)$ is an involutive antilinear antiautomorphism 
$\sigma \: \g \to \g, x \mapsto x^*$ satisfying 
\begin{itemize}
\item[\rm(I1)] $\alpha(x) \in \R$ for $x = x^* \in \fh$. 
\item[\rm(I2)] $\sigma(\g_\alpha) = \g_{-\alpha}$ for $\alpha \in \Delta.$ 
\item[\rm(I3)] $\kappa(\sigma(x),\sigma(y)) = \oline{\kappa(x,y)}$ for 
$x,y \in \g.$
\end{itemize}
Then we call $(\g,\fh,\kappa,\sigma)$  an {\it involutive quadratic split 
Lie algebra} and write 
$$\fk := \fk(\sigma) := \{ x \in \g \: x^* = - x\}$$ for the corresponding 
real form of $\g$. 
We call $\sigma$ and the corresponding real form $\fk$ {\it unitary} 
if the hermitian form 
$$ \kappa_\sigma(x,y) := \kappa(x,\sigma(y)) $$
is positive semidefinite on $\g_c$. 
\end{definition}

\begin{lemma} \label{lem:3.14} 
Let $(\g,\fh,\kappa,\sigma)$ be an involutive affine Kac--Moody algebra 
with a unitary real form. For $\alpha \in \Delta_i$, 
pick $x_{\pm\alpha} 
\in \g_{\pm\alpha}$ with $[x_\alpha, x_{-\alpha}] = \check \alpha$. Then 
$$ x_\alpha^* = \lambda_\alpha x_{-\alpha} \quad \mbox{ for some } 
\quad \lambda_\alpha > 0. $$
\end{lemma}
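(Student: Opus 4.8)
The plan is to observe first that the existence of a scalar $\lambda_\alpha \in \C^\times$ is essentially automatic, so that the entire content of the lemma lies in its sign. Since $\alpha$ is integrable, the root space $\g_{-\alpha}$ is one-dimensional, and (I2) gives $\sigma(x_\alpha) = x_\alpha^* \in \g_{-\alpha}$. Hence there is a unique scalar $\lambda_\alpha$ with $x_\alpha^* = \lambda_\alpha x_{-\alpha}$, and $\lambda_\alpha \neq 0$ because $\sigma$ is a bijection and $x_\alpha \neq 0$. The strategy for controlling the sign is to evaluate the hermitian form $\kappa_\sigma$ on the vector $x_\alpha$ and read off $\lambda_\alpha$ from the resulting identity.

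Concretely, I would compute, using the complex bilinearity of $\kappa$,
$$ \kappa_\sigma(x_\alpha, x_\alpha) = \kappa(x_\alpha, \sigma(x_\alpha)) = \lambda_\alpha\, \kappa(x_\alpha, x_{-\alpha}). $$
The next step is to pin down the factor $\kappa(x_\alpha, x_{-\alpha})$. Since $[x_\alpha, x_{-\alpha}] = \check\alpha$, Remark~\ref{rem:1.4}(a) (see \eqref{eq:intflat}) gives $\kappa(\check\alpha,\check\alpha) = 2\kappa(x_\alpha, x_{-\alpha})$, so that $\kappa(x_\alpha, x_{-\alpha}) = \tfrac12 \kappa(\check\alpha,\check\alpha)$. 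For an affine Kac--Moody algebra $\Delta_i$ is connected, so the normalization of Remark~\ref{rem:1.4}(b) applies and yields $\kappa(\check\alpha,\check\alpha) \in \Q^\times_+$; in particular $\kappa(x_\alpha, x_{-\alpha}) > 0$.

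It then remains to combine these facts. Being a diagonal value of the hermitian form $\kappa_\sigma$, the left-hand side $\kappa_\sigma(x_\alpha, x_\alpha)$ is a real number, and since $\sigma$ is unitary this number is moreover $\geq 0$. As the factor $\kappa(x_\alpha, x_{-\alpha})$ is a strictly positive real, the displayed identity forces $\lambda_\alpha$ to be real and $\geq 0$; together with $\lambda_\alpha \neq 0$ this gives $\lambda_\alpha > 0$, as claimed. The only genuinely delicate point is the last one: positive semidefiniteness alone yields $\lambda_\alpha \geq 0$, and the strict inequality is recovered not from definiteness of $\kappa_\sigma$ (which need not hold) but from the bijectivity of $\sigma$, which guarantees $\lambda_\alpha \neq 0$. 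I expect no further obstacles, the realness and sign bookkeeping being the substance of the argument.
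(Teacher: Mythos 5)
Your proof is correct and is essentially the argument the paper leaves implicit: the paper's own proof is a one-line remark that the positivity of $\lambda_\alpha$ is equivalent to $\fk \cap \g(\alpha) \cong \su_2(\C)$ and that this is "easy to verify by computation" --- the computation being exactly the sign bookkeeping you carry out via $\kappa_\sigma(x_\alpha,x_\alpha) = \lambda_\alpha\,\kappa(x_\alpha,x_{-\alpha})$ and $\kappa(x_\alpha,x_{-\alpha}) = \tfrac12\kappa(\check\alpha,\check\alpha)>0$. Your appeal to the normalization $\kappa(\check\alpha,\check\alpha)\in\Q^\times_+$ of Remark~\ref{rem:1.4}(b) is legitimate (it is the paper's standing convention from Section~\ref{sec:2} on) and in fact is automatic here, since (I1) forces $\sigma(\check\alpha)=\check\alpha$, whence $\kappa(\check\alpha,\check\alpha)=\kappa_\sigma(\check\alpha,\check\alpha)\geq 0$, and it is nonzero by Proposition~\ref{prop:introot}(ii).
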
 

\begin{proof} It is easy to verify this by computation because 
the positivity of $\lambda_\alpha$ is equivalent to 
$\fk \cap \g(\alpha) \cong \su_2(\C)$, the 
compact real form of $\fsl_2(\C) \cong \g(\alpha)$.  
\end{proof}

\begin{prop}\label{prop:ex-unitary} 
If $(\g,\fh,\kappa)$ is a complex minimal locally affine Lie algebra, then 
$\g$ has a unitary real form. 
\end{prop}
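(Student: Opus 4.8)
The plan is to build the involution first on the core $\g_c$, by gluing together the unitary involutions of the affine Kac--Moody subalgebras that exhaust it, and then to extend it across the one remaining dimension $\g = \g_c + \C d$ furnished by minimality. First I would fix an integral base $\cB \subseteq \Delta_i$ (Proposition~\ref{prop:2.11}(iv)) and, for each $\alpha \in \cB$, root vectors $x_{\pm\alpha} \in \g_{\pm\alpha}$ normalized by $[x_\alpha, x_{-\alpha}] = \check\alpha$. As in the proof of the Extension Theorem~\ref{thm:3.3}, $\g_c$ is the directed union of the cores $(\g_F)_c = \g(\Delta_i^F)$ of affine Kac--Moody algebras $\g_F$, indexed by the finite connected subsets $F \subseteq \cB$ with $\delta \in \Spann_\Z F$, each such $F$ being an integral base of the affine root system $\Delta^F$.

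Next I would produce a unitary involution on each piece, normalized so that the pieces are forced to agree. Since $\g_F$ is affine Kac--Moody it carries a unitary real form; restricting its involution to $(\g_F)_c$ and invoking Lemma~\ref{lem:3.14} gives $x_\alpha^* = \lambda_\alpha x_{-\alpha}$ with $\lambda_\alpha > 0$ for $\alpha \in F$ (here the standing normalization $(\alpha,\alpha) \in \Q^\times_+$ of $\kappa$ is exactly what makes Lemma~\ref{lem:3.14} applicable with positive $\lambda_\alpha$). Conjugating by the diagonal automorphism $\phi_\chi$ with $\chi(\alpha) = \sqrt{\lambda_\alpha}$ (Lemma~\ref{lem:diag-auto}) then yields a unitary involution $\sigma_F$ of $(\g_F)_c$ with $\sigma_F(x_\alpha) = x_{-\alpha}$ for $\alpha \in F$. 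Such a normalized involution is unique: if $\sigma_F, \sigma_F'$ both satisfy it, then $\sigma_F \circ \sigma_F'$ is a linear automorphism of $(\g_F)_c$ preserving every root space, fixing $x_\alpha$ for $\alpha \in F$, and fixing $\fh \cap (\g_F)_c$ pointwise (both involutions send $\g_\beta$ to $\g_{-\beta}$ and are unitary, so $\sigma_F(\check\beta) = \check\beta$ by Lemma~\ref{lem:3.14}); hence it is the identity by Lemma~\ref{lem:diag-auto}, using $\Delta^F \subseteq \Spann_\Z F$. Applying this uniqueness to $\sigma_E\vert_{(\g_F)_c}$ for $F \subseteq E$ shows the $\sigma_F$ are compatible and glue to one antilinear antiautomorphism $\sigma_c$ of $\g_c = \bigcup_F (\g_F)_c$ with $\sigma_c^2 = \id$. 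Positivity of $\kappa_{\sigma_c}$ on $\g_c$ is then automatic, as it holds on each $(\g_F)_c$.

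Finally I would extend $\sigma_c$ to $\g$. By minimality (Definition~\ref{def:min}) there is a $d \in \fh$ with $\g = \g_c + \C d$, $\delta(d) = 1$, and $\alpha(d) \in \Z$ for all $\alpha \in \Delta$; set $\sigma(x + zd) := \sigma_c(x) + \oline z\, d$. For $x \in \g_\alpha$ the identity $-[\sigma(d), \sigma_c(x)] = -[d, \sigma_c(x)] = \alpha(d)\,\sigma_c(x) = \sigma_c([d,x])$ shows that $\sigma_c$ anticommutes with $\ad d$ in precisely the way needed for $\sigma$ to be an antiautomorphism, while $d^* = d$ and $\check\alpha^* = \check\alpha$ give (I1), (I2) holds by construction, and (I3) reduces to real-valuedness of $\kappa$ on the hermitian elements $d$ and $\check\alpha$. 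Thus $(\g,\fh,\kappa,\sigma)$ is involutive, and since positivity is required only on $\g_c$, it is unitary. The main obstacle is the middle step: matching the local involutions across the exhaustion. What makes it work is the combination of Lemma~\ref{lem:3.14} (the scaling factors are positive, hence have real square roots, and the coroots are fixed) with the rigidity of Lemma~\ref{lem:diag-auto} (the normalized involution is unique, hence compatible).
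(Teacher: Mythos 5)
Your proposal is correct, and it reaches the result by a genuinely different route than the paper. The paper's proof is top-down: it regards $\g$ with the conjugate scalar multiplication as a second complex minimal locally affine Lie algebra $\oline\g$, notes that $\alpha \mapsto -\oline\alpha$ is an isometric isomorphism of locally affine root systems, and invokes the Uniqueness Theorem~\ref{thm:3.4} to produce one global antilinear antiautomorphism $\sigma$ of $\g$ at a stroke (normalized on an integral base via Theorem~\ref{thm:3.3} so that $\sigma^2 = \id$); only afterwards does it verify positivity, by comparing $\sigma$ with Kac's compact involution on each affine piece $\g_F$ and showing that the discrepancy $\sigma \circ \sigma_c$ is a diagonal automorphism $\phi_\chi$ with $\im(\chi) \subeq \R^\times_+$. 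Your proof is bottom-up: each local involution is made unitary and normalized from the outset (twisting Kac's involution by $\phi_\chi$ with $\chi(\alpha) = \sqrt{\lambda_\alpha}$, which is exactly where the positivity $\lambda_\alpha > 0$ of Lemma~\ref{lem:3.14} is used), and the global involution is obtained by gluing through a rigidity argument. What your route buys is independence from Theorem~\ref{thm:3.4}, and hence from Theorem~\ref{thm:beam-class} (conjugacy of reflectable sections, resting on Yoshii's classification), which the paper's proof needs; moreover positivity never has to be rechecked globally. The price is that everything hangs on the uniqueness of the normalized involution, and there you should repair one citation: Lemma~\ref{lem:diag-auto} is stated under the hypothesis $\g = \fh + \g(\cB)$, and the paper never establishes that $\g(F)$ generates $(\g_F)_c$ for an arbitrary integral base $F$ --- indeed the paper's own rigidity argument, the uniqueness part of Lemma~\ref{lem:3.3}, routes through a generating simple system $\Pi_1$ precisely to avoid this, and only at the end uses that the character is trivial on $F$. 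So your uniqueness step should be quoted as (or proved exactly like) Lemma~\ref{lem:3.3} with $\g_1 = \g_2 = \g_F$, $\psi = \id$ and $y_\alpha = x_\alpha$; with that substitution the step is correct. One further small normalization: condition (I3) evaluated on the pair $(d,d)$ requires $\kappa(d,d) \in \R$, which is not part of Definition~\ref{def:min}; replace $d$ by $d - \frac{1}{2}\kappa(d,d)\delta^\sharp$, which changes no value $\alpha(d)$, keeps $\delta(d) = 1$, and makes $\kappa(d,d) = 0$ (the same implicit normalization underlies Remark~\ref{rem:3.7}(d), on which the paper's proof also relies).
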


\begin{proof} Let $\oline\g$ denote the same Lie algebra $\g$, endowed 
with the complex conjugate scalar multiplication 
$z \bullet x := \oline z x$. Then $(\oline\g,\oline\fh,\oline\kappa)$ also 
is a complex minimal locally affine Lie algebra with 
$\oline\g_\alpha = \g_{\oline\alpha}$. Note that 
$\oline\alpha \: \oline\fh \to \C$ is complex linear because 
$\oline\fh$ carries the opposite complex structure.  

Now $\psi \: \Delta(\g,\fh) \to \Delta(\oline\fg,\oline\fh), \alpha 
\mapsto - \oline \alpha$ is an isometric isomorphism 
of locally affine root systems. 
With Theorem~\ref{thm:3.4} we obtain an 
isometric isomorphism 
$$ \tilde\sigma \: (\g,\kappa) \to (\oline\g,\oline\kappa) 
\quad \mbox{ with } \quad 
\tilde\sigma(\g_\alpha) = \g_{-\oline\alpha}
\quad \mbox{ and } \quad \tilde\sigma(d) = -d. $$
We next define an antilinear map 
$\sigma \: \g \to \g, x \mapsto -\tilde\sigma(x)$ and 
obtain an involutive antiautomorphism of 
$\g_c$ satisfying 
$\sigma(\g_\alpha) =\g_{-\alpha}$ for each $\alpha \in \Delta$. 

Let $\cB \subeq \Delta_i$ be an integral base and 
pick $x_{\pm\alpha} \in \g_{\pm\alpha}$, $\alpha \in \cB$, in such a 
way that $[x_\alpha, x_{-\alpha}] = \check \alpha$. 
In view of Theorem~\ref{thm:3.3} (cf.\ also Lemma~\ref{lem:diag-auto}), 
we may 
choose $\sigma$ in such a way that $\sigma(x_\alpha) =  x_{-\alpha}$. 
Then $\sigma^2(x_\alpha) = x_\alpha$ for each $\alpha \in \cB$, so that 
the uniqueness assertion in Theorem~\ref{thm:3.3} 
implies that $\sigma^2 = \id$ on $\g_c$, so that $\sigma(d) = d$ 
leads to $\sigma^2 = \id_\g$, i.e., $\sigma$ defines a real 
form $\fk := \{ x \in \g \: \sigma(x) = x\}$.  

To see that $\fk$ is unitary, we first show that 
whenever $x_{\pm\alpha} \in \g_{\pm\alpha}$ satisfy 
$[x_\alpha, x_{-\alpha}] = \check \alpha$, then 
$\sigma(x_\alpha) = \lambda_\alpha x_{-\alpha}$ for some 
real $\lambda_\alpha > 0$ (Lemma~\ref{lem:3.14}). 
Let $V_F$ be as in Proposition~\ref{prop:2.11}(iv) and $\g_F \subeq \g$ 
be a corresponding affine Kac--Moody subalgebra. Then 
$\sigma(\g_{F,c}) = \g_{F,c}$, and $\sigma$ induces an involution 
on the core $\g_{F,c}$ of $\g_F$. We know from 
\cite[\S 2.7 and  Thm.~11.7]{Ka90} that there exists a unitary involution 
$\sigma_c$ on $\g_F$. We thus obtain a complex linear automorphism 
$$ \phi := \sigma \circ \sigma_c \: \g_{F,c} \to \g_{F,c} $$
preserving all root spaces and satisfying 
$\phi(x_\alpha) = \mu_\alpha x_\alpha$ with $\mu_\alpha > 0$
for each $\alpha \in F \subeq \cB$ and $x_{\pm\alpha}$ as above. 
Since $F$ is an integral base of 
$\Delta_F$, we have $\phi\res_{\g_F} = \phi_\chi$ 
for a homomorphism $\chi \: \Spann_\Z \Delta_F \to \C^\times$ 
with $\chi(\alpha) = \mu_\alpha$ for $\alpha \in \cB$ 
(cf.~Lemma~\ref{lem:diag-auto}). Then 
$\im(\chi) \subeq \R^\times_+$, so that $\chi(\alpha) > 0$ for each root. 

For the corresponding hermitian form, we therefore have on 
$\g_\alpha$, $\alpha \in \Delta_F$: 
$$ \kappa_\sigma(x,x) = \kappa(x,\sigma(x)) 
= \chi(\alpha) \kappa(x,\sigma_c(x)) \geq 0 $$
and on $\Spann \check F \subeq \fh$ we have 
$$ \kappa_\sigma(x,x) = \kappa(x,\sigma(x)) 
= \kappa(x,\sigma_c(x)) \geq 0. $$
Therefore $\sigma$ is unitary. 
\end{proof}

\section{Highest weight modules} \label{sec:4}

In this section we construct for each integral weight $\lambda$ of a 
coral locally affine Lie algebra $\g$ a simple module 
$L(\lambda, \fp_\lambda)$ and show that it is integrable 
if $\lambda$ does not vanish on $\z(\g_c)$. 
For complex Lie algebras, we further show that for $\lambda = \lambda^*$, 
the module $L(\lambda, \fp_\lambda)$ is unitary with respect to any 
unitary real form $\fk$ of $\g$.

\begin{definition} Let $(\g,\fh)$ be a split Lie algebra. A triple 
$(\g^+, \g^0, \g^-)$ is said to define a {\it split triangular decomposition}  
if there exist subsets $\Sigma^0, \Sigma^\pm \subeq \Delta$ such that 
\begin{itemize}
\item[\rm(T1)]  $\Delta = \Sigma^+ \dot\cup \Sigma^0 \dot\cup \Sigma^-$ 
is a partition of $\Delta$. 
\item[\rm(T2)]  $\g^\pm = \sum_{\alpha \in \Sigma^\pm} \g_\alpha$ 
and $\g^0 = \fh +\sum_{\alpha \in \Sigma^0} \g_\alpha$. 
\item[\rm(T3)] $[\g^0, \g^\pm] \subeq \g^\pm$. 
\item[\rm(T4)] $\sum_{i = 1}^n \alpha_i \not=0$ for $\alpha_i \in \Sigma^-$ 
and $n > 0$. 
\end{itemize}

Then $\g = \g^- \oplus \g^0 \oplus \g^+$ is a direct sum of vector spaces 
and $\g^\pm \rtimes \g^0$ are subalgebras (sometimes called 
{\it generalized parabolics}). 
\end{definition}

\begin{definition} \label{def:verma} 
Let $(\g^+, \g^0, \g^-)$ be a split triangular decomposition, 
$\fp := \g^0 + \g^+$ and $\lambda \in \h^*$. We extend $\lambda$ 
to a linear functional 
$\lambda \: \fp \to \K$ vanishing on all root spaces. 
We assume that $\lambda([\fp,\fp]) = \lambda([\g^0,\g^0]) =  \{0\}$, so that 
$\lambda$ defines a one-dimensional $\fp_\lambda$-module $\K_\lambda$. 
We write 
$$ M(\lambda,\fp) := \cU(\g)\otimes_{\cU(\fp)} \K_\lambda $$
for the corresponding {\it generalized Verma module} (cf.\ \cite{JK85}). 
This is a $\g$-module generated by a $1$-dimensional $\fp$-module 
$[\1 \otimes \K_\lambda]$ 
isomorphic to $\K_\lambda$, and each other $\g$-module with this 
property is a quotient of $M(\lambda,\fp)$. 

The  Poincar\'e--Birkhoff--Witt Theorem implies that 
the multiplication map \break 
$\cU(\g^-) \otimes \cU(\fp) \to \cU(\g)$
is a linear isomorphism, so that 
$$ M(\lambda,\fp) \cong \cU(\g_-)\otimes_\K \K_\lambda $$
as $\fh$-modules. In particular, $M(\lambda,\fp)$ has an 
$\fh$-weight decomposition, all weights are contained 
in the set 
$\lambda + \Spann_{\N_0} \Sigma^-$,
and (T4) implies that the multiplicity of the weight $\lambda$ is~$1$. 
Therefore 
$M(\lambda, \fp)$ contains a unique maximal submodule, namely 
the sum of all submodules not 
containing $v_\lambda$. This is a submodule whose 
set of weights does not contain $\lambda$, therefore it is proper 
(cf.\ \cite[Prop.~IX.1.12]{Ne00a} and \cite{JK85}). 
We write 
$L(\lambda,\fp)$ for the corresponding unique simple quotient 
and call it the {\it $\fp$-highest weight module} defined by $\lambda$. 
\end{definition}

\subsection*{Highest weight modules for integral weights} 

Let $\g$ be a coral locally extended affine Lie algebra 
and $\lambda \in \fh^*$ an integral weight. We have seen in 
Theorem~\ref{thm:q-grad} that 
$\g^q(\lambda) := \bigoplus_{\lambda(\alpha^\sharp) = q} \g_\alpha$ 
defines a grading of $\g$ 
by a cyclic subgroup of $\Q$. We claim that the three sets 
$$ \Sigma^\pm := \{ \alpha \in \Delta \: 
\pm \lambda(\alpha^\sharp) > 0 \} \quad \mbox{ and } \quad 
\Sigma^0(\lambda) := \{ \alpha \in \Delta \: \lambda(\alpha^\sharp) = 0\}, $$
resp., the corresponding subalgebras 
$$ \g^0(\lambda) := \sum_{\lambda(\alpha^\sharp) = 0} \g_\alpha  
\quad \mbox{ and } \quad 
\g^\pm(\lambda) := \sum_{\pm q >0} \g_q(\lambda) $$
define a split triangular decomposition. Clearly  (T1/2) hold 
by definition, (T3) follows from 
$(\Sigma^0(\lambda) + \Sigma^\pm(\lambda))\cap \Delta 
\subeq \Sigma^\pm(\lambda),$
and (T4) is an immediate consequence of the definition of 
$\Sigma^-(\lambda)$. 

The linear functional $\lambda \in \fh^*$ extends in a natural 
way to a linear functional 
$$ \lambda \: \fp_\lambda := \g^+(\lambda) \rtimes \g^0(\lambda)
=  \sum_{\lambda(\alpha^\sharp) \geq 0} \g_\alpha \to \K $$
vanishing on all root spaces. 
In view of the following lemma, we thus obtain a homomorphism 
of Lie algebras and we obtain from Definition~\ref{def:verma} 
a simple $\g$-module $L(\lambda) := L(\lambda,\fp_\lambda)$. 

\begin{lemma} \label{lem:character} $\lambda \: \fp_\lambda \to \K$ is a homomorphism 
of Lie algebras, i.e., it vanishes on the commutator algebra. 
\end{lemma} 

\begin{proof} In view of $\fh \subeq \fp_\lambda$, the commutator algebra of $\fp_\lambda$ 
is adapted to the 
root decomposition. Therefore it suffices to observe 
that $\lambda$ vanishes 
on 
$$\h \cap [\fp_\lambda,\fp_\lambda] 
= \sum_{\lambda(\alpha^\sharp) = 0} [\g_\alpha, \g_{-\alpha}]
= \sum_{\lambda(\alpha^\sharp) = 0} \K \alpha^\sharp 
$$
(Remark~\ref{rem:1.4}(a)). 
\end{proof}

\begin{definition} \label{def:posyst} 
(a) A $\g$-module $M$ is said to be {\it integrable} 
if it has a weight decomposition with respect to 
$\fh$ and, for each integrable 
root $\alpha \in \Delta_i$, it is a locally finite $\g(\alpha)$-module 
(which is equivalent to $\g_{\pm\alpha}$ acting nilpotently on $M$) 
(cf.\ \cite[Prop.~3.6]{Ka90}).  

(b) A subset $\Delta^+ \subeq \Delta$ is called a {\it positive system} if 
\begin{itemize}
\item[\rm(PS1)] $\Delta = \Delta^+ \cup -\Delta^+$. 
\item[\rm(PS2)] $\sum_{i = 1}^n \alpha_i \not=0$ for $\alpha_i \in \Delta^+$ 
and $n > 0$. 
\end{itemize}

Note that (PS2) implies in particular that 
$\Delta^+ \cap - \Delta^+ = \eset$ and if 
$\alpha,\beta \in \Delta^+$ and  
$\alpha + \beta$ is a root, then it is positive (cf.\ \cite[Lemma~1.2]{Ne98}). 
To each positive system $\Delta^+$ corresponds a split triangular 
decomposition with $\Sigma^\pm = \Delta^\pm$ and $\Sigma^0 = \eset$. 
The corresponding generalized parabolic subalgebra is 
$$ \fp := \fp(\Delta^+) := \fh + \sum_{\alpha \in \Delta^+} \g_\alpha. $$
Since each $\lambda \in \fh^*$ extends to a homomorphism $\fp \to \K$, 
we obtain a simple $\g$-module $L(\lambda, \Delta^+) := L(\lambda, \fp(\Delta^+))$ by the construction in Definition~\ref{def:verma}. 
It is the unique simple $\g$-module of highest weight $\lambda$, i.e., 
generated by a $\fp(\Delta^+)$-weight vector of weight $\lambda$. 
We call these eigenvectors {\it primitive}. 
\end{definition}

\begin{remark} \label{rem:integ} 
Let $M$ be an integrable $\g$-module and  
$\pi \: \g \to \gl(M)$ the corresponding representation. 

(a) The set $\cP_M \subeq \fh^*$ of $\fh$-weights of $M$ 
consists of integral weights because the eigenvalues of 
$\check \alpha$ on $M$ are integral. 

(b) By the definition of the integrable roots, $\g$ is an 
integrable $\g$-module. 

(c) If $\alpha$ is integrable and $x_{\pm \alpha} \in \g_{\pm\alpha}$, 
then the operators $\pi(x_{\pm\alpha})$ are locally nilpotent, so that 
$r_\alpha^M := e^{\pi(x_\alpha)} e^{-\pi(x_{-\alpha})} e^{\pi(x_\alpha)} 
\in \GL(M)$ is defined and satisfies 
$$ \pi(r_\alpha^\g x) = r_\alpha^M\pi(x) (r_\alpha^M)^{-1} 
\quad \mbox{ for } \quad x \in \g$$ 
(cf.\ \cite[Prop.~6.1.3]{MP95}). Since $r_\alpha^\g\res_\fh = r_\alpha$, 
it follows immediately that there exists for each 
$w \in \cW$ an element $w^M \in \GL(M)$ and an automorphism 
$w^\g \in \Aut(\g)$ with $w^\g\res_\fh = w$ and 
$$ \pi(w^\g x) = w^M\pi(x) (w^M)^{-1} \quad \mbox{ for } \quad x \in \g. $$ 
In particular, the representations $\pi$ and its 
$w^\g$-twist $\pi \circ w^\g$ are equivalent. 
This observation also implies that the weight set 
$\cP_M$ is $\cW$-invariant. 
\end{remark}

\begin{prop} \label{prop:weyldep} If 
$\lambda \in \fh^*$ is an integral weight for which 
$L(\lambda, \fp_\lambda)$ is integrable, then 
$$ L(\lambda, \fp_\lambda) \cong L(w\lambda, \fp_{w\lambda}) 
\quad \mbox{ for each } \quad w \in \cW. $$
\end{prop}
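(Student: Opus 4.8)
The plan is to produce, inside the simple module $M := L(\lambda,\fp_\lambda)$, a primitive vector of weight $w\lambda$ for the parabolic $\fp_{w\lambda}$, and then to conclude by simplicity together with the universal property of generalized Verma modules in Definition~\ref{def:verma}. The essential input is the integrability hypothesis: by Remark~\ref{rem:integ}(c) it furnishes, for each $w \in \cW$, an automorphism $w^\g \in \Aut(\g)$ with $w^\g\res_\fh = w$ and an invertible operator $w^M \in \GL(M)$ intertwining $\pi$ with its $w^\g$-twist, i.e.\ $\pi(w^\g x) = w^M \pi(x) (w^M)^{-1}$ for all $x \in \g$.

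First I would record that the musical map $\sharp$ is $\cW$-equivariant, $(w\beta)^\sharp = w(\beta^\sharp)$ for $w \in \cW$ and $\beta \in \Delta$. It suffices to check this on the generating reflections $r_\alpha$, where, using additivity of $\sharp$ and the action of $r_\alpha$ on $\fh$, it reduces to the identity $\beta(\check\alpha)\alpha^\sharp = \alpha(\beta^\sharp)\check\alpha$; this is immediate from the formulas for $\alpha^\sharp$ and $(\beta,\alpha)$ in \eqref{eq:intflat}. Consequently $(w\lambda)(\alpha^\sharp) = \lambda\big((w^{-1}\alpha)^\sharp\big)$, so $w$ transports the grading data of $\lambda$ to that of $w\lambda$. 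Since $w^\g$ fixes $\fh$ pointwise and maps $\g_\alpha$ onto $\g_{w\alpha}$, this yields $w^\g(\g^q(\lambda)) = \g^q(w\lambda)$ for every $q$, and in particular $w^\g(\g^+(\lambda)) = \g^+(w\lambda)$ and $w^\g(\g^0(\lambda)) = \g^0(w\lambda)$, hence $w^\g(\fp_\lambda) = \fp_{w\lambda}$.

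Next I would set $v' := w^M v_\lambda$, where $v_\lambda$ is the highest weight generator of $M$. For $h \in \fh$, writing $h = w^\g(w^{-1}h)$ and applying the intertwining relation gives $\pi(h)v' = (w\lambda)(h)\,v'$, so $v'$ has weight $w\lambda$. For any $x \in \g^+(w\lambda) = w^\g(\g^+(\lambda))$, writing $x = w^\g(y)$ with $y \in \g^+(\lambda)$ yields $\pi(x)v' = w^M \pi(y) v_\lambda = 0$, and the same computation annihilates every nonzero root space of $\g^0(w\lambda)$. Thus $\fp_{w\lambda}$ acts on the line $\K v'$ through the character $w\lambda$ (which is again integral, as $\cW$ permutes the coroots), so that $\K v' \cong \K_{w\lambda}$ as a $\fp_{w\lambda}$-module.

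Finally, since $M$ is simple it is generated by $v'$, and the universal property in Definition~\ref{def:verma} makes $M$ a quotient of $M(w\lambda,\fp_{w\lambda})$; being simple, it must coincide with the unique simple quotient $L(w\lambda,\fp_{w\lambda})$, which proves the claim. I expect the only genuinely delicate step to be the $\cW$-equivariance of $\sharp$ and the resulting transport of the $\Q$-grading under $w^\g$; once that is in place, the weight computation and the identification of $M$ are formal consequences of the integrability intertwiner and the defining property of the generalized Verma module.
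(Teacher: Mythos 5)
Your proof is correct and is essentially the paper's own argument in different packaging: both rest on Remark~\ref{rem:integ}(c) (the automorphism $w^\g$ with $w^\g\res_\fh = w$ and the intertwiner $w^M$), on the identity $w^\g(\fp_\lambda) = \fp_{w\lambda}$, and on the uniqueness of the simple quotient from Definition~\ref{def:verma}; where the paper twists the module structure by $(w^\g)^{-1}$ and then invokes the equivalence of the twisted module with the original one, you simply compose these two steps into the single observation that $v' = w^M v_\lambda$ is a $\fp_{w\lambda}$-primitive vector of weight $w\lambda$ inside $L(\lambda,\fp_\lambda)$. One slip in wording: $w^\g$ does \emph{not} fix $\fh$ pointwise --- it restricts to $w$ on $\fh$, exactly as your own computation $h = w^\g(w^{-1}h)$ later uses --- so the sentence deriving $w^\g(\g^q(\lambda)) = \g^q(w\lambda)$ should invoke only that $w^\g$ preserves $\fh$, maps $\g_\alpha$ onto $\g_{w\alpha}$, and the $\cW$-equivariance of $\sharp$ (which you verify, and which the paper uses tacitly); nothing downstream is affected.
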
 

\begin{proof} The main point is that the element $w$ of the Weyl group 
is induced by an automorphism $\phi \in \Aut(\g,\fh,\kappa)$ 
with $(\phi^{-1})^*\mu = w\mu$ for $\mu \in \fh^*$ 
(Remark~\ref{rem:integ}(c)). 
Then 
$$\phi(\fp_\lambda) 
= \fh + \sum_{\lambda(\alpha^\sharp)\geq 0} \g_{(\phi^{-1})^*\alpha} 
= \fh + \sum_{\lambda(\alpha^\sharp)\geq 0} \g_{w\alpha} 
= \fh + \sum_{(w\lambda)(\alpha^\sharp)\geq 0} \g_{\alpha} 
= \fp_{w\lambda}. $$
On $L(\lambda, \fp_{\lambda})$ we now define the $(\phi^{-1})$-twisted 
$\g$-module structure by $x \bullet v := \phi^{-1}(x)v$, for which $[\1\otimes 1]$ 
becomes a $\fp_{w\lambda}$-eigenvector of weight $w\lambda$, 
which leads to an isomorphism to $L(w\lambda, \fp_{w\lambda})$. 
Now the assertion follows from Remark~\ref{rem:integ}(c), 
asserting that the $\phi^{-1}$-twist of 
$L(\lambda, \fp_{\lambda})$ is isomorphic to $L(\lambda, \fp_{\lambda})$. 
\end{proof}

\begin{prop} \label{prop:int-hiwei}
If $\Delta^+$ is a positive system, 
$L(\lambda,\Delta^+)$ is integrable and \break 
$\g^0(\lambda) = \fh + \g(\Delta^\lambda_i)$ holds for 
$\Delta^\lambda_i := \Delta_i \cap \Delta^\lambda$ 
(which is trivially satisfied if $\Delta^\lambda \subeq \Delta_i$), then 
$$ L(\lambda,\Delta^+) \cong L(\lambda, \fp_\lambda). $$
In particular, $L(\lambda, \Delta^+)$ does not depend on the positive 
system $\Delta^+$. 
\end{prop}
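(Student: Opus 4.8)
The plan is to realize the $\Delta^+$-highest weight vector of $L(\lambda,\Delta^+)$ as a $\fp_\lambda$-primitive vector of weight $\lambda$ and then to quote the uniqueness of simple highest weight modules. Let $v$ span the (one-dimensional) $\Delta^+$-highest weight space; it is an $\fh$-eigenvector of weight $\lambda$, it is annihilated by $\g_\alpha$ for every $\alpha\in\Delta^+$, and it generates $L(\lambda,\Delta^+)$ since that module is simple. Because $\lambda\res_{\fp_\lambda}$ is a character by Lemma~\ref{lem:character}, it suffices to prove that $\g_\alpha v=0$ for every root $\alpha$ of $\fp_\lambda$, i.e.\ for every $\alpha$ with $\lambda(\alpha^\sharp)\ge 0$. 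Once this is established, $L(\lambda,\Delta^+)$ is a $\g$-module generated by a $\fp_\lambda$-eigenvector of weight $\lambda$, hence a nonzero quotient of $M(\lambda,\fp_\lambda)$; being simple it must coincide with the unique simple quotient $L(\lambda,\fp_\lambda)$ (Definition~\ref{def:verma}), and the asserted independence of $\Delta^+$ is then immediate.

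First I would record that integrability forces $\lambda$ to be $\Delta^+$-dominant: for $\alpha\in\Delta^+\cap\Delta_i$ the vector $v$ is a highest weight vector of the $\fsl_2$-subalgebra $\g(\alpha)$ on which $\g_{-\alpha}$ acts nilpotently, so $\lambda(\check\alpha)\in\N_0$. The crucial step is then to show $\Sigma^+\subeq\Delta^+$, which is equivalent to $\lambda(\alpha^\sharp)\ge 0$ for all $\alpha\in\Delta^+$. For integrable $\alpha\in\Delta^+$ this is dominance, since $\lambda(\alpha^\sharp)=\tfrac12(\alpha,\alpha)\lambda(\check\alpha)$ and $(\alpha,\alpha)>0$. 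For a non-integrable, hence isotropic, positive root I would invoke the affine structure of Proposition~\ref{prop:2.11}: normalizing the basic isotropic root $\delta$ so that $\delta\in\Delta^+$, every positive isotropic root is a positive multiple of $\delta$, while $\delta^\sharp$ is the canonical central element $\sum_{\alpha\in\Pi_F}\check a_\alpha\check\alpha$ of a finite affine Kac--Moody subalgebra $\g(\Pi_F)+\K h_0$, with all coefficients $\check a_\alpha>0$ and $\Pi_F\subeq\Delta^+\cap\Delta_i$. Hence $\lambda(\delta^\sharp)=\sum_\alpha \check a_\alpha\lambda(\check\alpha)\ge 0$ by dominance, which is exactly the level computation of Remark~\ref{rem:3.4}(ii), so $\lambda(k\delta^\sharp)\ge 0$ for $k>0$. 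This yields $\Sigma^+\subeq\Delta^+$ and, taking negatives, $\Sigma^-\subeq-\Delta^+$; in particular $v$ is killed by $\g_\alpha$ for every $\alpha\in\Sigma^+$ and for every $\alpha\in\Sigma^0\cap\Delta^+$. I expect this positivity, the non-negativity of the level on a positive isotropic root, to be the main obstacle, as it is the only point that cannot be settled by formal highest weight and $\fsl_2$ bookkeeping and genuinely needs the exhaustion by affine subalgebras.

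It remains to kill $\g_\alpha v$ for $\alpha\in\Sigma^0\cap(-\Delta^+)$. Here I would use $\g^0(\lambda)=\fh+\g(\Delta^\lambda_i)$ together with $\lambda(\check\beta)=\tfrac{2}{(\beta,\beta)}\lambda(\beta^\sharp)=0$ for $\beta\in\Delta^\lambda_i$. For such a $\beta$, whichever of $\pm\beta$ lies in $\Delta^+$ already annihilates $v$, and since the $\g(\beta)$-module generated by $v$ is integrable with $\check\beta$-eigenvalue $0$ it is trivial; thus $v$ is annihilated by both $\g_\beta$ and $\g_{-\beta}$ for every $\beta\in\Delta^\lambda_i$. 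As these root spaces generate $\g(\Delta^\lambda_i)$, an induction on the height of the roots of $\g(\Delta^\lambda_i)$, using $[\g_{\beta_1},\g_{\beta_2}]v=\g_{\beta_1}\g_{\beta_2}v-\g_{\beta_2}\g_{\beta_1}v$, shows that $\g_\gamma v=0$ for every nonzero root $\gamma$ of $\g(\Delta^\lambda_i)$; equivalently the $\g^0(\lambda)$-submodule $\cU(\g^0(\lambda))v$ is one-dimensional. In particular $\g_\alpha v=0$ for every $\alpha\in\Sigma^0$, which completes the verification that $v$ is $\fp_\lambda$-primitive and hence the proof. When $\Delta^\lambda\subeq\Delta_i$ the set $\Sigma^0$ consists only of integrable roots and this last induction is unnecessary, the two strata being handled directly by the $\fsl_2$-argument.
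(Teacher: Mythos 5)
Your proposal follows the same overall route as the paper: exhibit the $\Delta^+$-primitive vector $v$ as a $\fp_\lambda$-eigenvector and invoke the uniqueness of the simple quotient of $M(\lambda,\fp_\lambda)$. Your treatment of $\Sigma^0$ (the $\fsl_2$-argument giving $\g(\beta)v=0$ for $\beta\in\Delta^\lambda_i$, combined with the hypothesis $\g^0(\lambda)=\fh+\g(\Delta^\lambda_i)$ and the fact that the annihilator of $v$ is a subalgebra) and of the integrable part of $\Sigma^+$ (dominance) is correct and is exactly what the paper does. You are also right that the isotropic roots in $\Sigma^+$ are the delicate point; the paper's own proof passes over them in silence.

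However, your argument for that point has a genuine gap: the assertion that there is an affine simple system $\Pi_F\subeq\Delta^+\cap\Delta_i$ with $\delta^\sharp=\sum_{\alpha\in\Pi_F}\check a_\alpha\check\alpha$ and all $\check a_\alpha>0$. Proposition~\ref{prop:2.11} produces \emph{some} simple system of each affine subsystem, but with no compatibility whatsoever with the given positive system, and in general no such $\Pi_F$ exists inside $\Delta^+$: positive systems of affine root systems need not be of ``standard'' type. Concretely, take $\g$ affine of type $A_1^{(1)}$ (a coral LEALA) with integrable root $\beta$ and basic isotropic root $\delta$, and let $\Delta^+:=\{\beta+k\delta\: k\in\Z\}\cup\{k\delta\: k\geq 1\}$; this satisfies (PS1) and (PS2), contains $\delta$, but every integrable root in $\Delta^+$ has $\beta$-coefficient $1$, so $\delta$ (whose $\beta$-coefficient is $0$) is not a positive combination of integrable positive roots, and no $\Pi_F$ as you require can lie in $\Delta^+$. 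Moreover, this situation is compatible with all hypotheses of the proposition: if $\lambda$ vanishes on all coroots, then $\lambda$ kills $[\g,\g]=\g_c$, so $L(\lambda,\Delta^+)\cong\K_\lambda$ is one-dimensional, hence integrable, and $\g^0(\lambda)=\g=\fh+\g(\Delta_i)$ by corality. So your intermediate claim is false as stated; it becomes true precisely when $\lambda(\delta^\sharp)\neq 0$, but proving it then amounts to showing that integrability at nonzero level forces $\Delta^+$ to contain a simple system adapted to $\delta$ --- a substantive fact that you neither prove nor can quote from the paper.

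The step can be repaired with tools you already use, avoiding simple systems entirely. If $\lambda(\delta^\sharp)=0$, then $\Sigma^+$ contains no isotropic root and there is nothing to prove. Otherwise normalize $\delta\in\Delta^+$ and suppose, for a contradiction, that $\lambda(\delta^\sharp)<0$. Pick an integrable root $\beta$ and $m\in\N$ with $\beta+\Z m\delta\subeq\Delta_i$ (Lemma~\ref{lem:2.5}). By \eqref{eq:mul-coroot}, $\lambda\big((\pm\beta-km\delta)\,\check{}\,\big)=\pm\lambda(\check\beta)-\tfrac{2km}{(\beta,\beta)}\lambda(\delta^\sharp)>0$ for $k\gg 0$, so dominance forces both $\beta-km\delta$ and $-\beta-km\delta$ to lie in $\Delta^+$ (their negatives are integrable roots on which $\lambda$ of the coroot is negative, hence cannot be positive roots). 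Their sum $-2km\delta$ is a root, hence positive, and then $2km$ copies of $\delta$ together with $-2km\delta$ give a vanishing sum of positive roots, contradicting (PS2). Thus $\lambda(\delta^\sharp)>0$, and every isotropic root in $\Sigma^+$ is a positive multiple of $\delta$, hence lies in $\Delta^+$ by your own (PS2)-observation; the rest of your proof then goes through. Note also that the proposition is stated for coral LEALAs, where $\Delta^0$ need not equal $\Z\delta\setminus\{0\}$; the repaired argument survives in that generality (Lemma~\ref{lem:2.5} supplies $mG\subeq\Delta^0\cup\{0\}$), whereas your appeal to Proposition~\ref{prop:2.11} restricts you to the locally affine case.
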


\begin{proof}  Let $v_\lambda \in L(\lambda,\Delta^+)$ 
be a generating primitive element. 
Then, for each $\alpha \in \Delta_i$, 
$v_\lambda$ generates an integrable $\g(\alpha)$-module 
of highest weight $\lambda(\check \alpha)$, 
for which $\pm\lambda(\check \alpha)$ are the maximal and minimal 
eigenvalues of $\check \alpha$. If $\lambda(\check\alpha) =0$, 
this implies that $\g(\alpha)v_\lambda=\{0\}$. Now our assumption 
implies that $v_\lambda$ is a $\fp_\lambda$-eigenvector. 
Therefore $L(\lambda, \Delta^+)$ is a simple $\g$-module generated by 
a $\fp_\lambda$-weight vector of weight $\lambda$, 
so that the universal property of $L(\lambda, \fp_\lambda)$ implies 
that it is isomorphic to $L(\lambda,\Delta^+)$. 
\end{proof}

The main feature of the construction in Definition~\ref{def:verma} 
is that it provides a construction of simple ``highest weight'' modules 
without referring to a positive system. 
Proposition~\ref{prop:int-hiwei} 
now tells us that in all classical cases, it produces 
the same result. This provides a new perspective on highest 
weight modules which is more natural for infinite rank algebras, 
because they have no distinguished $\cW$-conjugacy class of 
positive systems (cf.\ \cite{Ne98}). 

\begin{remark} \label{rem:locfin}
  \begin{footnote}
    {Many results stated in this remark have been obtained in 
\cite[Sect.~I]{Ne98} and \cite[Sect.~3]{Ne04} in the context 
of unitary highest weight modules of complex involutive Lie algebras. 
Since we shall need it later, we now explain how one can argue 
in the algebraic context over a general field of characteristic 
zero. }
  \end{footnote}
Let $(\g,\fh)$ be a split Lie algebra and assume that 
all roots are integrable, i.e., $\Delta = \Delta_i$. 
In view of \cite[Thm.~VI.3]{Ne00b}, $\g$ is 
locally finite and its commutator algebra is a direct sum 
of simple split Lie algebras (\cite[Thm.~III.11]{St99}). 

Let $\lambda \in \fh^*$ be an integral weight. Then 
there exists a positive system $\Delta^+ \subeq \Delta$ for which 
$\lambda$ is dominant integral (\cite[Lemma~I.18]{Ne98}), i.e., 
$\lambda(\check \alpha) \in \N_0$ for $\alpha \in \Delta^+$. 

For finite-dimensional reductive split Lie algebras it is well-known 
that the simple highest weight module 
$L(\lambda,\Delta^+)$ is finite-dimensional if and only if 
$\lambda$ is dominant integral and that each 
integrable highest weight module is simple. 
Now $\g = \indlim \g_j$ is a directed union of finite-dimensional 
reductive Lie algebras for which $\fh_j := \fh \cap \g_j$ is a splitting 
Cartan subalgebra of $\g_j$ and whose corresponding root systems are 
$\Delta_j := \{ \alpha \in \Delta \: \g_\alpha \subeq \g_j\}$. 
Then $\Delta_j^+ := \Delta_j \cap \Delta^+$ is a positive system, 
and $\lambda_j := \lambda\res_{\fh_j}$ is dominant integral for $\g_j$. 
For $\g_j \subeq \g_k$, the submodule 
$\cU(\g_j)v_{\lambda_k} \subeq L(\lambda_k,\Delta_k^+)$ 
generated by a primitive element $v_{\lambda_k}$  
is an integrable highest weight module, hence isomorphic 
to $L(\lambda_j,\Delta_j^+)$. We conclude that we may form 
a direct limit module $\indlim L(\lambda_j,\Delta_j^+)$ which 
is a highest weight module of $\g$ of highest weight~$\lambda$. 
As a direct limit of simple $\g_j$-modules, it is simple, hence 
isomorphic to $L(\lambda,\Delta^+)$. This implies in particular 
that 
$$L(\lambda,\Delta^+) \cong \indlim L(\lambda_j, \Delta_j^+)$$ 
is integrable and that its set of weights coincides with 
\begin{equation}
  \label{eq:weightset}
\cP_\lambda := \conv(\cW\lambda) \cap (\lambda + \cQ), 
\end{equation}
where $\cQ = \Spann_\Z \Delta \subeq \fh^*$ is the root group 
(cf.\ \cite[Thm.~I.11]{Ne98}). From the integrability of 
$L(\lambda, \Delta^+)$ and Proposition~\ref{prop:int-hiwei} 
we now derive that 
$$ L(\lambda) = L(\lambda,\fp_\lambda) \cong L(\lambda, \Delta^+)$$ 
does not depend on the choice of $\Delta^+$ 
and Proposition~\ref{prop:weyldep}, together with 
\eqref{eq:weightset}, further shows that 
$L(\lambda)\cong L(\mu)$ if and only if $\mu \in \cW\lambda$
(cf.\ \cite[Thm.~I.20]{Ne98}). 
\end{remark}

The observation summarized in the preceding remark 
was our original motivation to explore the approach to highest weight 
modules of locally affine Lie algebras developed in the present paper. 

The following proposition explains the $\Delta^+$-independent 
picture for highest weight modules of affine Kac--Moody algebras. 

\begin{prop}
  \label{prop:dom-conj}  Let $(\g,\fh,\kappa)$ be an affine Kac--Moody 
algebra, 
$\Pi \subeq \Delta$ a fundamental system of simple roots   
and $\lambda \in \fh^*$. Then the following assertions hold: 
\begin{itemize}
\item[\rm(i)] If $\lambda(\fz(\g))\not=\{0\}$, then 
$\cW\lambda$ contains a unique dominant weight 
$\tilde\lambda$, i.e., 
$\tilde\lambda(\check \alpha) \geq 0$ for $\alpha \in \Pi.$
\item[\rm(ii)] If $\lambda(\check\Delta_i)\not=\{0\}$, then the following 
are equivalent: 
  \begin{itemize}
  \item[\rm(a)] $(\cW \lambda)(\check \alpha)$ is bounded for 
each $\alpha \in \Delta_i$. 
  \item[\rm(b)] $(\cW \lambda)(\check \alpha)$ is bounded for 
some $\alpha \in \Delta_i$. 
  \item[\rm(c)] $\lambda(\fz(\g)) = \{0\}$. 
  \end{itemize}
\item[\rm(iii)] If $\lambda$ is dominant integral, then 
$L(\lambda, \Delta^+) \cong L(\lambda, \fp_{\lambda}).$
\end{itemize}
\end{prop}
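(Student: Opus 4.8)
The plan is to treat the three parts in the order (ii), (i), (iii), since the level function organizes everything. Throughout I use that $\fz(\g)$ is one-dimensional, spanned by $\delta^\sharp$ (Lemma~\ref{lem:cent} together with Proposition~\ref{prop:2.11}(i)), so that $\lambda(\fz(\g)) \neq \{0\}$ is equivalent to $\ell := \lambda(\delta^\sharp) \neq 0$; I call $\ell$ the \emph{level} of $\lambda$. Since $\delta^\sharp$ is central and fixed by $\cW$, the level is constant on the orbit $\cW\lambda$, and $\delta(\lambda^\sharp) = (\delta,\lambda) = \ell$ by symmetry of the form. The second ingredient is the coroot formula \eqref{eq:mul-coroot}: writing a real root as $\gamma + m\delta$ with $\oline\gamma \in \oline\Delta$ and $m \in \Z$, one has $\lambda\big((\gamma + m\delta)\,\check{}\big) = \frac{2}{(\gamma,\gamma)}\big(\lambda(\gamma^\sharp) + m\,\ell\big)$.

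For (ii) I would use $(w\lambda)(\check\alpha) = \lambda(w^{-1}\check\alpha) = \lambda\big((w^{-1}\alpha)\,\check{}\big)$, so that $\{(w\lambda)(\check\alpha) : w \in \cW\}$ equals $\{\lambda(\check\beta) : \beta \in \cW\alpha\}$. The orbit $\cW\alpha$ of a real root is infinite and, by Proposition~\ref{prop:2.11}, contained in $\oline\Delta + \Z\delta$ with $\oline\Delta$ finite; hence its elements $\beta = \gamma + m\delta$ run through unboundedly many $m$ for some fixed $\gamma$. Feeding this into the displayed formula, the set $\{\lambda(\check\beta)\}$ is bounded precisely when the coefficient $\ell$ of $m$ drops out, i.e.\ when $\ell = 0$. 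This yields (c)$\Rightarrow$(a) (finitely many $\gamma$, none depending on $m$) and the contrapositive of (b)$\Rightarrow$(c) (if $\ell \neq 0$ the values are unbounded for \emph{every} real root), while (a)$\Rightarrow$(b) is trivial.

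For (i) I would pass to $\fh_\R$ via the form. Dominance $\lambda(\check\alpha_i) \geq 0$ is equivalent to $\lambda^\sharp$ lying in the fundamental chamber $\overline{C} = \{h : \alpha_i(h) \geq 0\}$, and $\cW$ acts compatibly on $\fh^*$ and $\fh$ under $\sharp$. The key geometric input is that for affine type the fundamental chamber is a fundamental domain for $\cW$ on the Tits cone $X$, meeting each orbit in $X$ in exactly one point (\cite[Prop.~3.12]{Ka90}; see also \cite[Ch.~5]{MP95}). Since $(\gamma + m\delta)(\lambda^\sharp) = (\gamma,\lambda) + m\ell$, only finitely many positive real roots take a negative value on $\lambda^\sharp$ once $\ell \neq 0$, so for $\ell > 0$ we get $\lambda^\sharp \in X$ and the fundamental-domain property produces a unique dominant representative; the case $\ell < 0$ is symmetric, replacing $\Pi$ by $-\Pi$ (equivalently, using the antidominant chamber). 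I expect this transport between the Tits cone on $\fh$ and dominance of weights in $\fh^*$, together with tracking the sign of the level, to be the main obstacle.

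Finally, (iii) follows from Proposition~\ref{prop:int-hiwei} once I check its two hypotheses for dominant integral $\lambda$: that $L(\lambda, \Delta^+)$ is integrable, the standard fact for dominant integral highest weights of symmetrizable Kac--Moody algebras (\cite[Ch.~10]{Ka90}), and that $\g^0(\lambda) = \fh + \g(\Delta^\lambda_i)$. The inclusion $\fh + \g(\Delta^\lambda_i) \subseteq \g^0(\lambda)$ always holds, since every root occurring in $\g(\Delta^\lambda_i)$ is a $\Z$-combination of roots in $\Delta^\lambda_i$ and hence has $\lambda$-value $0$. For the reverse inclusion I must control the isotropic roots $n\delta$ lying in $\Sigma^0(\lambda)$: if $\ell \neq 0$ there are none, so $\g^0(\lambda)$ is spanned by $\fh$ and real root spaces in $\Delta^\lambda_i$; and if $\ell = 0$, then dominance forces $\lambda(\check\alpha_i) = 0$ for all $i$ (as $\ell = \sum_i a_i^\vee \lambda(\check\alpha_i)$ with all $a_i^\vee > 0$), whence $\Delta^\lambda_i = \Delta_i$, $\g(\Delta^\lambda_i) = \g_c$, and $\g^0(\lambda) = \g = \fh + \g_c$. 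In either case the hypothesis holds and Proposition~\ref{prop:int-hiwei} applies.
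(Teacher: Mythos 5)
Your proposal is correct and follows the same overall route as the paper: (ii) via the coroot formula \eqref{eq:mul-coroot} and unboundedness along $\delta$-strings of roots, (i) via the affine Tits cone and the fundamental-domain property of the fundamental chamber, and (iii) by verifying the hypotheses of Proposition~\ref{prop:int-hiwei}. The differences are minor but worth recording. In (ii) the paper works with the explicit reflections $r_{\alpha_k}$, $\alpha_k = \alpha + km\delta$, computing $(r_{\alpha_k}\lambda)(\check \alpha) = -\lambda(\check\alpha) - \frac{4km}{(\alpha,\alpha)}\lambda(\delta^\sharp)$, and proves (c)$\Rarrow$(a) by factoring $\lambda$ through $\Spann\check\Delta_i/\fz(\g) \cong \Spann \check{\oline\Delta_i}$, on which $\cW$ acts as a finite group; your orbit reformulation $\{(w\lambda)(\check\alpha)\} = \{\lambda(\check\beta) \: \beta \in \cW\alpha\}$ is the same computation (note $r_{\alpha_k}(\alpha) = -\alpha - 2km\delta$ supplies exactly the unbounded $\delta$-shifts you invoke). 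In (iii) the paper argues uniformly from dominance, showing $\Sigma^0(\lambda) = \Delta \cap \Spann_\Z \Pi_\lambda$ and citing \cite[Prop.~4.1.14]{MP95} for $\g^0(\lambda) = \fh + \g(\Pi_\lambda)$; your case split on the level avoids that citation and is more self-contained. In (i) the paper only cites \cite[Prop.~6.6]{Ka90} and \cite{MP95}, and your sketch is the argument behind those citations; however, your treatment of $\ell < 0$ by ``replacing $\Pi$ by $-\Pi$'' produces an antidominant, not a dominant, representative, and indeed no dominant one can exist there, since $K = \sum_j \check a_j \check\alpha_j$ with $\check a_j > 0$ (cf.\ Remark~\ref{rem:3.4}(ii)) forces every dominant weight to have level $\geq 0$. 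So (i) as literally stated holds only for positive level; this defect is inherited from the paper's own formulation (and is harmless in the application in Theorem~\ref{thm:4.10}, where the simple systems $\Pi_F$ are chosen after $\lambda$), but you should state it as a caveat rather than as a symmetric case of the proof.
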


\begin{proof} (i) In \cite{Ka90}, a generator of the one-dimensional 
center is denoted $K$. Our assumption implies that $\lambda(K) \not=0$, 
so that the assertion follows from \cite[Prop.~6.6]{Ka90}, combined 
with \cite[Thm.~16]{MP95}.  

(ii) (a) $\Rarrow$ (b) is trivial. 

(b) $\Rarrow$ (c): Pick $m \in \N$ such that 
$\alpha_k := \alpha + k m\delta \in \Delta_i$ holds for each $k \in \N$ 
(cf.\ Lemma~\ref{lem:2.5}) and use \eqref{eq:mul-coroot} to see that 
$$ \check\alpha_k = \check \alpha + \frac{2km}{(\alpha,\alpha)} \delta^\sharp.
$$
Then  
$$ (r_{\alpha_k}\lambda)(\check \alpha) 
= \Big(\lambda -  \big(\lambda(\check \alpha) + \frac{2km}{(\alpha,\alpha)}
\lambda(\delta^\sharp)\big)\alpha_k\Big)(\check \alpha)
= -\lambda(\check \alpha) - \frac{4km}{(\alpha,\alpha)}
\lambda(\delta^\sharp). $$
If the set of these numbers is bounded (for $k \in \N$), then 
$\lambda(\delta^\sharp) =0$, and this is (c) (cf.\ Lemma~\ref{lem:cent}). 

(c) $\Rarrow$ (a): If $\lambda\res_{\fz(\g)} = \{0\}$, 
then $\lambda$ factors through a linear functional 
$$ (\Spann \check\Delta_i)/\fz(\g) 
\cong \Spann \check{\oline\Delta_i}. $$ 
Since the root system $\oline\Delta_i$ is finite and 
$\cW$ acts on it as a finite group, (a) follows. 

(iii) First we recall from \cite[Lemma~10.1]{Ka90} or 
\cite[Prop.~6.1.6]{MP95} that 
$L(\lambda,\Delta^+)$ is integrable if and only if 
$\lambda$ is dominant integral. Next we observe that 
$\Delta^+ \subeq \Spann_{\N_0} \Pi$ 
implies that 
$\Sigma^0(\lambda) = \Delta \cap \Spann_\Z(\Pi_\lambda),$
so that $\g^0(\lambda) = \fh + \g(\Pi_\lambda)$ (for $\Pi_\lambda := \Pi \cap 
\Delta^\lambda$) follows from 
\cite[Prop.~4.1.14]{MP95}. 
Therefore the assumptions of Proposition~\ref{prop:int-hiwei} 
are satisfied, and (ii) follows. 
\end{proof}

\subsection*{Highest weight modules of locally affine Lie algebras} 

We now come to our main results on highest weight modules. 

\begin{theorem} \label{thm:4.10} Let $\g$ be a locally affine Lie algebra 
and $\lambda \in \fh^*$ an integral weight with 
$\lambda(\z(\g_c)) \not=0$, so that $\Delta^\lambda \subeq \Delta_i$. Then 
the following assertions hold: 
\begin{itemize}
\item[\rm(a)] $L(\lambda)$ is an integrable $\g$-module. 
\item[\rm(b)] Its set of weights 
is 
$\cP_\lambda = \conv(\cW\lambda) \cap (\lambda + \cQ)$, 
where $\cQ = \Spann_\Z \Delta.$
\item[\rm(c)] $L(\mu)\cong L(\lambda)$ if and only if 
$\mu \in \cW\lambda$.  
\end{itemize}
\end{theorem}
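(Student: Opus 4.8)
The plan is to reduce everything to the affine Kac--Moody case handled in Proposition~\ref{prop:dom-conj} and to the locally finite case summarized in Remark~\ref{rem:locfin}, exploiting the exhaustion of $\g_c$ by cores of affine Kac--Moody subalgebras from Proposition~\ref{prop:2.11}. The hypothesis $\lambda(\fz(\g_c)) \not= 0$ is equivalent to $\Delta^\lambda \subeq \Delta_i$ by Proposition~\ref{prop:3.4}, which is precisely the condition needed to invoke Proposition~\ref{prop:int-hiwei} once integrability is established.

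For part (a), I would first establish integrability by checking that $\g_{\pm\alpha}$ acts locally nilpotently on $L(\lambda)$ for each $\alpha\in\Delta_i$. Since $L(\lambda) = L(\lambda,\fp_\lambda)$ is generated by a $\fp_\lambda$-weight vector $v_\lambda$ of weight $\lambda$, it suffices to show that $v_\lambda$ lies in a finite-dimensional $\g(\alpha)$-submodule for each integrable $\alpha$; local nilpotence then propagates to all of $L(\lambda)$ by the standard argument that the set of locally finite vectors for $\g(\alpha)$ is a submodule (using that $\g(\alpha)$-local finiteness is preserved under the action of root vectors, as in \cite[Prop.~3.6]{Ka90}). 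For $\alpha$ with $\lambda(\check\alpha) \geq 0$ this is immediate from the highest weight structure. The key point requiring care is that a priori $v_\lambda$ need only be a $\fp_\lambda$-eigenvector, so for roots $\alpha\in\Delta^\lambda = \Sigma^0(\lambda)$ with $\lambda(\check\alpha)<0$ one must argue separately; but since $\Delta^\lambda\subeq\Delta_i$, the zero-component $\g^0(\lambda)$ has all roots integrable, hence is locally finite by Proposition~\ref{prop:loc-fin}, and $v_\lambda$ generates an integrable highest weight module over $\g^0(\lambda)$ by the locally finite theory of Remark~\ref{rem:locfin}. This forces $\lambda$ to be dominant on $\g^0(\lambda)$ and secures local finiteness for all $\alpha\in\Delta^\lambda$.

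For part (b), once integrability is known, the weight set $\cP_{L(\lambda)}$ is $\cW$-invariant by Remark~\ref{rem:integ}(c) and consists of integral weights. The inclusion $\cP_{L(\lambda)} \subeq (\lambda + \cQ)$ is automatic from the Verma construction, and $\cW$-invariance combined with the fact that $\lambda$ is an extreme weight yields $\cP_{L(\lambda)}\subeq \conv(\cW\lambda)\cap(\lambda+\cQ)$. For the reverse inclusion I would pass to the exhaustion $\g_c = \bigcup_F (\g_{1,F})_c$ of Proposition~\ref{prop:2.11}(ii): restricting $L(\lambda)$ to each affine Kac--Moody subalgebra $\g_F + \K h_0$ and using Proposition~\ref{prop:dom-conj}(iii) identifies the $\g_F$-submodule generated by $v_\lambda$ with an integrable highest weight module, whose weights fill out $\conv(\cW_F \lambda)\cap(\lambda+\cQ_F)$ by the affine theory of \cite{Ka90}. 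Taking the directed union over $F$ recovers the full formula \eqref{eq:weightset}; the main obstacle here is verifying that the convex hulls and Weyl orbits behave compatibly under the direct limit, i.e.\ that every element of $\conv(\cW\lambda)\cap(\lambda+\cQ)$ already appears at some finite stage $F$, which follows because any such weight differs from $\lambda$ by a finitely supported element of $\cQ$.

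For part (c), one direction is Proposition~\ref{prop:weyldep}: if $\mu = w\lambda$ then $L(\mu)\cong L(\lambda)$. For the converse, suppose $L(\mu)\cong L(\lambda)$ as $\g$-modules. Then their weight sets coincide, so by part (b) we have $\conv(\cW\mu) = \conv(\cW\lambda)$ and $\mu+\cQ = \lambda+\cQ$. Since $\lambda$ and $\mu$ are extreme points of this common convex set and the extreme points of $\conv(\cW\lambda)$ are exactly $\cW\lambda$ (the argument is as in the locally finite case, \cite[Thm.~I.20]{Ne98}, using that integrable roots generate enough reflections), it follows that $\mu\in\cW\lambda$. The step I expect to be most delicate is confirming that $\lambda$ remains an \emph{extreme} point of $\conv(\cW\lambda)$ in the infinite-rank setting and that no two distinct $\cW$-orbits can produce the same convex hull; this is where the locally affine structure, specifically the one-dimensionality of $\fz(\g_c)$ from Proposition~\ref{prop:2.11}(i) and the transversality condition $\lambda(\fz(\g_c))\neq 0$, must be used to rule out degeneracies coming from the isotropic direction.
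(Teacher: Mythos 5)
Your part (a) contains a genuine gap, and it is precisely the step the whole theorem hinges on. You claim that for $\alpha$ with $\lambda(\check\alpha) \geq 0$ (i.e.\ for $\alpha \in \Sigma^+(\lambda)$) the local finiteness of $\g(\alpha)$ at $v_\lambda$ ``is immediate from the highest weight structure.'' It is not: one must show $x_{-\alpha}^{N}v_\lambda = 0$ for some $N$, and the standard argument (that $x_{-\alpha}^{\lambda(\check\alpha)+1}v_\lambda$ is primitive, hence generates a proper submodule, hence vanishes in the simple quotient) works only when $\alpha$ behaves like a \emph{simple} root: to see that this vector is killed by $\g_\beta$ for all $\beta \in \Sigma^+(\lambda)\cup\Sigma^0(\lambda)$ one commutes $x_\beta$ past the powers of $x_{-\alpha}$, and the resulting root vectors for $\beta - k\alpha$ may lie in $\Sigma^-(\lambda)$, in which case they do not annihilate $v_\lambda$. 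Weight considerations do not show $\cU(\g)x_{-\alpha}^{N+1}v_\lambda$ is proper either, since its weights include $\lambda$. In the locally affine setting $\Sigma^+(\lambda)$ has no distinguished simple system, so this route is blocked. Meanwhile the case you single out as delicate --- roots $\alpha \in \Delta^\lambda$ with $\lambda(\check\alpha)<0$ --- is empty: $\check\alpha \in \K\alpha^\sharp$ forces $\lambda(\check\alpha)=0$ on $\Delta^\lambda$, and then $\g_{\pm\alpha} \subeq \fp_\lambda$ both kill $v_\lambda$; so your appeal to Proposition~\ref{prop:loc-fin} and Remark~\ref{rem:locfin} resolves a non-issue while the real difficulty is waved away.

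The paper closes exactly this gap by a different mechanism: it writes $\g$ as the directed union of the subalgebras $\g_F$ of Proposition~\ref{prop:2.11} (affine Kac--Moody up to a central summand of $\fh$), uses Proposition~\ref{prop:dom-conj}(i) --- this is where $\lambda(\fz(\g_c))\not=0$ actually enters --- to choose $w_F \in \cW$ with $\lambda_F = w_F\lambda$ dominant integral for $\Pi_F$, and then identifies $L(\lambda,\fp_\lambda^F) \cong L(\lambda_F,\fp_{\lambda_F}^F) \cong L(\lambda_F,(\Delta^F)^+)$ via Propositions~\ref{prop:weyldep} and \ref{prop:dom-conj}(iii); the affine integrability theorem applies to each piece, the embeddings for $F_1 \subeq F_2$ are compatible because integrable highest weight modules over affine algebras are simple, and $L(\lambda,\fp_\lambda)$ is the direct limit, hence integrable. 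Note that your part (b) gestures at this same exhaustion but invokes Proposition~\ref{prop:dom-conj}(iii) directly for the restriction of $\lambda$ to $\g_F$, which need not be dominant with respect to $\Pi_F$ --- the Weyl conjugation you omit in (a) is needed there too. Your part (c) essentially reproduces the paper's argument ($\Ext(\conv\cP_\lambda) = \cW\lambda$, with $\lambda \in \Ext(\conv\cP_\lambda)$ coming from $\cP_\lambda \subeq \lambda + \Spann_{\N_0}\Sigma^-(\lambda)$) and is fine once (a) and (b) are in place.
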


\begin{proof} (a) We write $\g$ as a 
directed union of subalgebras $\g_F = \fh + \g(\Delta^F_i)$  
as in Proposition~\ref{prop:2.11}, so that $\g_F$ is a direct sum 
of $(\Delta^F)^\bot \subeq \fh$ and the affine Kac--Moody algebra 
$\g(A_{\Pi_F})$. Further, $\fp_\lambda$ is a directed union 
of the subalgebras $\fp_\lambda^F := \fp_\lambda \cap \g_F$. 
In the following, $L(\lambda,\fp_\lambda^F)$ is always understood as a 
$\g_F$-module. We 
now choose an element $w_F \in \cW$ such that 
$\lambda_F := w_F\lambda$ is dominant integral with respect to $\Pi_F$ 
(Proposition~\ref{prop:dom-conj}) 
and observe that, as 
$\g_F$-modules, we have 
$$ L(\lambda, \fp_{\lambda}^F)  
\cong  L(\lambda_F, \fp_{\lambda_F}^F)  
\cong  L(\lambda_F, (\Delta^F)^+) $$
(Propositions~\ref{prop:weyldep} and \ref{prop:dom-conj}(iii)). 
This implies that 
$L(\lambda_F, \fp_{\lambda}^F)$ is an integrable $\g_F$-module 
(\cite[Lemma~10.1]{Ka90} or \cite[Prop.~6.1.6]{MP95}).

From our construction, it follows that for $F_1 \subeq F_2$ 
we have a natural embedding of the simple integrable $\g_{F_A}$-modules 
$$L(\lambda_{F_1}, \fp_{\lambda_{F_1}}^{F_1}) \subeq 
L(\lambda_{F_2}, \fp_{\lambda_{F_2}}^{F_2}) $$
because $U(\g_{F_1})v_{\lambda_{F_2}} \subeq 
L(\lambda_{F_2}, \fp_{\lambda_{F_2}}^{F_2})$ is an integrable 
highest weight module, hence simple (\cite[Cor.~10.4]{Ka90}), 
and therefore isomorphic to 
$L(\lambda_{F_1}, \fp_{\lambda_{F_1}}^{F_1})$. 
We conclude that, for each $\g_F$, 
 $L(\lambda, \fp_\lambda)$ is a direct limit of integrable 
$\g_F$-modules, hence integrable. Since $\g_F$ was arbitrary, 
the assertion follows from the corresponding results in the affine 
case (\cite[Prop.~6.2.7]{MP95}). 

(b) follows immediately from the direct limit description of $L(\mu)$ 
under (a). 

(c) In view of (a), $L(w\lambda) \cong L(\lambda)$ for each 
$w \in \cW$ follows from Proposition~\ref{prop:weyldep}). 
For the converse, we use (b) to see that in the rational 
affine space $\lambda + \Spann_\Q\cQ$, we have 
$$ \Ext(\conv\cP_\lambda) = \Ext(\conv(\cW\lambda)) \subeq \cW\lambda.$$ 
On the other hand, 
$$\cP_\lambda \subeq \lambda + \Spann_{\N_0} \Sigma^-(\lambda) $$
implies that $\lambda \in \Ext(\conv(\cP_\lambda))$, so that the 
$\cW$-invariance of $\cP_\lambda$ implies 
$\cW\lambda = \Ext(\conv(\cP_\lambda))$, and this implies (c). 
\end{proof}

\begin{theorem} Let $\g$ be a locally affine complex Lie algebra 
and $\sigma$ a unitary involution preserving $\fh$ 
(Definition~\ref{def:unitaryrealform}). 
For 
$\mu \in \fh^*$ put $\mu^*(h) := \oline{\mu(\sigma(h))}$.
Let $\lambda = \lambda^*\in \fh^*$ be an integral weight not vanishing 
on the center. Then 
$L(\lambda,\fp_\lambda)$ carries a positive definite hermitian 
form invariant under the unitary real form $\fk$ of $\g$. 
\end{theorem}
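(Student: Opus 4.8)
The plan is to produce the invariant form as a directed union of the positive definite contravariant Hermitian forms carried by integrable highest weight modules of affine Kac--Moody algebras. Since $\lambda$ does not vanish on $\fz(\g_c)$, Proposition~\ref{prop:3.4} gives $\Delta^\lambda\subeq\Delta_i$, and Theorem~\ref{thm:4.10}(a) shows that $L(\lambda)=L(\lambda,\fp_\lambda)$ is integrable. As in the proof of that theorem, I write $\g$ as the directed union of the subalgebras $\g_F=\fh+\g(\Delta_i^F)$, where $F$ runs through the finite connected subsets of $\Delta_i$ with $\delta\in\Spann_\Z F$, each $\g_F$ containing the affine Kac--Moody algebra $\g(A_{\Pi_F})$, and I realize $L(\lambda)=\bigcup_F \cU(\g_F)v_\lambda$, where $v_\lambda$ is a generating primitive vector and each $L_F:=\cU(\g_F)v_\lambda$ is a simple integrable highest weight $\g_F$-module.

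Writing $x^*:=\sigma(x)$, the first step is to produce, for each $F$, a positive definite contravariant form on $L_F$. Because $\Delta_i^F=-\Delta_i^F$, the condition $\sigma(\g_\alpha)=\g_{-\alpha}$ together with $\sigma(\fh)=\fh$ shows that $\sigma$ preserves $\g_F$, so $\sigma_F:=\sigma\res_{\g_F}$ is an involution of $\g_F$. It is unitary, since $\kappa_\sigma$ is positive semidefinite already on $\g_{F,c}\subeq\g_c$; by Lemma~\ref{lem:3.14} it is therefore of compact type, $x_\alpha^*=\lambda_\alpha x_{-\alpha}$ with $\lambda_\alpha>0$ for $\alpha\in\Delta_i^F$. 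By the unitarizability of integrable highest weight modules of affine Kac--Moody algebras (\cite[\S11.7]{Ka90}), $L_F$ carries a contravariant (with respect to $\sigma_F$) Hermitian form $H_F$, unique up to a positive scalar and positive definite; I normalize it by $H_F(v_\lambda,v_\lambda)=1$. For $F\subeq F'$ the restriction $H_{F'}\res_{L_F\times L_F}$ is again a contravariant $\sigma_F$-form on the highest weight $\g_F$-module $L_F$ with value $1$ at $v_\lambda$, hence coincides with $H_F$ by uniqueness. The forms $H_F$ thus glue to a single positive definite Hermitian form $H$ on $L(\lambda)=\bigcup_F L_F$.

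It remains to check that $H$ is $\fk$-invariant. Every $x\in\g=\fh+\g_c$ lies in some $\g_F$, and any two vectors of $L(\lambda)$ lie in a common $L_{F'}$ with $F'\supeq F$; contravariance of $H_{F'}$ with respect to $\sigma_{F'}=\sigma\res_{\g_{F'}}$ then yields $H(\pi(x)u,w)=H(u,\pi(x^*)w)$, so $H$ is contravariant with respect to $\sigma$ on all of $\g$. For $x\in\fk$ we have $x^*=-x$, whence $H(\pi(x)u,w)+H(u,\pi(x)w)=0$; that is, $\pi(x)$ acts by skew-Hermitian operators and $H$ is invariant under the unitary real form $\fk$.

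The genuinely nontrivial input is the affine positivity result and the verification that $\sigma_F$ is a compact type involution to which it applies; this is where the hypothesis that $\sigma$ be unitary, together with Lemma~\ref{lem:3.14}, is essential. A point needing care is that $\lambda$ need not be dominant for $\Pi_F$, so that identifying $L_F$ with a module to which \cite[\S11.7]{Ka90} directly applies proceeds, as in Theorem~\ref{thm:4.10}, through a Weyl conjugation $L(\lambda,\fp_\lambda^F)\cong L(\lambda_F,(\Delta^F)^+)$ with $\lambda_F$ dominant; for the compact normalization of the $x_{\pm\alpha}$ the implementing operators are isometries of the contravariant form, so positivity transfers back to $L_F$. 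Once positivity holds on every piece, the direct limit structure makes global positive definiteness automatic.
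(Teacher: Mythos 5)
Your proof is correct and takes essentially the same route as the paper's: both reduce to Kac's unitarizability theorem for integrable highest weight modules of affine Kac--Moody algebras, use the direct limit description of $L(\lambda,\fp_\lambda)$ as a union of the $\g_F$-modules from Theorem~\ref{thm:4.10}, and glue the normalized contravariant forms via their uniqueness. Your version merely makes explicit the details the paper leaves implicit (that $\sigma$ restricts to a unitary involution of each $\g_F$, the Weyl-conjugation needed when $\lambda$ is not dominant for $\Pi_F$, and the derivation of $\fk$-invariance from contravariance).
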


\begin{proof} For any affine Kac--Moody Lie algebra $\g_F$, we know from 
\cite[Thm.~11.7]{Ka90} that for each dominant integral weight 
$\lambda_F = \lambda_F^*$, the corresponding integrable weight module 
$L(\lambda_F, \fp_{\lambda_F})$ has a $\fk_F$-invariant positive 
definite hermitian form, which is unique if normalized by 
$(v_{\lambda_F}, v_{\lambda_F}) = 1$ on the highest weight 
vector~$v_{\lambda_F}$. 

Using this uniqueness and the description of $L(\lambda, \fp_\lambda)$ 
as a direct limit of the $\g_F$-modules 
$L(\lambda_F, \fp_{\lambda_F})$ (Theorem~\ref{thm:4.10}), 
it follows that $L(\lambda,\fp_\lambda)$ carries a $\fk$-invariant 
positive definite hermitian form. 
\end{proof}

\section{Appendix 1. Yoshii's classification} \label{sec:5} 

In this appendix we describe Yoshii's classification 
of locally affine root systems of infinite rank and 
show that two reflectable sections are conjugate under the automorphism group. 
We have already seen in Section~\ref{sec:3} that this can be used to 
show that minimal locally affine Lie algebras are determined by 
their root system. Below we describe for each of the seven types 
of root systems a corresponding minimal locally affine Lie algebra 
which is a twisted loop algebra.

We first recall that each irreducible locally 
finite root system of infinite rank is isomorphic to one of the following 
(cf.\ \cite[\S 8]{LN04}). 
Here we realize the root systems in the free vector space 
$\Q^{(J)}$ with basis $\eps_j$, $j \in J$ and the canonical symmetric 
bilinear form defined by $(\eps_i, \eps_j) = \delta_{ij}$: 
\begin{align*}
A_{J} &:= \{ \eps_{j} - \eps_{k} : j,k \in J, j \not= k \}, \\
B_{J} &:= \{ \pm \eps_{j}, \pm \eps_{j} \pm \eps_{k} : j,k \in J, j \not= k \}, \quad (B_{J})_{\rm sh} = \{ \pm \eps_{j}: j \in J\}\\
C_{J} 
&:= \{ \pm 2 \eps_{j}, 
\pm \eps_{j} \pm \eps_{k} : j,k \in J, j \not= k \},\quad 
(C_J)_{\rm lg} = \{ \pm 2 \eps_{j}: j \in J \}\\ 
D_{J} &:= \{ \pm \eps_{j} \pm \eps_{k} : j,k \in J, j \not= k \} 
= (B_J)_{\rm lg} = (C_J)_{\rm sh}.\\ 
BC_{J} &:= \{ \pm \eps_{j}, \pm 2\eps_{j}, 
\pm \eps_{j} \pm \eps_{k} : j,k \in J, j \not= k \}, \quad 
(BC_J)_{\rm ex} = \{ \pm 2\eps_{j}: j\in J\}. 
\end{align*}

For a root system $(V,\cR,(\cdot,\cdot))$, 
we put $\cR^{(1)} := \cR \times \Z  
\subeq V \times \Q$, where the scalar product on $V \times \Q$ is defined 
by $((\alpha,t), (\alpha', t')) := (\alpha,\alpha')$. 
Now we can state Yoshii's classification (\cite[Cor.~13]{YY08}): 

\begin{theorem} \label{thm:yclass} Each irreducible reduced 
locally affine root system $(V,\cR)$ of infinite rank 
is isomorphic to one of the following: 
$A_{J}^{(1)}, B_J^{(1)}, C_J^{(1)}, D_J^{(1)}$, or 
\begin{align*}
B_J^{(2)} &:= \big((B_J)_{\rm sh} \times \Z\big) 
\cup \big((B_J)_{\rm lg} \times 2 \Z\big) 
= \big(B_J\times 2\Z\big) 
\cup \big((B_J)_{\rm sh} \times (2\Z + 1)\big), \\
C_J^{(2)} &:= \big((C_J)_{\rm sh} \times \Z\big) 
\cup \big((C_J)_{\rm lg} \times 2 \Z\big)
= (C_J \times 2\Z) 
\cup \big(D_J  \times (2 \Z+1)\big) \\
(BC)_J^{(2)} &:= 
\Big(\big((BC_J)_{\rm sh} \cup (BC_J)_{\rm lg}\big) \times \Z
\Big)  \cup \big((BC_J)_{\rm ex} \times (2\Z+1)\big)\\
&= (B_J \times 2\Z) \cup \big(BC_J\times (2\Z+1)\big). 
\end{align*}
\end{theorem}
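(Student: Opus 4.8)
The plan is to follow Yoshii (\cite{YY08}) and reduce the classification to an analysis of the one-dimensional radical $V^0$ together with the locally finite root system obtained in the quotient. First I would record that, since the form on $V$ is positive semidefinite, $V^0 = \{v \in V : (v,V) = \{0\}\}$ is precisely its radical, and axiom (A5) forces $\dim V^0 = 1$ with $V^0 \cap \Spann_\Z \cR$ infinite cyclic. Fix a generator $\delta$ and identify $V^0 \cong \Q$ via $\delta \mapsto 1$. Passing to $\oline V := V/V^0$, equipped with the induced positive definite form, the image $\oline\cR$ is an irreducible locally finite root system of infinite rank; by the Loos--Neeb classification recalled above (\cite[\S 8]{LN04}) its reduced part $\oline\cR_{\rm red}$ is one of $A_J, B_J, C_J, D_J$, while $\oline\cR$ itself is one of $A_J, B_J, C_J, D_J$ or the non-reduced system $BC_J$.

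Next I would describe $\cR$ as a bundle of fibers over $\oline\cR$. Writing $\cR = \bigcup_{\oalpha \in \oline\cR}(\oalpha + S_\oalpha)$ with $S_\oalpha \subeq V^0$, the fact that $S_\oalpha$ depends only on the $\oline\cW$-orbit of $\oalpha$ (which has at most three classes, distinguished by square length) leaves only the three fibers $S, L, E$ over short, long and extralong roots. Setting $G := \la S, L, E\ra \subeq V^0 \cap \Spann_\Z \cR = \Z\delta$ and using that $\Spann \cR = V$ forces $G$ to span $V^0$, I may normalize so that $G = \Z\delta$ and thus regard $S, L, E$ as subsets of $\Z$. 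By choosing a reflectable section I arrange $0 \in S_\oalpha$ for the section roots, so that each nonempty fiber contains $0$.

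The core of the argument---and the step I expect to be the main obstacle---is the arithmetic determination of $S, L, E$. Here I would combine the reflection axiom (A3), the integrality (A2) of the Cartan integers $\la \oalpha, \obeta \ra$, and the translation relations $S + 2G \subeq S$, $L + kS \subeq L$, $E + 4S \subeq E$ (with $k = (\beta,\beta)/(\alpha,\alpha) \in \{2,3\}$ by Corollary~\ref{cor:2.2} and Proposition~\ref{prop:quotient}, and in fact $k=2$ in infinite rank). Closure under the reflections $r_{\obeta + t}$ together with these relations forces each fiber to be a union of cosets of a subgroup of $\Z$ and drastically restricts the admissible combinations: one shows that $S$ itself must generate $G$, so after normalization $S = \Z$; that $L$ is then either all of $\Z$ or the even sublattice $2\Z$ (the index-$2$ twist); and that $E$, when present, occupies the odd coset $2\Z + 1$ over the non-reduced quotient $BC_J$. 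Matching these finitely many surviving configurations against the possible quotients $\oline\cR$ yields precisely the untwisted families $A_J^{(1)}, B_J^{(1)}, C_J^{(1)}, D_J^{(1)}$ and the three twisted families $B_J^{(2)}, C_J^{(2)}, (BC)_J^{(2)}$, completing the classification.
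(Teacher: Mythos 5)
The first thing to say is that the paper does not prove this theorem at all: it is quoted verbatim as Yoshii's classification, with the citation \cite[Cor.~13]{YY08}, and everything in the paper that touches it (Lemma~\ref{lem:2.5}, Proposition~\ref{prop:3.4}, Theorem~\ref{thm:beam-class}) treats it as an external input. So there is no internal proof to compare against; what can be said is that your sketch follows the same strategy as Yoshii's actual proof and its finite-rank antecedent \cite[Ch.~2]{ABGP97}: pass to the positive definite quotient $\oline V = V/V^0$, invoke the classification of irreducible locally finite root systems \cite[\S 8]{LN04} to identify $\oline\cR$ among $A_J, B_J, C_J, D_J, BC_J$, and then determine the fibers $S, L, E \subeq \Z\delta$ over the $\oline\cW$-orbits.

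As a proof, however, your proposal has a genuine gap, and it sits exactly where you say you expect the main obstacle: the arithmetic determination of $S$, $L$, $E$ is asserted (``one shows that $S$ must generate $G$\dots'', ``$L$ is then either $\Z$ or $2\Z$\dots'') rather than carried out, and that determination \emph{is} the theorem -- everything before it is soft. Worse, the inputs you propose to feed into that step, namely that $S_\oalpha$ depends only on the $\oline\cW$-orbit of $\oalpha$ and the translation relations $S + 2G \subeq S$, $L + kS \subeq L$, $E + 4S \subeq E$, are themselves quoted in this paper (proof of Lemma~\ref{lem:2.5}) from \cite{YY08} and \cite{AA-P97}, i.e.\ from the very source whose classification you are trying to reprove; deriving them from the axioms (A1)--(A5) requires precisely the reflection computations you have skipped. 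There is also a concrete slip: it is not true that every nonempty fiber can be arranged to contain $0$. A reflectable section only lifts $\oline\cR_{\rm red}$, and the reducedness axiom (R) forces $E \cap 2S = \eset$, so when $\oline\cR = BC_J$ the extralong fiber cannot contain $0$; indeed $E = 2\Z+1$. Relatedly, (R) never enters your argument explicitly, yet it is exactly what excludes an untwisted $BC_J^{(1)}$ (with $E$ a subgroup of $\Z$) from the list of seven types, so any complete proof must invoke it at this point. In short: the skeleton is the right one, but the load-bearing steps are missing or borrowed from the result being proved.
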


Let $(V,\cR)$ be a locally affine 
root system and recall that a subspace $V' \subeq V$ is called a reflectable 
section 
if $V' \cap \Delta$ maps bijectively onto $\oline\Delta_{\rm red}$. 
From the classification one easily derives the existence of a reflectable  
section. The following theorem proves their uniqueness up to 
conjugacy by automorphisms: 

\begin{theorem}\label{thm:beam-class} If $(V,\cR)$ is a locally affine 
root system of infinite rank 
and $V', V'' \subeq V$ two reflectable sections, then there exists an 
isometric automorphism $\phi\in \Aut(V,\cR)$ 
with $\phi(V') = V''$, inducing the identity on $\oline V$. 
\end{theorem}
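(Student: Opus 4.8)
The plan is to build the desired automorphism directly from the difference of the two sections and then to verify, case by case along Yoshii's list, that it preserves $\cR$. Fix a generator $\delta$ of the cyclic group $V^0 \cap \Spann_\Z \cR$, and let $\pi \: V \to \oline V$ denote the quotient map. Since $V'$ and $V''$ are complements of $V^0 = \ker\pi$, each is the image of a $\Q$-linear section of $\pi$; write $s', s'' \: \oline V \to V$ for these sections. Their difference takes values in $\ker\pi = \Q\delta$, so there is a $\Q$-linear functional $f \: \oline V \to \Q$ with $s''(\oline v) = s'(\oline v) + f(\oline v)\delta$. I would then define
$$ \phi \: V \to V, \quad \phi(v) := v + f(\pi(v))\delta. $$
First I would record the cheap properties: because $\delta \in V^0$ the bilinear form is unchanged by adding a multiple of $\delta$, so $\phi$ is isometric; $\phi$ is bijective with inverse $v \mapsto v - f(\pi(v))\delta$; it induces the identity on $\oline V$ since $\phi(v) - v \in V^0$; and by construction $\phi(s'(\oline v)) = s''(\oline v)$, i.e.\ $\phi(V') = V''$. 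Thus everything except $\phi(\cR) = \cR$ is essentially automatic.

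The heart of the matter is therefore to show $\phi(\cR) = \cR$. Writing each root lying over $\oline\gamma \in \oline\cR$ as $s'(\oline\gamma) + t\delta$, with $t\delta$ ranging over the layer $S_{\oline\gamma} := \{ t\delta \in V^0 \: s'(\oline\gamma) + t\delta \in \cR\}$ (which depends only on the $\oline\cW$-orbit of $\oline\gamma$), the condition $\phi(\cR) = \cR$ becomes the requirement that each $S_{\oline\gamma}$ be invariant under translation by $f(\oline\gamma)\delta$. So I would reduce the whole problem to the divisibility statement that $f(\oline\gamma)\delta$ lies in the period subgroup of $S_{\oline\gamma}$ for every orbit.

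To get this I would feed in the two remaining inputs. Reflectability of $V''$ says that for $\oline\gamma \in \oline\cR_{\rm red}$ the point $s''(\oline\gamma) = s'(\oline\gamma) + f(\oline\gamma)\delta$ is again a root, i.e.\ $f(\oline\gamma)\delta \in S_{\oline\gamma}$; and since $V'$ is reflectable, $0 \in S_{\oline\gamma}$ for such $\oline\gamma$, so $S_{\oline\gamma}$ is a subgroup of $\Z\delta$ equal to its own period. This settles the short and long roots at once. The only orbits left are the extralong ones, which by Theorem~\ref{thm:yclass} occur solely in type $(BC)_J^{(2)}$; there $S_{\oline\gamma} = (2\Z+1)\delta$, whose period is $2\Z\delta$, and such $\oline\gamma$ does \emph{not} lie in $\oline\cR_{\rm red}$, so reflectability gives no direct information. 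This is the one genuinely delicate point, and I expect it to be the main obstacle. It is resolved by the linearity of $f$: an extralong root has the form $\oline\gamma = 2\oline\alpha$ with $\oline\alpha$ short, whence $f(\oline\gamma) = 2f(\oline\alpha)$, and since the short layer $S_{\oline\alpha} = \Z\delta$ already forces $f(\oline\alpha) \in \Z$, we obtain $f(\oline\gamma) \in 2\Z$, i.e.\ $f(\oline\gamma)\delta \in 2\Z\delta$ as required. Running through the seven types of Theorem~\ref{thm:yclass} to confirm the layer shapes ($\Z\delta$ on short roots throughout, $\Z\delta$ or $2\Z\delta$ on long roots, $(2\Z+1)\delta$ on extralong roots) then yields $\phi(\cR) = \cR$ in every case, so $\phi \in \Aut(V,\cR)$ is isometric, induces the identity on $\oline V$, and satisfies $\phi(V') = V''$, as claimed.
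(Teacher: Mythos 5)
Your proposal is correct and takes essentially the same route as the paper's proof: both encode the difference of the two reflectable sections as a linear functional (your $f$, the paper's $\gamma$) whose values are forced into $\Z$, resp.\ $2\Z$, by reflectability, define the translation automorphism $\phi(\alpha) = \alpha + f(\oline\alpha)\delta$, verify invariance of $\cR$ type by type against Yoshii's classification, and get isometry from $\delta \in V^0$. Your handling of the extralong layers in $BC_J^{(2)}$ via $f(2\oline\alpha) = 2f(\oline\alpha) \in 2\Z$ simply makes explicit the one case the paper dismisses as ``easily verified.''
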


\begin{proof} We think of a reflectable section as being realized by a linear 
section $\sigma \: \oline V \to V$ of the quotient map $V \to \oline V$. 
Any other section $\sigma' \: \oline V \to V$ is of the form 
$\sigma' = \sigma + \gamma\cdot \delta$, where $\gamma \: \oline V \to \Q$ 
is a linear functional with 
$\sigma'(\oline\Delta_{\rm red}) \subeq \Delta$. 

We fix a reflectable section $V'$, the corresponding map $\sigma$ and 
the corresponding reduced root system $\Delta_{\rm red} 
= \sigma(\oline\Delta_{\rm red})$. Accordingly, we identify 
$V$ with $\oline V \times \Q$ with $\delta = (0,1)$, so that 
$\Delta \subeq \oline\Delta \times \Z$, as in the classification. 
We now have to determine all other reflectable sections of $\Delta$. 

In all cases, a necessary 
condition on $\gamma$ is $\gamma(\oline\Delta_{\rm red}) \subeq \Z$. 
For the untwisted types $X_J^{(1)}$, this is also sufficient. 
For $B_J^{(2)}$ and $C_J^{(2)}$ we find the conditions 
$\gamma(\alpha) \in \Z$ for $\alpha$ short and 
$\gamma(\alpha) \in 2\Z$ for $\alpha$ long. 
For $BC_J^{(2)}$ we need $\gamma(\alpha) \in \Z$ for $\alpha$ short or long. 

In all these cases, it is easily verified that 
$\phi(\alpha) := \alpha + \gamma(\oline\alpha)\delta$ 
defines an automorphism of $\Delta$ mapping 
$V'$ onto $\sigma'(\oline V)$.  Finally  $\delta \in V^0$ implies 
that $\phi$ is isometric. 
\end{proof}

\begin{remark} We describe for each locally affine root system of  
infinite rank the 
set of all integral weights $\lambda$ with 
$\Delta_{\rm red} \cong 
\Delta^\lambda = \{ \alpha \in \Delta \: \lambda(\alpha^\sharp) = 0\}$, i.e., for which $\Delta^\lambda$ is a reflectable setion.  
We use some information from the proof of Proposition~\ref{prop:3.4}, 
where we have shown that such weights exist. 

We write each root $\beta \in \Delta$ as 
$\beta = n \alpha + m \delta$ with $n =1$ (if $\beta$ is short or long) 
or $n = 2$ (if $\beta$ is extralong and $\alpha$ is short).  
We also normalize the scalar product on roots in such a way that 
long roots have square length $2$. Then short roots have square 
length $1$ (if they occur) and extralong roots have square length $4$. 
In the proof of Proposition~\ref{prop:3.4}
we have seen in \eqref{eq:mul-coroot} that 
$$ (n\alpha + m \delta)\,\check{} 
= \frac{2}{n^2(\alpha,\alpha)}(n\alpha^\sharp  + m \delta^\sharp)
= \frac{1}{n} \check\alpha + \frac{2m}{n^2(\alpha,\alpha)}\delta^\sharp. $$

For the untwisted cases $\cR^{(1)}$, we have 
$\Delta = \Delta_{\rm red} \oplus \Z \delta,$ and  
we find the condition $\lambda(\delta^\sharp) \in \Z$ 
by considering long roots $\alpha$. 

For $B_J^{(2)}$ and $C_J^{(2)}$ 
and a short root $\beta$, $m \in \Z$ is arbitrary, 
which leads to $2\lambda(\delta^\sharp) \in \Z$, and for a 
long root we have $m \in 2 \Z$, which leads to the same condition 
$\lambda(\delta^\sharp) \in \frac{1}{2}\Z$. 

For $BC_J^{(2)}$ we find for extralong roots 
the condition $\lambda(\delta^\sharp) \in 2\Z$, which is also sufficient 
for short and long roots. 
\end{remark}

\subsection*{Realization of minimal locally affine Lie algebras} 

In this subsection we combine Yoshii's classification of locally 
affine root systems (Theorem~\ref{thm:yclass}) with the 
Uniqueness Theorem~\ref{thm:3.4} to realize 
all infinite rank minimal locally affine Lie algebras as 
twisted loop algebras. 

We start with a description of doubly extended loop algebras. 

\begin{example} \label{ex:loop} (cf.\ \cite[\S 5]{MY06}) 
Let $(\oline\g, \oline\fh, \kappa_{\oline\g})$ be 
a split quadratic Lie algebra whose root system $\oline\Delta 
= \oline\Delta_i$ is locally finite and connected, resp., irreducible. 
Further, let $\Gamma \subeq \Q$ be a subgroup containing $1$, and 
$\K[\Gamma]$ be the algebra whose generators we write 
as formal exponentials $t^q$, $q \in \Gamma$. 

(a) We form the Lie algebra 
$\cL^\Gamma(\oline\g) := \K[\Gamma] \otimes \oline\g,$
which is a generalization of a loop algebra (which we obtain for 
$\Gamma = \Z$, for which we simply 
write $\cL(\oline\g)$). It is a $\Gamma$-graded 
Lie algebra with grading spaces $\cL^\Gamma(\oline\g)_q = t^q 
\otimes \oline\g$ and 
$$ \oline\kappa(t^q \otimes x, t^s \otimes y) := 
\delta_{q,-s} \kappa_{\oline\g}(x,y) $$
is a non-degenerate invariant symmetric 
bilinear form on $\cL^\Gamma(\oline\g)$. 
Further, \break 
$D(t^q \otimes x) := q t^q \otimes x$ defines a $\oline\kappa$-skew 
symmetric derivation on 
$\cL^\Gamma(\oline\g)$, so that we may form the associated double extension 
$$ \g := \hat\cL^\Gamma(\oline\g) := 
(\K \oplus_{\omega_D} \cL^\Gamma(\oline\g)) \rtimes_{\tilde D} \K, $$
where $\omega_D(x,y) = \oline\kappa(Dx,y)$ is a $2$-cocycle and 
$\tilde D(z,x) := (0,Dx)$ is the canonical extension of $D$ to 
the central extension $\K \oplus_{\omega_D} \cL^\Gamma(\oline\g)$ 
(cf.\ Definition~\ref{def:doubext}).
Now 
$$ \kappa((z,x,t),(z',x',t')) := zt' + z't + \oline\kappa(x,x') $$
is an invariant symmetric non-degenerate bilinear form on $\g$ and 
$\fh := \K \oplus \oline\fh \oplus \K$
is a splitting Cartan subalgebra, so that 
$(\g,\fh,\kappa)$ is a split quadratic Lie algebra. 
The element $c := (1,0,0)$ is central and 
the eigenvalue of $d := (0,0,1)$ on $t^q \otimes \oline\g$ is $q$. 

It is now easy to verify that 
the root system of $(\g,\fh)$ can be identified with the 
set 
$$ \oline\Delta\times \Gamma \subeq 
\{0\} \times \oline\fh^* \times \K, \quad \mbox{ where } \quad 
(\alpha,q)(z,h,t) := (0,\alpha,q)(z,h,t) = \alpha(h) + t q, $$
and that the set of integrable roots is 
$\Delta_i = \oline\Delta \times \Gamma.$

For root vectors 
$x_{(\alpha,q)} = t^q \otimes x_\alpha \in \g_{(\alpha, q)}$ 
with $[x_\alpha,x_{-\alpha}] = \check \alpha$, we have  
$$ [t^q \otimes x_\alpha, t^{-q} \otimes x_{-\alpha}] 
= (q \kappa_{\oline\g}(x_\alpha, x_{-\alpha}), \check \alpha) 
= \Big(\frac{2q}{(\alpha,\alpha)}, \check \alpha\Big) $$
(cf.\ Remark~\ref{rem:1.4}). Since $(\alpha,q)$ takes the value $2$ 
on this element, it follows that 
$$ (\alpha,q)\check{} 
= \Big(\frac{2q}{(\alpha,\alpha)}, \check \alpha\Big)
\quad \mbox{ and } \quad 
\kappa((\alpha,q)\check{}, (\beta,r)\check{}) 
= \kappa_{\oline\g}(\check \alpha, \check \beta). $$
From that we easily derive for the scalar product of the roots 
$$ ((\alpha, q), (\beta, r)) = (\alpha,\beta), $$
which implies that $(\g,\fh,\kappa)$ is a 
a LEALA. 

(b) Writing a linear function on $\fh$ as a triple 
$\lambda = (z,\lambda_0, t) \in \K \times \oline\fh^* \times \K$, 
we conclude that $\lambda$ is integral 
if and only if 
$$ \frac{2qz}{(\alpha,\alpha)} + \lambda_0(\check \alpha) \in \Z$$ 
holds for each $q \in \Gamma$ and $\alpha \in \oline\Delta$. 
This means that $\lambda_0 \in \oline\fh^*$ 
is an integral weight of $\oline\g$ and 
$z \in \frac{(\alpha,\alpha)}{2q} \Z$
for each $q \in \Gamma$. The latter condition has a non-zero solution $z$ 
if and only if the subgroup $\Gamma$ is cyclic. We conclude that 
there are integral weights $\lambda$ not vanishing on the 
central element $(1,0,0)$ if and only if $\Gamma \cong \Z$, which 
corresponds to the classical case of loop algebras 
(cf.\ Theorem~\ref{thm:2.8}). 
\end{example}

\begin{remark}
If $\Gamma$ is not cyclic, then the 
group $\Gamma$ is a directed union of cyclic infinite groups 
$\Z q_j$, so that 
the Lie algebra $\g = \cL^\Gamma(\oline\g)$ 
is a direct limit of doubly extended loop algebras 
isomorphic to $\hat\cL(\oline\g)$. 
If $\oline\g$ is finite-dimensional, this exhibits $\hat\cL^\Gamma(\oline\g)$ 
as a direct limit of affine Kac--Moody algebras, but it is not locally 
affine in the sense of Definition~\ref{def:locafflie} (cf.\ \cite{YY08}). 
\end{remark}

If $X_J \in \{ A_J, B_J,C_J, D_J\}$ is one of the irreducible 
locally affine Lie algebras, then we have the following corresponding 
locally affine simple Lie algebras. 

For a set $J$ and a field $\K$, 
we write $\gl_J(\K)$ for the set of all 
$(J \times J)$-matrices with finitely many non-zero entries, i.e., 
the finitely supported functions on $J \times J$. 
Then the set $\fh = \Spann_\K \{ E_{jj} \: j \in J \}$ of diagonal 
matrices is a splitting Cartan subalgebra with the root system 
$A_J$, where $\eps_j(E_{kk}) := \delta_{jk}$. Its commutator algebra  
is the simple Lie algebra $\fsl_J(\K)$. 

Next, let $2J := J \dot{\cup} (- J)$ be a disjoint union, 
where $-J$ denotes a copy of the set $J$
whose elements are denoted by $-j$, $j \in J$. 
We define $S \in \K^{2J \times 2J}$ (the set of all $J \times J$-matrices)  
by 
$$ S_\pm := \pmat{ \0 & \1 \\ \pm\1 & \0} 
= \sum_{j \in J} (E_{j, -j} \pm E_{-j, j}).$$
Then 
$$ \sp_{2J}(\K) := \{ x \in \gl_{2J}(\K) \: x^\top S_- + S_- x = 0\} $$
and 
$$ \fo_{2J}(\K) := \{ x \in \gl_{2J}(\K) \: x^\top S_+ + S_+ x = 0\}$$ 
are split Lie algebras with respect to the space 
$$ \fh = \Spann \{ E_{jj} - E_{-j,-j} \: j \in J\} $$
of diagonal matrices. If we define 
$\eps_j(E_{kk} - E_{-k,-k}) := \delta_{jk}$, then the 
corresponding root systems are 
$C_J$ for $\sp_{2J}(\K)$ and $D_J$ for $\fo_{2J}(\K)$. 

To realize the root system $B_J$, 
we put $2J+1 := 2J \dot{\cup} \{0\}$ (disjoint union) 
and 
$$ S := \pmat{ \0 & 0 & \1 \\ 0 & 1 & 0 \\ \1 & 0 & \0} 
= E_{00} + \sum_{j \in J} (E_{j, -j} + E_{-j, j}).$$
Then 
$$ \fo_{2J+1}(\K) := \{ x \in \gl_{2J+1}(\K) \: x^\top S + S x = 0\}$$ 
is a split Lie algebra with respect to 
$\fh = \Spann \{ E_{jj} - E_{-j,-j} \: j \in J\}$ and the 
root system $B_J$. Since the quadratic spaces 
$(\K^{2J}, S_+)$ and $(\K^{2J+1}, S)$ are isomorphic, 
the Lie algebras $\fo_{2J+1}(\K)$ and $\fo_{2J}(\K)$ 
are isomorphic, although they have two non-isomorphic root 
decompositions with respect to non-conjugate Cartan subalgebras 
(cf.\ \cite[Lemma~I.4]{NS01}). 
 
On all these Lie algebras, there is a natural non-degenerate 
invariant symmetric bilinear form, given by 
$\kappa(x,y) := \tr(xy)$.

To obtain realizations of minimal locally affine Lie algebras, 
we now turn to twisted loop algebras. 
Let $(\oline\g,\oline\fh, \kappa_{\oline\g})$ 
be one of the four types of 
simple locally finite split quadratic 
Lie algebras with root system of type $X_J$. 
Further, let $\sigma \in \Aut(\oline\g)$ be an involutive automorphism 
fixing $\oline\fh$. Then $\sigma$ induces an automorphism of the 
root system which is isometric because of the positive definiteness 
of the form and the fact that every homomorphism 
$\Z/2\Z \to \R^\times_+$ is trivial. This 
implies that $\oline\kappa$ is $\sigma$-invariant. 

Let $\oline\g = \oline\g_+ \oplus \oline\g_-$ 
be the $\sigma$-eigenspace decomposition of 
$\oline\g$, and put $\oline\fh_\pm := \oline\fh \cap \oline\g_\pm$. 
{\sl We assume that $\fh_+$ is maximal abelian in $\oline\g_+$}, hence a 
splitting Cartan subalgebra and write 
$\oline\Delta_\pm \subeq \oline\h_+^*$ for the set 
of non-zero weights 
of $\oline\g_+$, resp., the set of $\oline\fh_+$-weights in 
$\oline\g_-$. 

Define $\tilde\sigma \in \Aut(\cL(\oline\g))$ by 
$\tilde\sigma(t^q \otimes x) := (-1)^q t^q \otimes \sigma(x)$ 
and consider the corresponding twisted loop algebra 
$$ \cL(\oline\g, \sigma) := \{ \xi \in \cL(\oline\g) \: 
\tilde \sigma(\xi) = \xi\} 
= \big(\K[t^{\pm 2}] \otimes \oline\g_+\big) 
\oplus \big(t\K[t^{\pm 2}] \otimes \oline\g_-\big). $$
This Lie algebra is invariant under the canonical derivation 
$D$ of the loop algebra, so that we also obtain a corresponding double 
extension 
$\g := \hat\cL(\oline\g, \sigma) \subeq \hat\cL(\oline\g),$
which is the set of fixed points for the involution on  
$\hat\cL(\oline\g, \sigma)$, defined by 
$\hat\sigma(z,\xi,t) := (z, \tilde\sigma(\xi),t)$ 
(which makes sense because $\tilde\sigma$ leaves $\oline\kappa$ 
invariant). 

The subalgebra $\fh := \K \oplus \oline\fh_+ \oplus \K$ 
is a splitting Cartan subalgebra of $\g$ and the restriction of 
the quadratic invariant form of $\hat\cL(\oline\g)$ is 
non-degenerate on $\g$. Its root system is given by 
$$ \Delta_i 
= (\oline\Delta_+ \times 2\Z) \dot\cup (\oline\Delta_- \times (2\Z+1)). $$

In the proof of the classification theorem, we need the following 
elementary geometric lemma. 

\begin{lemma}\label{lem:quad} Let $(V,\beta)$ be a quadratic space,  
$v \in V$ be non-isotropic and 
$$ g(x) := x - \frac{2\beta(v,x)}{\beta(v,v)} v $$
be the orthogonal reflection in the hyperplane $v^\bot$. 
Then $\Ad(g)X := gXg^{-1}$ is an involutive 
automorphism of $\fo(V,\beta)$, and for the 
corresponding eigenspaces $\fo(V,\beta)_{\pm 1}$, we have 
$$ \fo(V,\beta)_1 \cong \fo(v^\bot, \beta),  $$
and the map 
$$ \phi \: v^\bot \to \fo(V,\beta)_{-1}, \quad 
\phi(x) := \beta_{v,x} - \beta_{x,v}, \quad \beta_{v,w}(u) := \beta(v,u)w,$$ 
is a linear isomorphism. 
\end{lemma}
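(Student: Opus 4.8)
The plan is to work with the orthogonal decomposition $V = v^\bot \oplus \K v$, relative to which $g$ acts as $\id$ on $v^\bot$ and as $-\id$ on $\K v$, and to read off the two eigenspaces of $\Ad(g)$ in block form.

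First I would establish the basic properties of $g$. Since $v$ is non-isotropic, $g$ is the Householder reflection in $v^\bot$; a direct computation gives $\beta(g(x),g(y)) = \beta(x,y)$, so $g$ preserves $\beta$, while $g(v) = -v$ and $g|_{v^\bot} = \id$ yield $g^2 = \id$. As conjugation by a $\beta$-preserving map sends $\fo(V,\beta)$ into itself and $(\Ad g)^2 = \Ad(g^2) = \id$, the map $\Ad(g)$ is an involutive automorphism of $\fo(V,\beta)$ with eigenvalues $\pm 1$.

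Next I would identify the eigenspaces, using $\Ad(g)X = gXg$ (as $g = g^{-1}$). The equation $\Ad(g)X = X$ is equivalent to $gX = Xg$, i.e.\ to $X$ preserving each eigenspace of $g$; writing $X$ in block form relative to $v^\bot \oplus \K v$, this forces the off-diagonal blocks to vanish, the $v^\bot$-block $A$ to lie in $\fo(v^\bot,\beta)$, and the scalar $d$ on $\K v$ to satisfy $2d\,\beta(v,v) = 0$, hence $d = 0$ since $\beta(v,v) \neq 0$. This gives $\fo(V,\beta)_1 \cong \fo(v^\bot,\beta)$. Dually, $\Ad(g)X = -X$ is equivalent to $gX = -Xg$, i.e.\ to $X$ interchanging $v^\bot$ and $\K v$; thus the diagonal blocks vanish and $X$ is pinned down by the vector $b := Xv \in v^\bot$, with the skew-symmetry condition then determining the action of $X$ on $v^\bot$ in terms of $b$.

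Finally I would match this description with $\phi$. Expanding $\phi(x)(u) = \beta(v,u)x - \beta(x,u)v$ and using the symmetry of $\beta$ shows $\phi(x) \in \fo(V,\beta)$ for all $x$. For $x \in v^\bot$ one computes $\phi(x)(v) = \beta(v,v)\,x \in v^\bot$ and $\phi(x)(u) = -\beta(x,u)\,v \in \K v$ for $u \in v^\bot$, so $\phi(x)$ interchanges the two eigenspaces and lies in $\fo(V,\beta)_{-1}$. Linearity of $\phi$ is immediate, injectivity follows from $\phi(x)(v) = \beta(v,v)x$ together with $\beta(v,v) \neq 0$, and for surjectivity I would observe that any $X \in \fo(V,\beta)_{-1}$ equals $\phi(\beta(v,v)^{-1}Xv)$, since both sides lie in the $-1$ eigenspace and take the same value at $v$, while that space is determined by the value at $v$. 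This last matching—verifying that the skew-symmetry coupling between the two off-diagonal blocks is precisely the one encoded by $\phi$—is the only point requiring care; the remaining verifications are routine linear algebra.
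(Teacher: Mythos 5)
Your proposal is correct and takes essentially the same route as the paper's proof: both work with the decomposition $V = v^\bot \oplus \K v$, identify $\fo(V,\beta)_1$ as the maps preserving both eigenspaces of $g$ (with the $\K v$-component killed by skew-symmetry, i.e.\ $\fo_1(\K) = \{0\}$), and identify $\fo(V,\beta)_{-1}$ as the maps interchanging them, proving surjectivity of $\phi$ by showing that any $\gamma$ in the $(-1)$-eigenspace equals $\phi\bigl(\beta(v,v)^{-1}\gamma(v)\bigr)$, since the skew-symmetry relation $\beta(v,\gamma(y)) = -\beta(\gamma(v),y)$ pins down $\gamma$ on $v^\bot$ from its value at $v$. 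The only cosmetic difference is that you verify explicitly that $\Ad(g)$ is an involutive automorphism, which the paper takes as evident.
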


\begin{proof} Let $V_\pm := V_\pm(g)$ denote the eigenspaces of $g$, 
so that $V_+ = v^\bot$ and $V_{-} = \K v$. Then 
$\fo(V,\beta)_1$ consists of all skew-symmetric linear maps 
commuting with $g$, i.e., preserving both $g$-eigenspaces. 
In view of $\fo_1(\K) = \{0\}$, this leads to the given description 
of $\fo(V,\beta)_1$. 

On the other hand, the relation $\Ad(g)X = -X$ is equivalent to 
$X V_\pm \subeq V_\mp$. In view of $\beta_{v,w}^\top = \beta_{w,v}$, 
the image of $\phi$ lies in $\fo(V,\beta)$ and it clearly maps 
$v^\bot$ into $\K v$ and $v$ into $v^\bot$. Conversely, 
let $\gamma \in \fo(V,\beta)_{-1}$. 
Then $\gamma(v) \in v^\bot$ and we put 
$x := \beta(v,v)^{-1}\gamma(v)$. We claim that 
$\gamma = \phi(x)$. Clearly, 
$$\gamma(v) = \beta(v,v)x = \beta_{v,x}(v) = \phi(x)(v). $$
For $y \in v^\bot$ we have $\gamma(y) \in \K v$ and 
$$ \beta(v,\gamma(y)) 
= - \beta(\gamma(v),y) 
= - \beta(v,v) \beta(x,y) 
= - \beta(v, \beta_{x,v}(y))
= \beta(v, \phi(x)(y)), $$
which implies that $\gamma(y) = \phi(x)(y)$, and hence that 
$\gamma = \phi(x)$. 
\end{proof}

\begin{theorem} \label{thm:classi} For the irreducible reduced 
locally affine root systems of infinite rank, the corresponding 
minimal locally affine Lie algebras can be constructed as follows:  
\begin{itemize}
\item[\rm(i)] For the root systems of type $X_J^{(1)}$ and a 
simple split Lie algebra $(\oline\g,\oline\fh)$ with root system 
$X_J$, the doubly extended loop algebra 
$\hat\cL(\oline\g)$ is minimal locally affine with the 
root system $X_J^{(1)} = X_J \times \Z$ . 
\item[\rm(ii)] For the root systems of type $X_J^{(2)}$,  
the doubly extended twisted loop algebra 
$\hat\cL(\oline\g, \sigma)$ is minimal locally affine with the 
root system $X_J^{(2)}$, where 
\begin{itemize}
\item[\rm $B_J^{(2)}$:] $\oline\g := \fo_{2J'}(\K)$ with 
$J' := J \cup \{j_0\}$, $j_0 \not\in J$ and 
$\sigma = \Ad(g)$ for the orthogonal reflection in the hyperplane  
$(e_{j_0} - e_{-j_0})^\bot \subeq  \K^{2J'}$.\begin{footnote}{Note that our description of the Lie algebra 
of type $B_J^{(2)}$ is more explicit than the one in \cite{YY08}.} 
\end{footnote}
\item[\rm $C_J^{(2)}$:] $\oline\g = \fsl_{2J}(\K)$ and 
$\sigma(x) = -S x^\top S^{-1}$, where 
$S = \pmat{\0 & \1 \\ -\1 & \0} \break 
= \sum_{j \in J} (E_{j, -j} - E_{-j, j})$. 
\item[\rm $BC_J^{(2)}$:] $\oline\g = \fsl_{2J+1}(\K)$ and 
$\sigma(x) = - S x^\top S^{-1}$, where \hfill\break  
$S := \pmat{ \0 & 0 & \1 \\ 0 & 1 & 0 \\ \1 & 0 & \0} 
= E_{j_0,j_0} + \sum_{j \in J} (E_{j, -j} +  E_{-j, j}).$
\end{itemize}
\end{itemize}
\end{theorem}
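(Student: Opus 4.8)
The plan is to check, for each algebra in the list, that it is a minimal locally affine Lie algebra whose root system is the asserted one; by the Uniqueness Theorem~\ref{thm:3.4} this then realizes \emph{the} minimal locally affine Lie algebra attached to each of the seven systems in Yoshii's classification (Theorem~\ref{thm:yclass}). For part~(i) the bulk of the work is already contained in Example~\ref{ex:loop}: specializing to $\Gamma = \Z$ exhibits $\hat\cL(\oline\g)$ as a LEALA with Cartan subalgebra $\fh = \K c \oplus \oline\fh \oplus \K d$ and integrable roots $\oline\Delta \times \Z = X_J \times \Z$. I would then note that $V^0 = \Q\delta$ with $\delta = (0,1)$ is one-dimensional and that $V^0 \cap \Spann_\Z \Delta_i = \Z\delta$ is cyclic (using, e.g., $(\oline\alpha,1) + (-\oline\alpha,0) = \delta$), so that (A5) holds; since the isotropic root $(0,1)$ coming from $t \otimes \oline\fh$ is non-integrable, $\Delta \not= \Delta_i$ and the system is locally affine. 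Minimality follows because the core $\g_c = \K c \oplus \cL(\oline\g)$ is a hyperplane (here the perfectness of $\cL(\oline\g)$, a consequence of the simplicity of $\oline\g$, is used) and because $d$ satisfies $\{\alpha \in \Delta_i \: \alpha(d) = 0\} = X_J \times \{0\}$, which maps bijectively onto $\oline\Delta_{\rm red} = X_J$ (as $X_J$ is reduced) and is therefore a reflectable section in the sense of Definition~\ref{def:min}.

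For part~(ii) the framework preceding the theorem already provides $\g = \hat\cL(\oline\g,\sigma)$ as a split quadratic Lie algebra with $\fh = \K c \oplus \oline\fh_+ \oplus \K d$ and integrable roots $\Delta_i = (\oline\Delta_+ \times 2\Z) \cup (\oline\Delta_- \times (2\Z+1))$, so the real task is to identify the eigenspace root systems $\oline\Delta_\pm$. In each case I would first pin down the fixed-point algebra $\oline\g_+$ and verify that $\oline\fh_+ = \oline\fh \cap \oline\g_+$ is maximal abelian in it, so that $\oline\Delta_+$ is the root system of $\oline\g_+$ and $\oline\Delta_-$ is the set of nonzero $\oline\fh_+$-weights on the $(-1)$-eigenspace $\oline\g_-$. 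For $C_J^{(2)}$, the map $\sigma(x) = -Sx^\top S^{-1}$ with $S$ skew has $\oline\g_+ = \sp_{2J}(\K)$, hence $\oline\Delta_+ = C_J$, while $\oline\g_-$ is the traceless part of $\Lambda^2(\K^{2J})$, whose nonzero weights are $\{\pm\eps_i \pm \eps_j \: i\not=j\} = D_J$. For $BC_J^{(2)}$ the same formula with $S$ symmetric gives $\oline\g_+ = \fo_{2J+1}(\K)$, hence $\oline\Delta_+ = B_J$, while $\oline\g_-$ is the traceless part of the symmetric square of $\K^{2J+1}$, with nonzero weights $\{\pm\eps_j, \pm 2\eps_j, \pm\eps_j \pm \eps_k\} = BC_J$. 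For $B_J^{(2)}$ I would invoke Lemma~\ref{lem:quad} with $v = e_{j_0} - e_{-j_0}$: it identifies $\oline\g_+ = \fo(v^\bot,\beta) \cong \fo_{2J+1}(\K)$, so that $\oline\Delta_+ = B_J$, and furnishes an $\oline\g_+$-equivariant isomorphism $\oline\g_- \cong v^\bot$, the standard module, whose nonzero weights are $\{\pm\eps_j\} = (B_J)_{\rm sh}$. In every case one then reads off that $(\oline\Delta_+ \times 2\Z) \cup (\oline\Delta_- \times (2\Z+1))$ is exactly the system $X_J^{(2)}$ of Theorem~\ref{thm:yclass}.

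It then remains to repeat the local-affineness and minimality checks of part~(i). The form is positive definite on $\oline V$ and $V^0 = \Q\delta$ with $\delta = (0,1)$; that $\delta \in \Spann_\Z\Delta_i$ follows from relations such as $(\eps_j,1) + (-\eps_j,0) = \delta$, valid because $\eps_j \in \oline\Delta_-$ while $-\eps_j \in \oline\Delta_+$ in the $B_J^{(2)}$ and $BC_J^{(2)}$ cases, and analogously from $D_J \subeq C_J$ in the $C_J^{(2)}$ case. Hence $V^0 \cap \Spann_\Z\Delta_i = \Z\delta$ is cyclic and, as $\Delta \not= \Delta_i$, the system is locally affine rather than locally finite; minimality follows as before, with $\{\alpha \in \Delta_i \: \alpha(d) = 0\} = \oline\Delta_+ \times \{0\}$ mapping bijectively onto $\oline\Delta_{\rm red}$ (which equals $B_J$ in the non-reduced $BC_J^{(2)}$ case and $\oline\Delta$ itself in the reduced cases). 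I expect the main obstacle to be the case-by-case bookkeeping in part~(ii): verifying that $\oline\fh_+$ is maximal abelian in $\oline\g_+$ and correctly identifying $\oline\g_-$ as the relevant standard, exterior-square, or symmetric-square module. Once these module identifications are secured, the matching with Yoshii's list is purely combinatorial.
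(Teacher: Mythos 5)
Your proposal is correct and follows essentially the same route as the paper: part (i) is read off from Example~\ref{ex:loop}, and part (ii) proceeds by identifying, case by case, the fixed-point algebra $\oline\g_+$ and the $\oline\fh_+$-weights on $\oline\g_-$ (with Lemma~\ref{lem:quad} handling $B_J^{(2)}$), exactly as in the paper's proof. Your only deviations are cosmetic: you identify $\oline\g_-$ as a traceless $\Lambda^2(\K^{2J})$, resp., $S^2(\K^{2J+1})$ module where the paper computes with block matrices, and you spell out the minimality and local-affineness verifications that the paper leaves implicit in the framework preceding the theorem.
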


\begin{proof} (i) follows immediately from Example~\ref{ex:loop}. 

(ii) $B_J^{(2)}$: 
In $\oline\g$ we consider the canonical Cartan subalgebra  
$$\oline\fh = \Spann \{ E_{jj} - E_{-j,-j} \: j \in J'\}.$$ 
Then 
$v := e_{j_0} - e_{-j_0} \in \K^{2J'}$ is a non-isotropic vector 
defining an orthogonal reflection $g \in \OO_{2J'}(\K)$ in $v^\bot$, 
and we obtain an involution of $\fo_{2J'}(\K)$ by 
$\sigma(x) := \Ad(g)x = gxg^{-1}$. 
It is easy to verify that $\Ad(g)$ preserves $\oline\fh$ with 
$$ \oline\fh_+ = \Spann \{ E_{jj} - E_{-j,-j} \: j \in J\} 
\quad \mbox{ and } \quad 
\oline\fh_- = \K (E_{j_0,j_0} - E_{-j_0,-j_0}).$$ 
From Lemma~\ref{lem:quad} we now derive that 
$$\oline\g_+ \cong \fo_{2J+1}(\K) \quad \mbox{ and } \quad 
\oline\Delta_+ = B_J. $$
This lemma also shows that 
$\oline\g_- \cong v^\bot \cong \K^{2J} \oplus \K(e_{j_0} - e_{-j_0})$, 
so that the set of non-zero weights of $\oline\fh_+$ is 
$\oline\Delta_- = \{ \pm \eps_j \: j \in J\} = (B_J)_{\rm sh}$, and this 
leads to 
$$ \Delta = (B_J \times 2 \Z) \cup \big((B_J)_{\rm sh} \times 
(2\Z+ 1)\big). $$

$C_J^{(2)}$: We have $\fsl_{2J}(\K)_+ = \sp_{2J}(\K)$ with the 
Cartan subalgebra  
$$\fh_+ = \Spann\{ E_{jj} - E_{-j,-j} \: j \in J\}.$$ 
The condition $\sigma(x) = - x$ is equivalent to $(Sx)^\top = - Sx$, 
which for $x = \pmat{a & b \\ c & d}$ is equivalent to 
$a^\top = d, b^\top = - b$ and $c^\top = -c$. From that it is easy to 
see that $\oline\Delta_-$ is the root system $D_J$, so that 
$$\Delta 
= (C_J \times 2\Z) \cup (D_J \times (2\Z + 1)) = C_J^{(2)}. $$
The corresponding minimal locally affine Lie algebra is the doubly 
extended twisted loop algebra $\hat\cL(\fsl_{2J}(\K), \sigma)$. 

$BC_J^{(2)}$: In this case 
$\fsl_{2J+1}(\K)_+ = \fo_{2J+1}(\K)$
and 
$$\fh_+ = \Spann \{ E_{jj} - E_{-j,-j} \: j \in J\}$$ is a splitting 
Cartan subalgebra of $\fo_{2J+1}(\K)$ for which the root system 
is $B_J$. 

The condition $\sigma(x) = - x$ is equivalent to $(Sx)^\top = Sx$. 
Evaluating this condition by writing 
$x$ as a $(3 \times 3)$-block matrix according to the 
decomposition $2J + 1 = J \cup \{0\} \cup -J$, 
we see that $\oline\Delta_-$ is the root system $BC_J$, so that 
$$\Delta 
= (B_J \times 2\Z) \cup (BC_J \times (2\Z + 1)) = BC_J^{(2)}. $$
The corresponding minimal locally affine Lie algebras is the doubly 
extended twisted loop algebra $\hat\cL(\fsl_{2J+1}(\K), \sigma)$. 
\end{proof}

\begin{remark} \label{rem:cj-alternate} 
For $C_J^{(2)}$ we also describe an alternative 
realization, which is a geometric variant of Kac' approach via diagram 
automorphisms which is more inplicit (cf.\ \cite{Ka90}). 

On $\oline\g := \fsl_{2J}(\K)$ we consider the involutive automorphism 
defined by 
$\sigma(x) = -S x^\top S^{-1}$, where 
$$S = \pmat{\0 & \1 \\ \1 & \0} 
= \sum_{j \in J} (E_{j, -j} + E_{-j, j}).$$ 
Then $\fsl_{2J}(\K)_+ = \fo_{2J}(\K)$ with the Cartan subalgebra 
$$\fh_+ = \Spann\{ E_{jj} - E_{-j,-j} \: j \in J\}.$$ 
The condition $\sigma(x) = - x$ is equivalent to $(Sx)^\top = Sx$, 
which for $x = \pmat{a & b \\ c & d}$ is equivalent to 
$a^\top = d, b^\top = b$ and $c^\top = c$. From that it is easy to 
see that $\oline\Delta_-$ is the root system $C_J$, so that 
$$\Delta 
= (D_J \times 2\Z) \cup (C_J \times (2\Z + 1)) 
= (D_J \times \Z) \cup ((C_J)_{\rm lg} \times (2\Z + 1)). $$
Then $\oline\Delta = C_J$, but $D_J \times \{0\}$ does not correspond 
to a reflectable section. To obtain a reflectable section, we consider instead the 
hyperplane 
$$ V' := \Spann \{ (2\eps_j, 1) \: j \in J\}, $$
which leads to $\Delta_{\rm red} = C_J$ and 
$$\Delta 
\cong (C_J \times 2\Z) \cup (D_J \times (2\Z + 1)) 
= C_J^{(2)}. $$
The corresponding minimal locally affine Lie algebra is the doubly 
extended twisted loop algebra $\hat\cL(\fsl_{2J}(\K), \sigma)$. 
\end{remark}

\section{Appendix 2. Isomorphisms of twisted loop algebras} 

Let $\oline\g$ be a $\K$-Lie algebra and 
$\sigma \in \Aut(\oline\g)$ an automorphism with 
$\sigma^m = \id_{\oline\g}$. Suppose that $\K$ contains a 
primitive $m$-th root of unity $\zeta \in \K^\times$, i.e., 
$\ord(\zeta) = m$. 
We define $\tilde\sigma \in \Aut(\cL(\oline\g))$ by 
$\tilde\sigma(t^q \otimes x) := \zeta^q t^q \otimes \sigma(x)$ 
and consider the corresponding twisted loop algebra 
$$ \cL(\oline\g, \sigma) := \{ \xi \in \cL(\oline\g) \: 
\tilde \sigma(\xi) = \xi\}. $$

\begin{lemma} \label{lem:maxideal} 
Let $(\oline\g,\oline\fh)$ be a locally finite split simple 
Lie algebra and $\cL(\oline\g) = \K[t^\pm] \otimes \oline\g$ be the 
corresponding loop algebra. 
Then the following assertions hold: 
\begin{itemize} 
\item[\rm(i)] Each ideal of $\cL(\oline\g)$ is of the form 
$I \otimes \oline\g$ for an ideal $I \trile  R$. 
\item[\rm(ii)] If $\K$ is algebraically closed, then 
the maximal ideals of $\cL(\oline\g)$ are the kernels of the 
evaluation maps $\ev_z \: \cL(\oline\g) \to \oline\g$, $z \in \K^\times$, 
sending $r \otimes x$ to $r(z)x$. 
\end{itemize}
\end{lemma}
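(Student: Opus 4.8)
The plan is to deduce (ii) formally from (i) and to prove (i) by exploiting the root decomposition of $\oline\g$. Write $R := \K[t,t^{-1}]$, so that $\cL(\oline\g) = R \otimes \oline\g$, and recall that a locally finite split simple Lie algebra has an irreducible root system with one-dimensional root spaces $\oline\g_\alpha = \K e_\alpha$ and $\oline\fh = \Spann \check\Delta$. First I would observe that any ideal $J \trile \cL(\oline\g)$ is stable under the commuting, simultaneously diagonalizable operators $\ad(1 \otimes h)$, $h \in \oline\fh$, which act on $R \otimes \oline\g_\alpha$ by the scalar $\alpha(h)$ and annihilate $R \otimes \oline\fh$. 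Since distinct roots give distinct characters of $\oline\fh$ (and any element of $J$ has only finitely many weight components, which can be separated by interpolation in char.\ zero), $J$ respects this joint eigenspace decomposition:
$$ J = \big(J \cap (R \otimes \oline\fh)\big) \oplus \bigoplus_{\alpha \in \Delta} \big(J \cap (R \otimes \oline\g_\alpha)\big). $$
As each $\oline\g_\alpha$ is one-dimensional, $J \cap (R \otimes \oline\g_\alpha) = I_\alpha \otimes e_\alpha$ for a $\K$-subspace $I_\alpha \subseteq R$.

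Next I would show that all the $I_\alpha$ coincide. If $\alpha + \beta$ is a root, then $[e_\alpha, e_\beta] = N_{\alpha,\beta}\, e_{\alpha+\beta}$ with $N_{\alpha,\beta}\neq 0$, so bracketing $r \otimes e_\alpha \in J$ with $1 \otimes e_\beta$ gives $r \otimes e_{\alpha+\beta} \in J$, whence $I_\alpha \subseteq I_{\alpha+\beta}$; the reverse inclusion follows by bracketing with $1 \otimes e_{-\beta}$. Since the root system is irreducible, any two roots are linked by such elementary moves, and therefore all $I_\alpha$ equal a common subspace $I \subseteq R$. The Cartan contribution is then pinned down from both sides: on one hand $[r \otimes e_\alpha, 1 \otimes e_{-\alpha}] = r \otimes \check\alpha$ shows $I \otimes \oline\fh \subseteq J$ (using $\oline\fh = \Spann \check\Delta$); on the other, bracketing an element $\sum_i s_i \otimes h_i \in J \cap (R \otimes \oline\fh)$ with $1 \otimes e_\gamma$ yields $\big(\sum_i \gamma(h_i) s_i\big) \otimes e_\gamma \in I \otimes e_\gamma$, and choosing finitely many roots $\gamma$ whose values on the $h_i$ form an invertible matrix forces each $s_i \in I$, so $J \cap (R \otimes \oline\fh) = I \otimes \oline\fh$.

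The main obstacle is the final structural point, that $I$ is not merely a $\K$-subspace but an \emph{ideal} of $R$; here I would use the tensor–bracket trick. For $r \in I$ and $s \in R$ the bracket $[s \otimes e_\alpha, r \otimes e_{-\alpha}] = sr \otimes \check\alpha$ lies in $J$, because $r \otimes e_{-\alpha} \in J$ and $J$ is an ideal; bracketing once more with $1 \otimes e_\gamma$ for a root $\gamma$ with $\gamma(\check\alpha) \neq 0$ produces $sr \otimes e_\gamma \in J$, i.e.\ $sr \in I$. Combining this with the previous paragraph gives $J = I \otimes \oline\g$, proving (i); the effort is essentially the bookkeeping needed to link all root spaces and to reconstruct the Cartan part.

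Finally, for (ii): part (i) sets up an inclusion-preserving bijection between ideals of $\cL(\oline\g)$ and ideals of $R$, so maximal ideals of $\cL(\oline\g)$ correspond exactly to maximal ideals of $R = \K[t,t^{-1}]$. Since $\K$ is algebraically closed, these are the ideals $(t - z)R$ with $z \in \K^\times$, and the quotient $R \to R/(t-z)R \cong \K$ is evaluation at $z$. Tensoring with $\oline\g$ identifies $(t-z)R \otimes \oline\g$ with $\ker(\ev_z)$, where $\ev_z \: \cL(\oline\g) \to \oline\g$ is the surjective Lie algebra homomorphism of the statement; as $\oline\g$ is simple this kernel is maximal, and every maximal ideal arises in this way.
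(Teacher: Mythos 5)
Your proof is correct in substance, but it takes a genuinely different route from the paper's. The paper treats an ideal of $\cL(\oline\g)$ as an $\oline\g$-submodule for the action $x \mapsto \ad(1 \otimes x)$: since $\oline\g$ is a central simple $\oline\g$-module, $\cL(\oline\g) = R \otimes_\K \oline\g$ is an isotypic semisimple $\oline\g$-module whose multiplicity space $\Hom_{\oline\g}(\oline\g,\cL(\oline\g))$ is identified with $R$ by a Schur-type computation; hence every submodule, in particular every ideal, is of the form $M \otimes \oline\g$ for a subspace $M \subseteq R$, and the ideal property then forces $t^{\pm 1}M \subseteq M$, i.e.\ $M \trile R$. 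You instead exploit the finer root decomposition with respect to $\oline\fh$: you split the ideal into weight components, identify all the multiplicity spaces $I_\alpha$ with one another via nonvanishing structure constants and connectedness of the root system, recover the Cartan component by bracketing in both directions, and obtain the $R$-ideal property by the tensor--bracket trick. The paper's argument is shorter and more conceptual (it describes all $\oline\g$-submodules at once, with the module-theoretic inputs --- semisimplicity and centrality --- quoted rather than re-derived); yours is more elementary and self-contained, using only the split structure, at the cost of more bookkeeping. Part (ii) is handled the same way in both proofs.

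One caveat on your step ``any two roots are linked by such elementary moves'': this is true for irreducible root systems of rank at least two (in particular in the infinite-rank situations where the paper applies the lemma), but it fails for $\oline\g = \fsl_2(\K)$, where $\Delta = \{\pm\alpha\}$ and no sum of two roots is a root, so $I_\alpha$ and $I_{-\alpha}$ cannot be compared by such moves. The gap closes with brackets you already use elsewhere: from $r \otimes e_\alpha \in J$ one gets $r \otimes \check\alpha \in J$ and then, bracketing with $1 \otimes e_{-\alpha}$, also $r \otimes e_{-\alpha} \in J$, so $I_\alpha \subseteq I_{-\alpha}$, and symmetrically. With this one-line patch your argument covers all locally finite split simple $\oline\g$.
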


\begin{proof} (i) First we note that $\oline\g$ is a central 
simple $\oline\g$-module, 
so that $\cL(\oline\g)$ is an isotypic semisimple $\oline\g$-module of 
type $\oline\g$. Let $R := \K[t^\pm]$ be the ring of Laurent polynomials. 
Then $\cL(\oline\g) = R \otimes_\K \oline\g$ and we may identity 
$R$ with the multiplicity space $\Hom_{\oline\g}(\oline\g, \cL(\oline\g))$ 
by assigning to $r \in R$ the embedding 
$x \mapsto r \otimes x$. 
In fact, let $\psi \in \Hom_{\oline\g}(\oline\g, \cL(\oline\g))$ 
and $0 \not=x \in \oline\g$. We write 
$\psi(x) = \sum_i r_i \otimes y_i$ with linearly independent elements 
$r_i \in R$ and $y_i \in \oline\g$ 
and observe that this implies that 
$$ \psi(\oline\g) = \psi(\cU(\oline\g)x) 
\subeq \cU(\oline\g)(\sum_i r_i \otimes \oline\g) 
\subeq \sum_i r_i \otimes \oline\g,$$ 
where we use the canonical action of the enveloping algebra 
$\cU(\oline\g)$ on $\cL(\oline\g)$. 
We derive the existence 
of $\phi_i = \lambda_i \id \in \End_{\oline\g}(\oline\g) \cong \K$  
with 
$\phi(z) = \sum_i r_i \otimes \phi_i(z)$ for each $z \in \oline\g$, 
and this leads to 
$\psi(z) = (\sum_i \lambda_i r_i) \otimes z$ for each $z \in \oline\g$.

We conclude that each simple $\oline\g$-submodule of $\cL(\oline\g)$ 
is of the form $r \otimes \oline\g$, and since each submodule 
is semisimple, hence a sum of simple submodules, it is of the form 
$M \otimes \oline\g$ for a unique subspace $M \subeq R$. 

Assume, in addition, that $M \otimes \oline\g$ is an ideal of $\cL(\oline\g)$. 
Then 
$$ M \otimes \oline\g \supeq [t \otimes \oline\g, M \otimes \oline\g] 
= tM \otimes [\oline\g,\oline\g] 
= tM \otimes \oline\g  $$
implies that $tM \subeq M$, and we likewise obtain $t^{-1}M = M$, 
showing that $M \trile R$ is an ideal. 

(ii) If $\K$ is algebraically closed, then the maximal ideals 
of $\K[t^\pm]$ are the kernels of the point evaluations 
$\ev_z$, $z \in \K^\times$, so that the assertion follows from (i). 
\end{proof}

\begin{prop} \label{prop:recon} Let $(\oline\g_j,\oline\fh_j)$, $j=1,2$, 
be locally finite split simple 
Lie algebras, $m \in \N$, and $\sigma_j \in \Aut(\oline\g_j)$ 
be automorphisms with $\sigma_j^m = \id_{\oline\g_j}$. 
Then 
$$ \cL(\oline\g_1,\sigma_1) \cong
\cL(\oline\g_2,\sigma_2) \quad \Rarrow \quad 
\oline\g_1 \cong \oline\g_2. $$
\end{prop}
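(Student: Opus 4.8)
The plan is to recover the isomorphism type of $\oline\g$ intrinsically from the Lie algebra $\cL(\oline\g,\sigma)$, so that any $\K$-linear isomorphism of two twisted loop algebras forces their coefficient algebras to coincide. First I would reduce to the case that $\K$ is algebraically closed, and then pass from the twisted loop algebra to the \emph{untwisted} one by a base change along its centroid, where Lemma~\ref{lem:maxideal} identifies $\oline\g$ as a simple quotient.

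For the reduction, note that forming $\sigma$-eigenspaces commutes with the flat base change $-\otimes_\K\oline\K$, because $\sigma$ has finite order and $\K$ has characteristic zero; since $\K$ already contains a primitive $m$-th root of unity, so does $\oline\K$. Hence $\cL(\oline\g_j,\sigma_j)\otimes_\K\oline\K\cong\cL(\oline\g_j\otimes_\K\oline\K,\sigma_j\otimes\id)$ is again a twisted loop algebra over $\oline\K$ built on the split simple Lie algebra $\oline\g_j\otimes_\K\oline\K$, and a $\K$-isomorphism base-changes to an $\oline\K$-isomorphism. Thus it suffices to treat the algebraically closed case: once we obtain $\oline\g_1\otimes_\K\oline\K\cong\oline\g_2\otimes_\K\oline\K$ there, the two algebras $\oline\g_j$ have the same type, and since split simple locally finite Lie algebras over $\K$ are determined up to isomorphism by their root system, this yields $\oline\g_1\cong\oline\g_2$ over $\K$.

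Assume now $\K$ algebraically closed. Here I would first identify the centroid $Z:=\mathrm{Cent}(\cL(\oline\g,\sigma))$. As $\oline\g$ is central simple over $\K$, multiplication by the $\tilde\sigma$-invariant Laurent polynomials identifies $Z$ with $R_0:=\K[t^{\pm m}]$, and $\cL(\oline\g,\sigma)$ is free over $R_0$. Writing $R:=\K[t^{\pm}]$, a comparison of the homogeneous components shows that the multiplication map $\cL(\oline\g,\sigma)\otimes_{R_0}R\to\cL(\oline\g)$ is an isomorphism of $R$-Lie algebras: the eigenspace decomposition $\oline\g=\bigoplus_i\oline\g_i$ forces surjectivity in each degree, and matching dimensions of the graded pieces gives bijectivity. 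Now any $\K$-isomorphism $F\:\cL(\oline\g_1,\sigma_1)\to\cL(\oline\g_2,\sigma_2)$ carries centroid to centroid, hence is semilinear over an algebra isomorphism $\tau\:Z_1\to Z_2$; since $\K$ is algebraically closed, $\tau$ extends to an isomorphism $\tilde\tau\:R_1\to R_2$ of the degree-$m$ extensions compatible with the gradings, so that $F\otimes\tilde\tau$ is a $\K$-Lie algebra isomorphism $\cL(\oline\g_1)\to\cL(\oline\g_2)$. By Lemma~\ref{lem:maxideal}(ii) every simple quotient of $\cL(\oline\g_j)$ is isomorphic to $\oline\g_j$, and an isomorphism matches simple quotients, so $\oline\g_1\cong\oline\g_2$.

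The main obstacle I expect is the precise computation of the centroid together with the verification that the base change really produces the untwisted loop algebra; in particular, showing that no centroidal transformations arise beyond multiplication by $R_0$ rests squarely on the central simplicity of the locally finite split simple $\oline\g$, which must be cited or argued separately. A secondary technical point is arranging $\tilde\tau$ so that $F\otimes\tilde\tau$ is a genuine Lie algebra isomorphism and not merely a semilinear map, but once the centroid is pinned down this step is routine.
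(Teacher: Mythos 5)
Your proposal is correct and has the same skeleton as the paper's proof: reduction to algebraically closed $\K$ via the classification of locally finite split simple Lie algebras (\cite{NS01}), untwisting of $\cL(\oline\g_j,\sigma_j)$ by base change along $R = \K[t^{\pm m}] \subeq S = \K[t^{\pm}]$, and recovery of $\oline\g_j$ as the simple quotients of the untwisted loop algebra through Lemma~\ref{lem:maxideal}. Where you differ is in how the middle step is justified: the paper simply cites \cite{ABP04} --- Lemma~4.3 for centrality over $R$ together with $S \otimes_R \cL(\oline\g_j,\sigma_j) \cong S \otimes_\K \oline\g_j$, and Theorem~4.6 for the passage from a $\K$-isomorphism to an $R$-isomorphism --- whereas you prove both facts directly: the untwisting by comparing graded components, and the ring-compatibility of the given isomorphism by the centroid argument (any isomorphism conjugates centroid to centroid, hence is semilinear over the induced isomorphism $\tau$ of centroids; since $\tau$ must have the form $u \mapsto cu^{\pm 1}$ on $\K[u^{\pm}]$, algebraic closedness lets you lift it to $\K[t^{\pm}]$ by extracting an $m$-th root of $c$, which is exactly where the paper invokes the existence of $m$-th roots). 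Since the cited ABP04 results are themselves proved by this centroid-semilinearity method, your argument is an unpacking of the paper's route rather than a genuinely new one; what it buys is self-containedness, and it isolates precisely where central simplicity of $\oline\g_j$ enters. That ingredient --- $\mathrm{End}_{\oline\g}(\oline\g) = \K$ --- is exactly what the paper's proof of Lemma~\ref{lem:maxideal}(i) asserts (``$\oline\g$ is a central simple $\oline\g$-module''), and the same isotypic-decomposition argument given there identifies $\mathrm{Hom}_{\oline\g}(\oline\g,\cL(\oline\g))$ with the multiplicity space and hence yields $\mathrm{Cent}(\cL(\oline\g)) = \K[t^{\pm}]$ and $\mathrm{Cent}(\cL(\oline\g,\sigma)) = \K[t^{\pm m}]$; so the gaps you flag are real but close with material already in the paper.
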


\begin{proof} Let $\oline\K$ denote the algebraic closure of $\K$. 
If we can prove the assertion for the Lie algebras 
$\oline\K \otimes_\K \oline\g_j$, 
then we arrive at an isomorphism 
$\oline\K \otimes_\K \oline\g_1 \cong \oline\K \otimes_\K \oline\g_2$, 
so that the classification of locally finite split simple 
Lie algebras implies that $\oline\g_1 \cong \oline\g_2$ because 
the isomorphism class is determined by the type of the corresponding 
root systems (cf.\ \cite[Thm.~VI.7]{NS01}). 
We may therefore assume that $\K$ is algebraically closed. 

Let $S := \K[t^\pm]$ and let $R := \K[t^{\pm m}]$ be the 
subring generated by $t^{\pm m}$. 
According to \cite[Lemma~4.3]{ABP04}, $\cL(\oline\g_j, \sigma_j)$ 
is central over $R$ and an $S/R$-form of $R \otimes_\K \oline\g$, i.e., 
$$ S \otimes_R \cL(\oline\g_j, \sigma_j) \cong S \otimes \oline\g_j. $$

Since $\K$ is algebraically closed, each element of $\K$ has 
$m$-th roots, so that \cite[Thm.~4.6]{ABP04} shows that 
\begin{equation}
  \label{eq:R-iso}
\cL(\oline\g_1,\sigma_1) \cong_\K \cL(\oline\g_2,\sigma_2) 
\quad \Rarrow \quad 
\cL(\oline\g_1,\sigma_1) \cong_R \cL(\oline\g_2,\sigma_2). 
\end{equation}
This in turn leads to 
$$ S \otimes_\K \oline\g_1 
\cong_S S \otimes_R \cL(\oline\g_1,\sigma_1) 
\cong_S  S \otimes_R \cL(\oline\g_2,\sigma_2) 
\cong_S  S \otimes_\K \oline\g_2. $$

Finally Lemma~\ref{lem:maxideal} shows that all quotients of 
$S \otimes_\K \oline\g_j$ by maximal ideals are isomorphic to 
$\oline\g_j$, so that we obtain $\oline\g_1 \cong_\K \oline\g_2$.
\end{proof}

\begin{theorem} \label{thm:1stclassi} We have isomorphism between the 
minimal locally affine Lie algebras corresponding 
to the following pairs of root systems: 
$$ (B_J^{(1)}, D_J^{(1)}), \quad 
  (C_J^{(2)}, BC_J^{(2)}) \quad \mbox{ and } \quad 
  (B_J^{(1)}, B_J^{(2)}). $$
\end{theorem}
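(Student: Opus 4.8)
The plan is to work entirely with the explicit (twisted) loop-algebra models provided by Theorem~\ref{thm:classi} and, for each of the three pairs, to produce an isomorphism of the corresponding doubly extended (twisted) loop algebras. First I would reduce the problem to the level of the cores: by Remark~\ref{rem:3.7} a minimal locally affine Lie algebra is the double extension of its centerless core $\g_{cc}$ by the degree derivation $D$ attached to a choice of $d$, and by Corollary~\ref{cor:3.4a} (via Theorem~\ref{thm:beam-class}) any two admissible choices of $d$, i.e.\ of reflectable section, are conjugate under $\Aut$. Hence it suffices to produce in each case an isometric isomorphism of the centerless cores --- which here are exactly the (twisted) loop algebras $\cL(\oline\g,\sigma)$ --- carrying one degree derivation to a conjugate of the other; such an isomorphism then extends to the double extensions by $(z,\xi,t)\mapsto(z,\Phi(\xi),t)$ as in Example~\ref{ex:loop}.

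For the pair $(B_J^{(1)},D_J^{(1)})$ this is immediate. By Theorem~\ref{thm:classi}(i) the two algebras are $\hat\cL(\fo_{2J+1}(\K))$ and $\hat\cL(\fo_{2J}(\K))$, and for infinite $J$ the quadratic spaces $(\K^{2J+1},S)$ and $(\K^{2J},S_+)$ are isometric, so an isometry $\psi$ induces an isomorphism $\Phi = \psi(\cdot)\psi^{-1}\colon \fo_{2J+1}(\K)\to\fo_{2J}(\K)$ preserving $\kappa(x,y)=\tr(xy)$ exactly. Tensoring with $\K[t^{\pm}]$ yields a graded, $\oline\kappa$-isometric isomorphism $\cL(\fo_{2J+1})\to\cL(\fo_{2J})$ commuting with $D$, which extends to the required isomorphism of double extensions.

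The substance lies in $(C_J^{(2)},BC_J^{(2)})$ and $(B_J^{(1)},B_J^{(2)})$, where the two root systems are genuinely non-isomorphic (their quotients $\oline\Delta$ are $C_J$ versus $BC_J$; resp.\ the fiber data differ), so the Uniqueness Theorem~\ref{thm:3.4} does not apply. Here the infinite-rank coincidences $\fsl_{2J}(\K)\cong\fsl_{2J+1}(\K)$ and $\fo_{2J'}(\K)\cong\fo_{2J+1}(\K)$ (equal cardinalities) are essential: Proposition~\ref{prop:recon} shows that an isomorphism $\cL(\fsl_{2J},\sigma_1)\cong\cL(\fsl_{2J+1},\sigma_2)$ forces $\fsl_{2J}\cong\fsl_{2J+1}$, so the necessary condition is met and I need only construct the isomorphism. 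For $(B_J^{(1)},B_J^{(2)})$ I plan to exploit that the reflection $\sigma=\Ad(g)$ acts nontrivially only on the single hyperbolic plane $\langle e_{j_0},e_{-j_0}\rangle$ (Lemma~\ref{lem:quad}): this is a ``finite-rank'' twist, and since $J$ is infinite one should be able to absorb it by an infinite shift through mutually orthogonal hyperbolic planes, trivializing the $\Z/2$-twist and identifying $\cL(\fo_{2J'},\sigma)$ with the untwisted $\cL(\fo_{2J+1})$. For $(C_J^{(2)},BC_J^{(2)})$ the analogous construction must directly match the symplectic involution $\sigma_1$ on $\fsl_{2J}(\K)$ with the orthogonal involution $\sigma_2$ on $\fsl_{2J+1}(\K)$.

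The main obstacle is precisely this last construction. In finite rank the corresponding affine algebras ($B_n^{(1)}$ versus $D_{n+1}^{(2)}$, and $A_{2n-1}^{(2)}$ versus $A_{2n}^{(2)}$) are pairwise non-isomorphic, and the two twisting automorphisms are not conjugate: for $(C_J^{(2)},BC_J^{(2)})$ the fixed-point (degree-zero) subalgebras $\sp_{2J}(\K)$ and $\fo_{2J+1}(\K)$ already have non-isomorphic types $C_J$ and $B_J$, and for $(B_J^{(1)},B_J^{(2)})$ the degree-one components (the adjoint versus the natural module of $\fo_{2J+1}(\K)$) differ. Consequently the sought isomorphisms cannot preserve the loop grading, so they do \emph{not} arise from conjugating involutions or from the standard outer-class theory of twisted loop algebras; they must be exhibited as explicit non-graded isomorphisms that genuinely mix the homogeneous components, and checking that such a map respects the invariant form and carries $D$ to a conjugate of the target degree derivation (so that the reduction of the first paragraph applies) is the delicate point.
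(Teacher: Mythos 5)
Your reduction framework and your treatment of the pair $(B_J^{(1)},D_J^{(1)})$ coincide with the paper's proof (both rest on $\fo_{2J}(\K)\cong\fo_{2J+1}(\K)$ and the fact that such an isomorphism is isometric up to a scalar). But for the two substantive pairs your proposal stops exactly where the theorem's content begins: you name the construction as ``the delicate point'' without supplying it, and your strategic claims about what such a construction must look like are partly wrong. For $(C_J^{(2)},BC_J^{(2)})$ the paper does precisely what you declared impossible, namely conjugate involutions. The idea you are missing is Remark~\ref{rem:cj-alternate}: the root system $C_J^{(2)}$ is \emph{also} realized by $\hat\cL(\fsl_{2J}(\K),\sigma)$ with the \emph{orthogonal} involution $\sigma(x)=-Sx^\top S^{-1}$, $S$ symmetric (fixed algebra $\fo_{2J}(\K)$); its root data $(D_J\times\Z)\cup\big((C_J)_{\rm lg}\times(2\Z+1)\big)$ are isomorphic to $C_J^{(2)}$ via the non-obvious reflectable section $V'=\Spann\{(2\eps_j,1) \: j \in J\}$. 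With this alternative model both twists are orthogonal involutions, and since $J$ is infinite the quadratic spaces $(\K^{(2J)},\beta_1)$ and $(\K^{(2J+1)},\beta_2)$ are isometric, giving an isomorphism of pairs $(\fsl_{2J}(\K),\sigma_1)\cong(\fsl_{2J+1}(\K),\sigma_2)$ and hence a loop-grading-preserving isomorphism of the twisted loop algebras and their double extensions. Your plan of ``directly matching the symplectic involution on $\fsl_{2J}(\K)$ with the orthogonal involution on $\fsl_{2J+1}(\K)$'' cannot succeed as stated, because those two pairs are genuinely non-isomorphic as Lie algebras with involution (fixed algebras $\sp_{2J}(\K)$ versus $\fo_{2J+1}(\K)$); the paper's insight is to change the model of $C_J^{(2)}$, not the kind of map.

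For $(B_J^{(1)},B_J^{(2)})$ your ``absorb the twist by an infinite shift through hyperbolic planes'' is not a workable mechanism: conjugation by any fixed ($t$-independent) automorphism only replaces $\Ad(g)$ by a conjugate reflection, and a nontrivial involution is never conjugate to the identity, so no shift by itself can trivialize the $\Z/2$-twist. What the paper uses instead is a \emph{loop-dependent} conjugation: a homomorphism $\alpha\:\K^\times\to\OO(V,\beta)$ acting by $t$, $t^{-1}$ on the two halves of an infinite family of hyperbolic coordinates, and the map $\xi(t)\mapsto\Ad(\alpha(t))\xi(t)$, which is an isomorphism $\cL(\fo_{2J'}(\K),\Ad(g))\to\cL(\fo_{2J'}(\K),\Ad(\alpha(-1)g))$, i.e., it changes the twisting involution by the coboundary $\Ad(\alpha(-1))$. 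This replaces the reflection $g$ by an involution $g'$ whose eigenspaces are both infinite; the infinite-rank isometry $\K v\oplus V_1\cong V_1$ then identifies $(V,\beta,g')$ with $(V_1\oplus V_2,\beta_1\oplus\beta_2,\alpha(-1))$, and a second application of the same conjugation trick untwists completely, yielding $\cL(\fo_{2J'}(\K),\sigma)\cong\cL(\fo_{2J}(\K))$. Note this map shears rather than ``mixes'' degrees (it sends $t^n\otimes x$ to $t^{n+k}\otimes x$ when $x$ has $\alpha$-weight $k$), confirming your observation that no graded isomorphism of the standard models exists, but that observation by itself produces nothing; the two concrete devices above (the alternative reflectable section, and the loop cocycle $\Ad(\alpha(t))$) are the missing ideas, and your appeal to Proposition~\ref{prop:recon} supplies only a necessary condition, as you acknowledge.
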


\begin{proof} (a) From the isomorphism of the Lie algebras 
$\fo_{2J}(\K) \cong \fo_{2J+1}(\K)$ (\cite[Lemma~I.4]{NS01})
and the fact that any isomorphism 
is (up to a factor) isometric with respect to the invariant 
quadratic form, it follows that the corresponding doubly extended 
loop algebras are also isomorphic. Therefore the non-isomorphic 
root systems $B_J^{(1)}$ and $D_J^{(1)}$ correspond to isomorphic 
minimal locally affine Lie algebras. 

(b) In Remark~\ref{rem:cj-alternate}, we have also seen how to 
realize the root system $C_J^{(2)}$ by a twisted loop algebra 
$\hat\cL(\fsl_{2J}(\K), \sigma)$, where 
$\fsl_{2J}(\K)_+  \cong \fo_{2J}(\K)$. From 
the isomorphism $(\K^{(2J)},\beta_1) \cong (\K^{(2J+1)},\beta_2)$ 
of quadratic spaces, with 
$$ \beta_j(x,y) = x^\top S_j y, \quad S_1 = \pmat{\0 & \1 \\ \1 & \0} 
\quad \mbox{ and } 
\quad  S_2 = \pmat{ \0 & 0 & \1 \\ 0 & 1 & 0 \\ \1 & 0 & \0}, $$ 
we obtain an isomorphism 
$(\fsl_{2J}(\K), \sigma_1) \cong 
(\fsl_{2J+1}(\K), \sigma_2)$ of Lie algebras with 
involution.  
Combining Theorem~\ref{thm:classi} with 
Remark~\ref{rem:cj-alternate}, it now follows that 
the minimal locally affine Lie algebra 
$\hat\cL(\fsl_{2J+1}(\K), \sigma_2)$ 
of type $BC_J^{(2)}$ 
is isomorphic to the minimal locally affine Lie algebra 
$\hat\cL(\fsl_{2J}(\K), \sigma_1)$ of type $C_J^{(2)}$. 

(c) We realize $B_J^{(2)}$ as in Theorem~\ref{thm:classi} via the 
quadratic space $(V = \K^{(2J')},\beta')$, where 
$$ \beta'(x,y) = \sum_{j \in J'} x_j y_{-j} + x_{-j}y_j $$
and $\sigma = \Ad(g)$, where $g$ is the orthogonal reflection in 
$v := e_{j_0} - e_{-j_0}$. 

Next we choose an orthogonal decomposition 
$V = \K v \oplus V_1 \oplus V_2$, 
where $V_k = (\K^{2J_k}, \beta_k)$, $k =1,2$, 
and 
$\beta_k(x,y) = \sum_{j \in J_k} x_j y_{-j} + x_{-j}y_j$
is the canonical form on $\K^{2J_k}$. 
Accordingly, we obtain a decomposition 
$$ V = \K v \oplus 
(\K^{(J_1)} \oplus \K^{(-J_1)}) \oplus
(\K^{(J_2)} \oplus \K^{(-J_2)}),$$ 
so that we may represent linear maps on $V$ accordingly by 
$(5 \times 5)$-block matrices. 
We thus obtain a group homomorphism 
$$ \alpha \: \K^\times \to \OO(V,\beta), \quad 
\alpha(t) := \pmat{
1 & 0 & 0 & 0 & 0 \\ 
0 & t\1 & \0 & \0 & \0 \\ 
0 & \0 & t^{-1}\1 & \0 & \0 \\ 
0 & \0 & \0 & \1 & \0 \\ 
0 & \0 & \0 & \0 & \1}. $$

Thinking of elements $\xi$ of $\cL(\fo_{2J'}(\K),\sigma)$ 
as maps $\K^\times \to \fo_{2J'}(\K)$ satisfying 
$$ \xi(-t) = \Ad(g)(\xi(t))\quad \mbox{ for } \quad t \in \K^\times, $$
it is now easy to see that 
$$ \xi \mapsto \xi', \quad \xi'(t) := \Ad(\alpha(t))(\xi(t)) $$
defines an isomorphism of Lie algebras 
$$ \cL(\fo_{2J'}(\K),\sigma) \to \cL(\fo_{2J'}(\K),\sigma'), $$
where 
$\sigma' = \Ad(\alpha(-1))\Ad(g) = \Ad(\alpha(-1)g) = \Ad(g')$, 
where 
$$ g' := \alpha(1)g = \pmat{
-1 & 0 & 0 & 0 & 0 \\ 
0 & -\1 & \0 & \0 & \0 \\ 
0 & \0 & -\1 & \0 & \0 \\ 
0 & \0 & \0 & \1 & \0 \\ 
0 & \0 & \0 & \0 & \1} $$
is the orthogonal reflection in the subspace 
$V_2 \subeq V$. Next we observe that the triple
$(V,\beta,g')$ of a quadratic space with an orthogonal reflection 
is isomorphic to the triple 
$(V_1 \oplus V_2, \beta_1 \oplus \beta_2, \alpha(-1))$, so that 
$$ \cL(\fo_{2J'}(\K),\sigma') 
\cong \cL(\fo_{2(J_1+J_2)}(\K),\alpha(1)). $$
Reversing the argument above, we further derive 
$$ \cL(\fo_{2(J_1+J_2)}(\K),\alpha(1)) 
\cong \cL(\fo_{2(J_1+J_2)}(\K),\id)
\cong \cL(\fo_{2J}(\K)). $$
\end{proof}

For the proof of the following theorem, we recall some 
facts on automorphisms of $\fsl_J(\K)$ from \cite{St01}: 

\begin{remark} \label{rem:nina-aut} 
A matrix $A \in \K^{J \times J}$ defines a linear 
endomorphism of the free vector space $\K^{(J)}$ if and only if each 
column has only finitely many non-zero entries. We write 
$\GL_J(\K)_f \subeq \GL(\K^{(J)})$ for the subgroup of those linear 
automorphisms 
$\phi(x) = Ax$, $A \in \K^{J \times J}$, for which the adjoint 
map, which is represented by the transposed matrix $A^\top$, preserves 
the subspace $\K^{(J)}$ of $(\K^{(J)})^* \cong \K^J$. 

It is shown in \cite{St01} that an automorphism of $\fsl_J(\K)$ 
either is of the form $\phi_A(x) = AxA^{-1}$ for some 
$A \in \GL_J(\K)_f$, or of the form 
$\tilde\phi_A(x) := -Ax^\top A^{-1}$ for some $A \in \GL_J(\K)_f$. 

Both types of automorphisms can easily be distinguished 
by their action on the invariant polynomial 
$p_3(x) := \tr(x^3)$ of degree~$3$, which is non-zero for $|J| > 2$. 
In fact, $p_3$ is invariant under automorphisms of the form 
$\phi_A$ and $p_3 \circ \tilde\phi_A = - p_3$. 
\end{remark}

\begin{theorem} \label{thm:2nd-classi} The minimal locally affine 
Lie algebras corresponding to the root systems 
$A_J^{(1)}$ and $C_J^{(2)}$ are not isomorphic. 
\end{theorem}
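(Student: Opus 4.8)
The plan is to distinguish the two minimal locally affine Lie algebras by means of an invariant that is insensitive to the central and derivation extensions, so that the problem reduces to distinguishing their centerless cores, and then to apply Proposition~\ref{prop:recon}. First I would observe that the minimal locally affine Lie algebra of type $A_J^{(1)}$ is $\hat\cL(\fsl_J(\K))$, built on the untwisted loop algebra of $\oline\g_1 := \fsl_J(\K)$, while the one of type $C_J^{(2)}$ is $\hat\cL(\fsl_{2J}(\K),\sigma)$ with $\oline\g_2 := \fsl_{2J}(\K)$ and $\sigma$ the involution from Theorem~\ref{thm:classi}. Since an isomorphism of the doubly extended Lie algebras restricts to an isomorphism of their cores, and the core is the central extension of the centerless core $\cL(\oline\g_j,\sigma_j)$ (here $\sigma_1 = \id$, $m=1$, and $\sigma_2=\sigma$, $m=2$), the key reduction is to show that an isomorphism at the level of the full Lie algebras would force $\cL(\oline\g_1,\sigma_1)\cong\cL(\oline\g_2,\sigma_2)$ as bare Lie algebras.

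Having made that reduction, I would invoke Proposition~\ref{prop:recon}: if the twisted loop algebras were isomorphic, then $\oline\g_1\cong\oline\g_2$, i.e.\ $\fsl_J(\K)\cong\fsl_{2J}(\K)$. Since $J$ is infinite, $|J|=|2J|$, so these two Lie algebras \emph{are} abstractly isomorphic, and this crude cardinality argument fails. This is where the real obstacle lies: Proposition~\ref{prop:recon} alone is too weak because the finite-dimensional distinction between $A$-type and $C$-type has evaporated in infinite rank. The correct invariant must instead remember how $\sigma$ acts, or equivalently remember the $\Z/2$-grading versus the trivial grading. The natural tool is Remark~\ref{rem:nina-aut}: the cubic invariant $p_3(x)=\tr(x^3)$ on $\fsl_J(\K)$ is preserved by automorphisms of the form $\phi_A$ and negated by those of the form $\tilde\phi_A$. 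The involution $\sigma$ defining $C_J^{(2)}$ is of the second type ($\sigma(x)=-Sx^\top S^{-1}$), so it reverses the sign of $p_3$, whereas the untwisted construction for $A_J^{(1)}$ uses $\sigma=\id$, which fixes $p_3$.

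Thus the plan's heart is to extract, from the abstract Lie algebra structure of the core, the datum of whether the loop construction is twisted by a $p_3$-reversing automorphism. Concretely, I would argue as follows. An isomorphism of cores intertwines the canonical derivation $D$ (the unique, up to scalar, derivation whose eigenspaces give the $\Z$-grading with finite-dimensional weight spaces modulo center), hence it must respect the grading structure and identify the degree-zero fixed subalgebra of $\oline\g_1$ (which is all of $\fsl_J(\K)$) with the $\sigma$-fixed subalgebra $\oline\g_{2,+}\cong\fo_{2J}(\K)$ inside the $C_J^{(2)}$ construction. But $\fsl_J(\K)$, being of type $A$, has a nonzero cubic invariant $p_3$, while the fixed-point algebra $\fo_{2J}(\K)$ of type $D$ carries no nonzero $\ad$-invariant cubic form (all invariants are built from even-degree traces because the defining representation is self-dual via $S$). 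An isomorphism of the cores would therefore have to identify a type-$A$ simple locally finite Lie algebra with a type-$D$ one as the relevant graded piece, contradicting the classification of locally finite split simple Lie algebras by their root systems (\cite{NS01}, as cited in Proposition~\ref{prop:recon}).

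I expect the main obstacle to be making the grading-and-fixed-point argument intrinsic: one must show that any Lie-algebra isomorphism of the two minimal locally affine algebras, which a priori need not respect gradings, can be normalized to intertwine the degree derivations, so that the identification of the degree-zero pieces is forced. The cleanest route is to pass to the centerless cores, identify the one-dimensional center $\fz(\g_c)$ intrinsically (Proposition~\ref{prop:2.11}(i)) and the image of the degree derivation $\ad d$ modulo inner derivations, and then exploit that the even part in the twisted case is of a different Lie type than the whole of $\fsl_J(\K)$. Once the even parts are pinned down as $\fsl_J(\K)$ versus $\fo_{2J}(\K)$ respectively, the contradiction follows either from the nonvanishing versus vanishing of $p_3$ (Remark~\ref{rem:nina-aut}) or directly from the type-$A$ versus type-$D$ dichotomy in the classification, completing the proof that no isomorphism exists.
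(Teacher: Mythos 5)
Your reduction to the centerless cores is sound, your diagnosis that Proposition~\ref{prop:recon} alone cannot work (because $\fsl_J(\K)\cong\fsl_{2J}(\K)$ for infinite $J$) is exactly right, and you have even located the correct distinguishing invariant, the cubic form $p_3(x)=\tr(x^3)$ of Remark~\ref{rem:nina-aut}. But the heart of your argument --- that any isomorphism of the cores ``intertwines the canonical derivation $D$'' and therefore identifies the degree-zero subalgebras --- is a genuine gap, and it cannot be repaired in that form. The uniqueness claim you invoke fails: the graded pieces here are infinite-dimensional (so ``finite-dimensional weight spaces'' is not available), and $D$ is far from unique up to scalar, since one may add $\ad h$ or conjugate by $t$-dependent ``shearing'' automorphisms. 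More fundamentally, the degree-zero subalgebra of a twisted loop algebra is simply not an isomorphism invariant: for an inner involution $\sigma=\Ad(g)$ of a simple $\oline\g$ one has $\cL(\oline\g,\sigma)\cong\cL(\oline\g)$ although the fixed-point algebra $\oline\g^\sigma$ is a proper subalgebra of $\oline\g$ (this is why only diagram automorphisms produce new affine algebras); and the isomorphism $B_J^{(2)}\cong B_J^{(1)}$ in Theorem~\ref{thm:1stclassi}(c) of this very paper is implemented by the conjugation $\xi(t)\mapsto \Ad(\alpha(t))\xi(t)$, which does \emph{not} respect the $\Z$-gradings. So an argument forcing the degree-zero pieces to match would prove too much. (Incidentally, for the realization of $C_J^{(2)}$ in Theorem~\ref{thm:classi} the fixed algebra is $\sp_{2J}(\K)$; the realization with fixed algebra $\fo_{2J}(\K)$ is the alternative one of Remark~\ref{rem:cj-alternate}.)

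The missing ingredient is the rigidity input that replaces grading-preservation, and the paper gets it from \cite{ABP04}: a $\K$-isomorphism of these loop algebras implies an isomorphism over the ring $R=\K[t^{\pm 2}]$ (Thm.~4.6 there), hence yields an $S$-linear automorphism $\phi$ of $\cL(\fsl_J(\K))$, $S=\K[t^{\pm}]$, carrying $\cL(\fsl_J(\K),\sigma)$ onto $R\otimes\fsl_J(\K)$. $S$-linearity forces $\phi$ to preserve every maximal ideal $\ker(\ev_z)$ (Lemma~\ref{lem:maxideal}), so it induces a family of automorphisms $\phi_z$ of $\fsl_J(\K)$ with $\phi(f)(z)=\phi_z(f(z))$; comparing the two involutions gives $\phi_z\circ\sigma=\phi_{-z}$, i.e.\ $\sigma=\phi_{-1}\circ\phi_1^{-1}$. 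Only at this point does $p_3$ enter, and in a pointwise form: $z\mapsto p_3(\phi_z(x))$ is a Laurent polynomial taking only the values $\pm p_3(x)$, hence constant, so $\sigma=\phi_{-1}\circ\phi_1^{-1}$ preserves $p_3$ --- contradicting the fact that $\sigma=\tilde\phi_S$ negates it. Your proposal would need this mechanism (or an equivalent one) to convert an abstract isomorphism into data rigid enough for $p_3$ to detect; as written, the passage from ``abstract isomorphism'' to ``graded isomorphism'' is assumed rather than proved, and it is false in general.
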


\begin{proof} In view of Theorem~\ref{thm:classi}, 
it suffices to show that the Lie algebras 
$\cL(\fsl_{J}(\K))$ and 
$\cL(\fsl_{J}(\K),\sigma)$ are not isomorphic if 
$\sigma$ is an involutive automorphism of the form 
$\sigma(x) = \tilde\phi_S(x) = - S x^\top S^{-1}$, 
where $S$ is any matrix 
defining an involutive automorphism of $\fsl_J(\K)$.  
After base field extension to the algebraic closure $\oline\K$ of $\K$, we may 
w.l.o.g.\ assume that $\K$ is algebraically closed. 

We argue by contradiction. If 
$\cL(\fsl_{J}(\K)) \cong \cL(\fsl_{J}(\K),\id) 
\cong \cL(\fsl_{J}(\K),\sigma)$, then 
\cite[Thm.~IV.6]{ABP04} implies that these Lie algebras are isomorphic 
over the ring $R := \K[t^{\pm 2}] \subeq S := \K[t^{\pm}]$. 
We also recall from \cite[Lemma~IV.3]{ABP04} that 
$$ S \otimes_R \cL(\fsl_J(\K), \sigma) 
\cong \cL(\fsl_J(\K), \sigma) \oplus t \cdot \cL(\fsl_J(\K), \sigma)
= S \otimes_\K \fsl_J(\K) = \cL(\fsl_J(\K)). $$
We therefore obtain an $S$-automorphism 
$\phi \in \Aut(\cL(\fsl_J(\K)))$, mapping the $R$-subalgebra 
$\cL(\fsl_J(\K),\sigma)$ to 
$\cL(\fsl_J(\K),\id) = R \otimes_\K \fsl_J(\K)$. 

From Lemma~\ref{lem:maxideal}, we know that 
the maximal ideals of $\cL(\fsl_J(\K))$ all have the form 
$S_z \otimes \fsl_J(\K) = S_z\cL(\fsl_J(\K))$, with 
$S_z := \{ f \in S \: f(z) = 0\}$ for some $z \in \K^\times$. 
Since $\phi$ is $S$-linear, it therefore preserves all maximal 
ideals of $\cL(\fsl_J(\K))$, hence induces for each $z \in \K^\times$ 
an automorphism $\phi_z\in \Aut(\oline\g)$ via 
$\phi_z(x) = \phi(1 \otimes x)(z)$, which in turn implies 
$$ \phi(f)(z) = \phi_z(f(z)) \quad \mbox{ for } z \in \K^\times, 
f \in \cL(\fsl_J(\K)). $$
Let $\tilde\sigma(f)(z) := \sigma(f(-z))$, so that 
$$\cL(\fsl_J(\K),\sigma) = \cL(\fsl_J(\K))_+ 
\quad \mbox{ and } \quad 
t\cL(\fsl_J(\K),\sigma) = \cL(\fsl_J(\K))_- $$
is the eigenspace decomposition of $\tilde\sigma$. 
Likewise 
$$\cL(\fsl_J(\K)) = \big(R \otimes \fsl_J(\K)\big) 
\oplus \big(t R \otimes \fsl_J(\K)\big) $$
is the eigenspace decomposition of the involution defined by 
$\tilde\id(f)(z) := f(-z)$. 
Since $\phi$ maps the $\tilde\sigma$-eigenspaces to the 
corresponding $\tilde\id$-eigenspaces, we obtain 
$$ \phi \circ \tilde\sigma = \tilde\id \circ \phi, $$
which leads to 
$$ \phi_z \circ \sigma = \phi_{-z} \quad \mbox{ for } \quad z \in  \K^\times,$$
and hence to the factorization 
$$ \sigma = \phi_{-1} \circ \phi_1^{-1}. $$

Pick $x \in \fsl_J(\K)$ with $p_3(x) := \tr(x^3)\not=0$. 
Then the function $\K^\times \to \K$, \break $z \mapsto p_3(\phi_z(x))$ 
is a Laurent polynomial, and we know from 
Remark~\ref{rem:nina-aut} that its only possible values are 
$\pm p_3(x)$, so that it is constant. This in turn implies that 
all automorphisms $\phi_z$ either fix $p_3$ or reverse its sign. 
In particular, $p_3$ is invariant under $\sigma$, but this contradicts 
$\sigma = \tilde\phi_S$ (cf.~Remark~\ref{rem:nina-aut}). 
\end{proof}

Combining the results from the preceding two theorems 
with Proposition~\ref{prop:recon} and the classification 
for the locally finite case in \cite{NS01}, we finally obtain 
the following classification of minimal locally affine Lie algebras: 

\begin{theorem}[Classification Theorem] \label{thm:3rd-classi} 
For each infinite set $J$, there are four isomorphism 
classes of minimal locally affine Lie algebras with 
$|\Delta| = |J|$. They are represented by the split Lie algebras 
with the root systems 
$A_J^{(1)}, B_J^{(1)}, C_J^{(1)}$ and $C_J^{(2)}$, resp., 
the loop algebras $\cL(\oline\g)$ with $\g$ of type 
$A_J$, $B_J$ or $C_J$, and 
the twisted loop algebra $\cL(\fsl_{2J}(\K),\sigma)$ with 
$\sigma(x) = -S x^\top S^{-1}$ and 
$S = \pmat{\0 & \1 \\ -\1 & \0}$. 
\end{theorem}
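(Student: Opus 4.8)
The plan is to combine Yoshii's classification (Theorem~\ref{thm:yclass}), the Uniqueness Theorem~\ref{thm:3.4}, and the explicit loop-algebra realizations of Theorem~\ref{thm:classi} to first produce \emph{at most} seven candidate isomorphy classes, then to collapse these to four using the coincidences of Theorem~\ref{thm:1stclassi}, and finally to separate the four survivors by passing to centerless cores and invoking Proposition~\ref{prop:recon}. First I would argue that every minimal locally affine Lie algebra $(\g,\fh,\kappa)$ with $|\Delta| = |J|$ for an infinite set $J$ has an irreducible reduced locally affine root system of infinite rank (irreducibility from (Irr), reducedness from the proposition following Definition~\ref{def:2.3}, and infinite rank since $|\Delta| = |J|$). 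Hence by Theorem~\ref{thm:yclass} its root system is isometrically isomorphic to exactly one of $A_J^{(1)}, B_J^{(1)}, C_J^{(1)}, D_J^{(1)}, B_J^{(2)}, C_J^{(2)}, (BC)_J^{(2)}$. By the Uniqueness Theorem~\ref{thm:3.4}, isometrically isomorphic root systems yield isomorphic minimal locally affine Lie algebras, so there are at most seven isomorphy classes, each realized by the twisted loop algebra attached to it in Theorem~\ref{thm:classi}. I would also note that each of the seven root systems has cardinality $|J|$, so all seven live in the same cardinality regime.

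Next I would invoke Theorem~\ref{thm:1stclassi}, which provides Lie algebra isomorphisms realizing $D_J^{(1)} \cong B_J^{(1)}$, $B_J^{(2)} \cong B_J^{(1)}$ and $(BC)_J^{(2)} \cong C_J^{(2)}$. These three identifications reduce the list to the four representatives $A_J^{(1)}, B_J^{(1)}, C_J^{(1)}, C_J^{(2)}$ claimed in the statement, realized by $\cL(\fsl_J(\K))$, $\cL(\fo_{2J+1}(\K))$, $\cL(\sp_{2J}(\K))$ and the twisted loop algebra $\cL(\fsl_{2J}(\K),\sigma)$.

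It remains to show that these four are pairwise non-isomorphic, which is the heart of the argument. Since an isomorphism of coral Lie algebras preserves the core $\g_c = [\g,\g]$ and its one-dimensional center (Proposition~\ref{prop:2.11}(i)), it descends to an isomorphism of centerless cores $\g_{cc} = \g_c/\fz(\g_c)$; for the four representatives these are the (twisted) loop algebras $\cL(\fsl_J(\K))$, $\cL(\fo_{2J}(\K))$ (using $\fo_{2J+1}(\K)\cong\fo_{2J}(\K)$), $\cL(\sp_{2J}(\K))$ and $\cL(\fsl_{2J}(\K),\sigma)$ respectively. Applying Proposition~\ref{prop:recon} (with $m=2$, using $\id^2=\id$ for the untwisted factors), an isomorphism between any two of these loop algebras would force an isomorphism between the underlying locally finite split simple Lie algebras. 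Since $\fsl_J(\K)$, $\fo_{2J}(\K)$ and $\sp_{2J}(\K)$ have pairwise distinct types ($A$, $D$ and $C$) in the classification of \cite{NS01}, the three untwisted representatives are pairwise non-isomorphic. Finally $\fsl_{2J}(\K) \cong \fsl_J(\K)$ is of type $A$, so by Proposition~\ref{prop:recon} the twisted algebra $C_J^{(2)}$ can only possibly be isomorphic to the type-$A$ representative $A_J^{(1)}$; Theorem~\ref{thm:2nd-classi} rules this out. Hence exactly four isomorphy classes remain.

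The main obstacle I anticipate is the bookkeeping in the last paragraph: correctly identifying the centerless cores with the advertised loop algebras (so that Proposition~\ref{prop:recon} applies) and recognizing that the type of $\fsl_{2J}(\K)$ is $A$ via $|2J|=|J|$, so that $C_J^{(2)}$ collides only with $A_J^{(1)}$ and the single non-isomorphism of Theorem~\ref{thm:2nd-classi} suffices to finish.
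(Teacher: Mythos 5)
Your proposal is correct and follows essentially the same route as the paper: the paper's own "proof" is the one-sentence remark preceding the theorem, which combines Yoshii's classification and the Uniqueness Theorem (giving at most seven classes, realized by Theorem~\ref{thm:classi}), the coincidences of Theorem~\ref{thm:1stclassi} (collapsing to four), and then Proposition~\ref{prop:recon} together with the locally finite classification of \cite{NS01} and Theorem~\ref{thm:2nd-classi} to separate the four survivors. Your write-up simply makes explicit the bookkeeping the paper leaves implicit (passing to centerless cores so that Proposition~\ref{prop:recon} applies, and noting that $\fsl_{2J}(\K)\cong\fsl_J(\K)$ so the only possible collision for $C_J^{(2)}$ is with $A_J^{(1)}$), which is exactly the intended argument.
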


\begin{remark} In \cite{Sa08}, Salmasian deals with the 
closely related conjugacy problem for 
maximal abelian splitting subalgebras of loop algebras 
of the form $\fL(\fk)$, where $\fk$ is coral locally finite. 
It turns out that the Lie algebras corresponding to the 
root systems $A_J$ and $C_J$ yield only one conjugacy class, but for 
type $B_J$ and $D_J$, it is only shown that the number of conjugacy classes 
is $\leq 5$. Since $B_J$ and $D_J$ correspond to isomorphic Lie algebras, 
it is $\geq 2$. 
\end{remark}


\bibliographystyle{amsalpha}

\end{document}